\theoremstyle{plain} 
\newtheorem{thm}{Theorem}[section]
\newtheorem{lemma}[thm]{Lemma}
\newtheorem{prop}[thm]{Proposition}
\theoremstyle{definition}
\newtheorem{rmk}[thm]{Remark}
\newcommand{\bdry}{\partial}
\newcommand{\cT}{\mathcal{T}}
\newcommand{\tri}{\bigtriangleup}
\DeclareMathOperator\Lk{Lk}
\newcommand{\co}{\colon\thinspace} 
\begin{document}

\title{Traversing three-manifold triangulations and spines}
\author{J. Hyam Rubinstein, Henry Segerman and Stephan Tillmann}
\address{School of Mathematics and Statistics, The University of Melbourne, VIC 3010, Australia} 
\address{Department of Mathematics, Oklahoma State University, Stillwater,  Oklahoma, OK 74078, USA}
\address{School of Mathematics and Statistics, The University of Sydney, NSW 2006, Australia} 
\email{joachim@unimelb.edu.au} 
\email{segerman@math.okstate.edu}
\email{stephan.tillmann@sydney.edu.au} 

\begin{abstract}
A celebrated result concerning triangulations of a given closed 3--manifold is that any two triangulations with the same number of vertices are connected by a sequence of so-called 2-3 and 3-2 moves. A similar result is known for ideal triangulations of topologically finite non-compact 3--manifolds. These results build on classical work that goes back to Alexander, Newman, Moise, and Pachner. The key special case of 1--vertex triangulations of closed 3--manifolds was independently proven by Matveev and Piergallini. The general result for closed 3--manifolds can be found in work of Benedetti and Petronio, and Amendola gives a proof for topologically finite non-compact 3--manifolds. These results (and their proofs) are phrased in the dual language of spines.

The purpose of this note is threefold. We wish to popularise Amendola's result; we give a combined proof for both closed and non-compact manifolds that emphasises the dual viewpoints of triangulations and spines; and we give a proof replacing a key general position argument due to Matveev with a more combinatorial argument inspired by the theory of subdivisions. \end{abstract}

\subjclass[2000]{57Q15, 57Q25, 52B70, 57N10, 57M50, 57M27}


\keywords{triangulation, 3-manifold, surface, combinatorial topology, triangulated manifolds, bistellar moves, standard spines}

\maketitle

\section{Introduction}

Suppose you have a finite number of triangles. If you identify edges in pairs such that no edge remains unglued, then the resulting identification space looks locally like a plane and one obtains a closed surface, a two-dimensional manifold without boundary. The classification theorem for surfaces, which has its roots in work of Camille Jordan and August M\"obius in the 1860s, states that each closed surface is topologically equivalent to a sphere with some number of handles or crosscaps. For example, the torus is a sphere with a single handle, and the projective plane is a sphere with a single crosscap. A modern proof of the classification theorem is due to John Conway and presented by Francis and Weeks~\cite{Francis-conway-1999}. 

Now suppose you have a finite number of tetrahedra and identify triangular faces in pairs such that no face remains unglued. In this case, the resulting identification space is again closed (it has no boundary). The resulting space looks everywhere like three-dimensional euclidean space \emph{except} possibly at the vertices and at mid-points of edges. This can be seen as follows. 

The local picture near a vertex in the identification space can be understood by tracing the small triangles in the tetrahedra cutting off the vertices that are identified under the face gluings (see Figure~\ref{vertex_link}). If these small triangles globally glue up to a sphere, then in the identification space the sphere formed by the triangles bounds a ball and the vertex in the identification space looks like the centre of a ball. In this case, the vertex is called \emph{material}. However, if the triangles glue up to a sphere with at least one handle or crosscap, then the space near this vertex looks like a cone on this surface. Such a vertex is called \emph{ideal}.

\begin{figure}[htbp]
\centering
\includegraphics[width=0.4\textwidth]{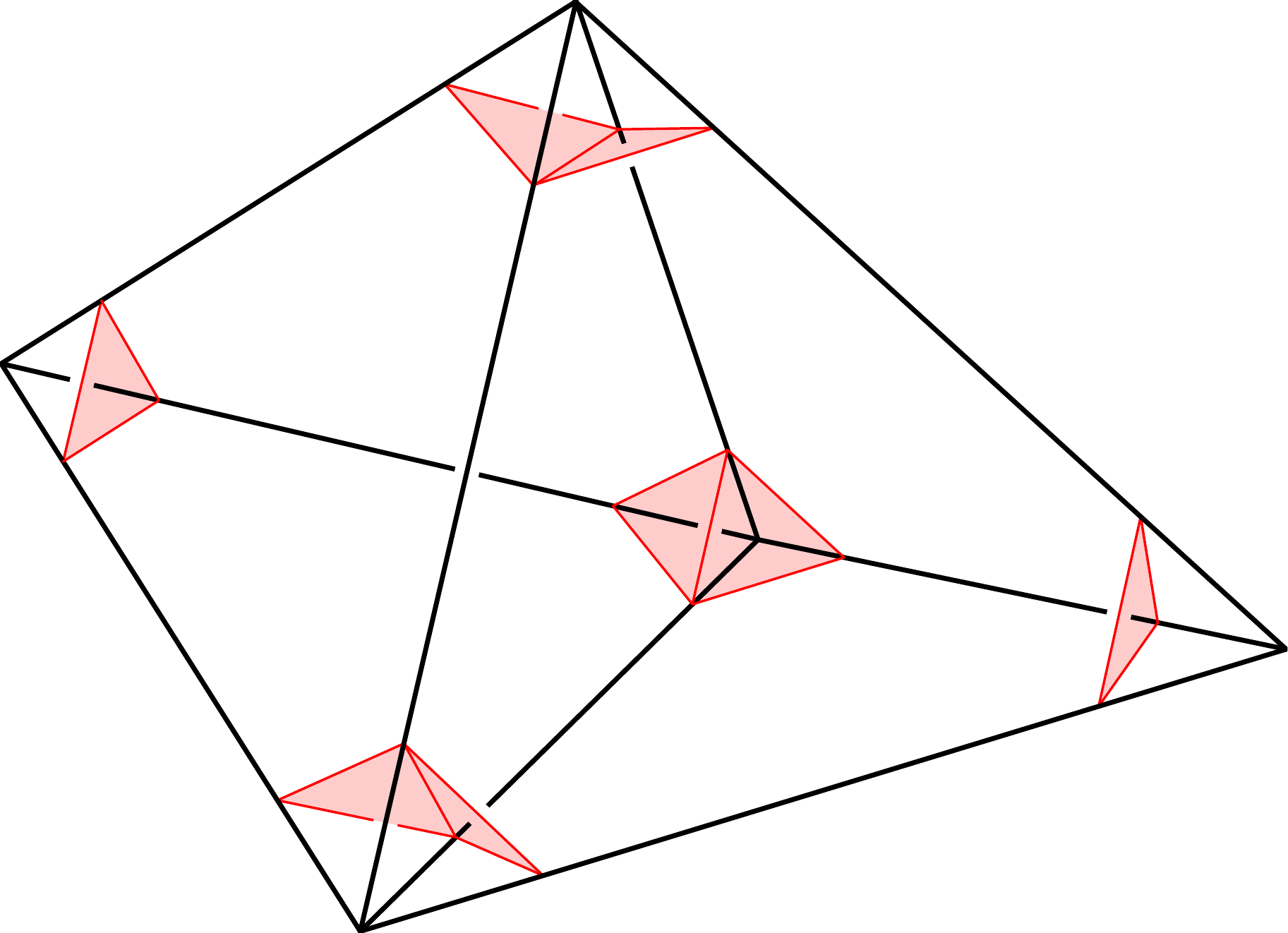}
\caption{Small triangles near the vertices of the tetrahedra glue up to form a surface.} 
\label{vertex_link}
\end{figure}

Only the cone on a sphere is a ball; the cone on a different surface is more difficult to visualise. In general one can imagine this cone as obtained as follows. First take a product of the surface $S$ with an interval $I$. The resulting three-dimensional space has two boundary components, each a copy of $S$. One then collapses one of these boundary components to a point, giving the cone on the surface. If $S$ is orientable, one can picture the product $S\times I$ in three-dimensional euclidean space. If $S$ is a sphere, it is now also easy to see that after collapsing one of the boundary components, one obtains a ball. 

The situation at the midpoints of edges arises from the issue that an edge may be identified with itself but in the opposite direction. In this case, a small neighbourhood of the midpoint of such an edge is bounded by a projective plane (see Figure~\ref{dr_projective_plane_midpoint}). If this happens, we can turn the mid-point into an ideal vertex by subdividing the tetrahedra. 

\begin{figure}[htbp]
\centering
\subfloat[A projective plane bounds the neighbourhood of the midpoint of an edge.]{
\includegraphics[width=0.4\textwidth]{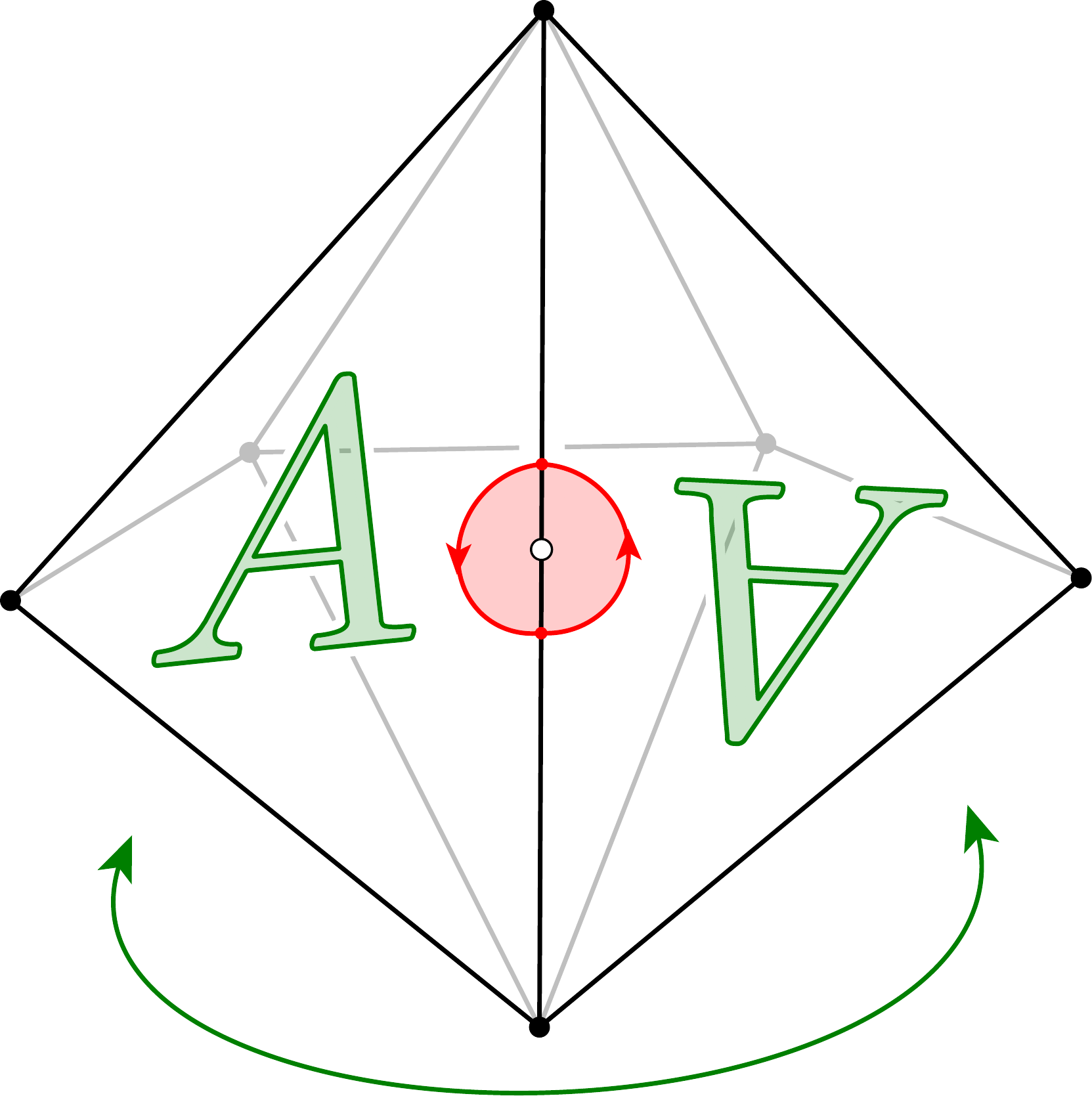}}
\quad 
\subfloat[After subdivision we get an ideal vertex whose small triangles glue up to produce the projective plane.]{
\includegraphics[width=0.4\textwidth]{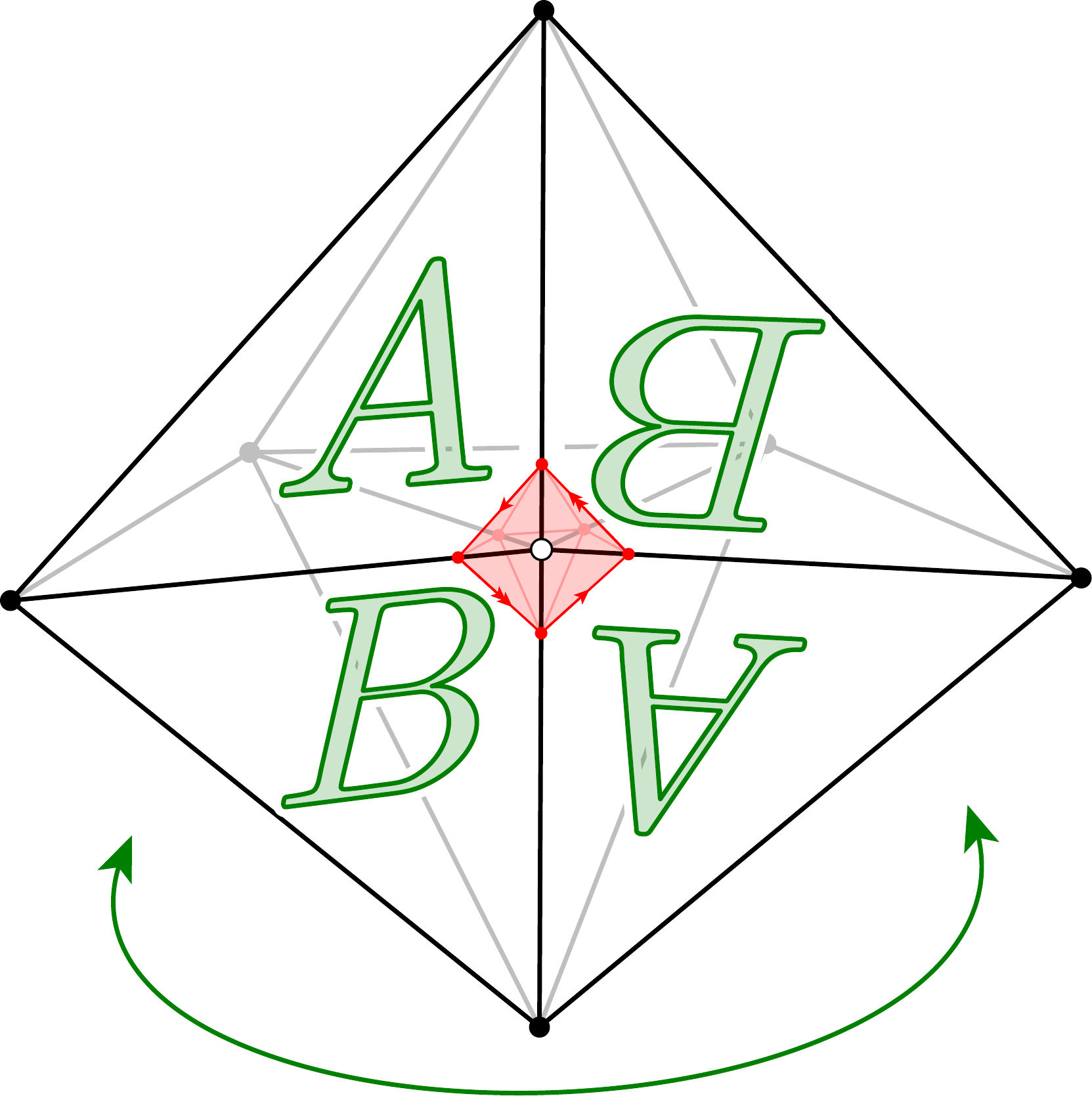}}
\caption{} 
\label{dr_projective_plane_midpoint}
\end{figure}

Hence we will assume that when we identify faces of a collection of tetrahedra, then the resulting space looks \emph{everywhere} like three-dimensional euclidean space \emph{except} possibly at the vertices. If all vertices are material, we have a \emph{three-dimensional manifold} and if there is at least one ideal vertex, we have a \emph{three-dimensional pseudo-manifold}. In each case, the space also comes with a \emph{triangulation}, namely the collection of tetrahedra and face pairings with the property that the only non-manifold points are at the vertices.

\medskip

\textbf{Why think about triangulations?}
As Bill Thurston~\cite{Thurston-three-1997} puts it, ``Manifolds are around us in many guises.'' We will not give the technical definition of a manifold here.
A key result of Moise~\cite{Moise-affine-1952} states that every three-dimensional manifold can be triangulated in the way described above. If one is given a compact three-dimensional manifold with non-empty boundary and one cones each boundary component to a point (one cone point for each boundary component), then it is a closed three-dimensional pseudo-manifold. A relative version of Moise's theorem allows the compact three-dimensional manifold with boundary to be triangulated, and coning each triangulation of a boundary component to a point gives a triangulation of the pseudo-manifold. In fact, there is a more general definition of three-dimensional pseudo-manifolds than that considered in this paper, where not only the vertices but also points at edges have neighbourhoods that don't look like balls. Such spaces arise naturally in algebraic geometry. Banagl and Friedman~\cite{banagl-friedman} showed that these more general three-dimensional pseudo-manifolds can also be triangulated.

This is in contrast with higher dimensions. There is a four-dimensional manifold, called Freedman's $E_8$ manifold, which was shown in the mid 1980s through work of Casson~\cite{Akbulut-Casson-1990} to have no triangulation. The existence of such manifolds in all dimensions greater or equal to five was recently shown by Manolescu~\cite{Manolescu-Pin-2016}. An overview of the so-called triangulation conjecture can be found in \cite{Manolescu-triangulations-2014, Manolescu-lectures-2016}.

\medskip

The interest in triangulations lies in their combinatorial framework that allows the study of geometric and topological properties of a manifold both in theory and in practice. In three-dimensional geometry and topology this is completely general due to Moise's result, and hence gives a framework for proving important theoretical results~\cite{Kneser-geschlossene-1929, Haken-theorie-1961, Haken-some-1968, Jaco-minimal-1988, Jaco-equivariant-1989, Jaco-0-efficient-2003}, 
design algorithms for decisions problems~\cite{Haken-verfahren-1961, Schubert-bestimmung-1961, Haken-uber-1962, Jaco-algorithm-1984, Thompson-thin-1994, Rubinstein-algorithm-1995, Li-Heegaard-2011, Dunfield-spanning-2011}, 
and allows the analysis of the computational complexity of such decision problems~\cite{Hass-computational-1999, Agol-computational-2006, Coward-reidemeister-2014, Burton-courcelle-2017, Schleimer-NP-2011}. 
Triangulations also carry the geometry of a three-manifold~\cite{Neumann-realizing-2011, Neumann-volumes-1985, Neumann-arithmetic-1992, Frigerio-constructing-2004}
 and allow rigorous computation of geometric invariants~\cite{Neumann-bloch-1999, Coulson-computing-2000, Hoffman-verified-2016}, such as the volume or the length of a shortest curve that cannot be contracted to a point. In addition, there are also numerous topological or geometric invariants that can be computed from triangulations, see, for example, \cite{Baseilhac-analytic-2015, Kashaev-matrix-2011, Carbone-wigner-2000, Kabaya-pre-2007}.
Many of these algorithms are implemented in publicly available software packages~\cite{snappy, regina}.

\medskip

Turaev and Viro~\cite{Turaev-state-1992, Turaev-quantum-1994} showed that this can also be turned around. They \emph{define} powerful invariants of closed 3--manifolds \emph{using} triangulations, and there is currently much activity in trying to link these invariants to known topological or geometric invariants, see \cite{Murakami-current-2013, Dimofte-quantum-2011} and the references therein.

To show that a quantity computed using a triangulation is indeed independent of the triangulation requires means to compare the quantities associated to different triangulations. A similar situation arises in \emph{knot theory}. In this case, there are quantities, properties or mathematical objects that can be computed from a diagram, which is a particular way of drawing the knot in the plane. These are computed using a prescribed set of rules. Reidemeister showed that any two diagrams of the same knot are related by a particular set of moves (see Figure~\ref{dr_Reidemeiser}). So if the result computed before applying any one of Reidemeister's moves is the same as the result computed after performing that move, then one has shown that the result is independent of the particular diagram and hence an invariant of the knot. Examples of this are the property of tricolourability, the Jones polynomial or the fundamental group. An excellent, elementary introduction to these ideas is given by Adams~\cite{Adams-knot-2004}.

\begin{figure}[htbp]
\centering
\includegraphics[width=0.7\textwidth]{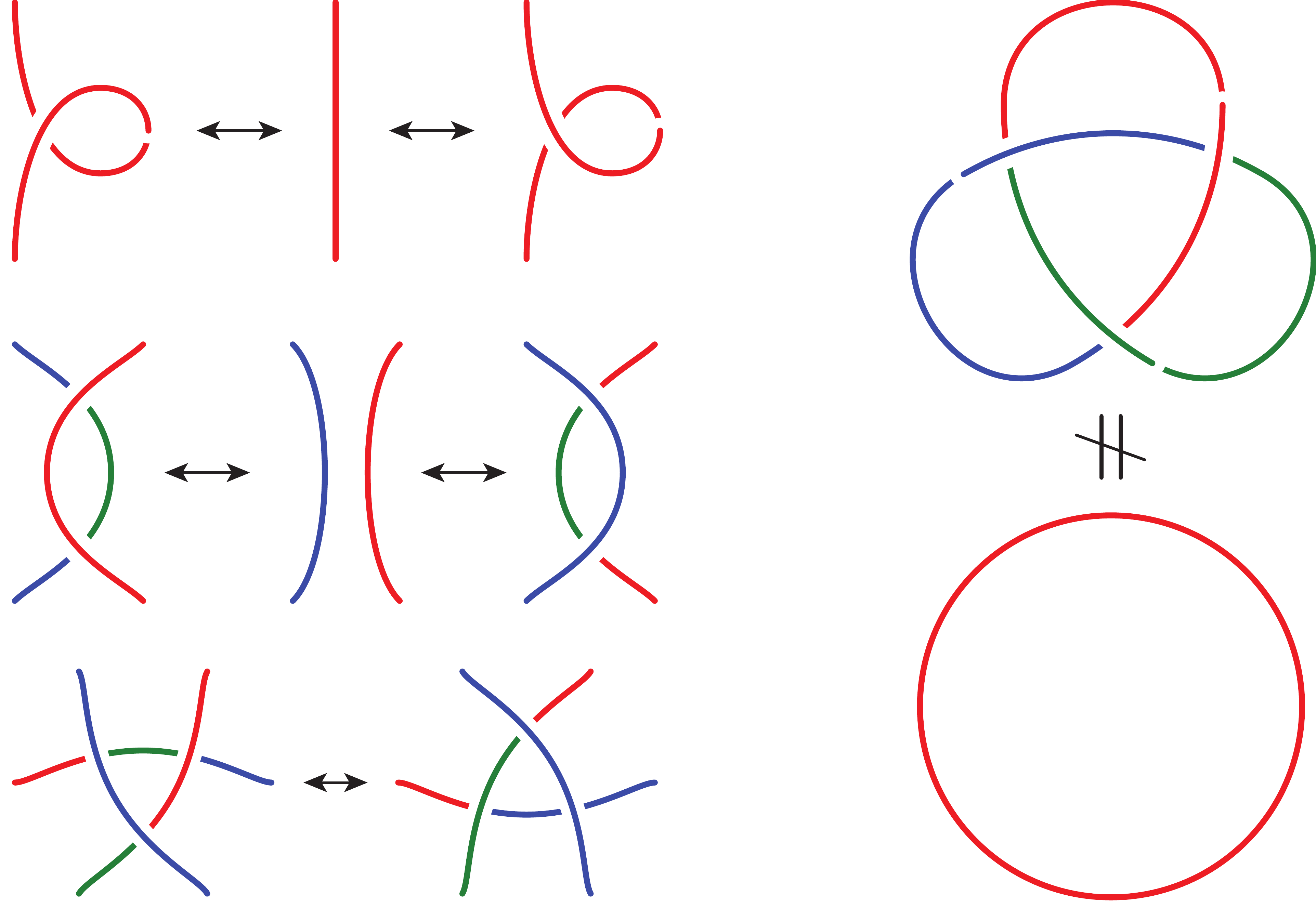}
\caption{Reidemeister moves. The colouring indicates that they preserves tricolourability. In particular, the trefoil knot cannot be deformed into the unknot.} 
\label{dr_Reidemeiser}
\end{figure}

The invariants of Turaev and Viro work similarly. 
Suitable moves that change one triangulation to another are the \emph{bistellar moves} that have been popularised by Pachner~\cite{Pachner-bistellare-1978}.
The bistellar moves play the role of Reidemeister's moves in this setting.
We drop down by one dimension to introduce the bistellar moves and highlight some subtle points.

\begin{figure}[htbp]
\centering
\includegraphics[width=0.8\textwidth]{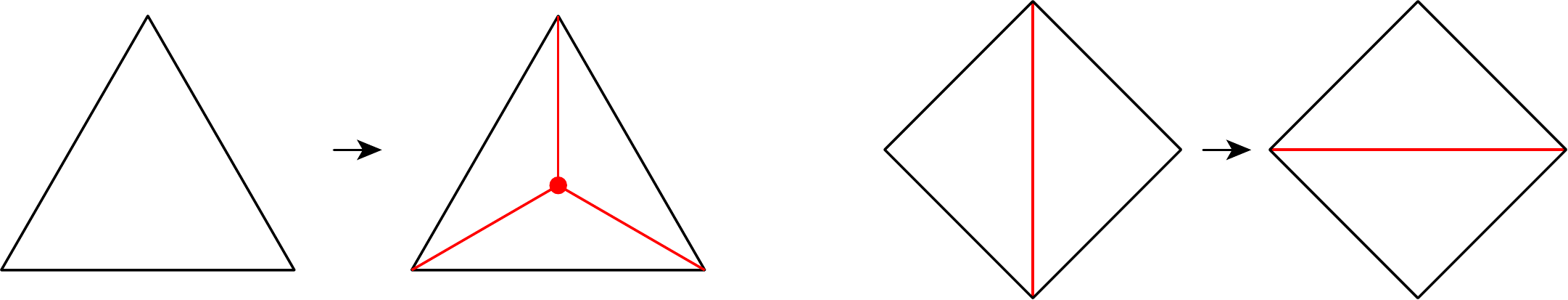}
\caption{The 1-3 and 2-2 moves.} 
\label{1-3_and_2-2_moves}
\end{figure}

\textbf{An interlude about surfaces.} For surfaces, the corresponding moves are shown in Figure~\ref{1-3_and_2-2_moves}. The 1-3 move introduces a vertex at the centre of a triangle and connects this to the three vertices of the triangle, thus dividing it into three triangles. Hence the name 1-3 move. Its inverse is called a 3-1 move, and can be performed on any vertex at which three different triangles meet. The second move is the 2-2 move. Two triangles sharing an edge can be thought of as a quadrilateral with a diagonal drawn in. The 2-2 move changes this diagonal to the other diagonal of the quadrilateral. Note that the 1-3 and 3-1 moves change the number of vertices, edges and triangles, whilst the 2-2 move leaves all of these quantities unchanged. There is a conceptually nice way of thinking about these moves. The boundary of a tetrahedron has a natural triangulation with four triangles. The above moves can be thought of as swapping any subset of triangles on the boundary of the tetrahedron with their complementary triangles. The three bistellar moves we have just described can be used to change a given triangulation into a different triangulation. 

However, in the setting we are interested in, one is given two different triangulations of the same space. What does this mean? One way to interpret this is that our topological space, in this case a surface $S$, comes with two sets of triangles drawn on it, and we'd like to apply the moves to change one set to another. The immediate idea is to superimpose both triangulations. This introduces new vertices at the places where edges of the two triangulations meet. The resulting cells may not all be triangles, but can be easily be subdivided into triangles. So the result is a third triangulation that is a \emph{common refinement} of both triangulations. It is therefore enough to show that one can transform a triangulation to an arbitrary refinement using the given set of moves. There is a hidden assumption in this argument, though, namely that the edges of both triangulations only meet in finitely many points.
Think of a small disc on the surface as parameterised as the euclidean plane. It might be that an edge in one triangulation looks like $y=0$ whilst an edge in the other triangulation looks like $y = x\ \text{sin}(\frac{1}{x})$ for $x>0$ and includes the origin as a vertex. These edges meet in infinitely many points! 
{We appear to have gone down the rabbit hole further: given a triangulation of a surface, one can define a smooth structure on the surface that makes the triangulation smooth. A nice account of this is given by Thurston~\cite[\S3.10]{Thurston-three-1997}. So now each triangulation gives us a smooth structure on the surface.}
A theorem of Whitehead says that one can indeed perturb the triangulations in order to arrange the finite intersection property. In applying this theorem, there is yet again a hidden assumption: that one of the triangulations may be curvy but at least sufficiently nice with respect to the smooth structure induced on the surface by the other triangulation. To concoct something unpleasant, imagine that an edge with respect to one triangulation looks like a fractal curve with respect to the smooth structure induced by the other. 
That both triangulations are piecewise smooth with respect to the same structure can also be arranged by a perturbation. This was shown by Epstein~\cite{Epstein-curves-1966} in 1966. Hatcher~\cite{Hatcher-kirby-2013} recently gave a very nice proof of this fact using the so-called Kirby torus trick. So we may (and will) assume that the triangulations have the finite intersection property after performing a small perturbation.

How do we transform a given triangulation $\cT_0$ of the closed surface $S$ to an arbitrary subdivision $\cT_1$ of $\cT_0$ only using 1-3, 2-2 and 3-1 moves? First apply a small isotopy of the triangulation $\cT_0$ that keeps all vertices fixed and moves each edge to a position where it does not meet any of the additional vertices of $\cT_1.$ Then repeatedly apply 1-3 moves to the isotopic copy of $\cT_0$ to introduce vertices precisely where the vertices of $\cT_1$ are. Denote the resulting triangulation $\cT'_0.$
We now have two triangulations of the surface that share the same set of vertices. It is now a pleasant exercise to show that there is a sequence of 2-2 moves that transforms $\cT'_0$ into a triangulation that is isotopic to $\cT_1$, again keeping the vertices fixed. 
So we have argued that any triangulation of a surface can be transformed to a subdivision by first applying an isotopy fixing the vertices of the initial triangulation, then applying a sequence of 1-3 moves followed by a sequence of 2-2 moves, and completing with an isotopy fixing the vertices of the target triangulation. The isotopies are merely for cosmetic reasons: they put edges neatly on top of each other, rather than roughly in the right place. 
In conclusion, given two arbitrary triangulations, we can transform one to the other by applying an isotopy, a sequence of 1-3 moves, a sequence of 2-2 moves, an isotopy, a sequence of 2-2 moves, a sequence of 3-1 moves, and finishing with an isotopy. Topological proofs of the existence of the sequence of 2-2 moves can be found in work of Mosher~\cite{Mosher-tiling-1988} and Hatcher~\cite{Hatcher-triangulations-1991}. A geometric proof using the Epstein-Penner decomposition~\cite{Epstein-euclidean-1988} transforms each triangulation with the same number of vertices via 2-2 moves to a \emph{canonical triangulation} by first choosing a geometric structure on the surface. This can be found in \cite{Tillmann-algorithms-2016}.

\begin{figure}[htbp]
\centering
\subfloat[]{
\includegraphics[width=0.3\textwidth]{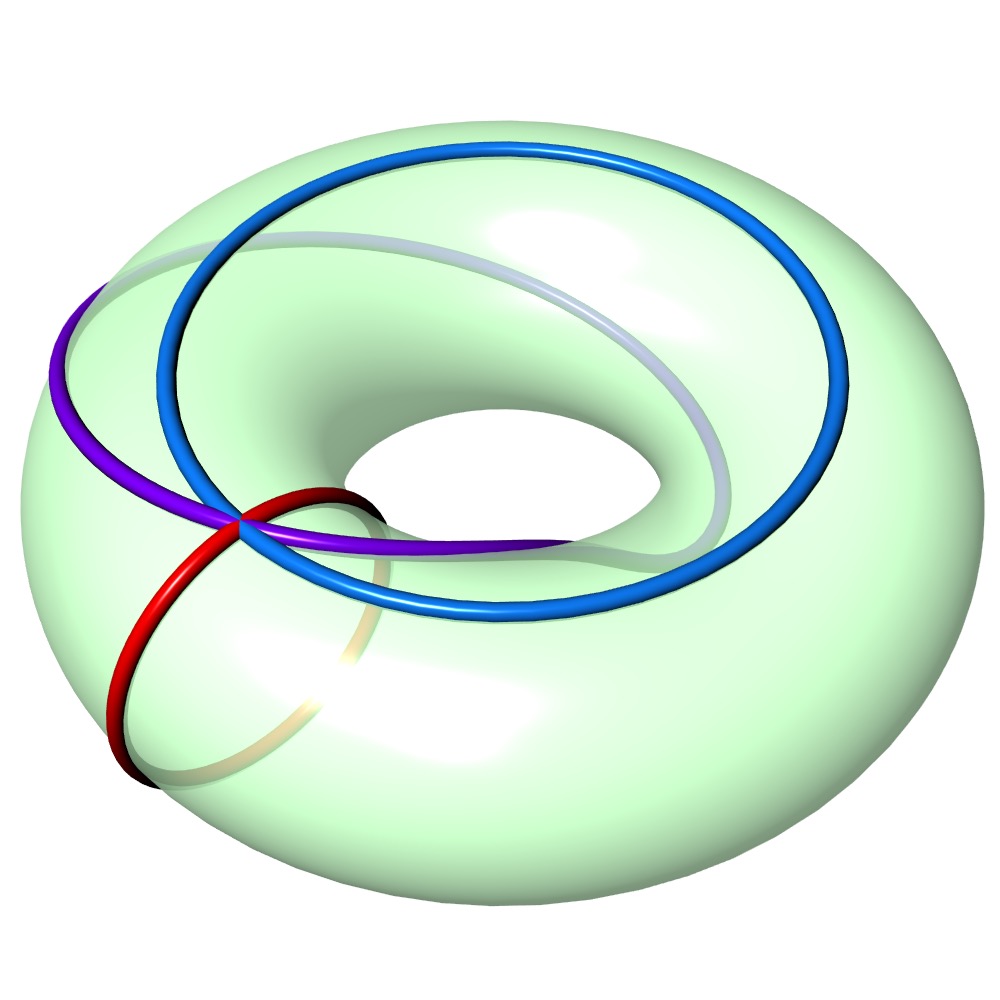}
}
\subfloat[]{
\includegraphics[width=0.3\textwidth]{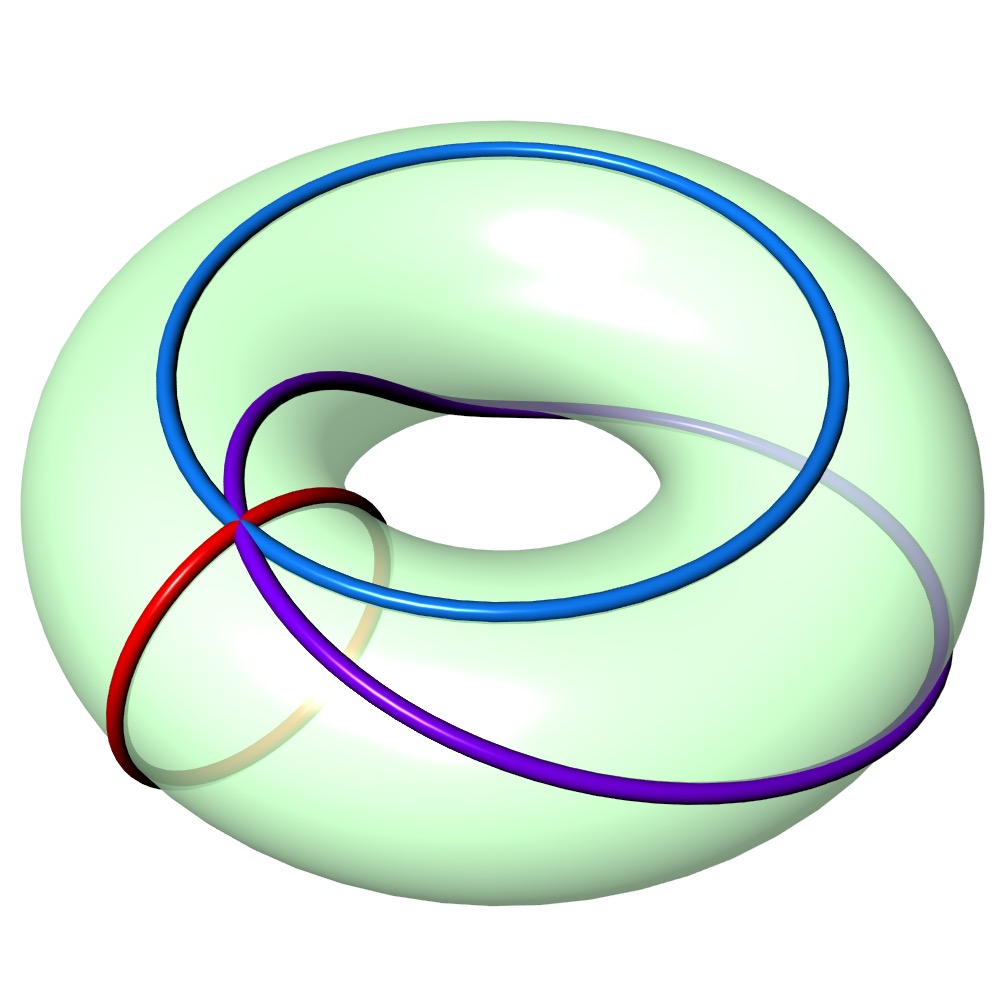}
}
\subfloat[]{
\includegraphics[width=0.3\textwidth]{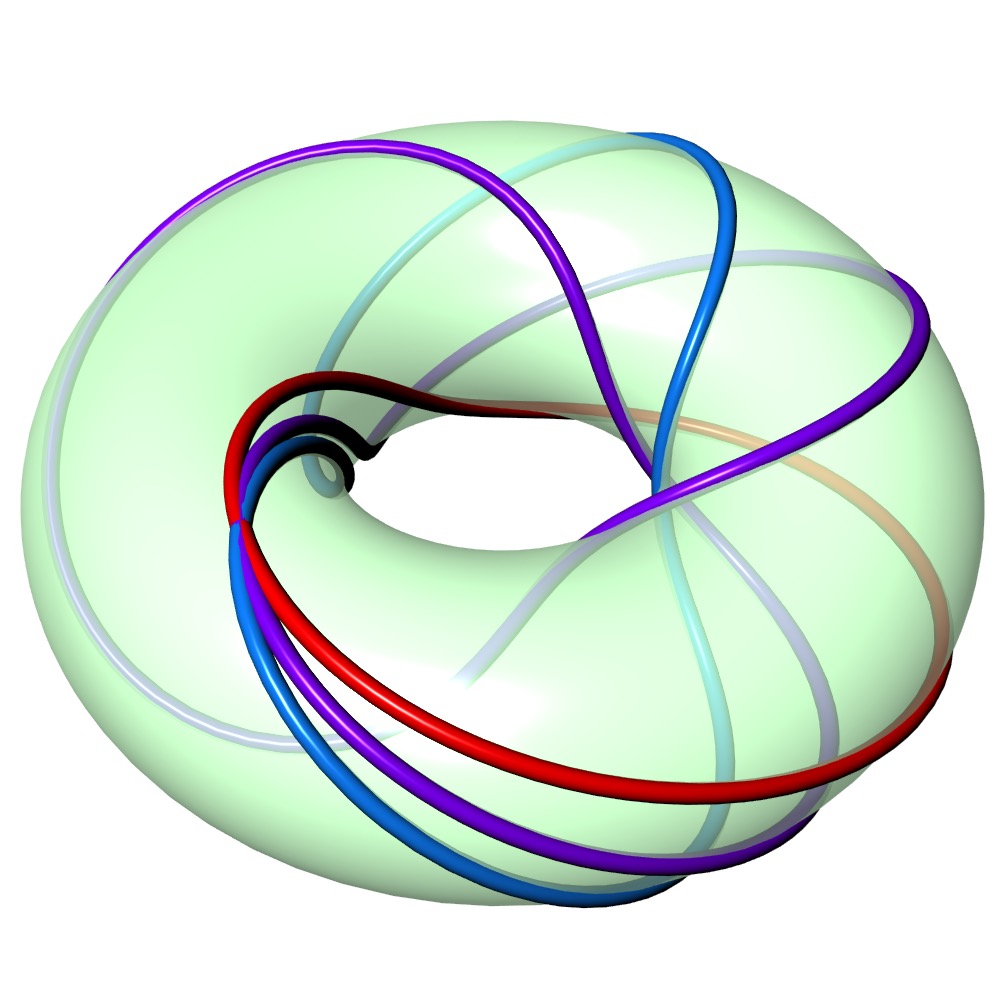}
}
\caption{Two one-vertex triangulations of the torus related by a single 2-2 move, and another one-vertex triangulation obtained by performing two more flips.} 
\label{torus_triangulations}
\end{figure}

Many authors also allow combinatorial isomorphisms when moving between different triangulations. These are maps from $S$ to $S$ that take vertices to vertices, edges to edges, and triangles to triangles. This has the following effect.
Combinatorially, there is a unique one-vertex triangulation of the torus: it has two triangles, three edges and one vertex. 
If one allows combinatorial isomorphism as a move to go from one triangulation to another, then for any two one-vertex triangulations of the torus, no bistellar moves would be required at all, but only a single combinatorial isomorphism to go from one to another. If one is not allowed to perform any combinatorial isomorphisms but only bistellar moves and isotopies, then there are infinitely many inequivalent one-vertex triangulations of a torus. However, one can apply a sequence of 2-2 moves and isotopies to connect any one-vertex triangulation to any other. See Figure~\ref{torus_triangulations} for an example of two such triangulations related by a single 2-2 move. The set of all isotopy classes of one-vertex triangulations of the two--torus connected by 2-2 moves thus has the structure of an infinite trivalent tree. In particular, for any two such triangulations, there is a unique minimal sequence of moves connecting them.
{All of this has beautiful stories in itself, of which we mention three. One story links triangulations to geometry and the Farey tesselation of the hyperbolic plane~\cite{Mosher-tiling-1988, Hatcher-triangulations-1991}. In particular, we can not only tell any two of them apart, but can easily work out the minimal number of moves needed to transform one to another. Another story gives a geometric interpretation of Markov triples and diophantine equations~\cite{Penner-decorated-1987, Penner-decorated-2012}, see \cite{Springborn-hyperbolic-2017} for a nice account of this. A third story uses these triangulations of the torus to construct special triangulations of the so-called lens spaces, a special class of 3--manifolds~\cite{Jaco-layered-2006, Jaco-minimal-2009}.}

Here is another reason why one might not want to allow combinatorial isomorphisms. To return to the analogy with knot theory, there are knots that cannot be transformed to their mirror image by a sequence of Reidemeister moves. An example is the trefoil knot. If we allowed the operation that takes a knot to its mirror image as an additional transformation, then the distinction between a right-handed and a left-handed trefoil knot is lost. 

\medskip

\textbf{Back to 3--dimensional spaces.} The situation for 3--dimensional manifolds regarding common refinements of triangulations is similar to surfaces, albeit slightly more difficult to visualise. Details and references are given in \S\ref{sec:refinements}.
The moves on 3--dimensional triangulations are also completely analogous. The 1-4 move introduces a vertex inside a tetrahedron and connects it to the four vertices of the tetrahedron with four edges. This creates six new triangular faces, spanned by the edges of the tetrahedron and the new vertex, and four new tetrahedra. The inverse move is called a 4-1 move. Then there is the 2-3 move, which is performed on two different tetrahedra meeting in a triangular face. The 2-3 move deletes this face by introducing a new edge connecting the opposite corners of the tetrahedra. Its inverse is the 3-2 move. Each of these moves changes the number of tetrahedra, faces and edges. However, the 2-3 and 3-2 moves do not affect the number of vertices in the triangulation. The 1-4 and 4-1 moves only change the number of material vertices, but not the number of ideal vertices. One may again imagine all of these moves as swapping subsets of tetrahedra on the boundary of a 4--dimensional simplex! 

\begin{figure}[htbp]
\centering
\includegraphics[width=0.8\textwidth]{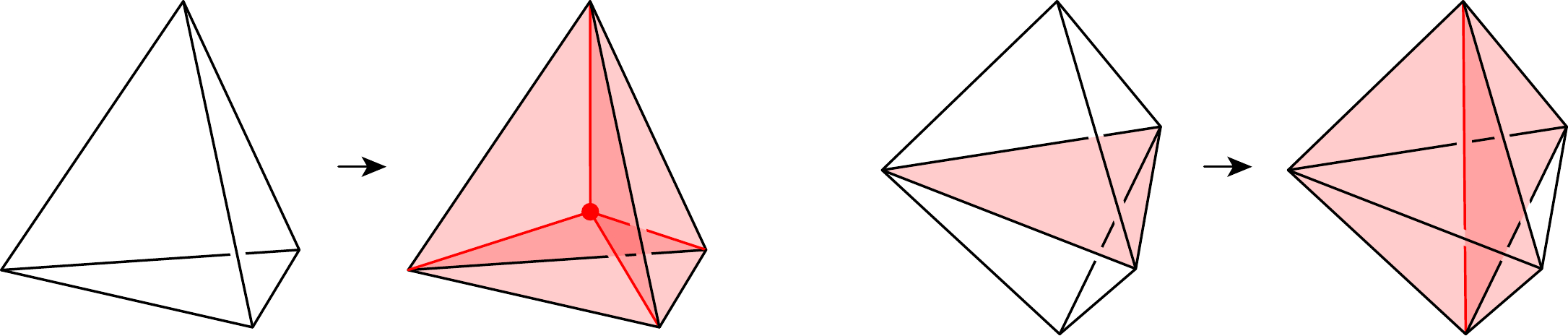}
\caption{The 1-4 and 2-3 moves.} 
\label{1-4_and_2-3_moves}
\end{figure}

The use of bistellar moves in the work of Pachner has its roots in the stellar moves used by Alexander~\cite{Alexander-combinatorial-1930} and Newman~\cite{Newman-foundation-1926}. The bistellar moves are equivalent to moves on a dual structure, called a spine, due to Matveev~\cite{Matveev-transformations-1987}. Classically, the stellar and bistellar moves were only defined, and results involving them only proved, for special types of triangulations that are less general than the ones we defined above. Vertices, edges, triangular faces and tetrahedra are called \emph{simplices}, and one indicates the dimension $d$ of a simplex by saying that it is a $d$--simplex. 
{
A simplex contained in another simplex $\sigma$ is said to be a
\emph{face} of $\sigma.$
A \emph{simplicial} triangulation requires any two simplices to meet in either a face or not at all. 
}
A \emph{combinatorial} triangulation has the additional requirement that the manifold structure is completely evident from the combinatorics --- this is technically made precise by requiring that the so-called link of every simplex is a sphere. For surfaces and 3--dimensional manifolds, every simplicial triangulation is combinatorial, and the distinction is only of relevance in higher dimensions (which are of no concern for this paper). 
Our triangulations are sometimes called \emph{singular} or \emph{semi-simplicial} in the literature. They can be turned into simplicial triangulations by performing at most two \emph{barycentric subdivisions}. We define these in \S\ref{sec:Barycentric subdivision} and show that barycentric subdivision can be achieved using the bistellar moves 1-4, 2-3 and 3-2. We also show that stellar moves can be achieved using these moves---the caveat here is that one stellar move may turn into an arbitrarily long sequence of bistellar moves. In this sense, there are infinitely many stellar moves but only four bistellar moves. This discussion connects our triangulations to the general theory of simplicial or combinatorial triangulations via bistellar moves, and we therefore use the word \emph{triangulation} throughout this paper without further qualification. We also always allow the implicit use of isotopy --- so our statements are really statements about triangulations up to isotopy, rather than fixed triangulations.
We summarise this discussion in the following theorem.

\begin{thm}[Alexander, Newman, Moise, Pachner]
\label{pachner}
The set of all triangulations of a closed three-dimensional manifold $M$ is connected under 1-4, 2-3, 3-2 and 4-1 moves.
\end{thm}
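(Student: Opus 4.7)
The plan is to transform one triangulation into the other via a common refinement, in three stages: reduce to simplicial triangulations, construct a common simplicial refinement, then show that refinements are reachable by bistellar moves.

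First I would reduce to the simplicial case. Every triangulation in our sense becomes simplicial after at most two barycentric subdivisions, and barycentric subdivision is realisable by a sequence of 1-4, 2-3 and 3-2 moves (as is to be established in \S\ref{sec:Barycentric subdivision}). So after applying the appropriate barycentric subdivisions to each of $\cT_0$ and $\cT_1$, it suffices to prove the result for simplicial triangulations.

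Next I would construct a common refinement. Using the results indicated in \S\ref{sec:refinements}, I may assume after a small PL isotopy that $\cT_0$ and $\cT_1$ meet in a finite polyhedral complex; taking the common polyhedral subdivision and then triangulating each cell (for instance, by coning from a chosen interior point) produces a common simplicial refinement $\cT$. The classical theorem of Alexander and Newman then guarantees that any simplicial refinement of a simplicial triangulation of a PL manifold is \emph{stellar equivalent} to the original, that is, reachable by a finite sequence of stellar subdivisions and their inverses. Hence $\cT_0$ and $\cT_1$ are each connected to $\cT$ by stellar moves.

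The final and most delicate step is to show that every stellar move on a simplicial triangulation of a closed $3$--manifold can be simulated by a finite sequence of 1-4, 2-3, 3-2 and 4-1 moves. This is where I expect the main obstacle to lie, since a stellar move may split a simplex at an arbitrary interior point and is itself not a bistellar move. Classically this step is handled by Pachner via a general position argument (or, dually, by Matveev's argument on spines), while the present paper replaces it with a combinatorial argument inspired by the theory of subdivisions. Concretely, one would verify that each elementary stellar operation (on a tetrahedron, triangular face, or edge) admits a factorisation into local bistellar moves, possibly after introducing auxiliary vertices by 1-4 moves that are then removed by 4-1 moves at the end. Concatenating the resulting sequences for $\cT_0 \rightsquigarrow \cT$ and $\cT \rightsquigarrow \cT_1$ then yields the desired bistellar path.
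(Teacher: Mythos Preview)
Your proposal is correct and follows essentially the same route as the paper's sketch in \S\ref{sec:refinements}: pass to simplicial triangulations via barycentric subdivision (realised by bistellar moves in \S\ref{sec:Barycentric subdivision}), invoke Moise and Alexander to reach a common refinement by stellar moves, and then factor each stellar move into bistellar moves. One small miscalibration: the stellar-to-bistellar step is not the delicate part---the paper dispatches it by short explicit local constructions in \S\ref{sec:stellar} using only 1-4, 2-3 and 3-2 moves---and the ``combinatorial argument inspired by the theory of subdivisions'' you allude to is the paper's contribution to Theorem~\ref{thm:main}, not to the present theorem.
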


An excellent account of the history and proof of a similar result that holds in all dimensions was recently given by Lickorish~\cite{lickorish_simplicial_moves}.
A more general definition of triangulations, and a different set of moves, is used by Ludwig and Reitzner~\cite{Ludwig-elementary-2006}.

Banagl and Friedman~\cite[Proposition 2.16]{banagl-friedman} give a version of this theorem for closed three-dimensional pseudo-manifolds starting with a more general definition of a pseudo-manifold, and use this to extend the Turaev-Viro invariants to three-dimensional pseudo-manifolds:

\begin{thm}[Banagl-Friedman]
\label{thm:Banagl-Friedman}
The set of all triangulations of a closed three-dimensional pseudo-manifold is connected under 1-4, 2-3, 3-2 and 4-1 moves.
\end{thm}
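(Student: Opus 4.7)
The plan is to reduce Theorem~\ref{thm:Banagl-Friedman} to Theorem~\ref{pachner} by localising the analysis near the ideal vertices. Let $M$ be the closed pseudo-manifold with ideal vertex set $V_i = \{v_1, \ldots, v_k\}$. By definition, each $v_j$ has a neighbourhood that is a cone on a closed surface $S_j$, and cutting open these cones yields a compact $3$--manifold $N$ with $\partial N = \bigsqcup_j S_j$; any triangulation $\cT$ of $M$ restricts to a triangulation of $N$ whose boundary triangulation is the link of the ideal vertices, and conversely $M$ is recovered by coning each boundary component.

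Given two triangulations $\cT_0$ and $\cT_1$ of $M$, the strategy is to proceed in three steps. First, I would standardise the triangulations near the ideal vertices: for each $v_j$, the link $\Lk(v_j,\cT_i)$ is a triangulation of $S_j$, and using the 2--dimensional Pachner theorem (1-3, 3-1, and 2-2 moves, discussed in the interlude about surfaces) one can connect $\Lk(v_j,\cT_0)$ to $\Lk(v_j,\cT_1)$ by a sequence of moves on $S_j$. The key observation is that each of these surface moves is realised by a bistellar move on the tetrahedra of $M$ incident to $v_j$: a 1-3 move in $\Lk(v_j,\cT_i)$ corresponds to a 2-3 move across the relevant face, a 3-1 move corresponds to a 3-2 move, and a 2-2 move on the link corresponds to another 2-3/3-2 exchange in an adjacent pair of tetrahedra. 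Thus, after a finite sequence of 2-3 and 3-2 moves in $M$, we may assume $\Lk(v_j, \cT_0) = \Lk(v_j, \cT_1)$ as triangulated surfaces for every $j$.

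Second, once the links agree, the triangulations $\cT_0$ and $\cT_1$ restrict to triangulations of $N$ with the same boundary triangulation. Now apply a relative version of Theorem~\ref{pachner} for compact $3$--manifolds with triangulated boundary (which follows from the closed manifold case by doubling $N$ along $\partial N$, connecting the doubled triangulations via 1-4, 2-3, 3-2, 4-1 moves supported away from $\partial N$, and then dividing back): the two triangulations of $N$ are related by a sequence of 1-4, 2-3, 3-2, 4-1 moves whose support is disjoint from $\partial N$. Every such move is also a legitimate move on $M$, since it happens in the manifold part of the pseudo-manifold and does not interact with the ideal vertices.

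The main obstacle is the first step: we must ensure that every 2--dimensional move on the link of an ideal vertex can be realised by one of the four $3$--dimensional bistellar moves without requiring a 1-4 move at $v_j$ itself (which is forbidden, since 1-4 moves can only introduce \emph{material} vertices). This is precisely why the dictionary between surface bistellar moves on $\Lk(v_j,\cT)$ and $3$--dimensional bistellar moves incident to $v_j$ must be established carefully, with attention to the case where the three tetrahedra meeting at a link-triangle used in a link 3-1 move do not themselves form a valid 3-2 configuration; here one first performs preparatory 2-3 moves on nearby faces so that the required 3-2 move becomes available. Once this local dictionary is in place, the remainder of the argument is a direct application of the surface and manifold cases already available.
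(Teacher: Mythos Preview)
Your approach is genuinely different from the paper's. The paper does not separate the ideal vertices from the manifold part at all: it applies two barycentric subdivisions to each triangulation to make them simplicial, invokes the Hauptvermutung for pseudo-manifolds (Brown) to straighten one triangulation relative to the other by an isotopy fixing the ideal vertices, and then uses Alexander's theorem to pass to a common stellar refinement. Since barycentric subdivision and stellar moves were already expressed as sequences of bistellar moves in \S\ref{sec:stellar}--\S\ref{sec:Barycentric subdivision}, the result follows. No relative statement and no analysis of links is needed.

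Your reduction strategy is appealing, but Step~2 contains a real gap. You assert that the relative version of Theorem~\ref{pachner} for a compact $3$--manifold $N$ with fixed boundary triangulation ``follows from the closed manifold case by doubling $N$ along $\partial N$, connecting the doubled triangulations via 1-4, 2-3, 3-2, 4-1 moves supported away from $\partial N$, and then dividing back.'' But Theorem~\ref{pachner} applied to the double $DN$ gives you \emph{some} sequence of bistellar moves; it gives you no control over where those moves are supported. A move in the sequence may well involve tetrahedra on both sides of the doubling surface, or may alter the induced triangulation of $\partial N$ inside $DN$, and there is no $\mathbb{Z}/2$--equivariance to exploit. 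The relative bistellar theorem is true, but it requires its own proof (for instance via relative stellar theory, as in Pachner's and Lickorish's work); it does not come for free from the absolute statement by doubling. Without it, your argument does not close.

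There is also a smaller issue in Step~1 worth flagging: your dictionary between link moves and ambient bistellar moves is slightly garbled (a 2-2 move on $\Lk(v_j)$ is realised by a \emph{single} 2-3 or 3-2 move on a face containing $v_j$, not by a 2-3/3-2 exchange; and a link 1-3 move requires a 2-3 move on the face \emph{opposite} $v_j$, which presupposes that the tetrahedron across that face is distinct). More seriously, arranging that the link triangulations agree \emph{as subsets of $M$}, not merely combinatorially, involves the mapping class group of $S_j$; you would need to argue that the relevant isotopy class can itself be realised by bistellar moves near $v_j$.
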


We remark that in the above, the number of ideal vertices remains unchanged, but the number of material vertices may vary. Proofs of Theorems~\ref{pachner} and \ref{thm:Banagl-Friedman} are sketched in \S\ref{sec:refinements}.

\medskip

The interest of results such as the above is indeed the construction of invariants. This is made even easier by limiting the number of moves required, which is achieved by the following refinement of Theorem~\ref{pachner} of Matveev~\cite{matveev_book} and Piergallini~\cite{Piergallini-standard-1988}.

\begin{thm}[Matveev, Piergallini]
\label{matveev}
The set of all triangulations of a closed three-dimensional manifold $M$ with exactly one material vertex is connected under 2-3 and 3-2 moves, excepting triangulations with a single tetrahedron.
\end{thm}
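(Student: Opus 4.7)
The plan is to deduce this from Pachner's result (Theorem~\ref{pachner}) by eliminating the vertex-changing 1-4 and 4-1 moves from any sequence connecting two one-vertex triangulations $\cT_0$ and $\cT_1$, each with at least two tetrahedra. Pachner's theorem yields a sequence $\cT_0 = \cS_0 \to \cdots \to \cS_n = \cT_1$ through triangulations that may carry additional material vertices. Since both endpoints have exactly one material vertex, the 1-4 moves and 4-1 moves in the sequence occur in equal number, and I would start by pairing each 1-4 move with the unique subsequent 4-1 move that destroys the same auxiliary vertex.

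The core task is then, for each matched pair, to bring the 1-4 and 4-1 moves together in the sequence so that they cancel as direct inverses. For a 2-3 or 3-2 move strictly between them that does not touch the auxiliary vertex, commutativity is immediate and the move can be pushed past the pair. The substantive case is a 2-3 or 3-2 move incident to the auxiliary vertex. For these I would establish local substitution identities rewriting the combination (1-4 move followed by an incident 2-3 or 3-2 move) as a short sequence consisting of 2-3 and 3-2 moves together with a single 1-4 move applied at a new location. Iterating such rewrites lets the 1-4 move migrate forward through the sequence until it meets its matched 4-1 move.

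The main obstacle is producing these local identities in every configuration around the auxiliary vertex. This is exactly the step where Matveev's general position argument is classically invoked, and where the paper proposes a combinatorial replacement inspired by subdivisions. In the same spirit, my plan is to first apply one or two barycentric subdivisions, realizable by 2-3 and 3-2 moves as the authors establish in \S\ref{sec:Barycentric subdivision}, so that the star of the auxiliary vertex becomes simplicial. In that regime the link of the vertex is a triangulated 2-sphere, and the finitely many local patterns in which an adjacent 2-3 or 3-2 move can meet the vertex admit a direct case analysis. Passing back and forth through the dual spine picture, where the 2-3 move becomes the Matveev--Piergallini move, gives the same conclusion for standard spines and clarifies which local patterns truly need to be checked.

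The exception for single-tetrahedron triangulations is unavoidable: such a triangulation admits neither a 2-3 move (no two distinct tetrahedra share a face) nor a 3-2 move (no three distinct tetrahedra share an edge), so any such triangulation is an isolated point of the move graph generated by 2-3 and 3-2. The theorem therefore cannot include them, and I would simply confirm that once one starts from a triangulation with at least two tetrahedra, the argument above never forces a return to a single-tetrahedron triangulation.
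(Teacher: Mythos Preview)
Your proposal has a genuine gap at the step you flag as ``the main obstacle,'' and the remedy you suggest for it does not work.

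First, the factual error: barycentric subdivision is \emph{not} realisable by 2-3 and 3-2 moves alone. Section~\ref{sec:Barycentric subdivision} shows it can be achieved with 1-4, 2-3 and 3-2 moves, and the 1-4 moves are unavoidable since barycentric subdivision introduces new vertices while 2-3 and 3-2 moves preserve the vertex set. So your plan to simplify the star of the auxiliary vertex by subdividing, and then run a finite case analysis, collapses: subdividing would create further auxiliary vertices, deepening rather than resolving the problem.

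Second, the ``local substitution identities'' you posit---rewriting a 1-4 followed by an incident 2-3 or 3-2 as some 2-3/3-2 moves plus a relocated 1-4---are precisely the content of the theorem, and you have not produced them. The difficulty is not the number of local patterns but the presence of self-identifications: in a one-vertex triangulation the tetrahedra around the auxiliary vertex may be glued to themselves and to each other in ways that block the obvious local moves. Even if you could migrate the 1-4 forward to sit adjacent to its paired 4-1, they need not be inverse to one another: they share the auxiliary vertex, but the surrounding four tetrahedra may have been completely rearranged by the intervening moves, so no cancellation occurs.

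The paper's route is quite different in spirit. Rather than commuting the 1-4 through the sequence, it leaves the Pachner sequence in place and modifies each intermediate triangulation by inserting an \emph{arch}, a one-tetrahedron gadget that merges the extra material vertex with an existing one. The key facts are that (i) a 1-4 followed by inserting an arch \emph{can} be realised by 2-3 and 3-2 moves (\S\ref{sec_1-4_arch_implementation}), and (ii) an arch can be transported from one location to another by 2-3 and 3-2 moves, via the arch-with-membrane and a sweep across the ball dual to the shared material vertex (\S\ref{move_arch}, \S\ref{move_membrane}). The self-gluing obstructions you would face in a local case analysis are handled globally in \S\ref{move_membrane} by a controlled partial subdivision of the ball's boundary---and there the required 1-4 moves are themselves absorbed into further arch insertions, so the vertex count stays fixed throughout.
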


This theorem was originally stated in the dual language of spines, which we discuss in \S\ref{sec:spines}.
There are precisely three closed three-manifolds that admit a triangulation with a single tetrahedron: the three-sphere and two other lens spaces, namely $L(4,1)$ and $L(5,2).$ See Figure~\ref{dr_one_vertex}. All of these manifolds are orientable. Moreover, up to combinatorial equivalence, there are precisely four triangulations with a single tetrahedron. The three-sphere has two such triangulations, but one of them has two vertices. So the exclusion only concerns three triangulations of three three-dimensional manifolds. No 2-3 or 3-2 moves can be applied to these triangulations.

\begin{figure}[htbp]
\centering
\subfloat[$S^3$]{
\includegraphics[width=0.22\textwidth]{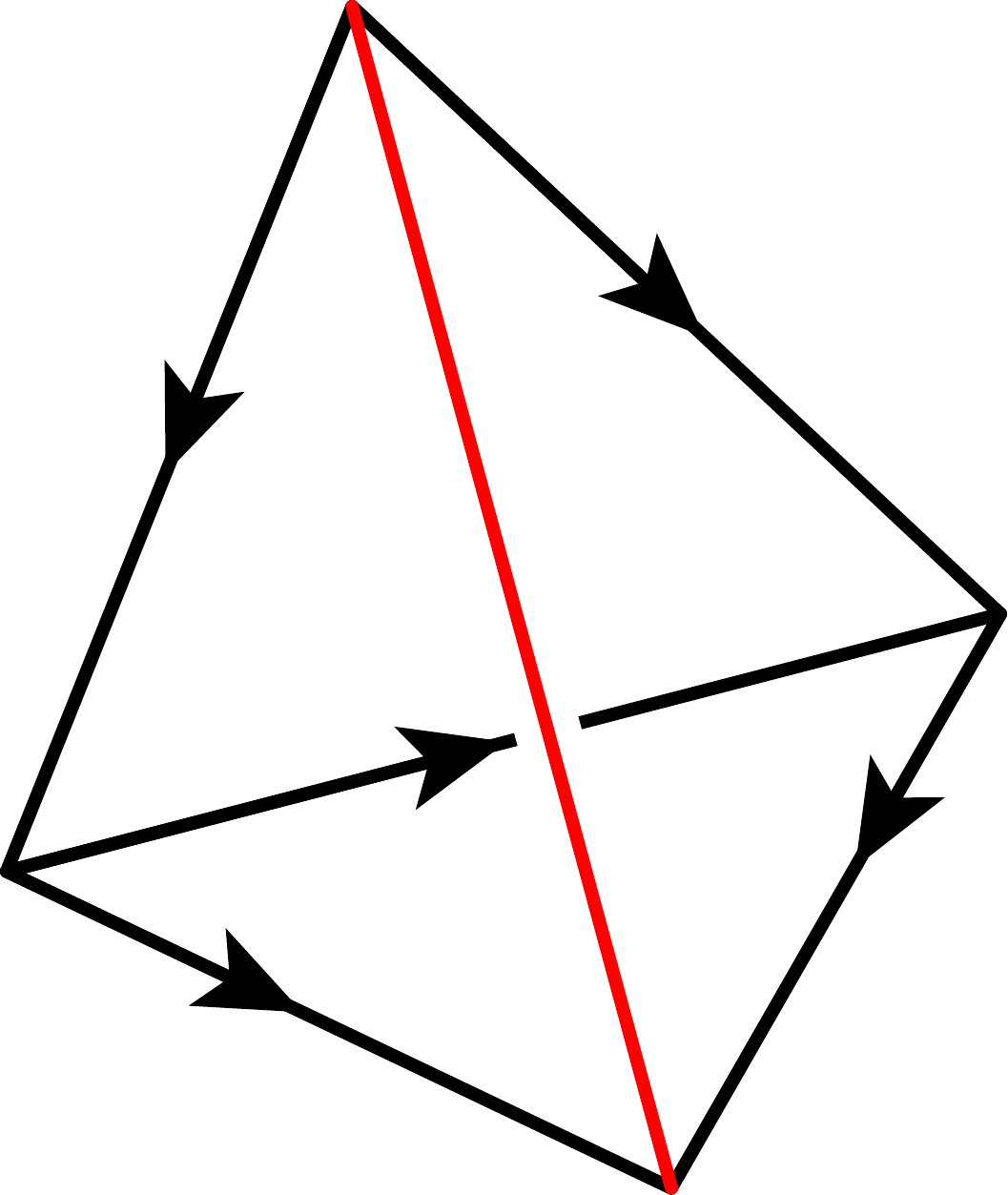}
}
\subfloat[$S^3$]{
\includegraphics[width=0.22\textwidth]{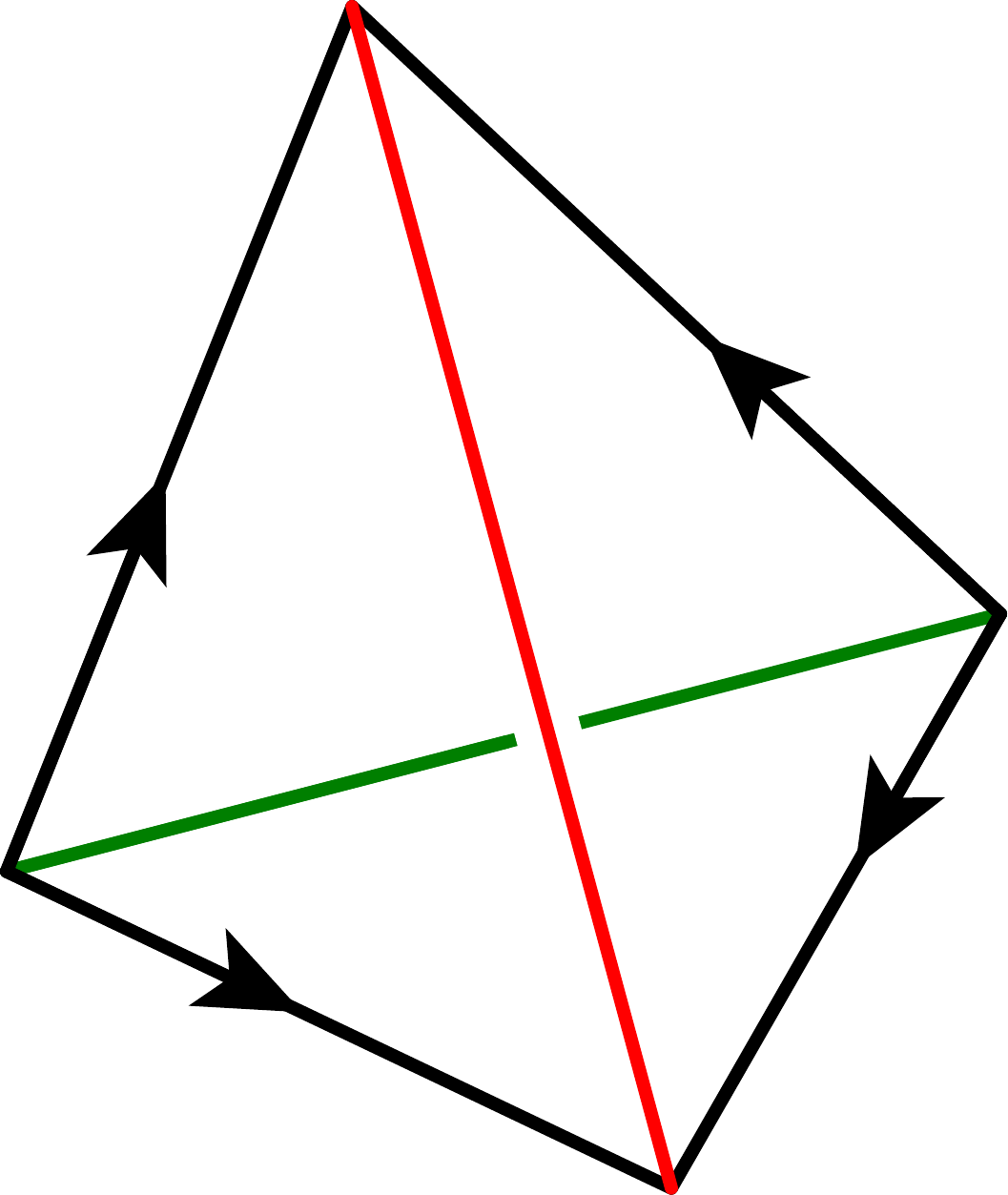}
}
\subfloat[$L(4,1)$]{
\includegraphics[width=0.22\textwidth]{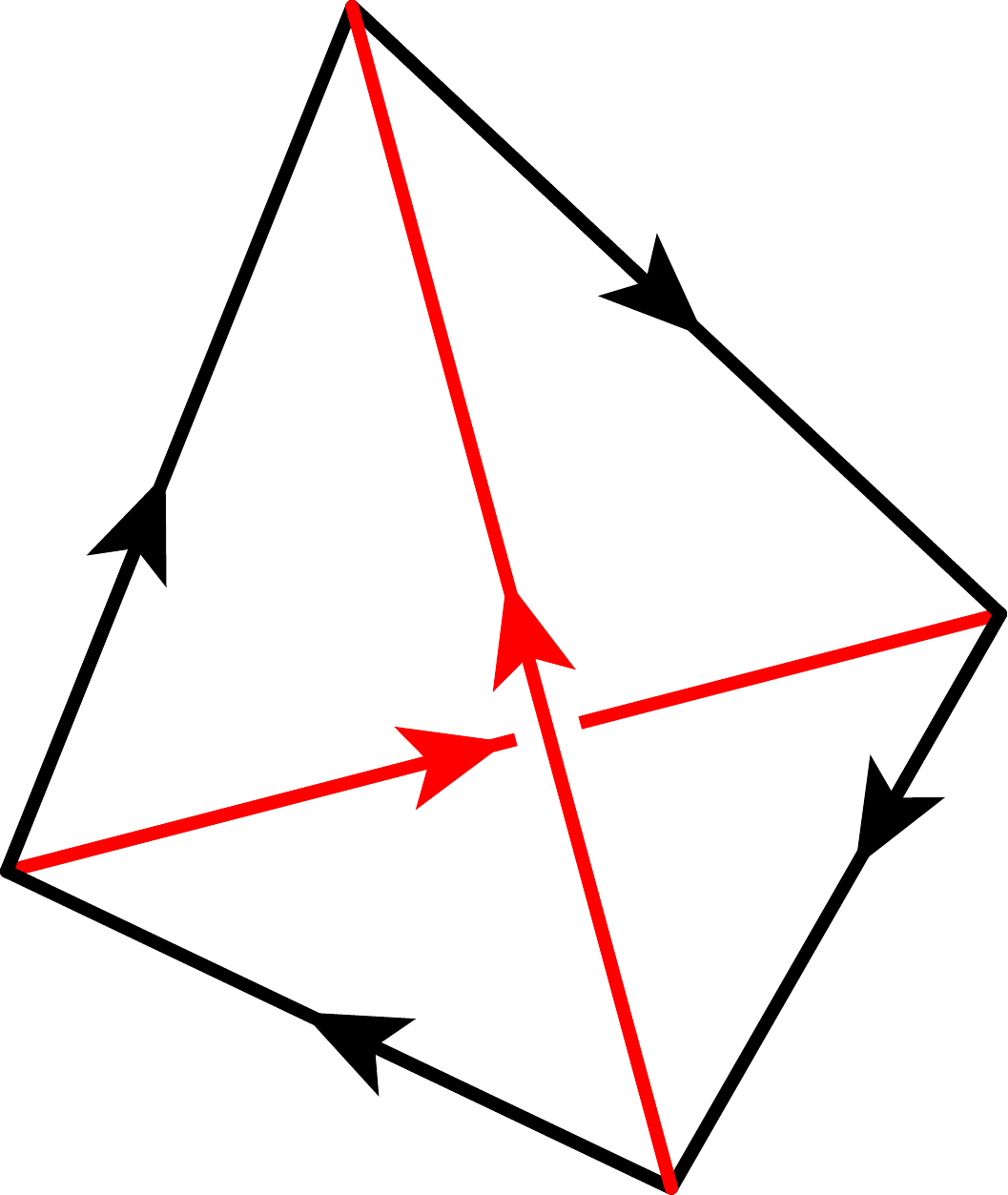}
}
\subfloat[$L(5,2)$]{
\includegraphics[width=0.22\textwidth]{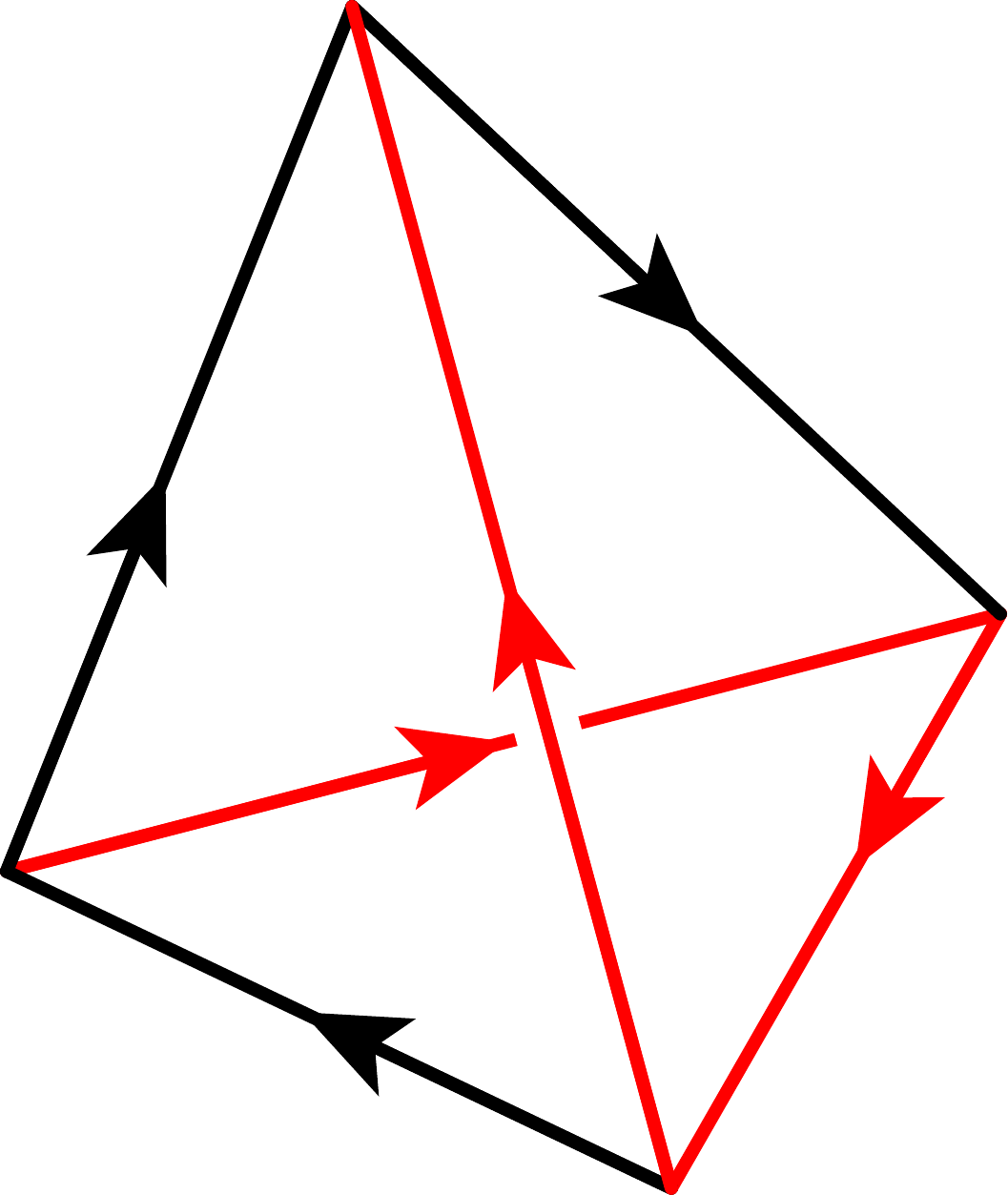}
}
\caption{Single tetrahedron triangulations of closed manifolds.} 
\label{dr_one_vertex}
\end{figure}

\medskip

Benedetti and Petronio~\cite{Benedetti-finite-1995, Benedetti-branched-1997} outlined a proof of the fact that the above result also holds for any prescribed number of material vertices in a triangulation of a 3--dimensional manifold, and Amendola~\cite{Amendola-calculus-2005} shows that this extends to 3--dimensional pseudo-manifolds. In each case, key results of Matveev~\cite{matveev_book} are used and the techniques involve the dual viewpoint of spines. The main purpose of this note is threefold. We wish to popularise Amendola's result; we give a proof that emphasises the dual viewpoints of triangulations and spines; and we give a proof replacing a key general position argument of \cite{matveev_book} with a more combinatorial argument inspired by the theory of subdivisions.

\begin{thm}[Amendola]
\label{thm:main}
Let $M$ be a three-dimensional manifold or pseudo-manifold.
The set of triangulations of $M$ with a fixed number (possibly zero) of material vertices is connected under 2-3 and 3-2 moves, excepting triangulations with a single tetrahedron.
\end{thm}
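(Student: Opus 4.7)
Suppose $\cT$ and $\cT'$ are two triangulations of $M$ having the same number of material vertices. The number of ideal vertices is a topological invariant of $M$, so $\cT$ and $\cT'$ agree there as well. Applying Theorem~\ref{pachner} (for manifolds) or Theorem~\ref{thm:Banagl-Friedman} (for pseudo-manifolds) produces a sequence of 1-4, 2-3, 3-2, and 4-1 moves $\cT = \cT_0, \cT_1, \ldots, \cT_N = \cT'$. Since only 1-4 and 4-1 moves alter the vertex count (and by exactly $\pm 1$), the agreement of the endpoint counts forces the number of 1-4 moves in the sequence to equal the number of 4-1 moves. The plan is to eliminate all of these from the sequence, leaving only 2-3 and 3-2 moves.

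The argument is most naturally phrased in the dual language of standard spines developed in Section~\ref{sec:spines}: tetrahedra correspond to true vertices of the spine, 2-3 and 3-2 moves correspond to Matveev's $T$-moves, and 1-4 and 4-1 moves correspond to bubble and anti-bubble moves that attach or detach a disk along a small $\theta$-graph, creating or destroying a boundary 2-sphere component (a material vertex). Higher-genus boundary components correspond to ideal vertices and are untouched by all four moves. The problem thus reduces to showing that two standard spines related by $T$-moves together with bubble and anti-bubble moves, and having matching boundary combinatorics, are already $T$-equivalent.

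The key technical step is a \emph{commutation lemma}: a bubble move followed by a $T$-move can be replaced by a (possibly longer) sequence of $T$-moves followed by a bubble move, and symmetrically for anti-bubbles preceded by $T$-moves. Iterating this lemma rearranges the sequence so that all bubbles occur at the start and all anti-bubbles at the end, yielding
\[ \cT \xrightarrow{\text{1-4 moves}} \cT^{+} \xrightarrow{\text{2-3, 3-2 moves}} (\cT')^{+} \xrightarrow{\text{4-1 moves}} \cT'. \]
Fixing a pairing of bubbles with anti-bubbles, I would restrict attention to a neighbourhood of each paired extra material vertex in $\cT^{+}$ and $(\cT')^{+}$; the local picture is that of a single material vertex, where Theorem~\ref{matveev} (Matveev--Piergallini) applies to cancel the pair using $T$-moves only. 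Because every move is local and never involves the ideal vertices, the pseudo-manifold case follows without modification.

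The principal obstacle is establishing the commutation lemma. In Matveev's treatment~\cite{matveev_book} one invokes a general-position argument for a PL map into the spine. My plan, delivering the novel ingredient advertised in the introduction, is to replace this with a subdivision argument in the spirit of Section~\ref{sec:refinements}: given a bubble and a subsequent $T$-move, I would take a sufficiently fine common refinement of the two affected regions of the spine, then show that the combinatorial data of this refinement can be realised by an explicit finite sequence of $T$-moves whose net effect is to slide the bubble past the $T$-move. This replaces the analytic general-position step by a finite combinatorial case analysis driven by the common refinement, and completes the proof.
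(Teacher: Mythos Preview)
Your plan has a genuine gap at the cancellation step. After your commutation lemma rearranges the sequence into
\[
\cT \xrightarrow{\text{1-4's}} \cT^{+} \xrightarrow{\text{2-3, 3-2}} (\cT')^{+} \xrightarrow{\text{4-1's}} \cT',
\]
you propose to cancel a bubble/anti-bubble pair by ``restricting to a neighbourhood of the extra material vertex'' and invoking Theorem~\ref{matveev}. But Theorem~\ref{matveev} is a \emph{global} statement about one-vertex triangulations of a closed manifold; it has no local form applicable to a neighbourhood of a vertex inside a larger triangulation. Worse, the vertex $v$ created by a 1-4 move need not be the vertex $w$ removed by the paired 4-1 move: the intermediate 2-3/3-2 moves preserve the vertex set, so if $v\neq w$ then $w$ was already a vertex of $\cT$ and $v$ survives into $\cT'$, and there is no single neighbourhood to restrict to. The cancellation is precisely where the real content lies, and your sketch does not touch it. (Your commutation lemma is also underspecified: when the $T$-move interacts with the bubble, ``taking a common refinement'' does not obviously yield a sequence of $T$-moves that slides the bubble past; making this precise is essentially as hard as the theorem itself.)

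The paper takes a different route and explicitly does \emph{not} assume Theorem~\ref{matveev}. It first uses Lemma~\ref{material_vertices_floor} to keep the material-vertex count at or above $k$, then reduces to an innermost segment where the count jumps to $k+1$ and back. Along that segment it \emph{absorbs} the extra vertex by inserting Matveev's arch into each intermediate triangulation (tracked by ``arch marks''), so every waypoint has exactly $k$ material vertices. The substantive work is then to connect successive waypoints by 2-3 and 3-2 moves: this reduces to sweeping a membrane disc across a three-ball from one arch to another (Section~\ref{move_membrane}), carried out combinatorially after a barycentric-subdivision-style preprocessing that removes self-gluings of the ball. Theorem~\ref{matveev} then falls out as the special case $k=1$.
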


As for the additional exceptions to the theorem: The only closed orientable three-dimensional pseudo-manifolds having a triangulation with a single tetrahedron are the manifolds given above. The only non-orientable pseudo-manifold having a triangulation with a single tetrahedron is the so-called Gieseking manifold shown in Figure~\ref{dr_Gieseking}. Here, the vertex link is a Klein bottle. 

\begin{figure}[htbp]
\centering
\includegraphics[width=0.3\textwidth]{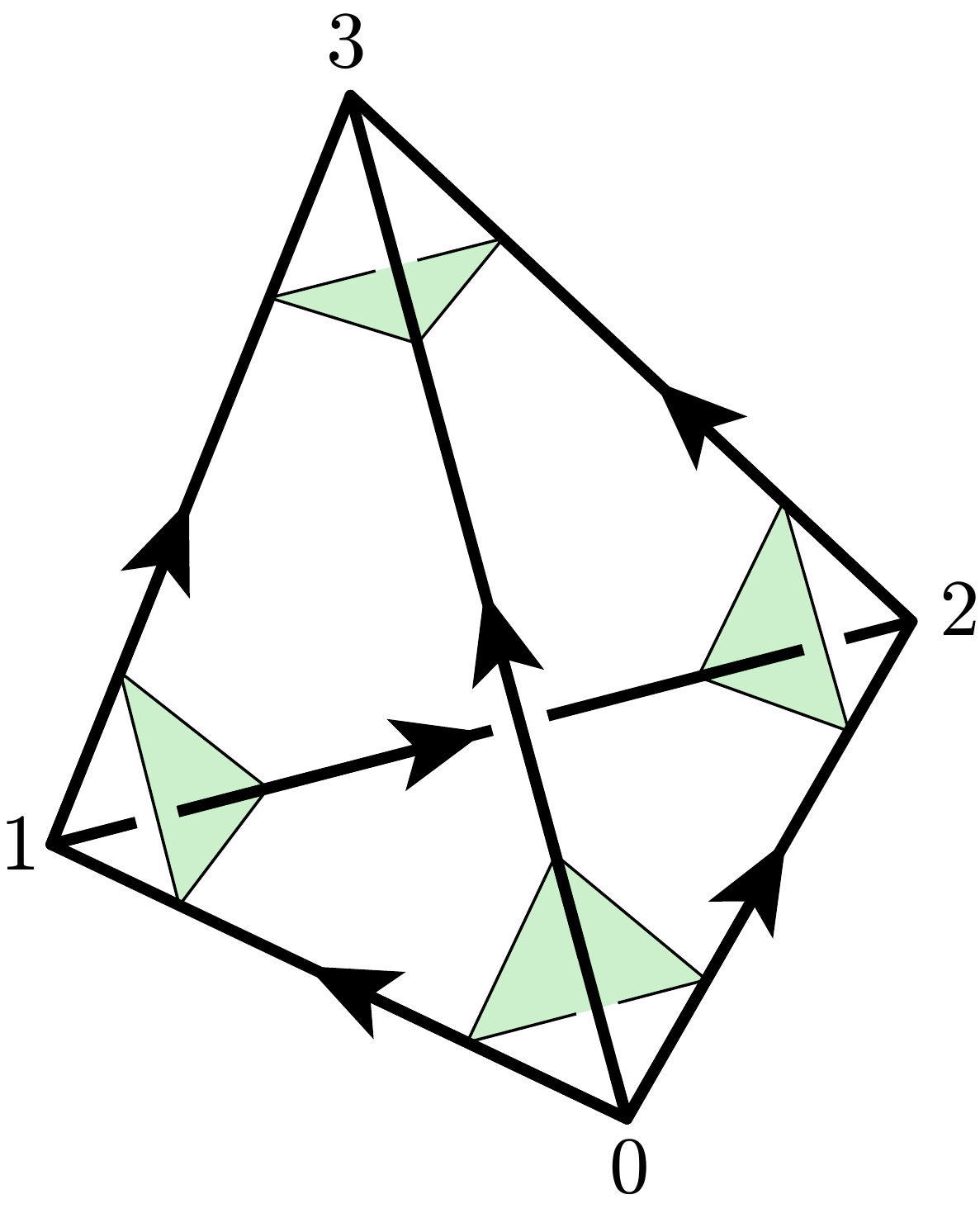}
\caption{The Gieseking manifold. Vertices are labelled by the number of arrows pointing towards the vertex. The faces are identified by $012 \leftrightarrow 023$ and $013 \leftrightarrow 123$.} 
\label{dr_Gieseking}
\end{figure}

\medskip

Theorems~\ref{pachner} and \ref{thm:Banagl-Friedman} are important connectivity results. They are useful for defining invariants in terms of triangulations, and for building censuses of triangulations. Theorem~\ref{matveev} is a great  improvement over Theorem~\ref{pachner} in the view of the theory of \emph{0--efficient triangulations} of Jaco and Rubinstein~\cite{Jaco-0-efficient-2003}, which require only a single vertex for the most important classes of closed three-dimensional manifolds. In the same vein, Theorem~\ref{thm:main} allows us to pass between any two triangulations with no material vertices without introducing material vertices along the way.
We also remark that Amendola~\cite{Amendola-calculus-2005} proves a more general theorem that keeps track of finer information on the manifold than Theorem~\ref{thm:main}.

\medskip

In the literature, there are a number of papers that attribute Theorem~\ref{thm:main} to Matveev or Piergallini. Personal communication with both of these authors revealed that both think they only proved Theorem~\ref{matveev}, but that their techniques may extend to that effect. 
This paper affirms this belief.

\medskip

We give a complete proof of Theorem~\ref{thm:main}. We will assume Theorems~\ref{pachner} and \ref{thm:Banagl-Friedman} (proofs are sketched in \S\ref{sec:refinements}), but we do not assume Theorem~\ref{matveev}. Our main task is thus to convert any appearance of 1-4 or 4-1 moves into sequences of 2-3 and 3-2 moves. 
Our contribution in reproving Theorem~\ref{matveev} in the proof of Theorem~\ref{thm:main} is to view the passage between triangulations from two perspectives: the triangulations on the one hand and the dual notion of \emph{special spines} on the other. 

\medskip

The structure of this paper is as follows. 
More examples, as well as the definitions of triangulations and spines are given in Section~\ref{sec:Triangulations}. Here, we also describe classical constructions, such as stellar moves and barycentric subdivision, and outline proofs of classical results going back to Alexander and Newman.

We set the scene in Section~\ref{sec:warm-up}, proving a lemma that simplifies the set-up for the proof of the main result. Whilst the main ideas for the main proof will be present, the interplay between triangulations and spines is not yet required. The strategy for the proof of the main result is presented in  Section~\ref{proof_strategy}. The remainder of the paper aims at filling in the missing details. This starts in Section~\ref{sec:arch and friends} by discussing Matveev's arch, and describing arch constructions as moves on triangulations. The first serious steps in providing details are taken in Section~\ref{sec:details}, which can be viewed as a pre-processing step for the two triangulations, reducing the problem of connecting them to the problem of \emph{sweeping a membrane across a ball}. This last task is performed in detail in Section~\ref{move_membrane}, thus completing this paper.

In writing this paper, we have considered other strategies to prove this particular connectivity result for triangulations. Inevitably, the problem was always reduced to a situation that could not easily be swept under the rug. In the end, we settled for sweeping across a ball, as here the generic situation is easy to deal with and hence the overall strategy easy to follow.  


\section{Triangulations and spines}
\label{sec:Triangulations}

In the hope of providing a reference that is useful for novices in the field, we provide this section to make this paper as self-contained as possible. We give further examples~(\S\ref{subsec:Examples}); 
define manifolds and pseudo-manifolds~(\S\ref{subsec:Manifolds and pseudo-manifolds}) and the triangulations we consider~(\S\ref{subsec:Triangulations}). We then discuss classical moves on triangulations, namely stellar moves~(\S\ref{sec:stellar}) and barycentric subdivision~(\S\ref{sec:Barycentric subdivision}) and explain how these can be performed using bistellar flips. We use these in \S\ref{sec:refinements} with sketches of proofs of Theorems~\ref{pachner} and \ref{thm:Banagl-Friedman}. The dual notion of spines~(\S\ref{sec:spines}) completes the summary of background material required for this paper.


\subsection{Examples}
\label{subsec:Examples}

Before we formalise the definition of a triangulation given in the introduction, we give some examples. In the introduction, we have already seen triangulations just involving one tetrahedron. Figure~\ref{dr_one_vertex} shows that we can build the 3--dimensional sphere with just one tetrahedron. A simplicial triangulation of the 3--dimensional sphere requires at least five tetrahedra.
The advantage of the more general triangulations thus lies in their efficiency.

Ideal triangulations similarly turn out to be an efficient way of encoding manifolds. An example is given by Thurston's ideal triangulation of the complement of the figure eight knot shown in Figure~\ref{dr_figure8}. This is a space obtained by deleting the figure eight knot from the three--dimensional sphere. One can build a pseudo-manifold with just two tetrahedra with the property that the complement of the vertex in this pseudo-manifold is homeomorphic to the complement of the figure eight knot. If one considers the space obtained by deleting a small regular neighbourhood of the knot from the 3--sphere, one obtains a compact 3--manifold with boundary a torus. To divide this into (material) tetrahedra requires ten --- where now we allow some faces of tetrahedra not to be identified so as to form the boundary. 

\begin{figure}[htbp]
\centering
\subfloat[]{
\includegraphics[width=0.5\textwidth]{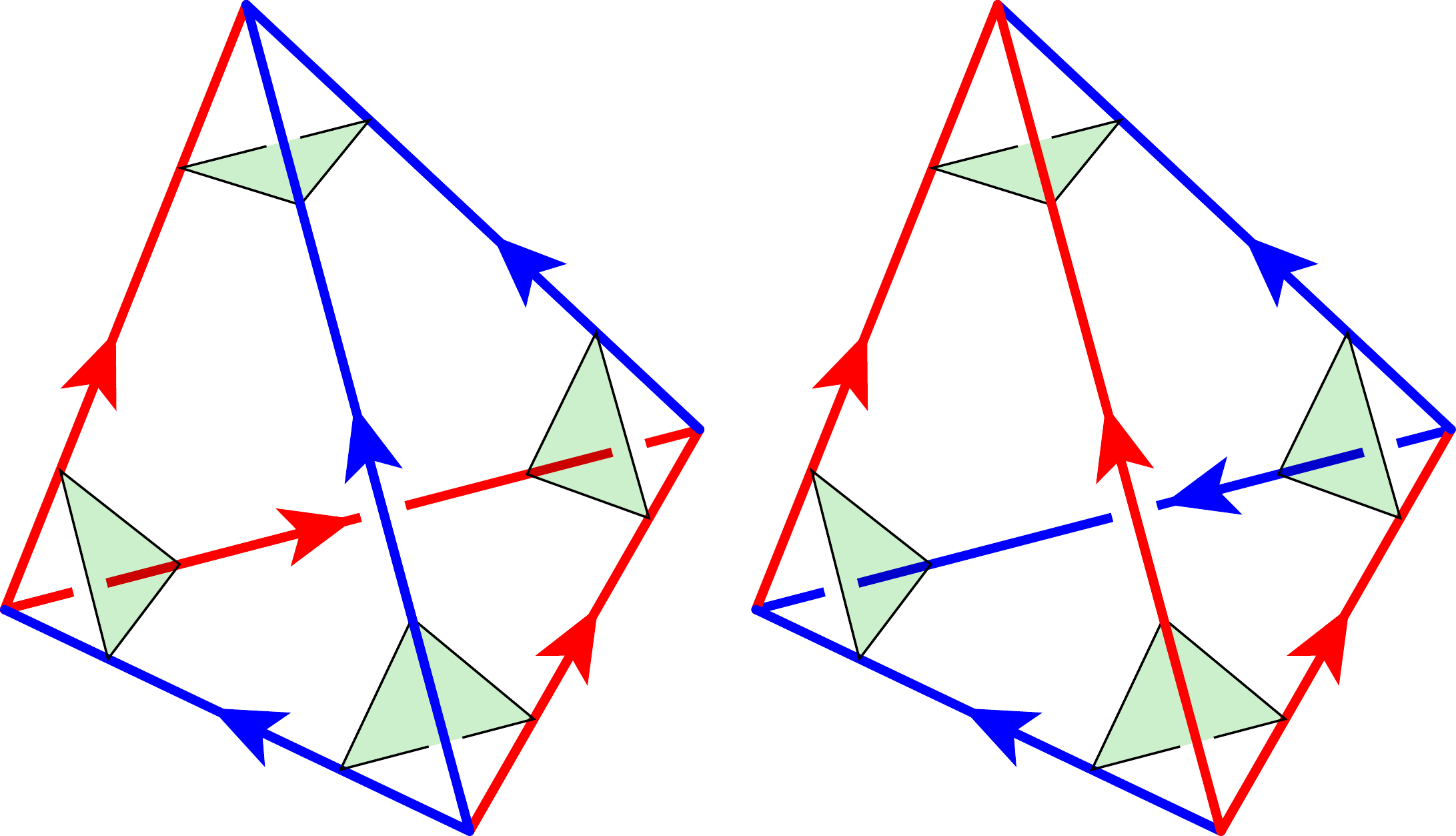}
}
\quad
\subfloat[]{
\includegraphics[width=0.3\textwidth]{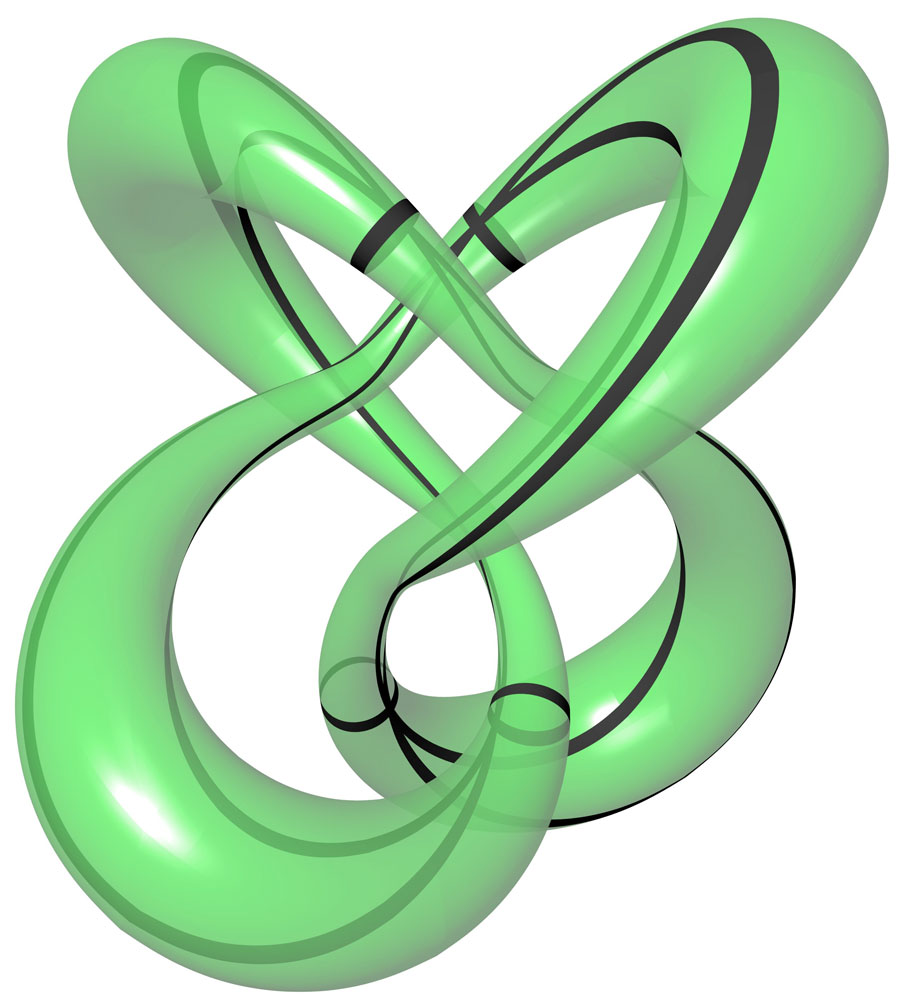}
}
\caption{The ideal triangulation of the complement of the figure eight knot in the three-sphere. The shown triangles form a tube enclosing the knot.} 
\label{dr_figure8}
\end{figure}


\subsection{Manifolds and pseudo-manifolds}
\label{subsec:Manifolds and pseudo-manifolds}

{
A euclidean 3--simplex $\sigma$ is the convex hull of four points $p_0, p_1, p_2, p_3$ in euclidean 3--space that are in general position. What matters in our discussion is not the euclidean, but just the affine structure. A unified discussion of simplices and subsimplices is aided by the following definition. An $n$--simplex $\sigma$ is the convex hull of $n+1$ affinely independent points $p_0, \ldots, p_n$ in some affine space of dimension at least $n.$
The convex hull of a subset of $\{p_0, \ldots, p_n\}$ is a \emph{face} or \emph{subsimplex} of $\sigma.$ 
We refer to 3--simplices as tetrahedra, 2--simplices as triangles, 1--simplices as edges and 0--simplices as vertices.
}

Let $\widetilde{\Delta}$ be a finite union of pairwise disjoint euclidean $3$--simplices. 
Every $k$--simplex $\tau$ in $\widetilde{\Delta}$ is contained in a unique $3$--simplex $\sigma_\tau.$ A $2$--simplex in $\widetilde{\Delta}$ is called a \emph{facet}.
Let $\Phi$ be a family of affine isomorphisms pairing the facets in $\widetilde{\Delta},$ with the properties that $\varphi \in \Phi$ if and only if $\varphi^{-1}\in \Phi,$ and every facet is the domain of a unique element of $\Phi.$ The elements of $\Phi$ are termed \emph{face pairings}.

The quotient space $P = \widetilde{\Delta}/\Phi$ with the quotient topology is then a \emph{closed $3$--dimensional pseudo-manifold}, and the quotient map is denoted $p\co \widetilde{\Delta} \to P.$ 
We will always assume that $P$ is connected. In the case where $P$ is not connected, the results of this paper apply to its connected components.

If one considers any point $p\in P$ that is not a vertex or a mid-point of an edge, then a small neighbourhood of $p$ looks like the neighbourhood of a point in 3--dimensional euclidean space. This is easy to see for points in the interior of a tetrahedron, not too difficult for points in faces, and a little more involved for points that lie in edges. So what makes $P$ a \emph{pseudo-}manifold is the fact that we have no control over small neighbourhoods of its vertices, and over small neighbourhoods of mid-points of edges. Place a small triangle in the corner of each tetrahedron so that these triangles glue up to form a closed surface. This surface is called the \emph{link} of the vertex, and the neighbourhood of the vertex is a cone over this surface, with cone point the vertex. This cone is a ball precisely when the surface is a sphere. We thus call a vertex \emph{material} when its link is a sphere and \emph{ideal} otherwise. To see what happens at midpoints of edges, take the intersection of the tetrahedron with a small sphere based at the midpoint. This gives a lune in the tetrahedron. Now trace the chain of tetrahedra around this edge. With each tetrahedron, this adds another lune until we get to the last tetrahedron. Here again, a lune is added but this is also identified to the initial lune. There are two different ways of making the identification: one gives a sphere, and the other gives a projective plane. In the second case, the edge is identified with itself but in opposite orientation. To rule out this case and to align the structure of the triangulation with the structure of our pseudo-manifolds, we require that 
\begin{enumerate}
\item the restriction of the map $p\co \widetilde{\Delta} \to P$ to the interior of each {$k$-simplex} in $\widetilde{\Delta}$ is a homeomorphism for all $k \in \{ 0, 1, 2, 3\}.$
\end{enumerate}
This is only a restriction on the triangulations we allow, but not on the pseudo-manifolds. The restriction has consequences only for $k=1$, indeed it only avoids the situation where an edge may fold back onto itself, but it allows extra identifications along the boundary of a simplex. With this requirement, the pseudo-manifold $P$ is a \emph{manifold} if and only if every vertex is material.

We still refer to the image under $p$ of a triangle in $\widetilde{\Delta}$ as a triangle in $P$, but the reader should bear in mind that this may not be an embedded triangle and have identifications along its boundary. The same applies to tetrahedra or edges in $P.$

The quotient space $P$ is studied via the map $p\co \widetilde{\Delta} \to P.$ The \emph{degree} of the triangle, edge or vertex $\tau$ in $P$ is the number of preimages it has in $\widetilde{\Delta};$ equivalently, the degree of the restriction $p \co p^{-1}(\tau)\to \tau.$

There is a more restrictive class, where such confusion cannot arise. We say that $(\widetilde{\Delta}, \Phi)$ is \emph{simplicial} if
\begin{enumerate}
\item the restriction of the map $p\co \widetilde{\Delta} \to P$ to each $k$--simplex $\widetilde{\Delta}$ is a homeomorphism for all $k \in \{0,1,2,3\},$ and
\item for any two simplices $\tau_0$ and $\tau_1$ in $\widetilde{\Delta},$ $p(\tau_0) \cap p(\tau_1)$ is either empty or a single simplex. 
\end{enumerate}


\subsection{Triangulations}
\label{subsec:Triangulations}

In the previous section, we have defined a 3--dimensional manifold or pseudo-manifold using tetrahedra. Now suppose you are given such a manifold or pseudo-manifold $M.$
We say that the triple $\cT = ( \widetilde{\Delta}, \Phi, h)$ is a \emph{triangulation} of $M$ if 
\begin{itemize}
\item[] $h\co P \to M$ is a piecewise linear homeomorphism.
\end{itemize}
The images of vertices, edges, triangles and tetrahedra under the
composition of the map $p\co  \widetilde{\Delta} \to P$ with $h$ give us the 
vertices, edges, triangles and tetrahedra of the triangulation in $M.$ Again, these generally have self-identifications along their boundaries.
The triangulation $\cT = ( \widetilde{\Delta}, \Phi, h)$ is a \emph{simplicial triangulation} of $M$ if $( \widetilde{\Delta}, \Phi)$ is simplicial.

We say that two triangulations $\cT_0 = ( \widetilde{\Delta}_0, \Phi_0, h_0)$ and $\cT_1 = ( \widetilde{\Delta}_1, \Phi_1, h_1)$ are \emph{equivalent} or \emph{the same} if there is a simplicial bijection $s \co \widetilde{\Delta}_0 \to \widetilde{\Delta}_1$ such that $h_0 \circ p_0 \circ s$ is isotopic to $h_1 \circ p_1.$ This in particular allows the same union of tetrahedra and set of face pairings to give inequivalent triangulations that are combinatorially equivalent.

We will be concerned with changing a given triangulation. On a formal level, this amounts to changing a triangulation $\cT_0 = ( \widetilde{\Delta}_0, \Phi_0, h_0)$ of $M$ to another triangulation $\cT_1 = ( \widetilde{\Delta}_1, \Phi_1, h_1)$. Such a change can be very local, such as replacing one simplex and the associated face pairings in $(\widetilde{\Delta}_0, \Phi_0)$ with a small number of simplices and corresponding face-pairings. Or it could be a change that replaces all simplices and face-pairings. 
As an example, we mention that it is always possible to transform a triangulation into a simplicial triangulation by applying at most two \emph{barycentric subdivisions}. This replaces each 3--simplex by
 $24 \times 24 = 576$ 3--simplices. We give the definition in \S\ref{sec:Barycentric subdivision} and show how to achieve this using 1-4, 2-3 and 3-2 moves. 

In the remainder of this paper, we try to keep notation to a minimum in order to not distract from the main ideas. A precise notation is, of course, relevant for an actual implementation in software.
There is a natural way to track vertices, edges, and so forth through a sequence of moves on a triangulation. Our moves are typically performed topologically inside the pseudo-manifold, and the simplices can then be disassembled to form a triangulation together with a preferred quotient map $\widetilde{\Delta} \to M.$


\subsection{Stellar moves}
\label{sec:stellar}

There are three types of stellar moves. The first type is simply the 1-4 move. See Figure~\ref{1-4_and_2-3_moves}. It places a vertex in the interior of a tetrahedron and divides the tetrahedron into four tetrahedra. Each of these can be viewed as a cone from a boundary face of the original tetrahedron to the central vertex. 

\begin{figure}[htbp]
\centering
\subfloat[Stellar move on a facet.]{
\labellist
\small\hair 2pt
\pinlabel {1-4} at 75 200
\pinlabel {2-3} at 335 170
\endlabellist
\includegraphics[width=0.3\textwidth]{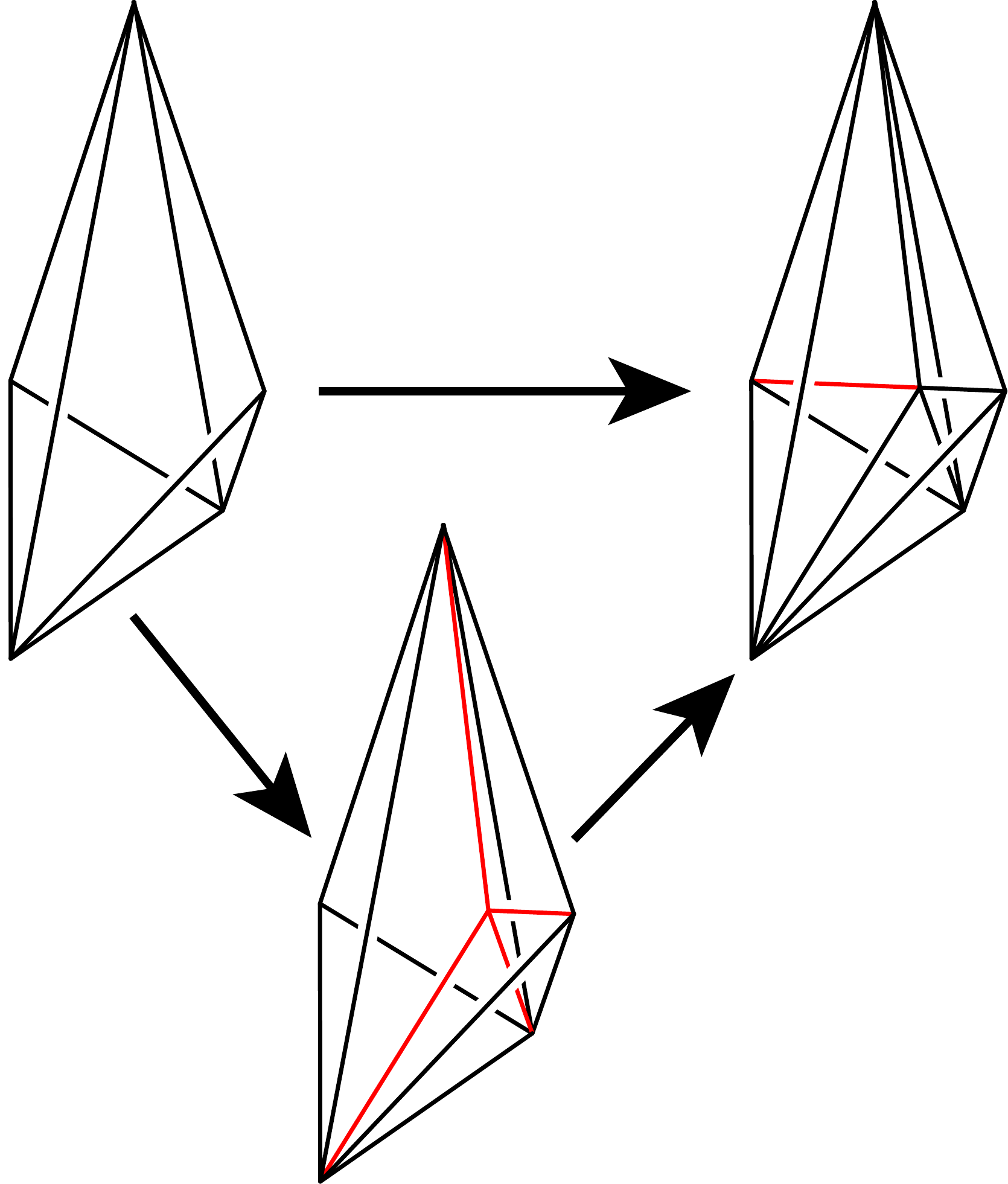}
\label{stellar_move_face}
}
\quad 
\subfloat[Stellar move on an edge.]{
\includegraphics[width=0.5\textwidth]{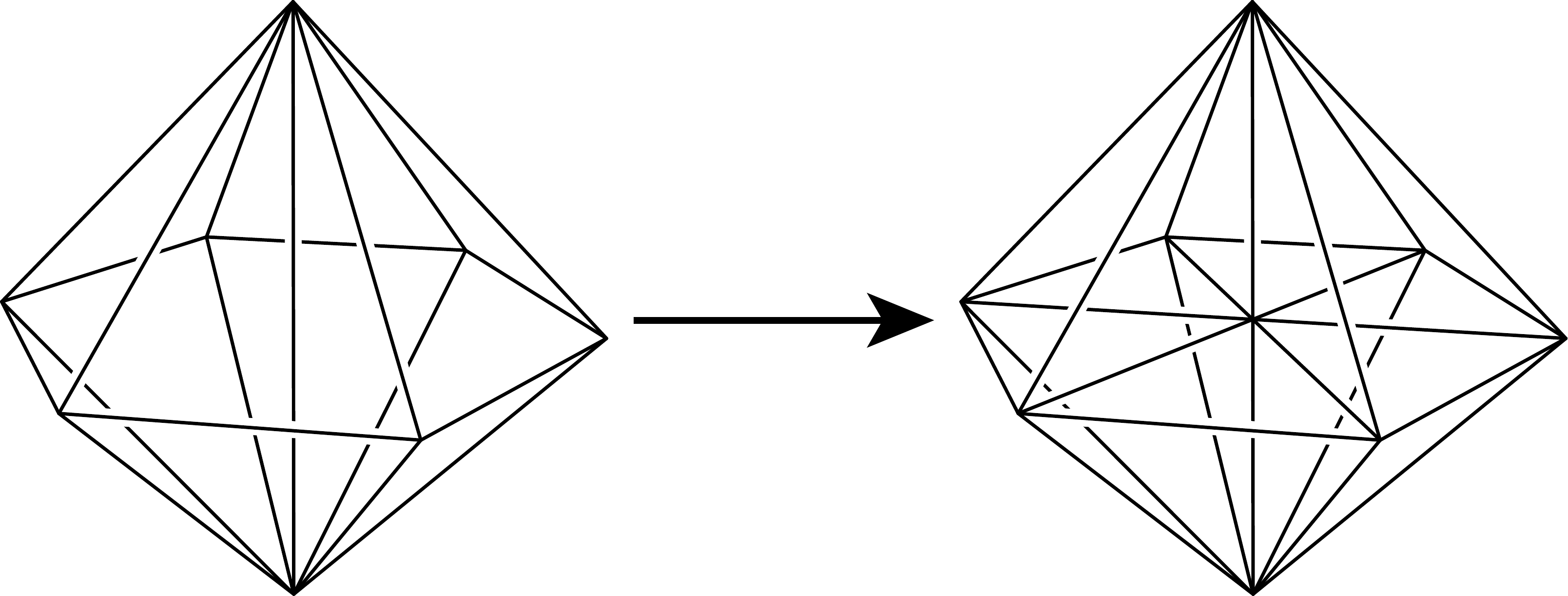}
\label{stellar_move_edge}
}
\caption{Stellar moves.} 
\label{stellar_moves}
\end{figure}

The second type of stellar move is conceptually similar, this time performed on a facet $\tau$ that is contained in two distinct tetrahedra $\sigma_0$ and $\sigma_1.$
Here, one first performs a 1-3 move on $\tau.$ One then subdivides each of the two tetrahedra into three by coning the triangulation of the subdivided face to the remaining vertex. See Figure~\ref{stellar_move_face}. This operation can be described as an operation on $\widetilde{\Delta},$ so it is irrelevant that some of the edges or vertices of the tetrahedra may be identified. To achieve this move by our four bistellar moves, one can first apply a 1-4 move on $\sigma_0.$ The face $\tau$ is not affected by this, and one can now perform a 2-3 move that replaces the two tetrahedra meeting in $\tau$ with three tetrahedra meeting in an edge. The result is the same as the stellar move on the face $\tau.$

The third move is performed on an edge $e$ that is only contained at most once in any tetrahedron and of degree at least two. We place a new vertex on $e$. In each tetrahedron, one now creates a new triangle by coning the edge opposite $e$ to the new vertex, thus cutting the tetrahedron into two. See Figure~\ref{stellar_move_edge}. Again, this operation can be done consistently in $\widetilde{\Delta},$ and so is well-defined. We need to show how to realise this move by bistellar moves. Choose a cyclic order of the tetrahedra abutting $e,$ and starting with one tetrahedron label them $\sigma_0, \ldots, \sigma_k.$ Suppose $\sigma_i$ and $\sigma_{i+1}$ meet in $e$ along $\tau_i,$ and $\sigma_k$ and $\sigma_{0}$ meet in $e$ along $\tau_k.$
First apply a 1-4 move on $\sigma_0.$ This increases the degree of $e$ by one. Then apply a 2-3 move along the face  $\tau_0$ (interpreted as a face in the new triangulation). This decreases the degree of $e$ by one. Now we iteratively apply 2-3 moves along $\tau_1,$ $\tau_2,$ \ldots, until only $\tau_{k-1}$ and $\tau_k$ are left. At this stage, $e$ (interpreted as an edge in the resulting triangulation) has degree three. Hence apply a 3-2 move (noting that all three tetrahedra meeting in $e$ are distinct). 
See Figure~\ref{barycentric_subdiv_3d_edge_top_view}.


\subsection{Barycentric subdivision}
\label{sec:Barycentric subdivision}

A \emph{barycentric subdivision} can be defined inductively. A barycentric subdivision of a 1--simplex introduces a vertex at its midpoint and divides it into two 1--simplices. A barycentric subdivision of a 2--simplex introduces a vertex at its centre and cones this to the barycentric subdivision of its boundary. This divides it into six 2--simplices. For a 3--simplex, we again introduce a vertex at its centre and then cone to the barycentric subdivision of its boundary, thus dividing it into 24 3--simplices. See Figure~\ref{fig_barycentric_subdivision}.

\begin{figure}[htbp]
\centering
\labellist
\small\hair 2pt
\pinlabel {0} at -15 80
\pinlabel {0} at 212 470
\pinlabel {0} at 436 80
\pinlabel {1} at 92 279
\pinlabel {1} at 332 279
\pinlabel {1} at 212 63
\pinlabel {2} at 225 233
\endlabellist
\subfloat[The result of barycentric subdivision on a 2-simplex.]{
\includegraphics[width=0.4\textwidth]{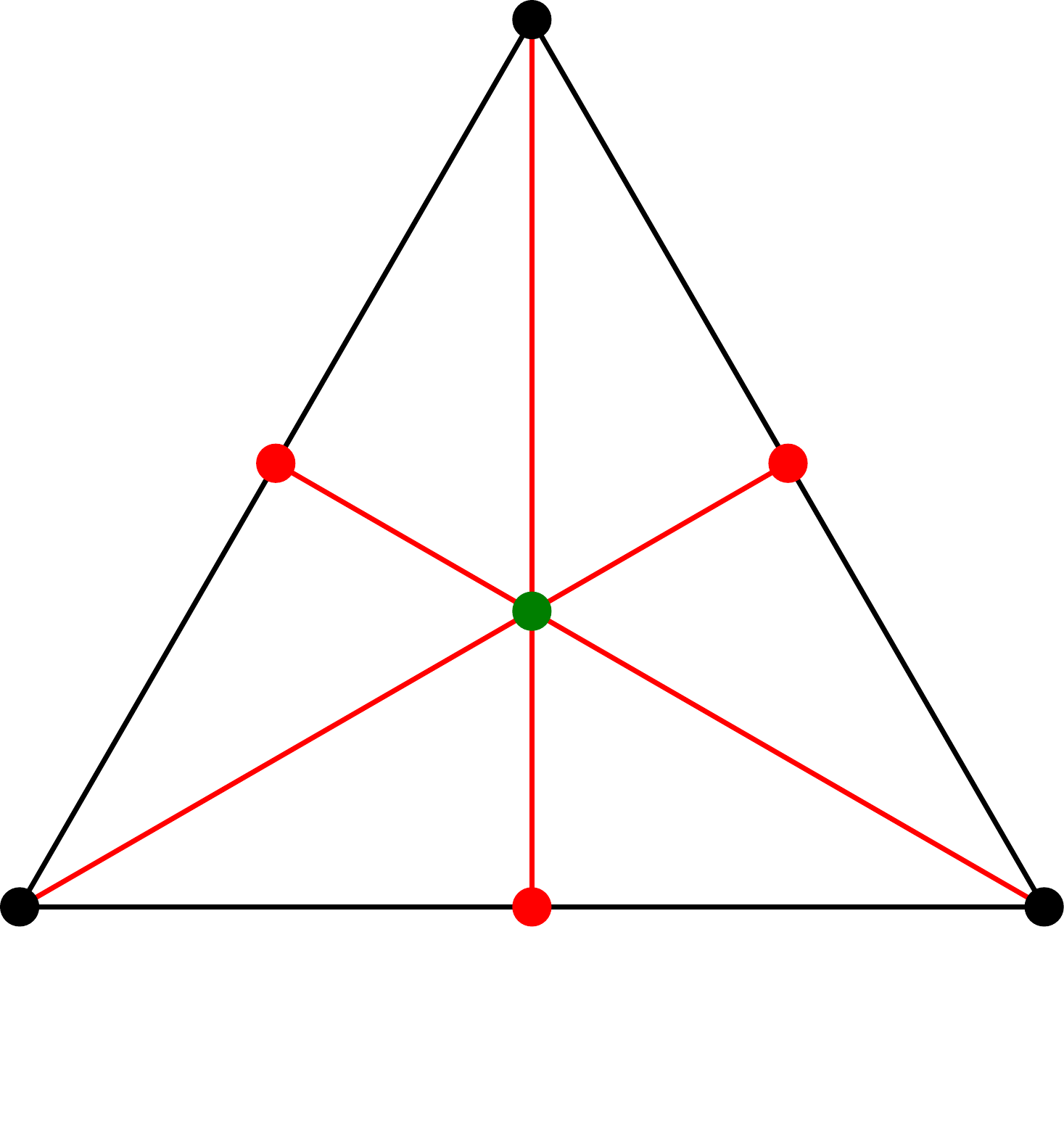}
}
\qquad
\subfloat[The result of barycentric subdivision on a 3-simplex.]{
\includegraphics[width=0.45\textwidth]{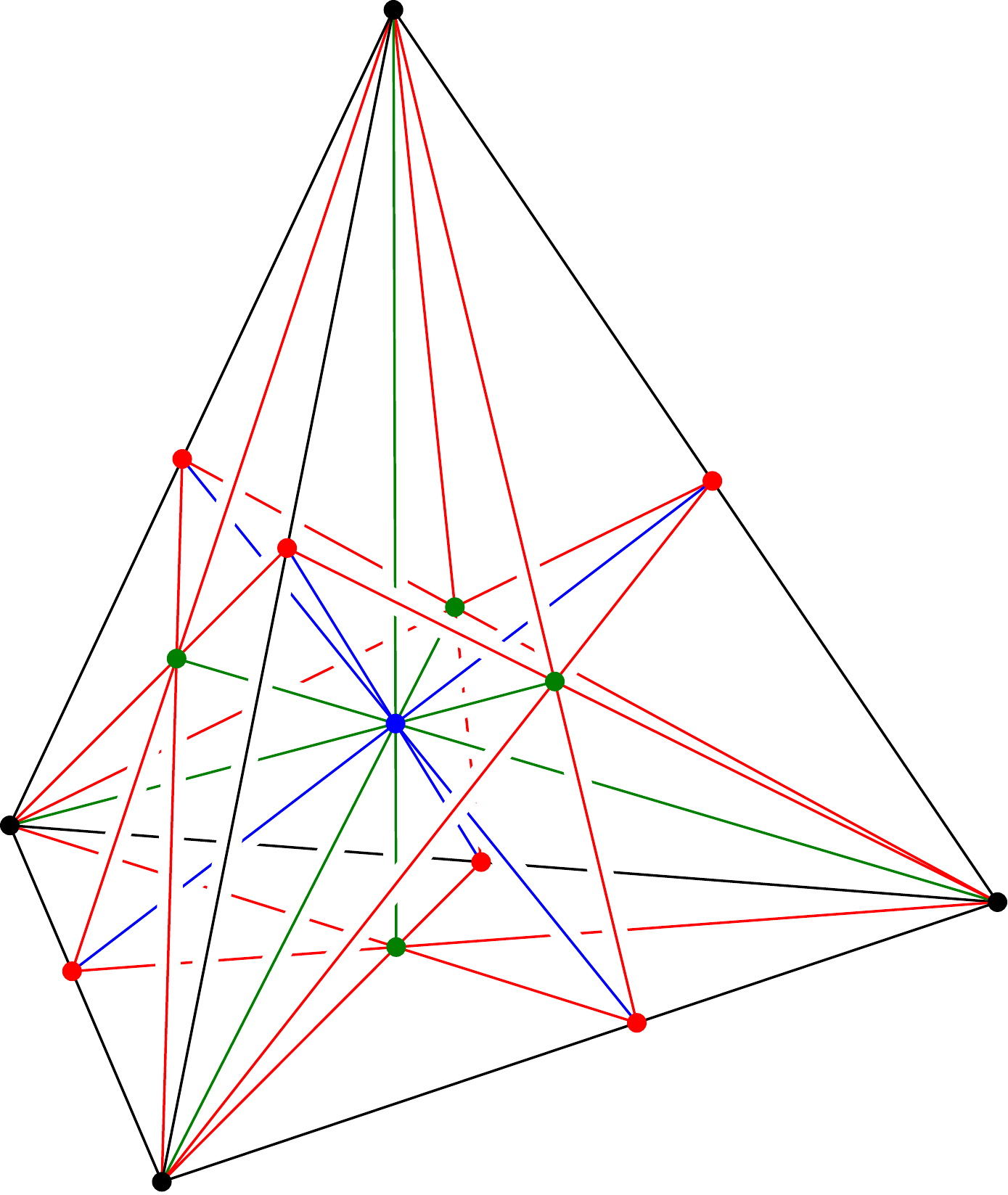}
}
\caption{Barycentric subdivision.} 
\label{fig_barycentric_subdivision}
\end{figure}

It follows from the definition of a triangulated pseudo-manifold that one can apply this procedure to each 3--simplex in the triangulation, and hence break the triangulation up into a finer triangulation having 24-times as many tetrahedra. It turns out that this can be achieved using stellar moves. Hence, by the previous section, barycentric subdivision can be realised using 1-4, 2-3, 3-2 moves on the original triangulation.

To achieve barycentric subdivision, first apply a 1-4 move to each 3--simplex in the original triangulation $\tri$ to obtain a new triangulation $\tri_0$ with four times as many tetrahedra. There are facets and edges that persist from $\tri$ to $\tri_0.$ On each of these facets, perform a stellar move, giving a new triangulation $\tri_1$ with three times as many tetrahedra as $\tri_0.$ The edges that persisted from $\tri$ to $\tri_0$ are still present in $\tri_1.$ Now perform stellar moves on all of these edges, giving a triangulation $\tri_2$ with twice as many tetrahedra as $\tri_1.$ So in total we have $4 \times 3 \times 2 = 24$ times as many tetrahedra as we started with, and we hope the reader is convinced that the overall effect is the same as applying a barycentric subdivision to each tetrahedron! The moves are illustrated in Figures~\ref{barycentric_subdiv_2d}--\ref{barycentric_subdiv_3d_edge_top_view}.

\begin{figure}[htbp]
\centering
\subfloat[]{
\centering
\labellist
\tiny\hair 2pt
\pinlabel {0} at -11 161
\pinlabel {0} at 395 161
\pinlabel {0} at 191 35
\pinlabel {0} at 191 285
\endlabellist
\includegraphics[width=0.28\textwidth]{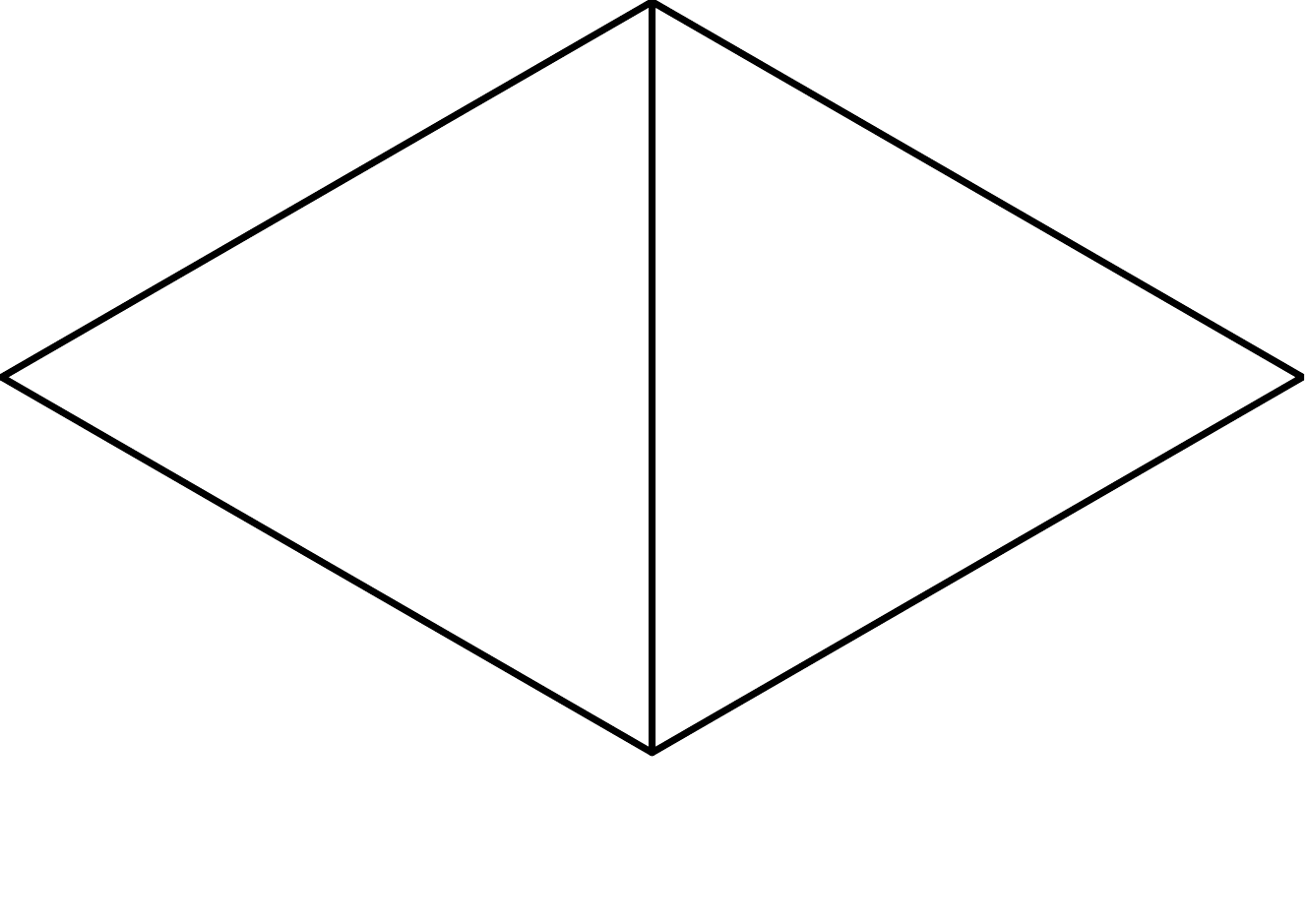}
}
\quad
\subfloat[]{
\centering
\labellist
\tiny\hair 2pt
\pinlabel {0} at -11 161
\pinlabel {0} at 395 161
\pinlabel {0} at 191 35
\pinlabel {0} at 191 285
\pinlabel {2} at 121 177
\pinlabel {2} at 261 177
\endlabellist
\includegraphics[width=0.28\textwidth]{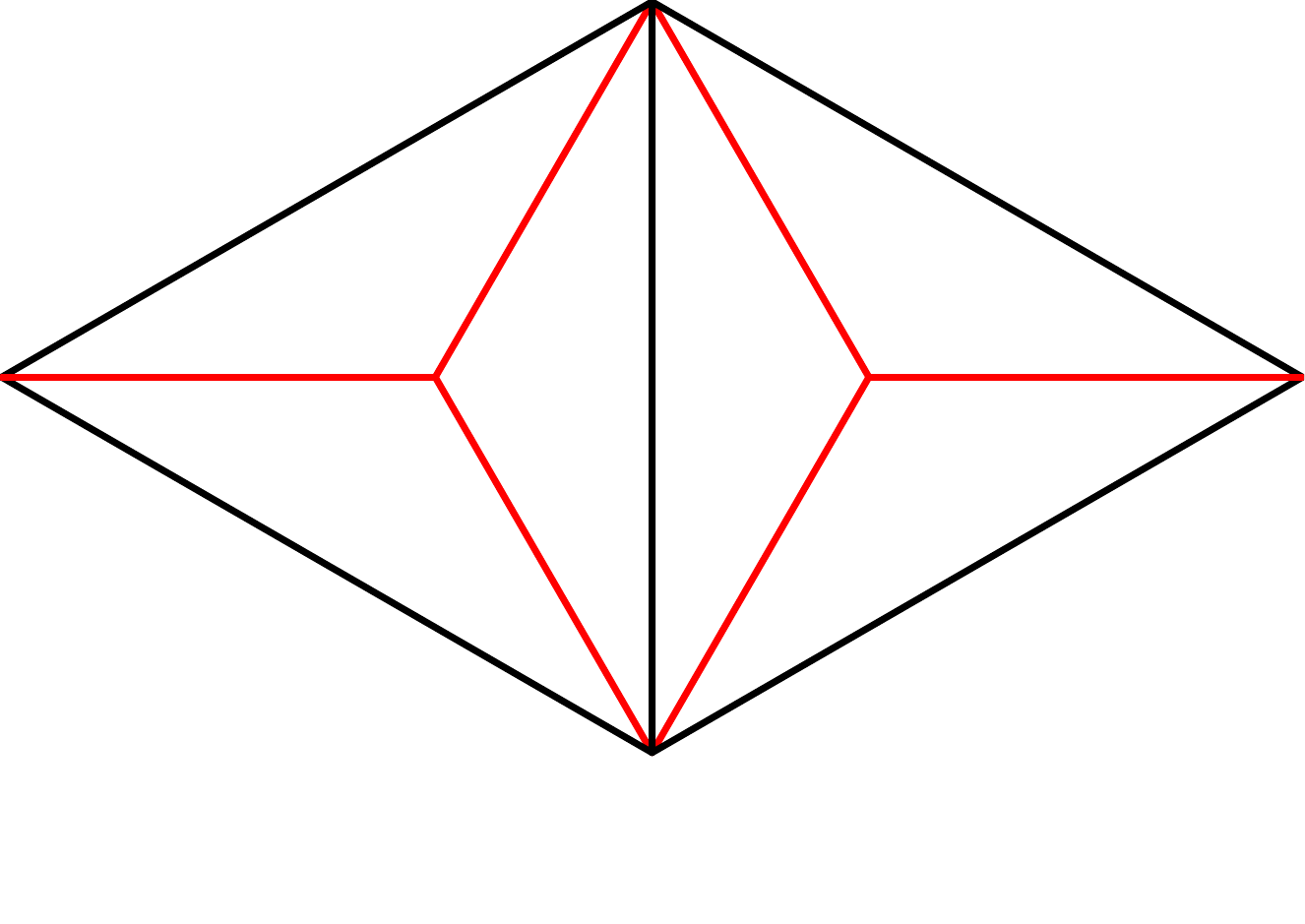}
}
\hspace{-1.2cm}
\subfloat[]{
\centering
\labellist
\tiny\hair 2pt
\pinlabel {0} at 191 35
\pinlabel {0} at 191 285
\pinlabel {2} at 114 161
\pinlabel {2} at 268 161
\pinlabel {1} at 224 177
\endlabellist
\includegraphics[width=0.28\textwidth]{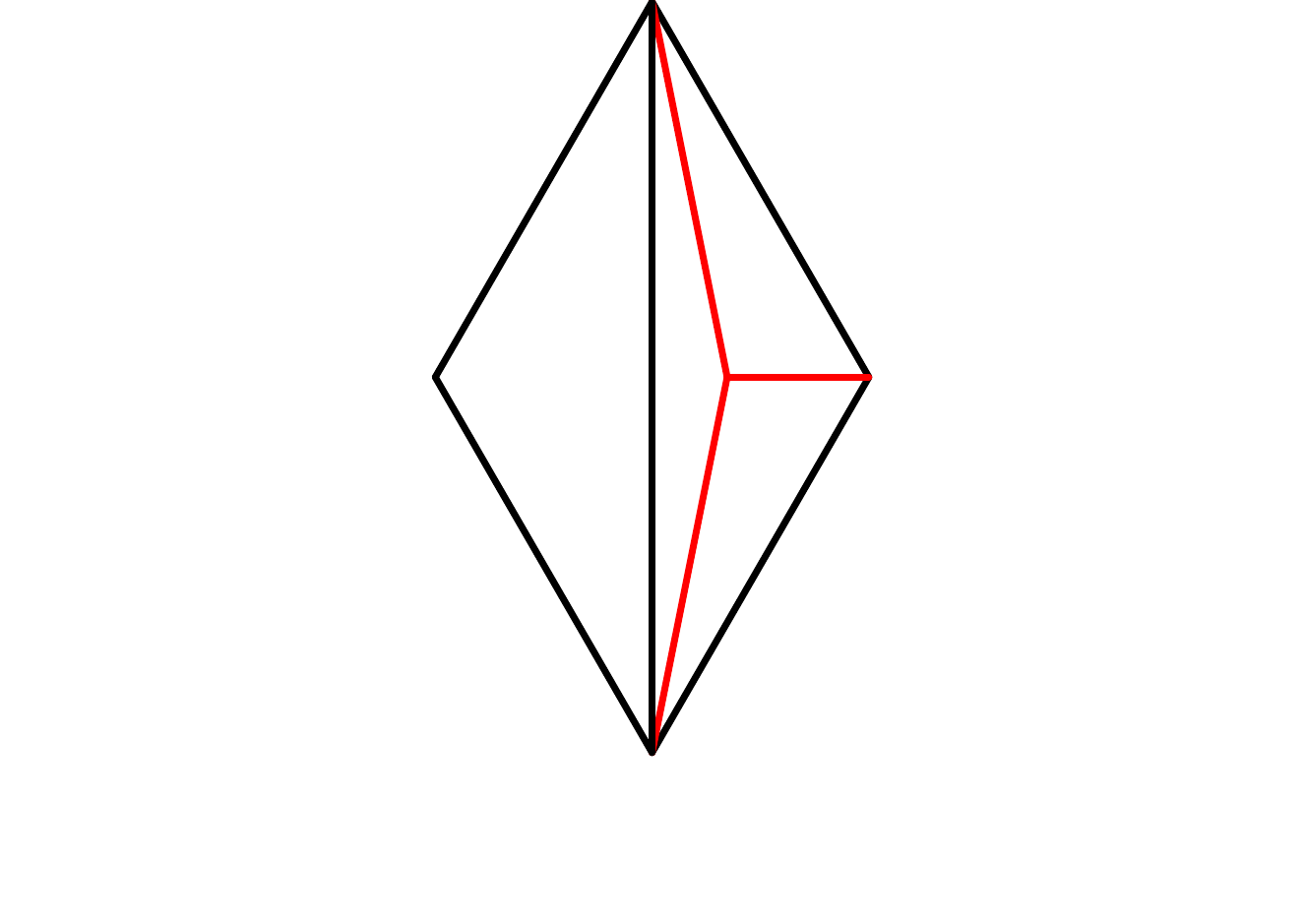}
}
\hspace{-2.8cm}
\subfloat[]{
\centering
\labellist
\tiny\hair 2pt
\pinlabel {0} at 191 35
\pinlabel {0} at 191 285
\pinlabel {2} at 114 161
\pinlabel {2} at 268 161
\pinlabel {1} at 224 177
\endlabellist
\includegraphics[width=0.28\textwidth]{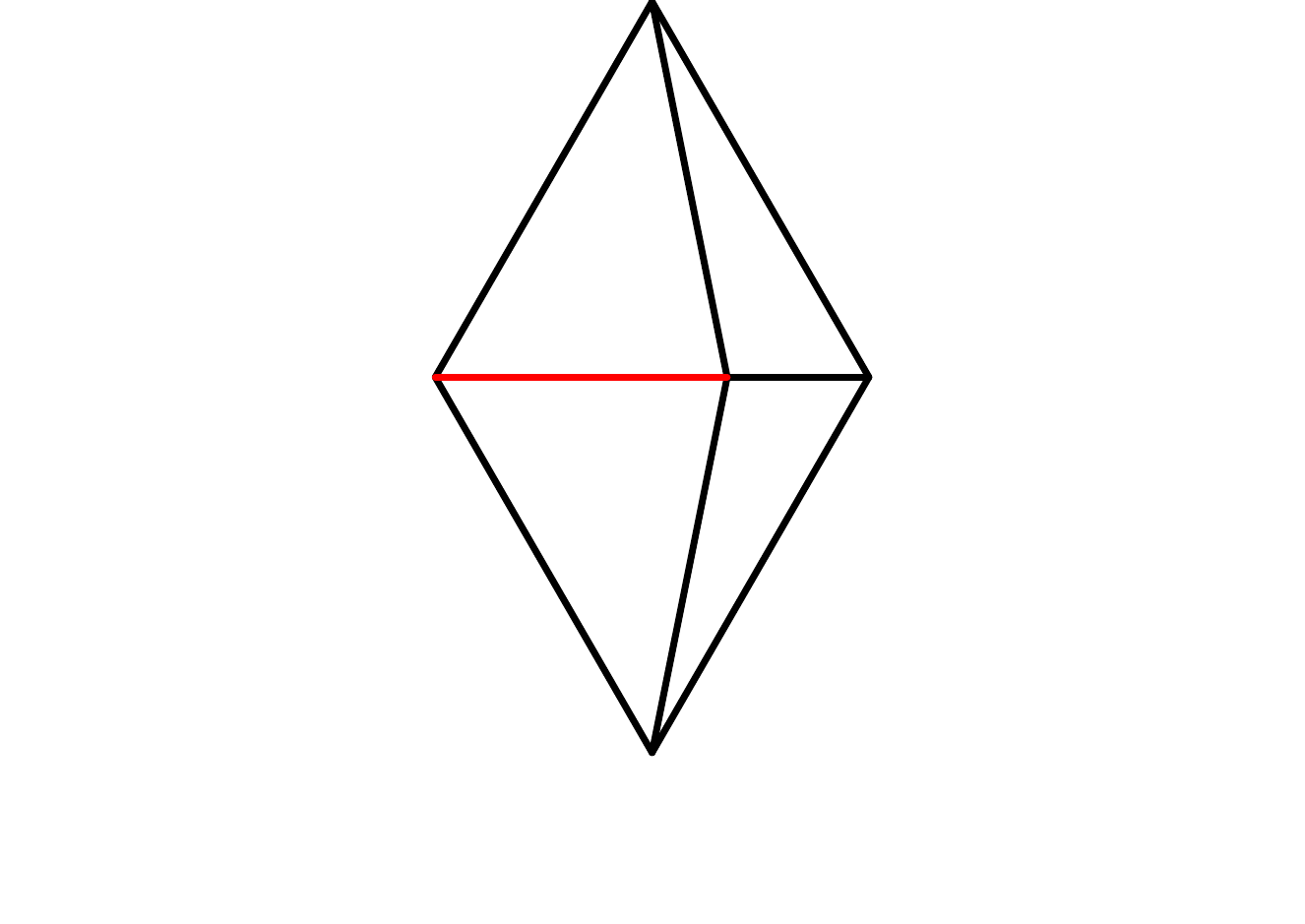}
}
\caption{Barycentric subdivision for two-dimensional triangulations via bistellar moves.} 
\label{barycentric_subdiv_2d}
\end{figure}

\begin{figure}[htbp]
\centering
\subfloat[$\cT$]{
\labellist
\tiny\hair 2pt
\pinlabel {0} at -14 142
\pinlabel {0} at 162 -19
\pinlabel {0} at 270 56
\pinlabel {0} at 225 336
\pinlabel {0} at 506 121
\endlabellist
\includegraphics[width=0.28\textwidth]{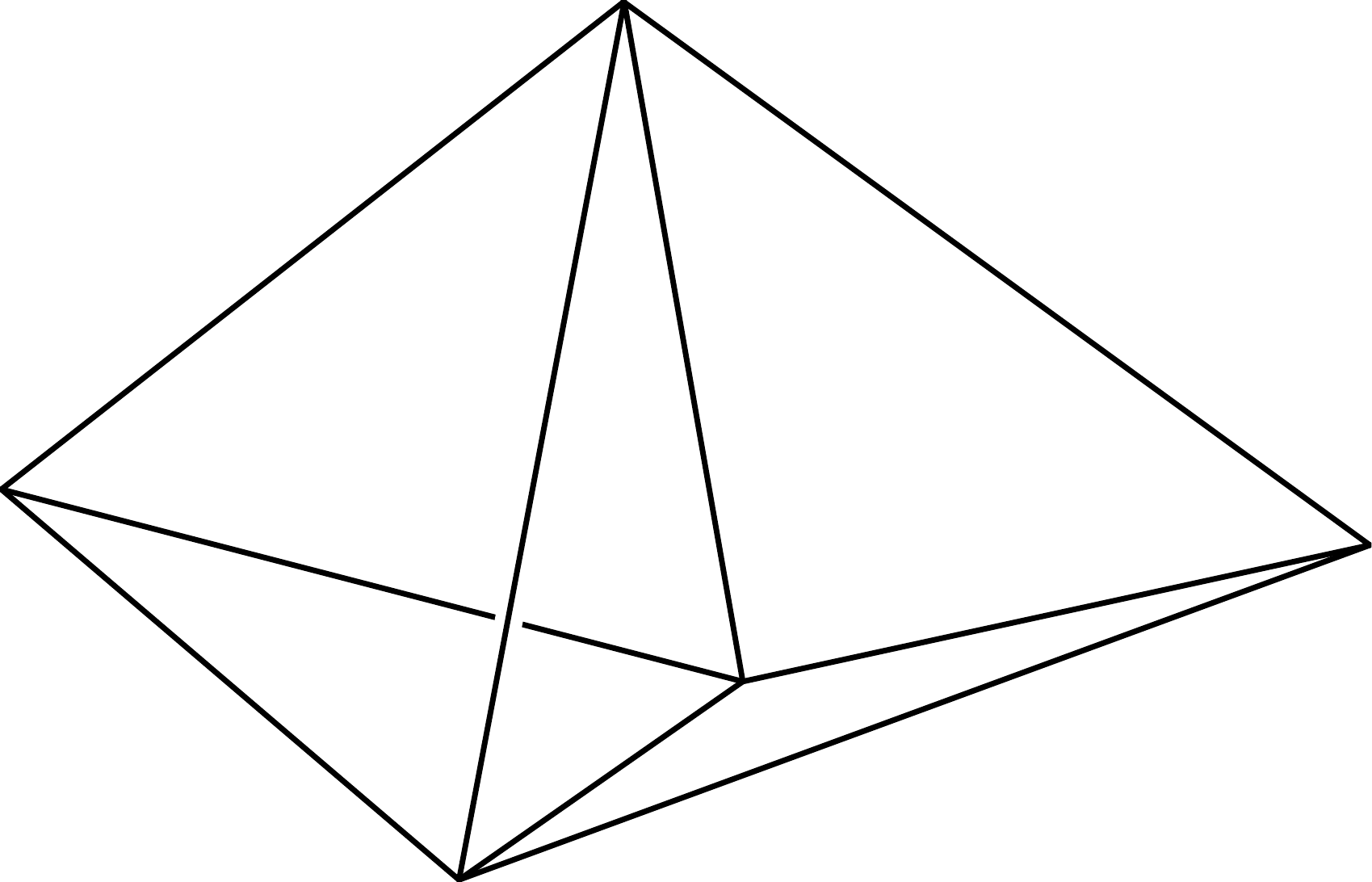}
}
\quad
\subfloat[$\cT'$]{
\labellist
\tiny\hair 2pt
\pinlabel {0} at -14 142
\pinlabel {0} at 162 -19
\pinlabel {0} at 270 56
\pinlabel {0} at 225 336
\pinlabel {0} at 506 121
\pinlabel {3} at 155 155
\pinlabel {3} at 298 150
\endlabellist
\includegraphics[width=0.28\textwidth]{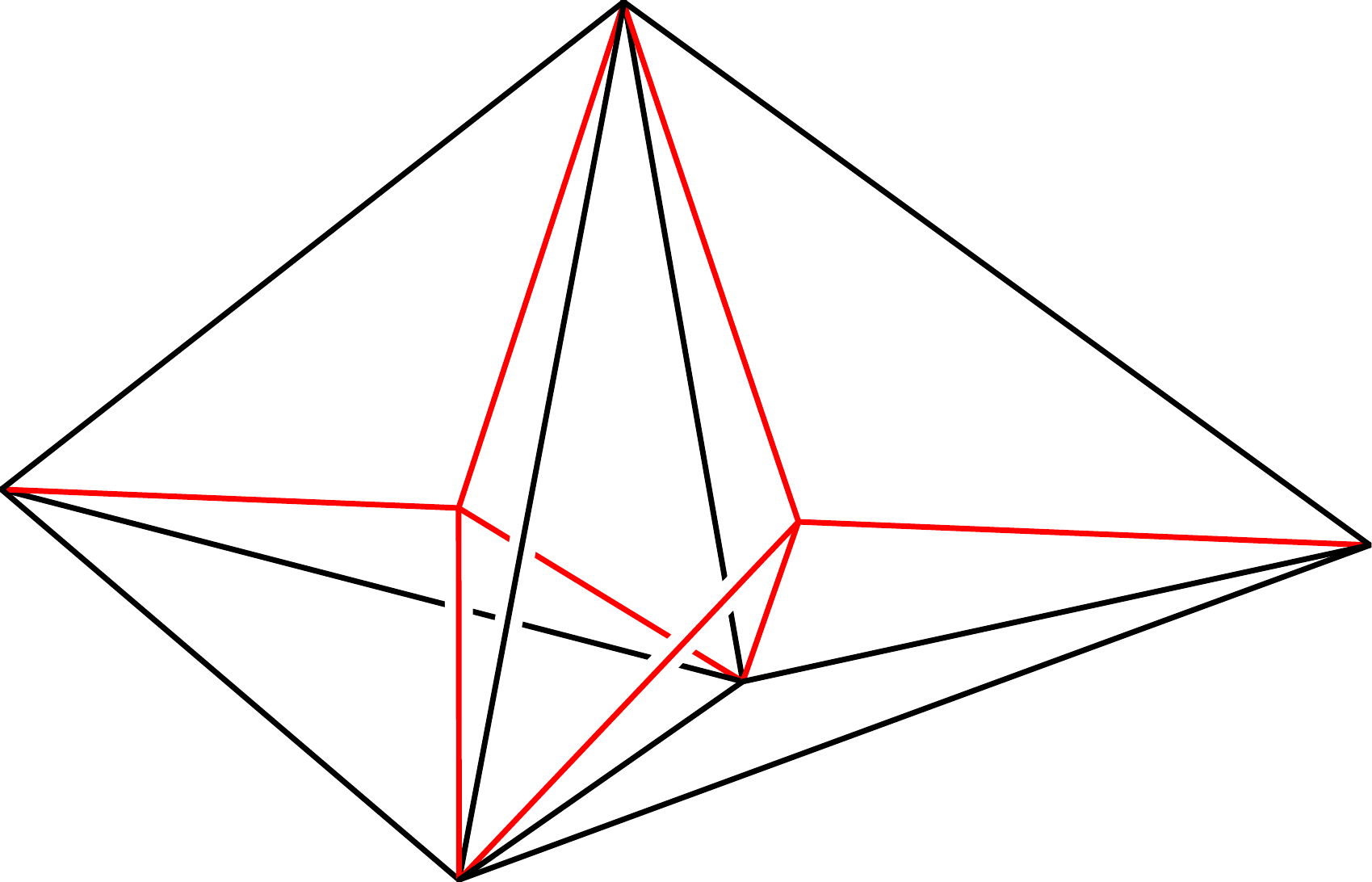}
}
\hspace{-1.2cm}
\subfloat[]{
\labellist
\tiny\hair 2pt
\pinlabel {0} at 162 -19
\pinlabel {0} at 272 53
\pinlabel {0} at 225 336
\pinlabel {3} at 147 139
\pinlabel {3} at 302 132
\pinlabel {2} at 227 140
\endlabellist
\includegraphics[width=0.28\textwidth]{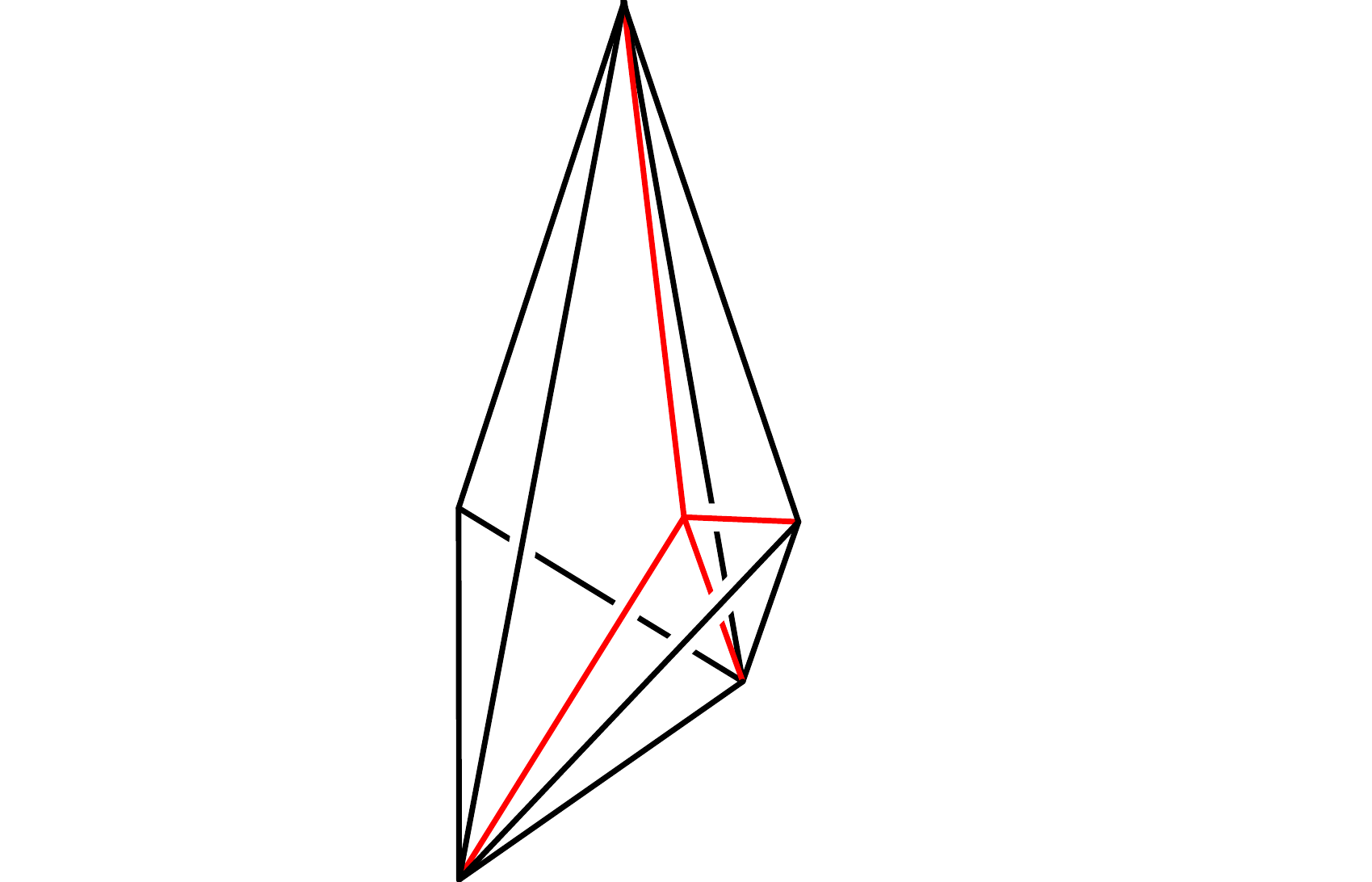}
}
\hspace{-2.8cm}
\subfloat[$\cT''$]{
\labellist
\tiny\hair 2pt
\pinlabel {0} at 162 -19
\pinlabel {0} at 272 53
\pinlabel {0} at 225 336
\pinlabel {3} at 147 139
\pinlabel {3} at 302 132
\pinlabel {2} at 227 151
\endlabellist
\includegraphics[width=0.28\textwidth]{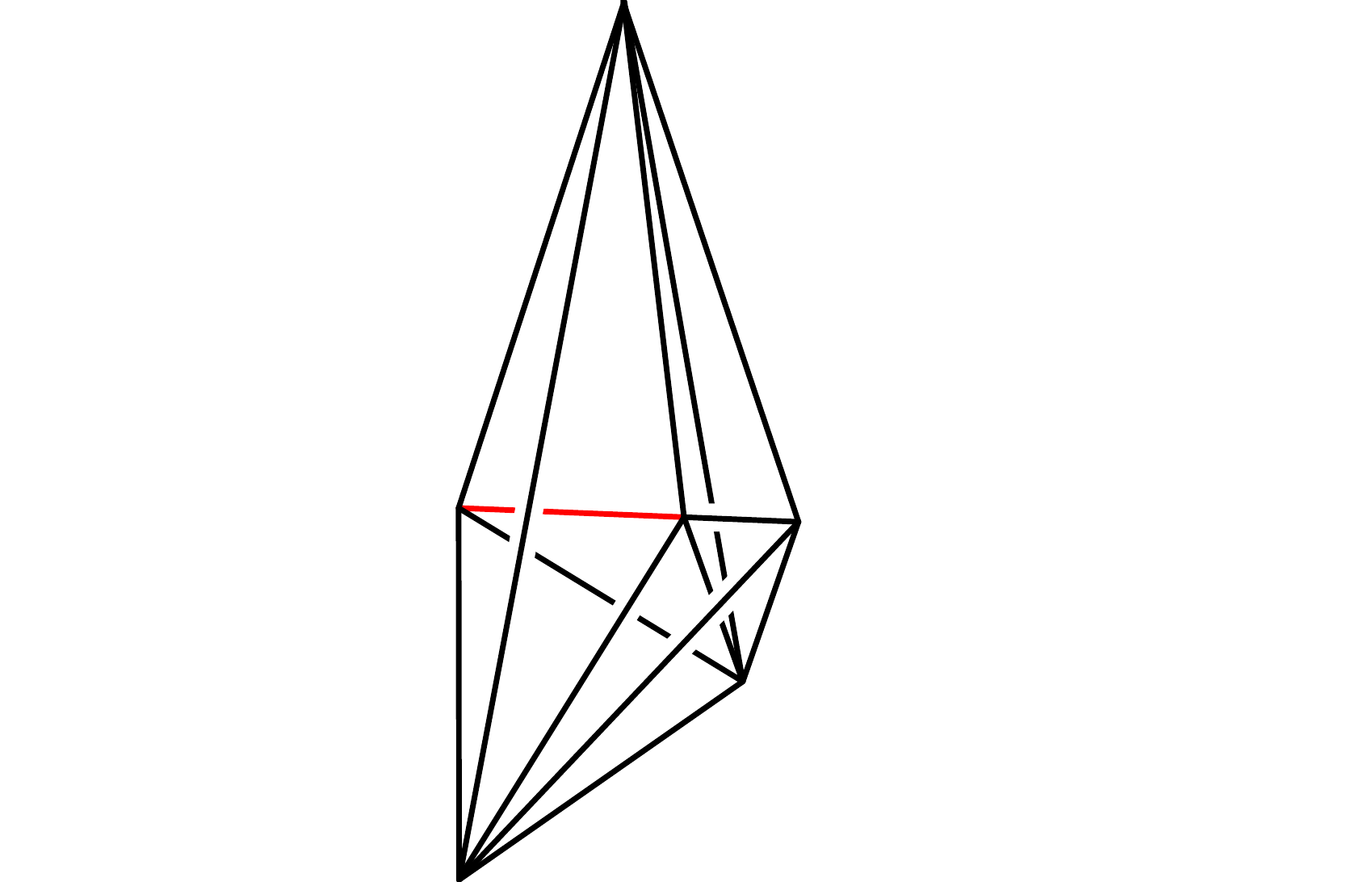}
}
\caption{Introducing the vertices corresponding to the three- and two-dimensional simplices for the barycentric subdivision of a three-dimensional triangulation. } 
\label{barycentric_subdiv_3d}
\end{figure}

\begin{figure}[htbp]
\centering
\subfloat[Before adding the vertex corresponding to an edge.]{
\labellist
\tiny\hair 2pt
\pinlabel {0} at 174 10
\pinlabel {0} at 174 385
\pinlabel {2} at -8 196
\pinlabel {3} at 27 120
\pinlabel {3} at 115 243
\pinlabel {2} at 253 103
\pinlabel {2} at 282 233
\pinlabel {3} at 365 173
\small
\pinlabel {$e$} at 186 176
\endlabellist
\includegraphics[width=0.31\textwidth]{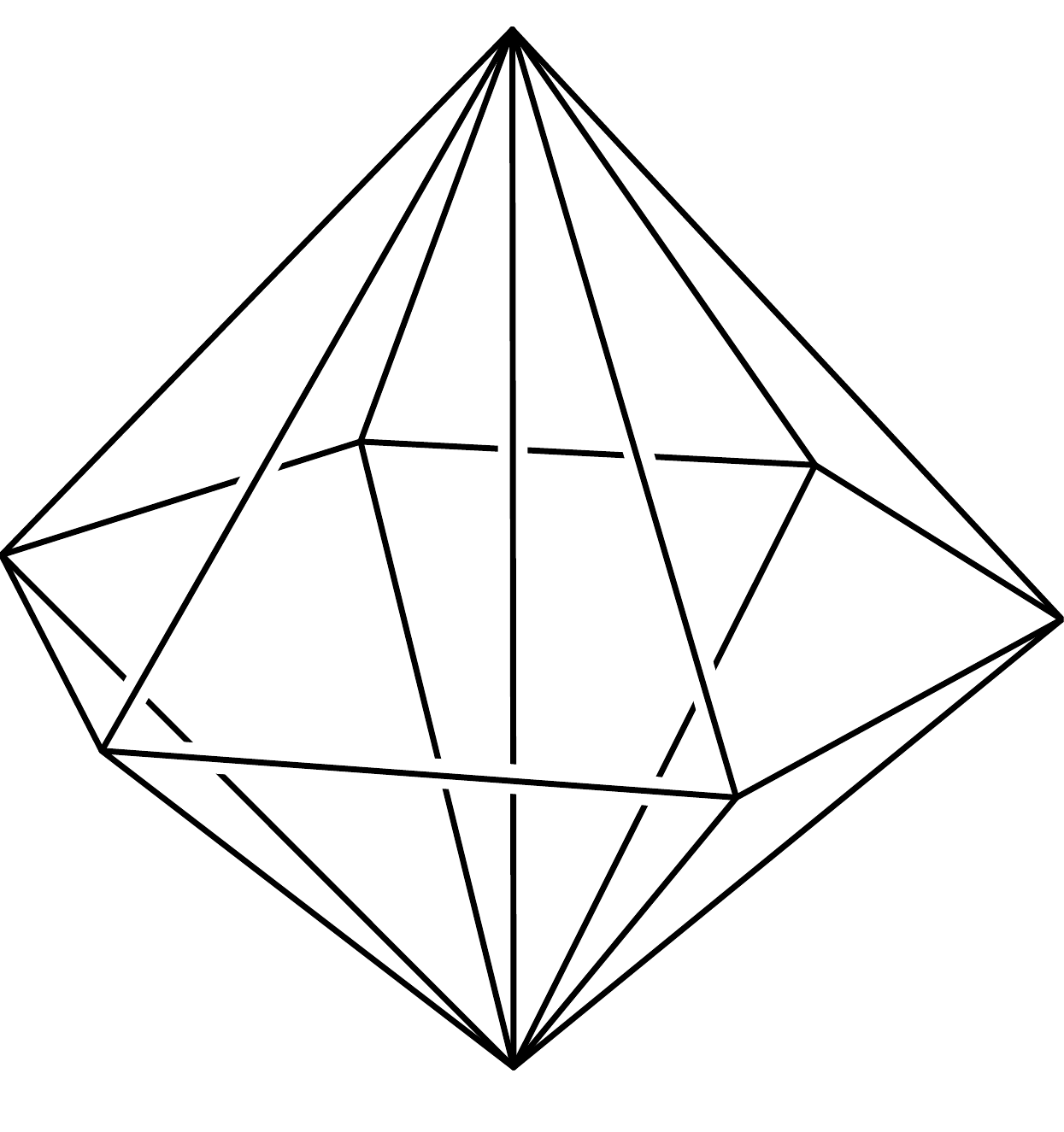}
}
\thinspace
\subfloat[The first step is to apply a 1-4 move.]{
\labellist
\tiny\hair 2pt
\pinlabel {0} at 174 10
\pinlabel {0} at 174 385
\pinlabel {2} at -8 196
\pinlabel {3} at 27 120
\pinlabel {3} at 115 243
\pinlabel {2} at 253 103
\pinlabel {2} at 282 233
\pinlabel {3} at 365 173
\pinlabel {1} at 150 165
\small
\pinlabel {$e$} at 186 176
\endlabellist
\includegraphics[width=0.31\textwidth]{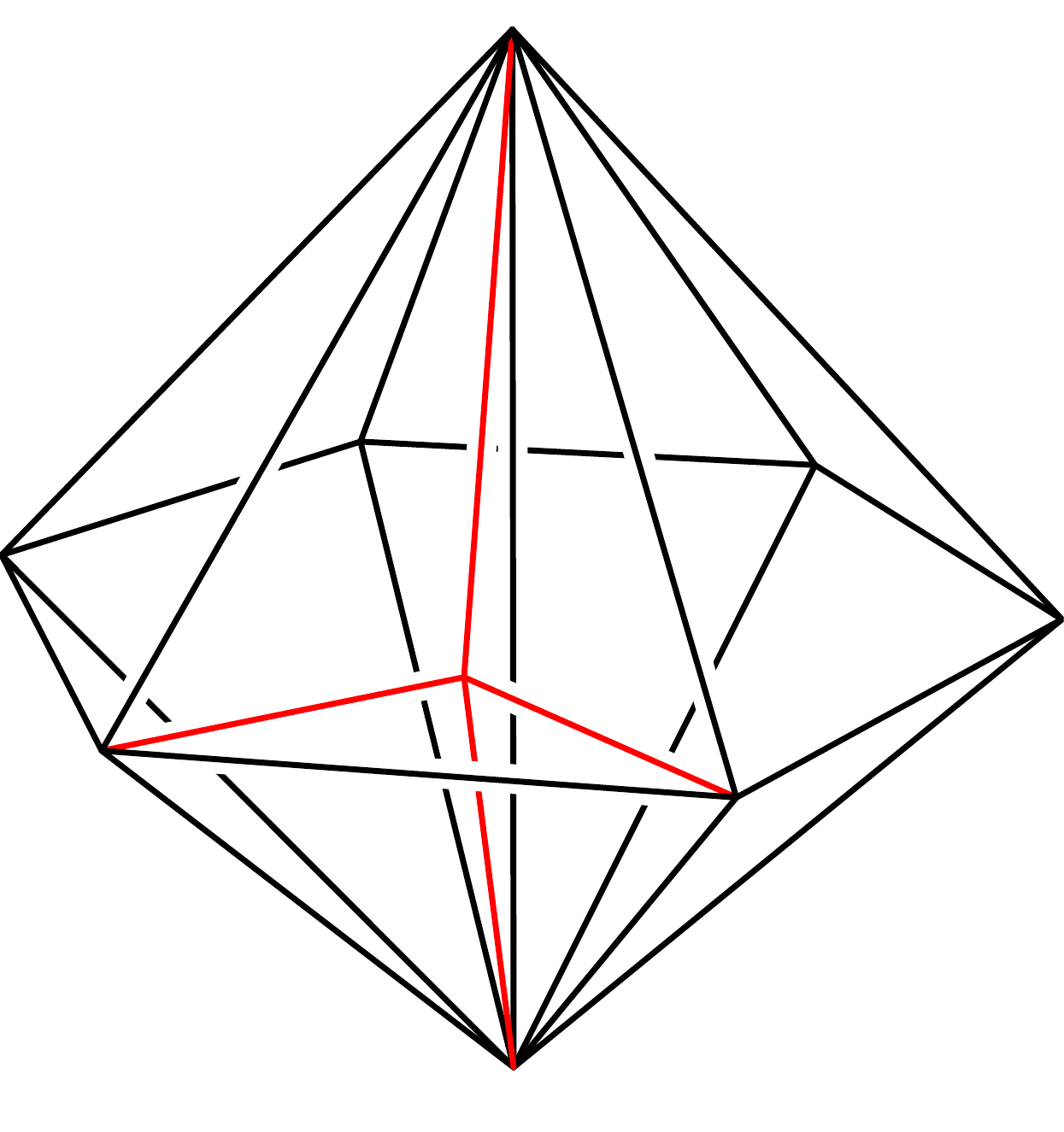}
}
\thinspace
\subfloat[The end result, after applying the subsequent steps shown in Figure~\ref{barycentric_subdiv_3d_edge_top_view}.]{
\labellist
\tiny\hair 2pt
\pinlabel {0} at 174 10
\pinlabel {0} at 174 385
\pinlabel {2} at -8 196
\pinlabel {3} at 27 120
\pinlabel {3} at 115 243
\pinlabel {2} at 253 103
\pinlabel {2} at 282 233
\pinlabel {3} at 365 173
\pinlabel {1} at 164 165
\endlabellist
\includegraphics[width=0.31\textwidth]{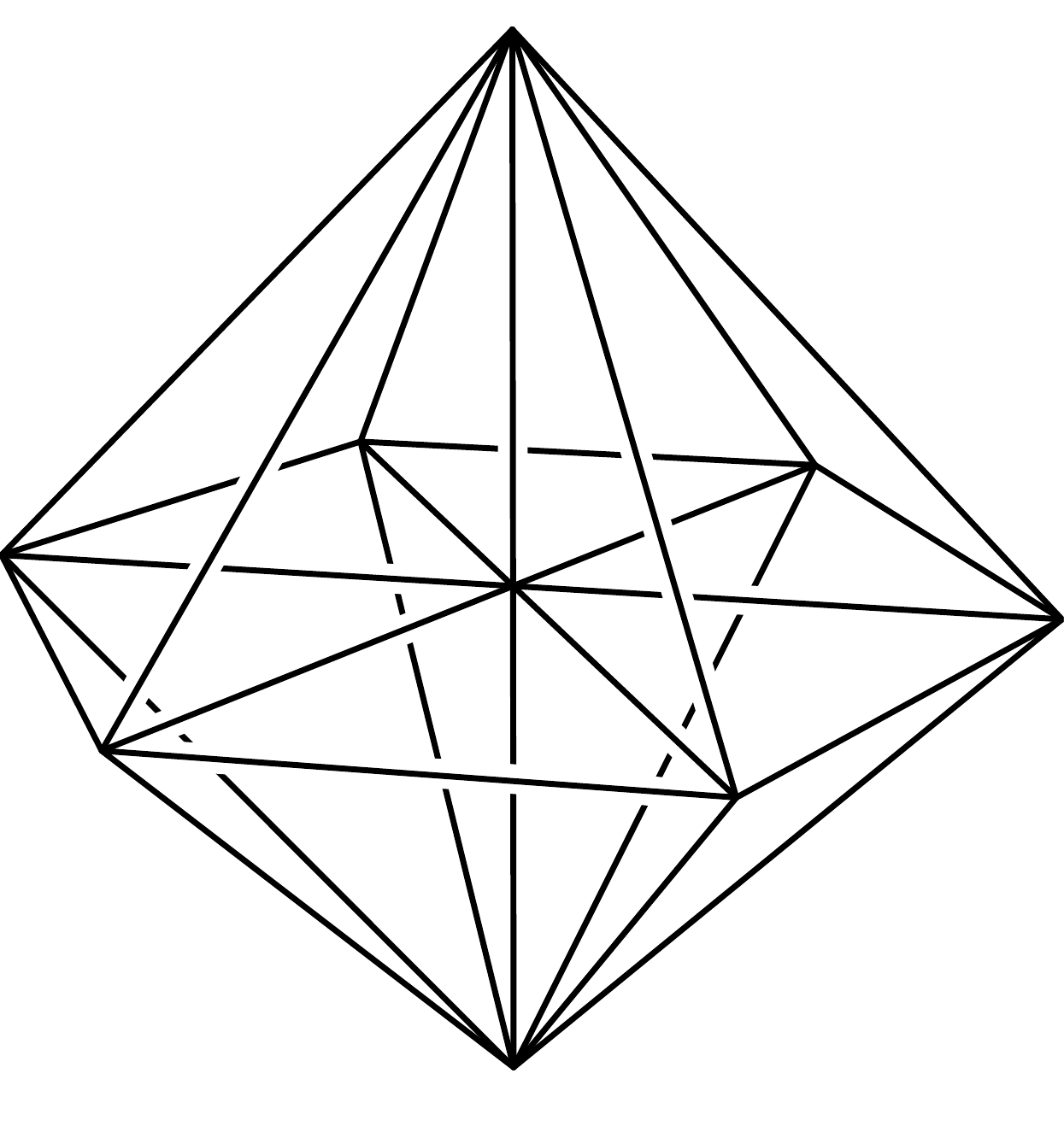}
}
\caption{Introducing the vertex corresponding to a one-dimensional simplex for the barycentric subdivision of a three-dimensional triangulation.}
\label{barycentric_subdiv_3d_edge}
\end{figure}

\begin{figure}[htbp]
\centering
\subfloat[]{
\includegraphics[width=0.15\textwidth]{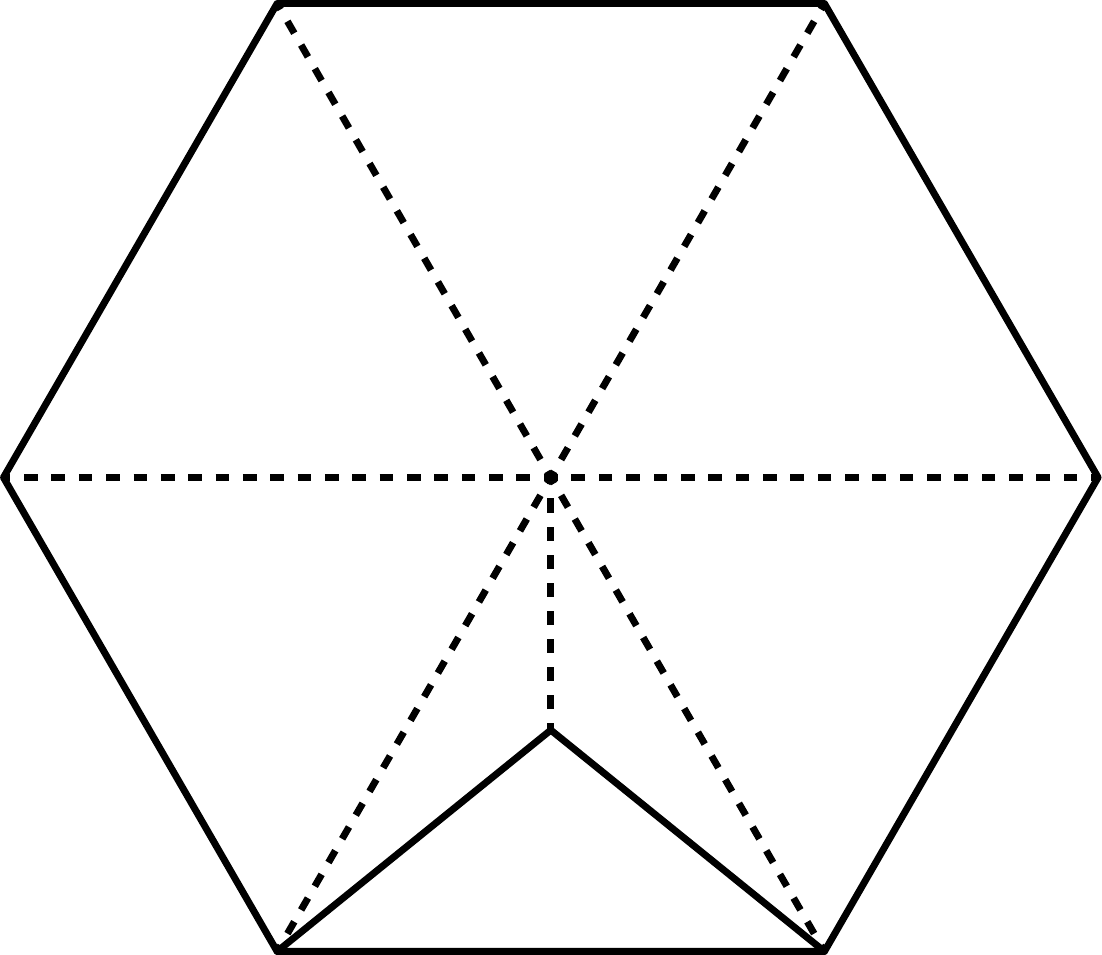}
}
\subfloat[]{
\includegraphics[width=0.15\textwidth]{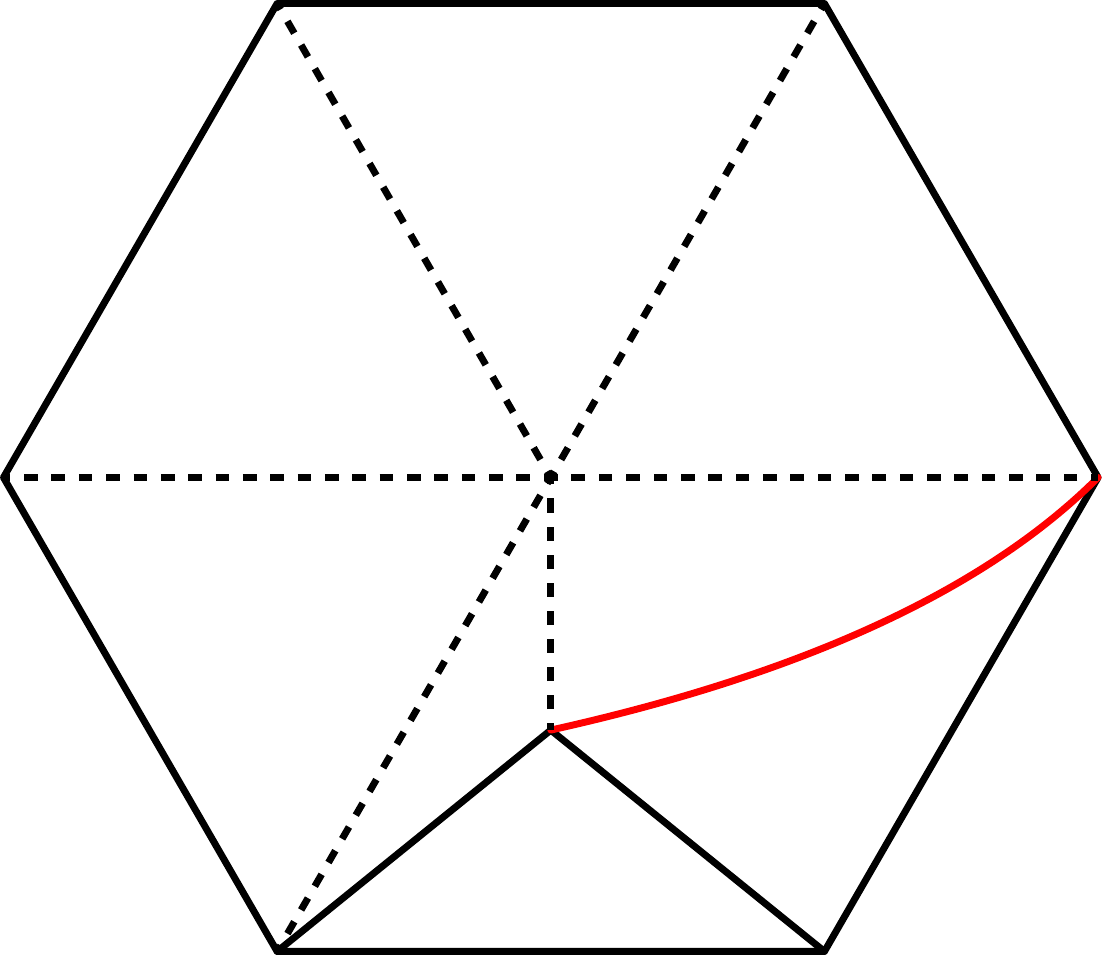}
}
\subfloat[]{
\includegraphics[width=0.15\textwidth]{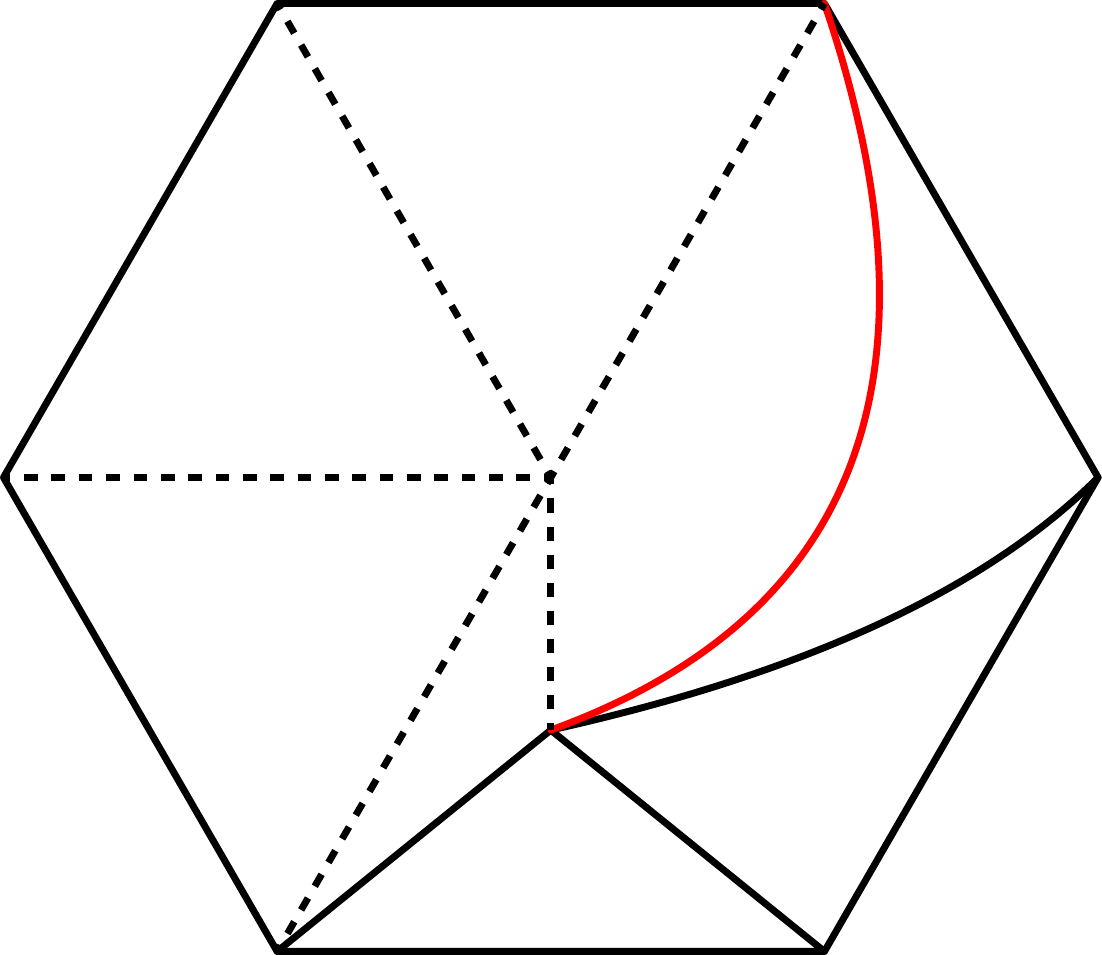}
}
\subfloat[]{
\includegraphics[width=0.15\textwidth]{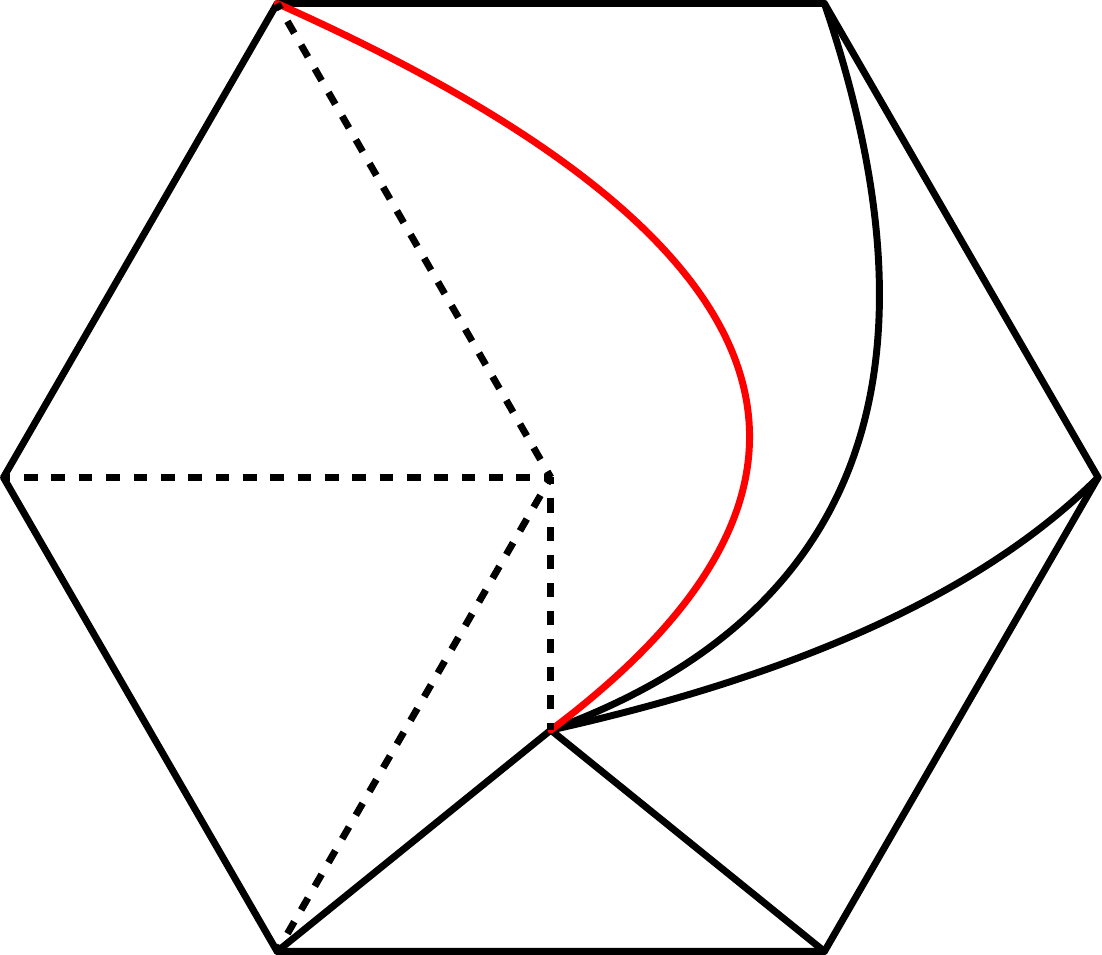}
}
\subfloat[]{
\includegraphics[width=0.15\textwidth]{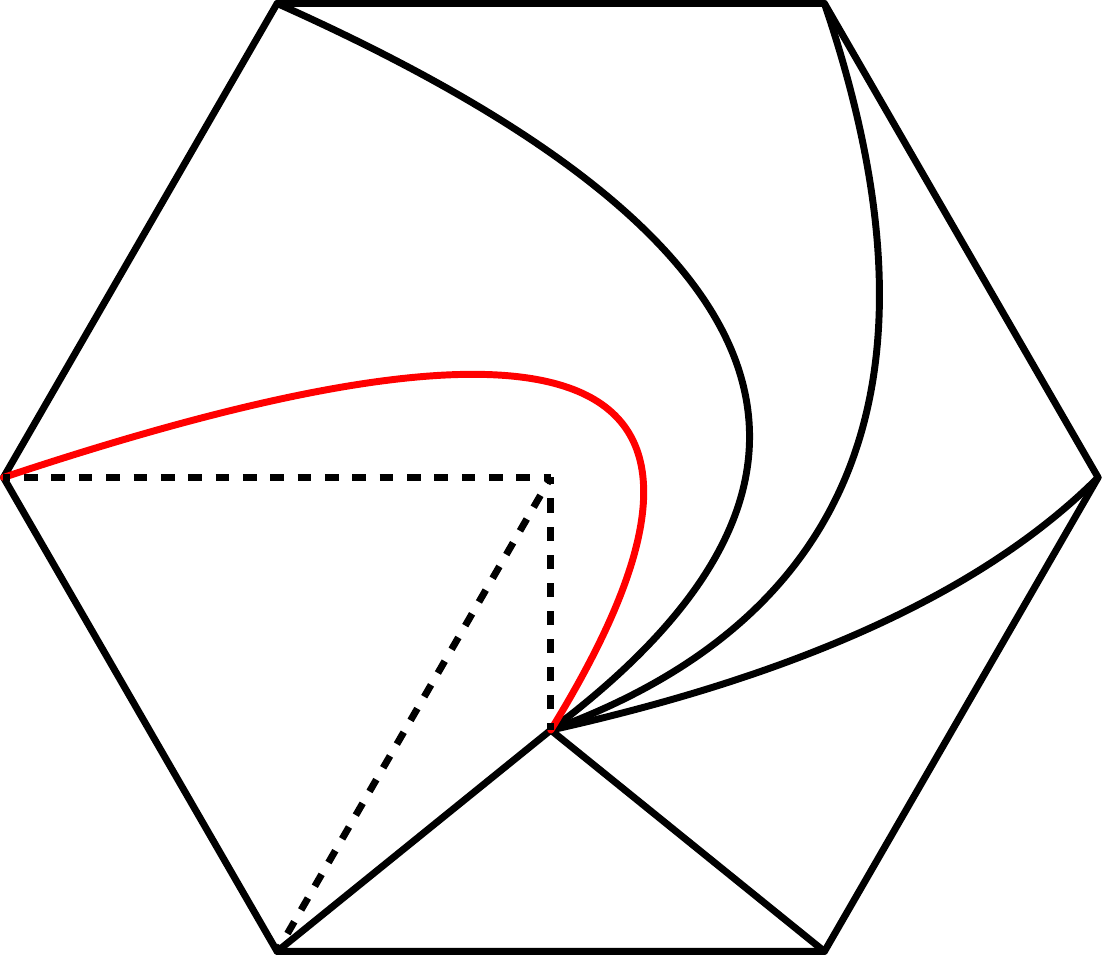}
}
\subfloat[]{
\includegraphics[width=0.15\textwidth]{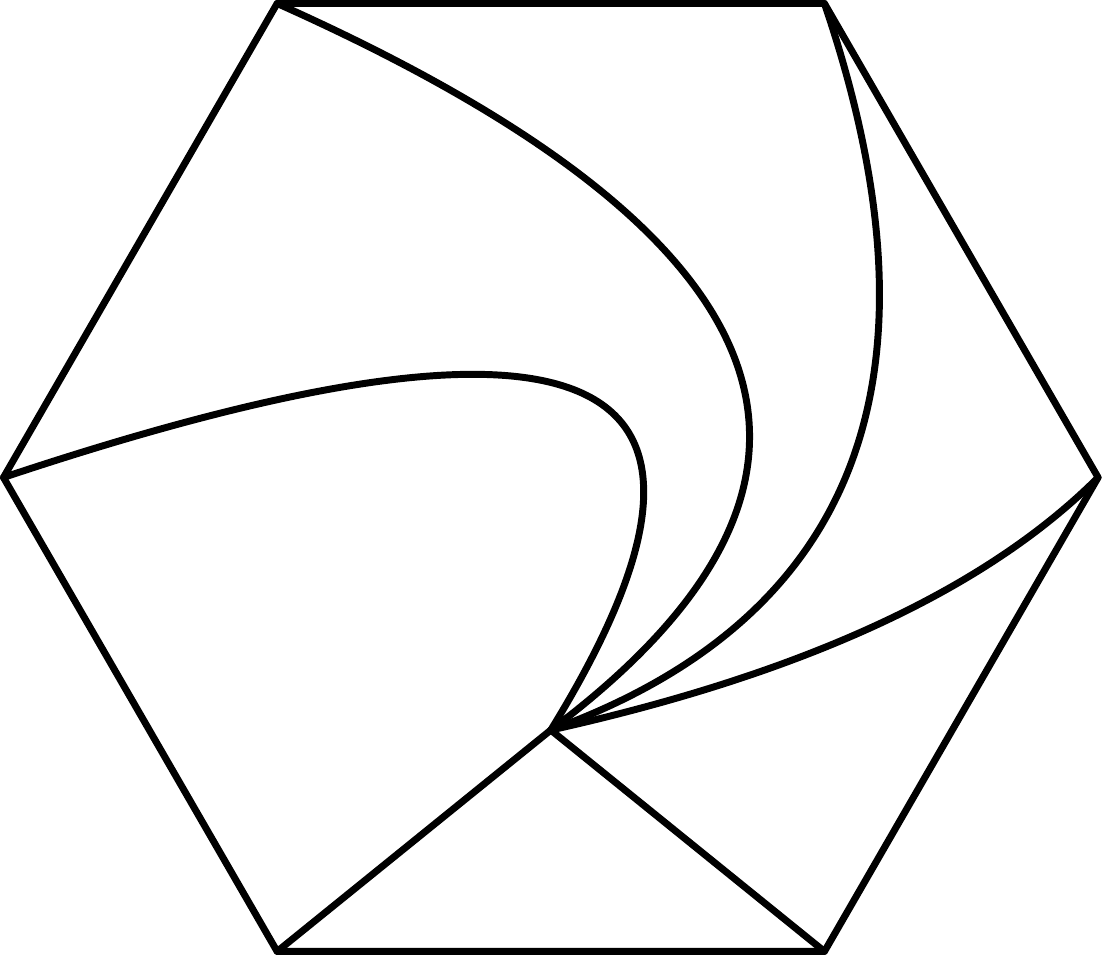}
}
\caption{The subsequent steps in adding the vertices corresponding to the one-dimensional simplices for the barycentric subdivision of a three-dimensional triangulation. The horizontal cross-section through the tetrahedra is shown. Edges are drawn with a solid line, while intersections of faces with the cross-section are drawn with a dotted line.} 
\label{barycentric_subdiv_3d_edge_top_view}
\end{figure}


\subsection{Common refinements}
\label{sec:refinements}

Even though we stated in the introduction that we will prove our main result assuming Theorems~\ref{pachner} and \ref{thm:Banagl-Friedman}, we cannot resist sketching proofs of these results. 

We are given triangulations $\cT_0$ and $\cT_1$ of a 3--manifold $M.$ To apply the classical results, we first apply two barycentric subdivisions to each in order to obtain the simplicial triangulations $\cT'_0 = ( \widetilde{\Delta}_0, \Phi_0, p_0)$ and $\cT'_1 = ( \widetilde{\Delta}_1, \Phi_1, p_1)$ respectively. We have quotient maps $p_0\co  \widetilde{\Delta}_0\to M$ and $p_1\co  \widetilde{\Delta}_1\to M.$ We view $M$ as the quotient space $\Delta_0/ \Phi_0,$ so that $p_0$ is a linear map on simplices. The work of Moise (see also Brown~\cite{Brown-hauptvermutung-1969} and Hamilton~\cite{Hamilton-triangulation-1976}) now allows us to compose $p_1$ with an isotopy $\iota\co M\to M$  so that the restriction of $\iota\circ p_1$ is also linear on each simplex. In particular, Alexander's theorem~\cite{Alexander-combinatorial-1930} (see also Lickorish~\cite{lickorish_simplicial_moves} and Turaev-Viro~\cite{Turaev-state-1992}) now allows us to transform $\cT'_0$ and $\cT'_1$ to a common refinement by applying stellar moves. 

We have shown above that barycentric subdivisions and stellar moves can be performed through sequences of bistellar moves. This completes our sketch of the proof of Theorem~\ref{pachner}.

The class of pseudo-manifolds considered in this paper is less general than that considered by Banagl and Friedman~\cite{banagl-friedman}. Our pseudo-manifolds are merely compact manifolds with boundary, where we have added a cone to each boundary component. It is therefore straightforward to sketch a proof of Theorem~\ref{thm:Banagl-Friedman} analogous to the above. We have the same set-up as before, and apply two barycentric subdivisions, giving quotient maps $p_0\co  \widetilde{\Delta}_0\to M$ and $p_1\co  \widetilde{\Delta}_1\to M.$ The work of Brown~\cite{Brown-hauptvermutung-1969} (see \cite[Theorem 2.13]{banagl-friedman}) now allows us to compose $p_1$ with an isotopy $\iota\co M\to M$ so that the restriction of $\iota\circ p_1$ is also linear on each simplex. Due to the topology of the pseudo-manifold, the isolated ideal vertices are necessarily fixed under the isotopy. Alexander's theorem again provides stellar moves resulting in a common refinement, and we conclude as above that there is the desired sequence of bistellar moves.


\subsection{Special spines}
\label{sec:spines}

We now describe a 2--dimensional object, a \emph{spine}, that encodes $M$, a 3--dimensional manifold or pseudo-manifold. To begin with, suppose you have a triangulation $\tri$ of $M,$ and pass to the second barycentric subdivision $\tri''$ of $\tri.$ This has the following nice property. Let $v$ be a vertex of $\tri''$ that corresponds to a vertex of $\tri.$ Then the collection $\Lk(v)$ of all tetrahedra in $\tri''$ containing $v$ is a cone on a triangulated surface with cone point $v,$ and for any two distinct vertices of $\tri$, the two corresponding surfaces are disjoint. Hence if we remove $\Lk(v)$ for each vertex $v$ of $\tri$ from $\tri'',$ we obtain the triangulation of a compact 3--dimensional manifold $M^c$ with boundary, and $M$ is obtained from $M^c$ by adding the cones on its boundary components. Hence we can reconstruct $M$ from $M^c.$ 

\begin{figure}[htbp]
\centering
\subfloat[A butterfly inside of a tetrahedron.]{
\includegraphics[width=0.2\textwidth]{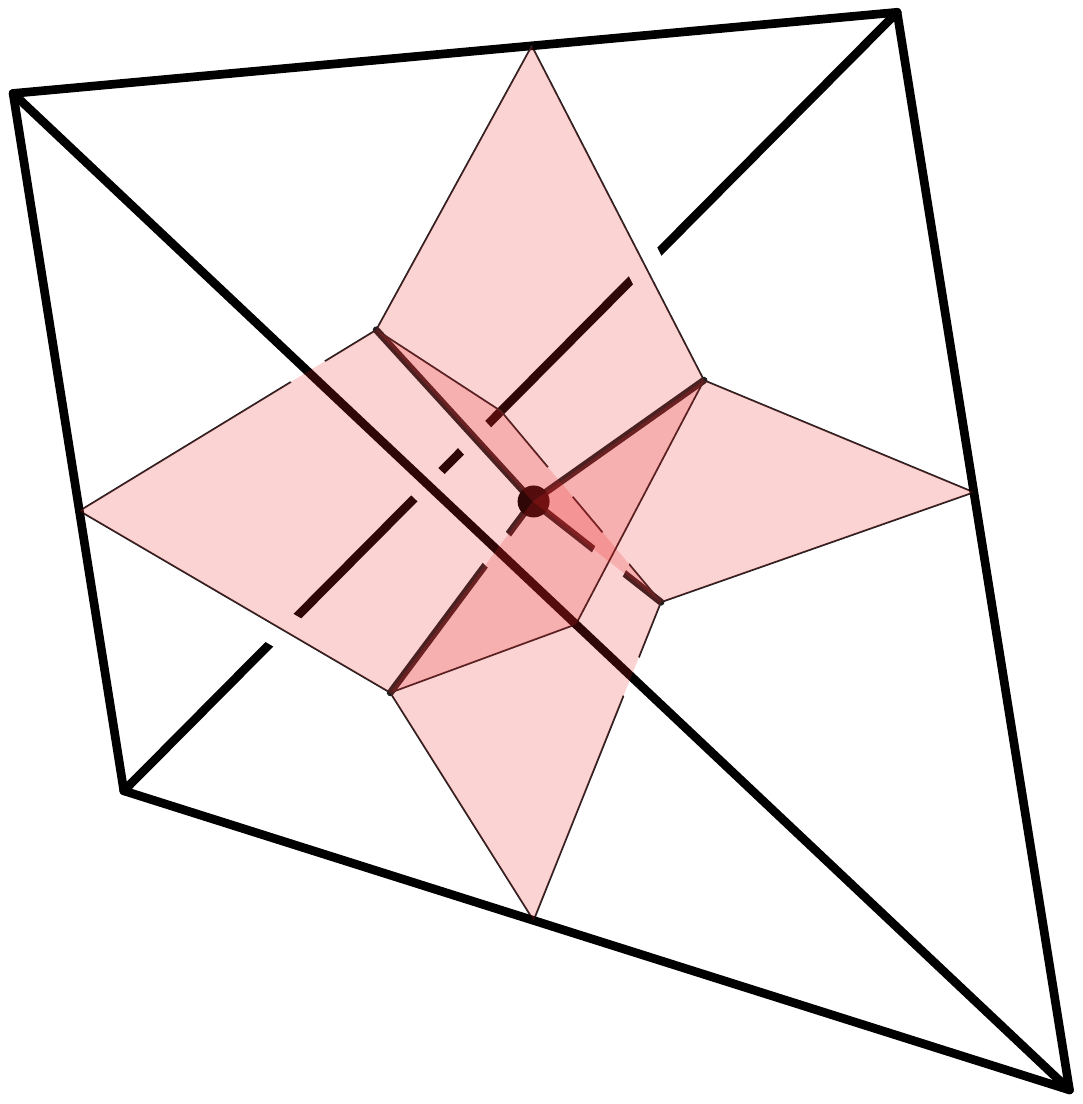}
\label{butterfly}
}
\quad
\subfloat[Three different types of point on a spine. From left to right: a non-singular point, a triple point, and a vertex.]{
\includegraphics[width=0.65\textwidth]{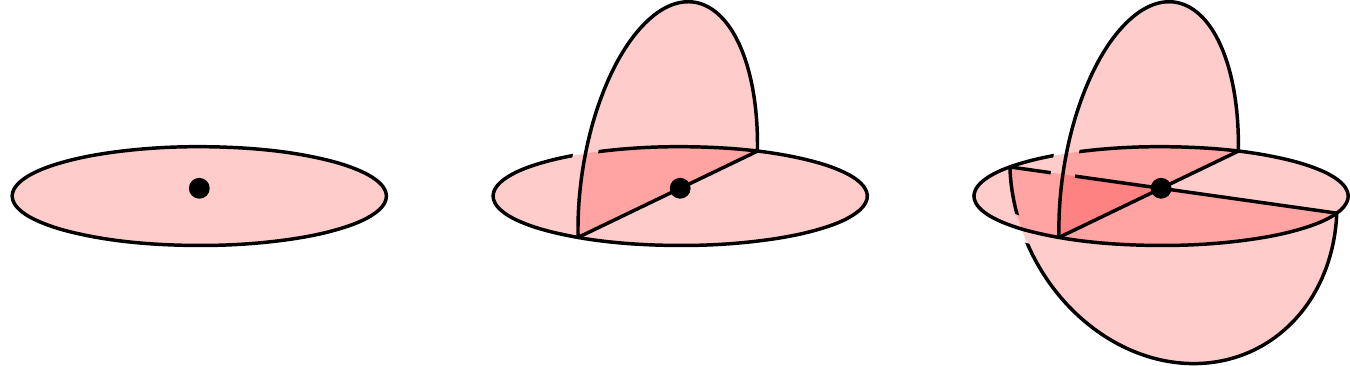}
\label{simple_spine_points}
}
\caption{Spines.} 
\label{spine_definitions}
\end{figure}

One can now collapse $M^c$ onto a 2--dimensional complex $S$ that is the union of all triangles in the first barycentric subdivision of $\tri$ with the property that they do not contain any vertex of $\tri.$ For each tetrahedron of $\tri$ this is the butterfly shown in Figure~\ref{butterfly}. Note that $M^c$ is homeomorphic to a regular neighbourhood of $S$ in $M^c.$ We claim that $S$ determines $M^c$ (and hence $M$) \emph{independent} of its embedding in $M^c.$ The important properties of $S$ that make this possible are the following. 

Each point in $S$ has a neighbourhood that is homeomorphic to one of the three models shown in Figure~\ref{simple_spine_points}. 
{
A point is \emph{singular} if it does not have a neighbourhood homeomorphic to a disc. The set of all singular points consists of the triple points and vertices, and is naturally a graph. The nodes of this graph are called vertices of the spine.
}
The complement of the set of singular points in $S$ is a surface, and we claim that it is a disjoint union of open discs. This comes from the local structure: each connected component of the complement is the union of all triangles in $\tri'$ having a vertex on the barycentre of an edge in $\tri.$ If one removes the vertices from the singular set, then each connected component is an interval---this is clear because we have a graph. The relationship with the triangulation is again evident: the interval connects the barycentres of adjacent tetrahedra through the barycentre of a common face. These properties are summarised by saying that $S$ is a \emph{special spine} of $M.$

Having observed this, one can show that if $S_0$ and $S_1$ are special spines of $M_0$ and $M_1,$ with the property that $S_0$ and $S_1$ are homeomorphic, then $M_0$ and $M_1$ are homeomorphic. The reader may find pleasure in proving this result, or can find it in Matveev's book~\cite[Theorem 1.1.17]{matveev_book}. 


Special spines and triangulations are dual objects: each vertex of a spine is dual to a tetrahedron; each edge of a spine is dual to a triangle; each face of a spine is dual to an edge of the triangulation. The complementary regions of a spine in the pseudo-manifold are dual to the vertices of the triangulation. Indeed, each such region is the cone over a surface, where the surface is homeomorphic to the vertex link. In particular, material vertices correspond to three-balls.

\subsection{Moves on triangulations and spines}

Using this duality, we should be able to view moves performed on triangulations in the dual spine picture. For example, the bistellar moves we introduced earlier can be seen as moves performed on spines, as shown in Figure~\ref{1-4 and 2-3 moves}. Pictures of spines are often more flexible, with many ways to draw the same spine. Figure~\ref{alt-spine_slide_2-3} shows an alternative picture for the 2-3 move.

\begin{figure}[htbp]
\subfloat[The 1-4 and 4-1 moves.]{
\centering
\labellist
\small\hair 2pt
\pinlabel 1-4 at 400 365
\pinlabel 4-1 at 400 278
\endlabellist
\includegraphics[width=0.5\textwidth]{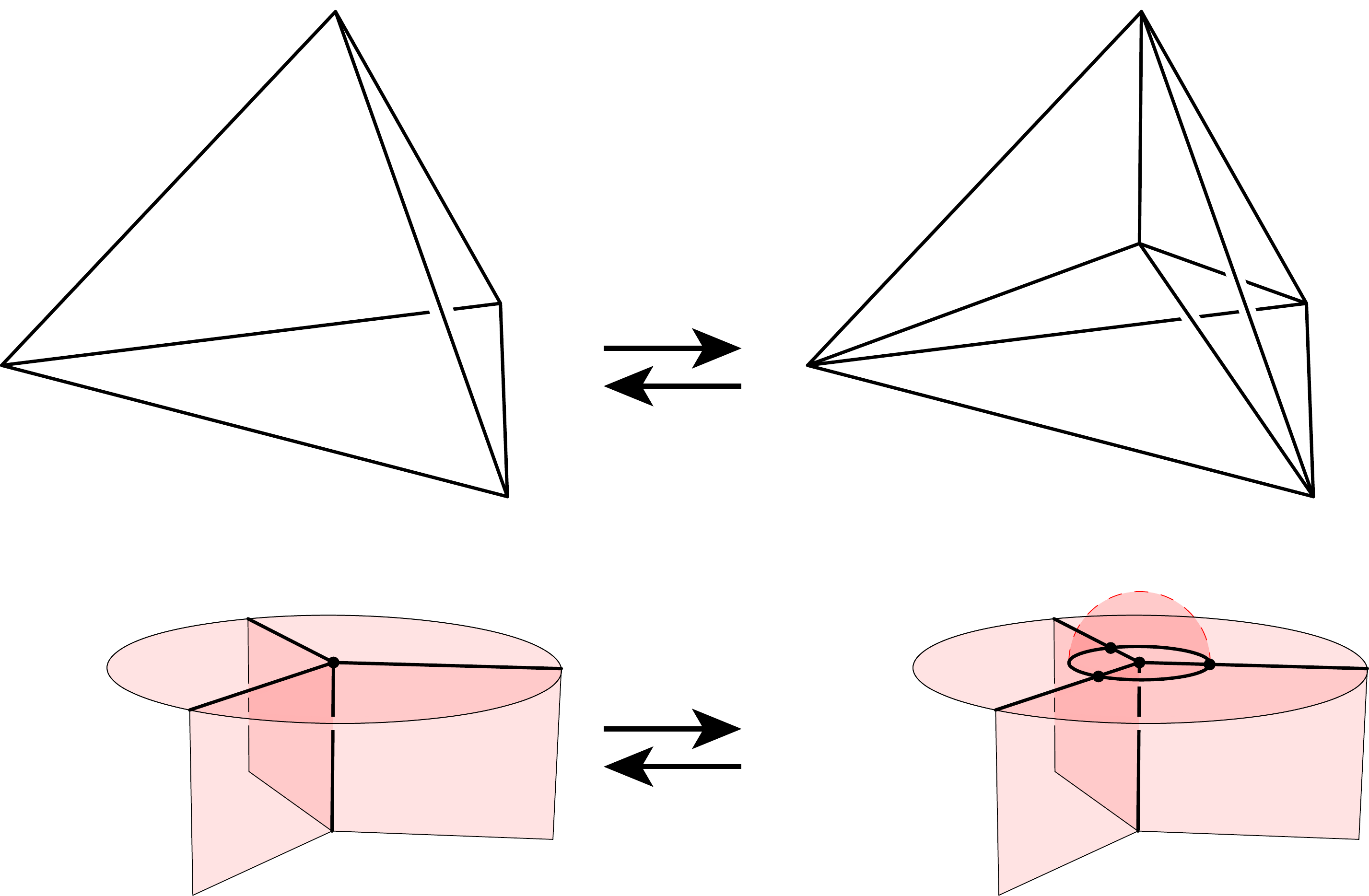}
}
\qquad
\subfloat[The 2-3 and 3-2 moves.]{
\centering
\labellist
\small\hair 2pt
\pinlabel 2-3 at 380 400
\pinlabel 3-2 at 380 314
\endlabellist
\includegraphics[width=0.4\textwidth]{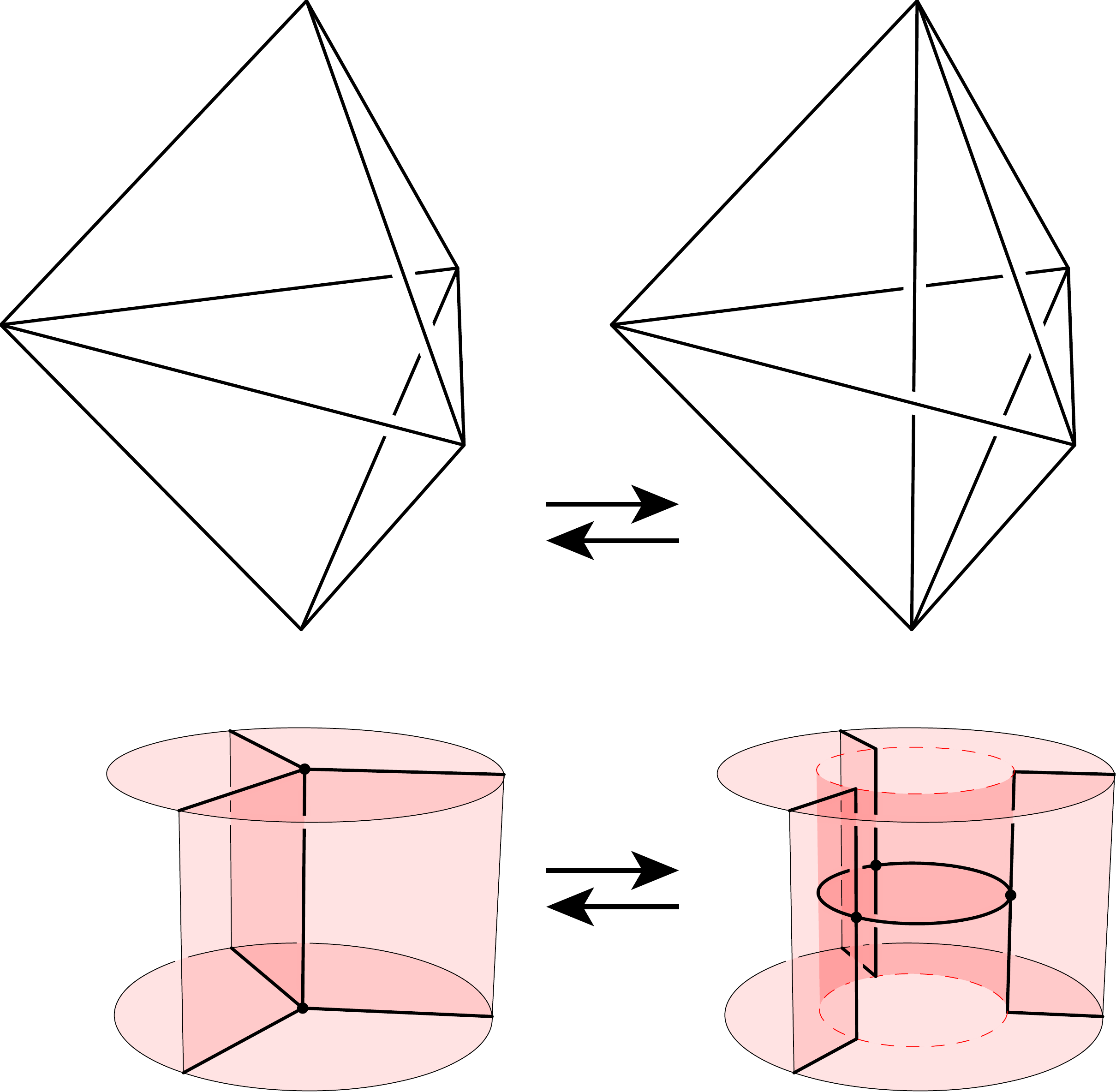}
}

\caption{Bistellar moves on triangulations and their dual special spines.}
\label{1-4 and 2-3 moves}
\end{figure}

\begin{figure}[htbp]

\centering
\labellist
\small\hair 2pt
\pinlabel 2-3 at 320 165
\endlabellist
\includegraphics[width=0.6\textwidth]{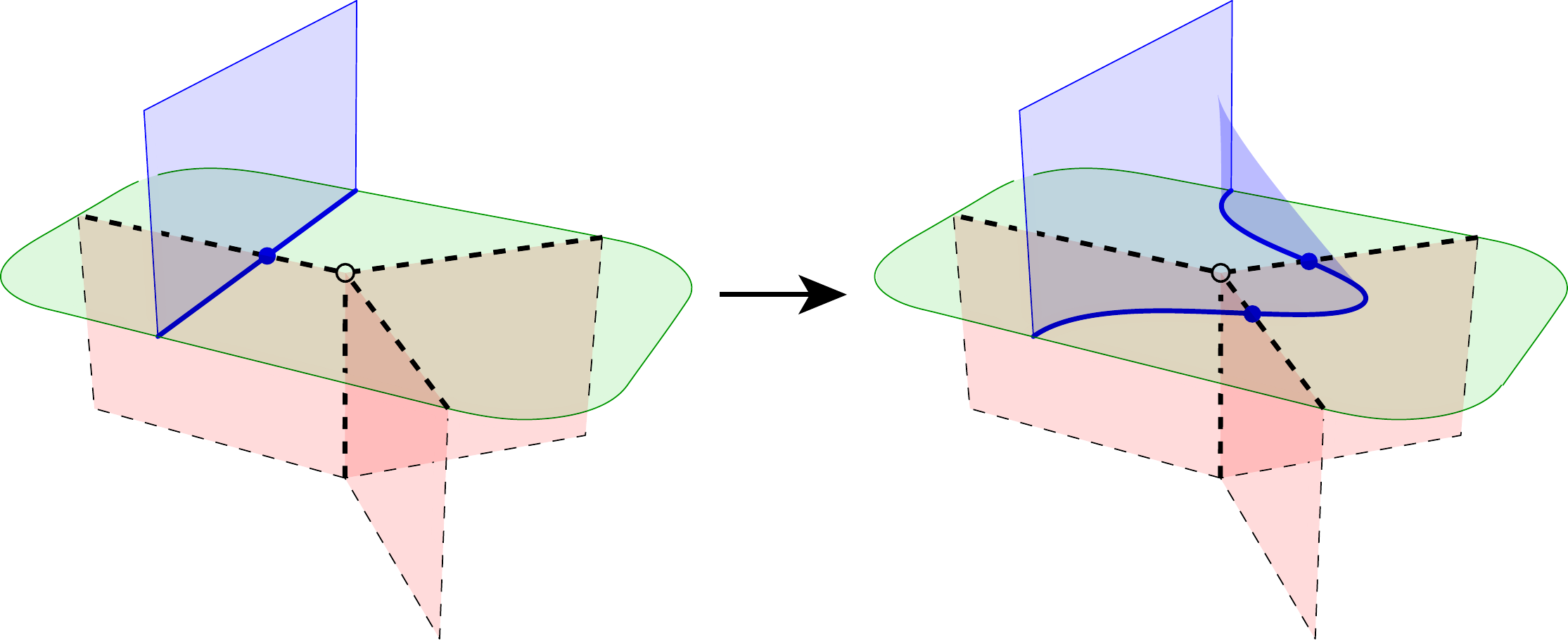}

\caption{An alternative picture of the 2-3 move on a spine.}
\label{alt-spine_slide_2-3}
\end{figure}


Similarly, we can see our barycentric subdivision moves (as implemented using bistellar moves) in the dual spine picture. These are shown in Figures~\ref{barycentric_subdiv_3d_spine} and \ref{barycentric_subdiv_3d_edge_spine}.

\begin{figure}[htbp]
\centering
\subfloat[$\cT$]{
\includegraphics[width=0.22\textwidth]{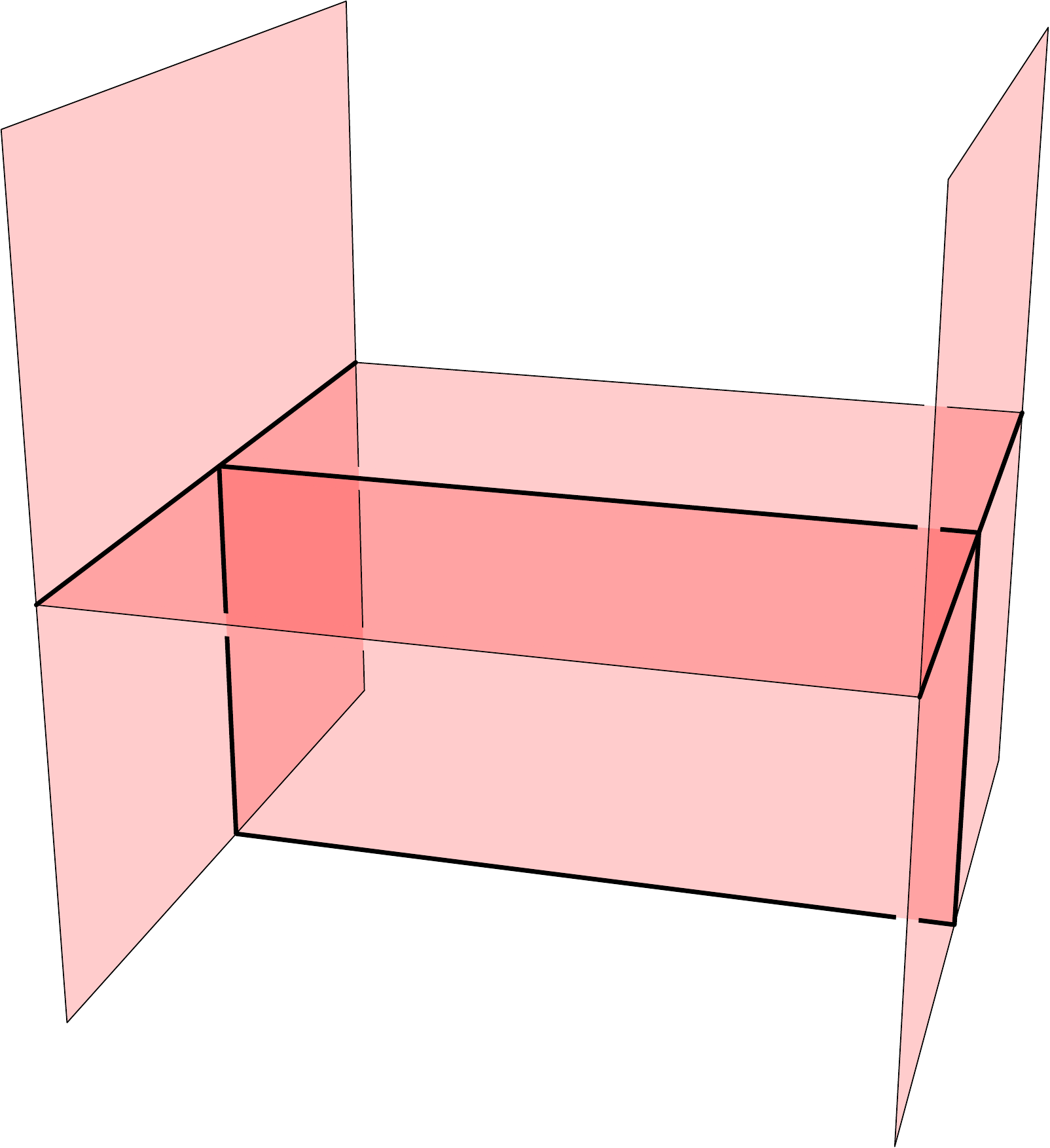}
}
\subfloat[$\cT'$]{
\includegraphics[width=0.22\textwidth]{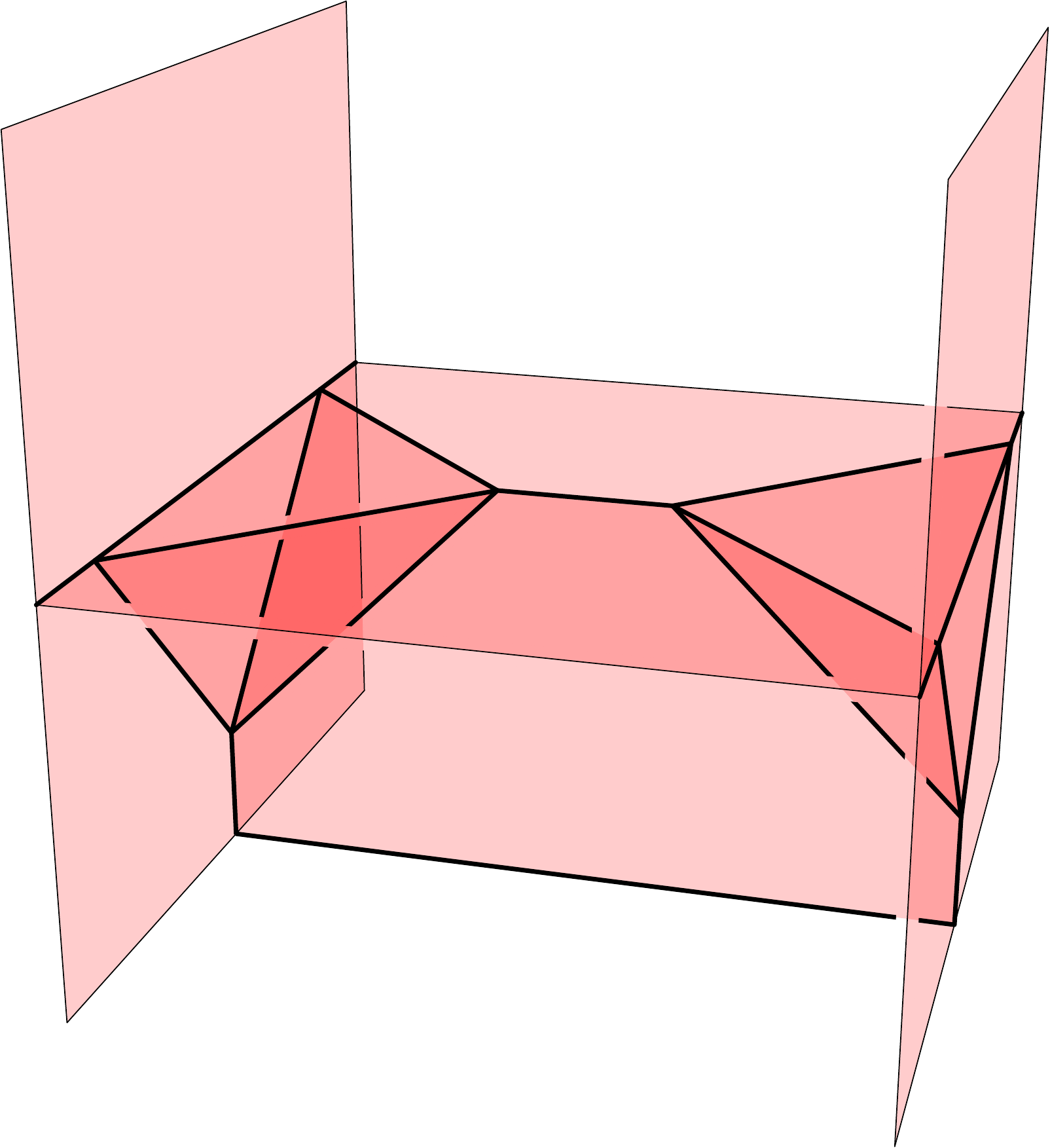}
}
\subfloat[]{
\includegraphics[width=0.22\textwidth]{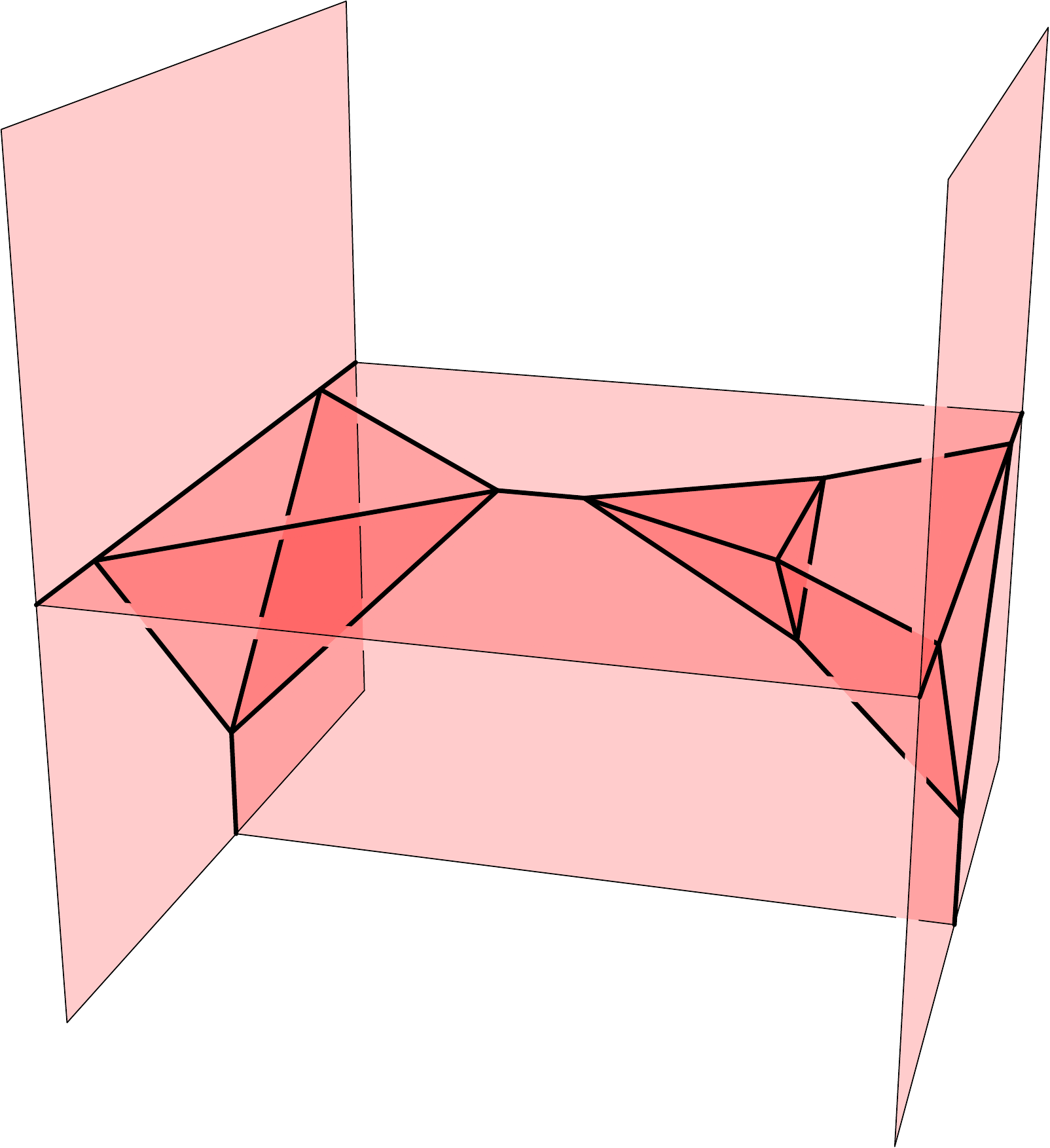}
}
\subfloat[$\cT''$]{
\includegraphics[width=0.22\textwidth]{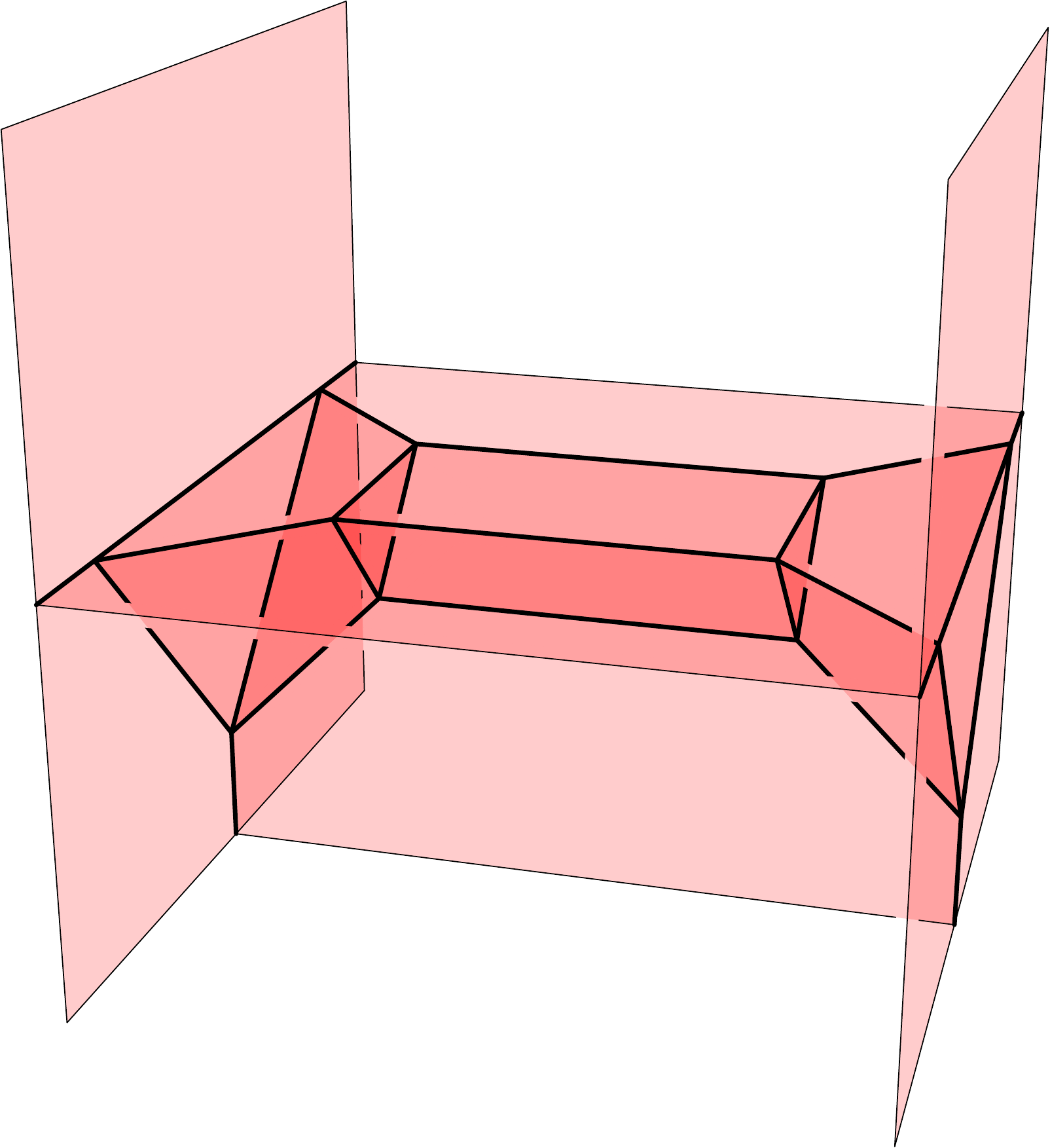}
}
\caption{Figure~\ref{barycentric_subdiv_3d}, drawn in the dual spine picture. } 
\label{barycentric_subdiv_3d_spine}
\end{figure}

\begin{figure}[htbp]
\centering
\subfloat[]{
\includegraphics[width=0.24\textwidth]{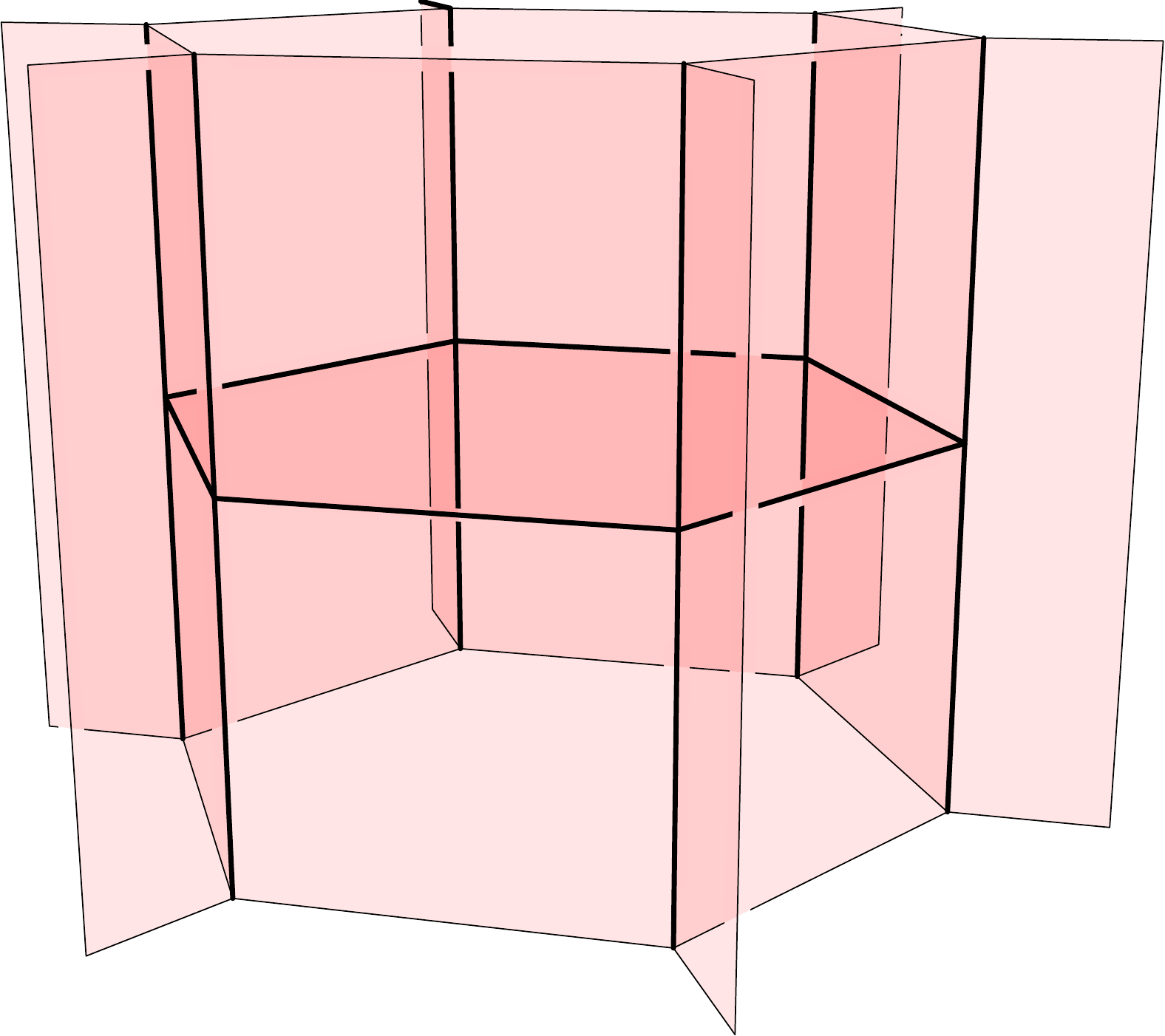}
\label{barycentric_subdiv_3d_spine_edge_step_0}
}
\subfloat[]{
\includegraphics[width=0.24\textwidth]{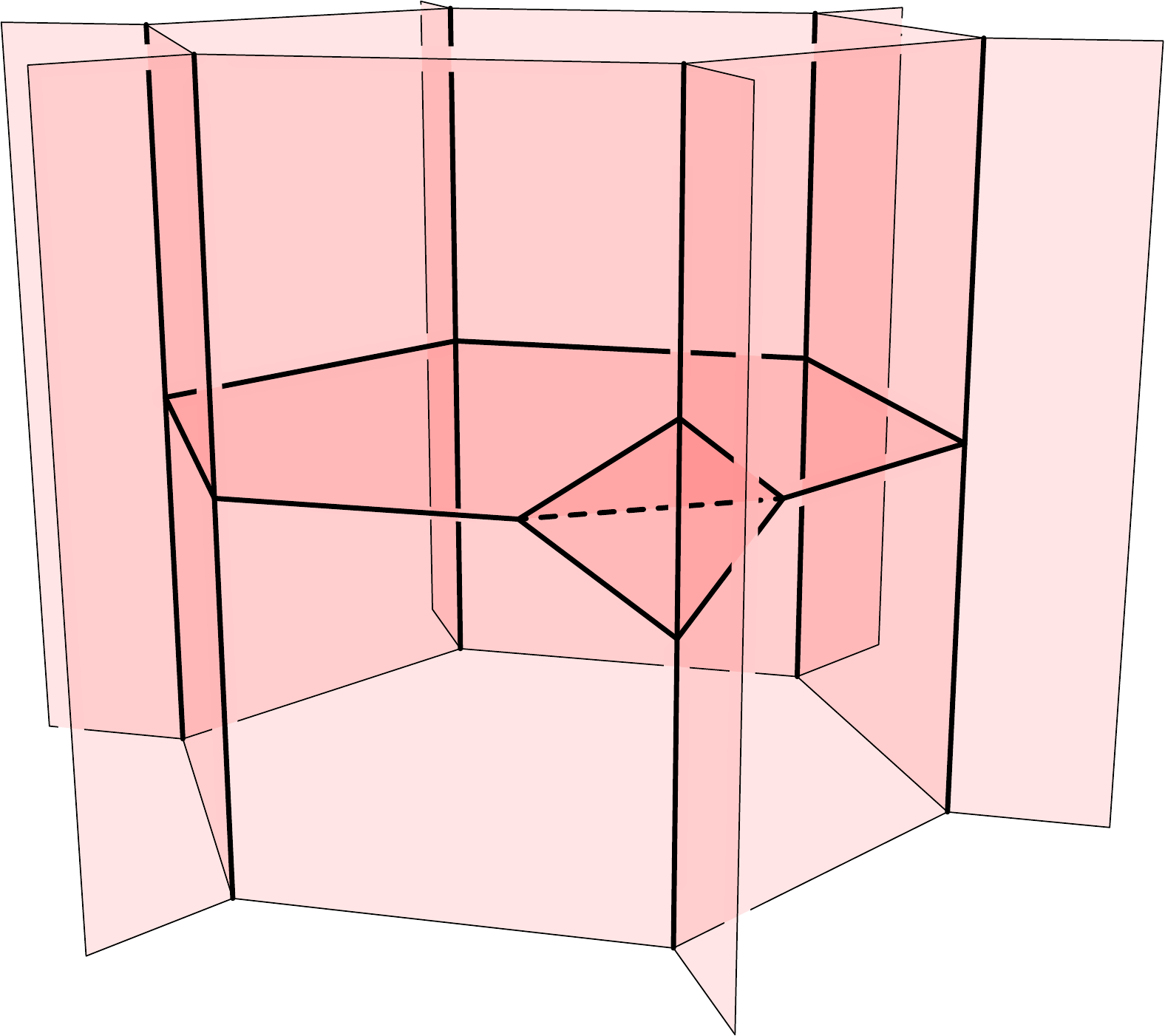}
\label{barycentric_subdiv_3d_spine_edge_step_1}
}
\subfloat[]{
\includegraphics[width=0.24\textwidth]{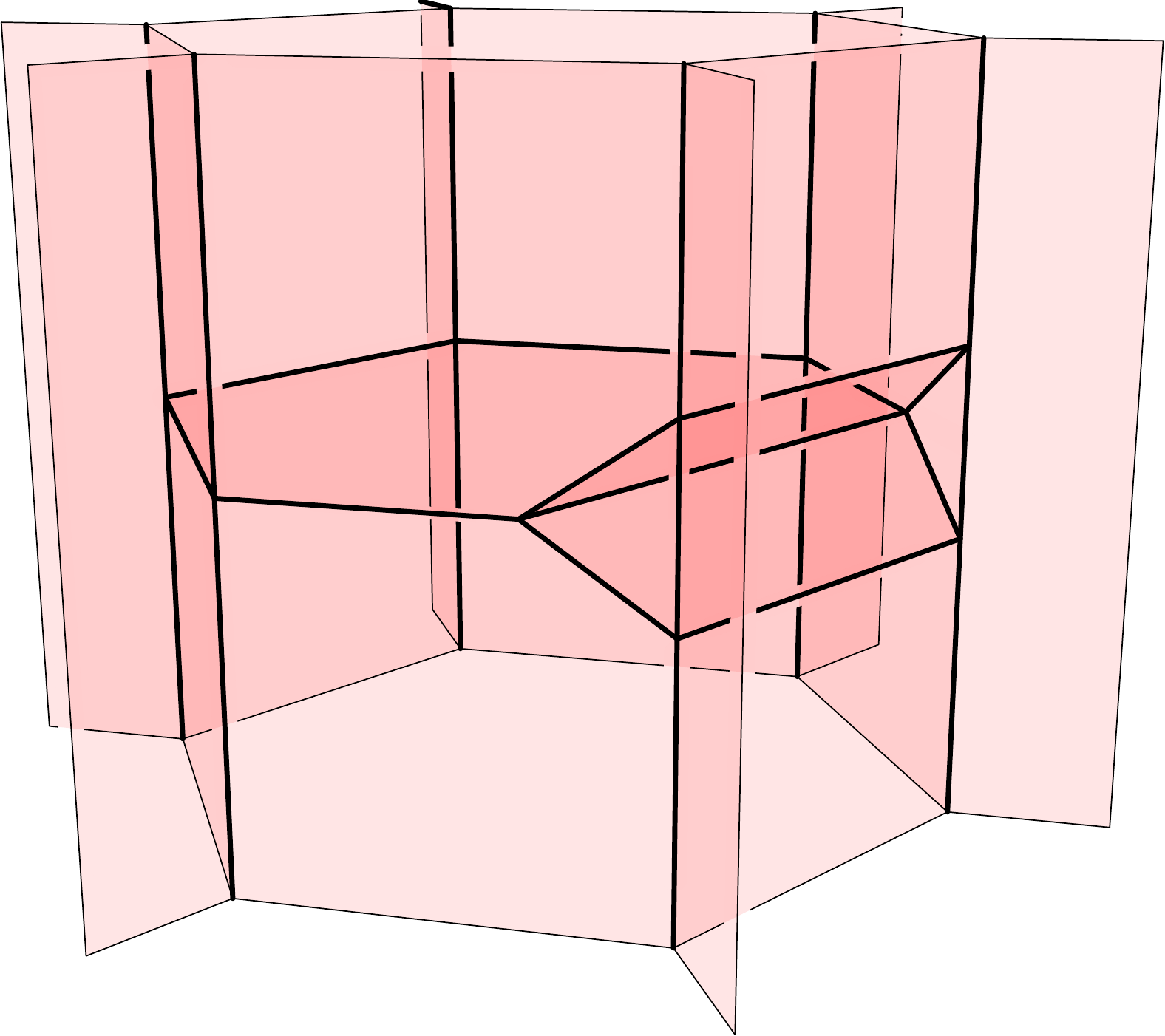}
\label{barycentric_subdiv_3d_spine_edge_step_2}
}
\subfloat[]{
\includegraphics[width=0.24\textwidth]{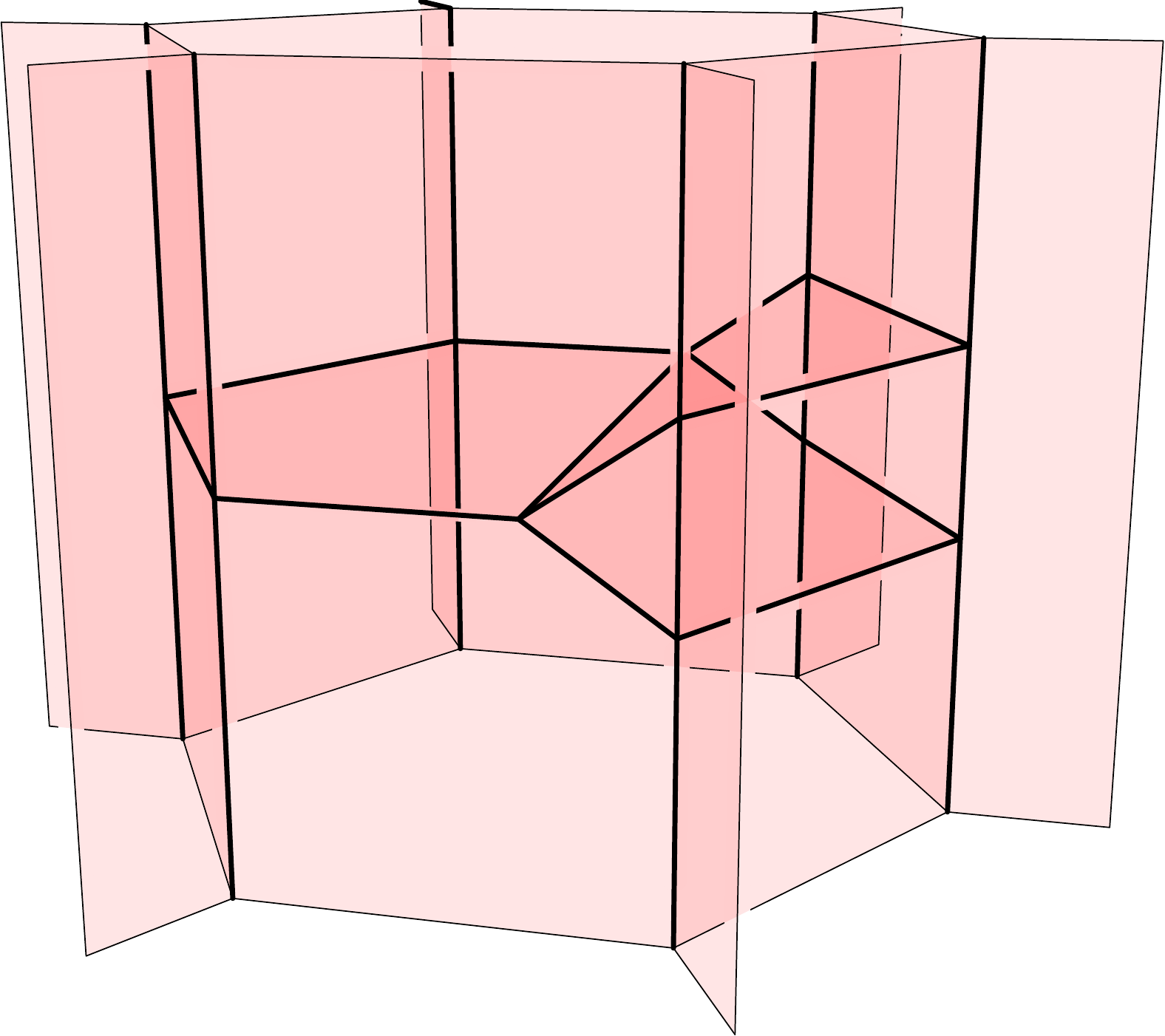}
\label{barycentric_subdiv_3d_spine_edge_step_3}
}

\subfloat[]{
\includegraphics[width=0.24\textwidth]{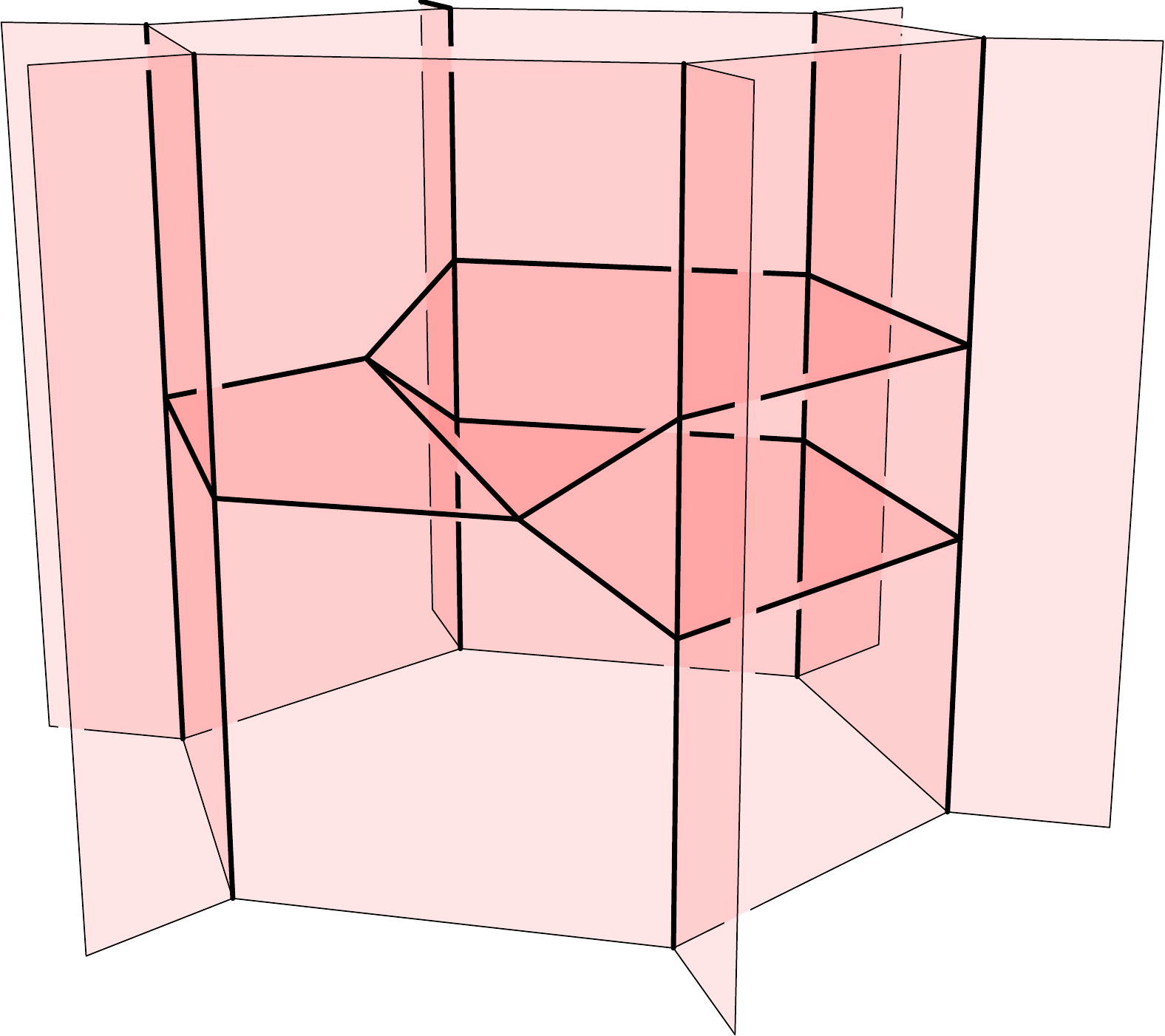}
\label{barycentric_subdiv_3d_spine_edge_step_4}
}
\subfloat[]{
\includegraphics[width=0.24\textwidth]{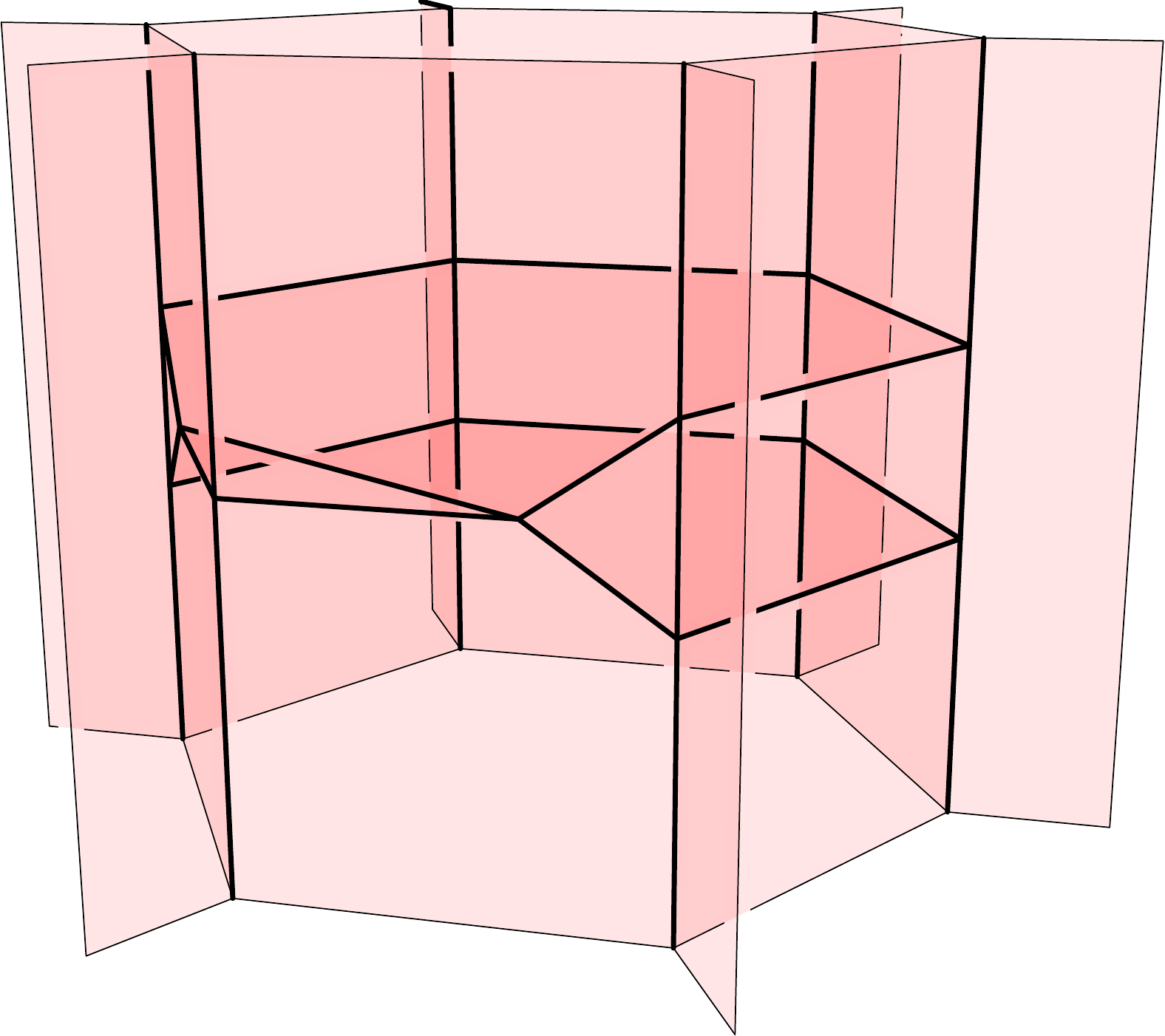}
\label{barycentric_subdiv_3d_spine_edge_step_5}
}
\subfloat[]{
\includegraphics[width=0.24\textwidth]{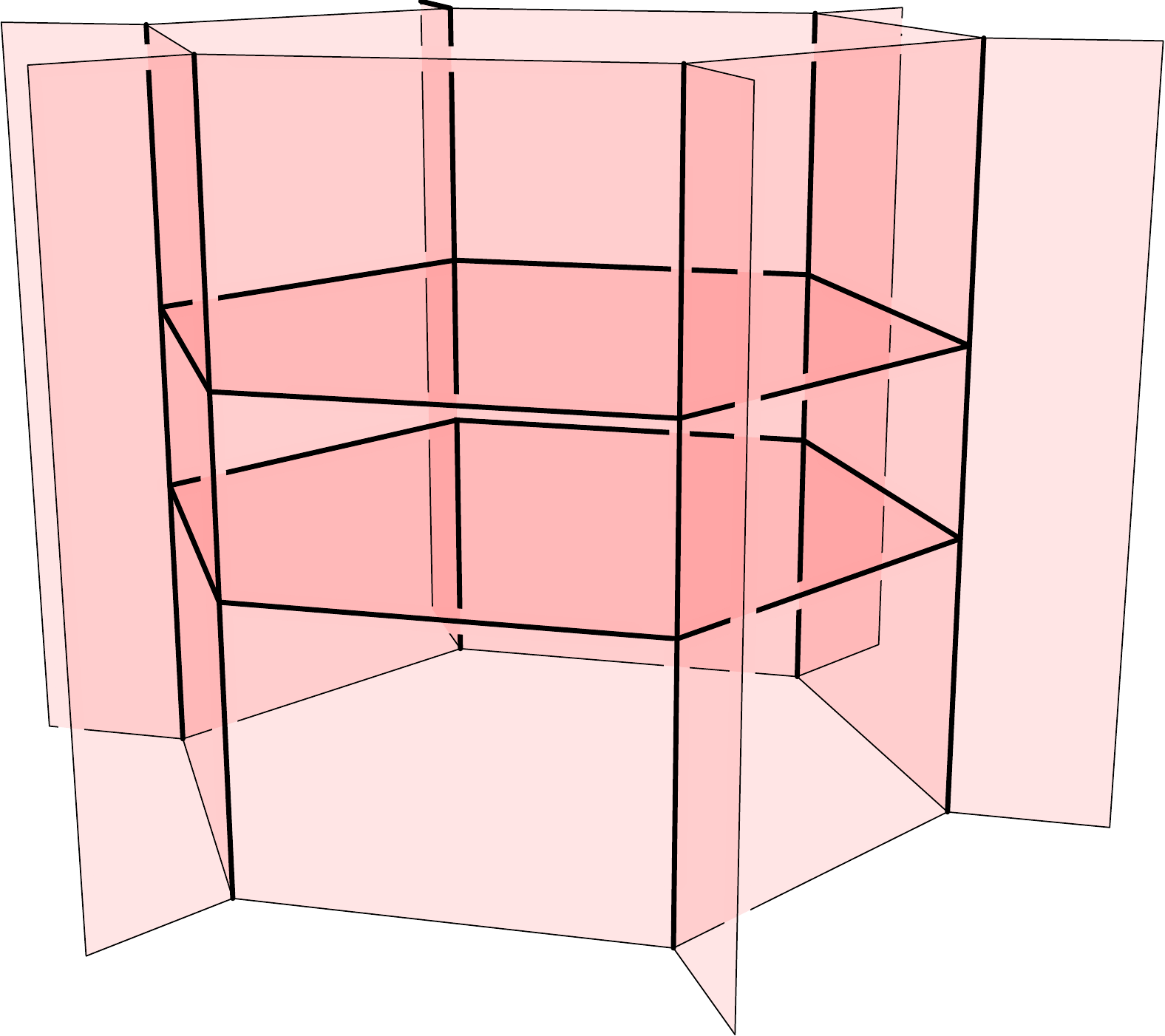}
\label{barycentric_subdiv_3d_spine_edge_step_6}
}
\caption{Figures \ref{barycentric_subdiv_3d_edge} and \ref{barycentric_subdiv_3d_edge_top_view}, drawn in the dual spine picture.} 
\label{barycentric_subdiv_3d_edge_spine}
\end{figure}

The spine picture is sometimes clearer than the triangulation picture when illustrating certain sequences of moves. For example, Matveev introduces the \emph{V-move}, which replaces one vertex of a spine with three vertices.
Matveev shows that the V-move can be implemented using 2-3 and 3-2 moves, assuming that a spine has more than one vertex (or dually that the triangulation has more than one tetrahedron), see Proposition 1.2.8 of \cite{matveev_book}. For the convenience of the reader, we reproduce the argument in Figure~\ref{fig_V-move}. This sequence of moves would be confusing and difficult to draw in the triangulation picture. (Try it!)

Note that by applying symmetries of the spine in the initial drawing, we can apply any of the three possible V-moves to any vertex of a spine that contains more than one vertex. The procedure requires an auxilliary neighbouring vertex -- any vertex in a spine has such a neighbour unless it is incident to itself along all four arcs of triple points, but then it would be the only vertex in the spine.

\begin{figure}[htbp]
\centering
\labellist
\small\hair 2pt
\pinlabel 2-3 at 345 203
\pinlabel 2-3 at 535 190
\pinlabel 2-3 at 535 63
\pinlabel 3-2 at 345 50
\endlabellist
\includegraphics[width=0.9\textwidth]{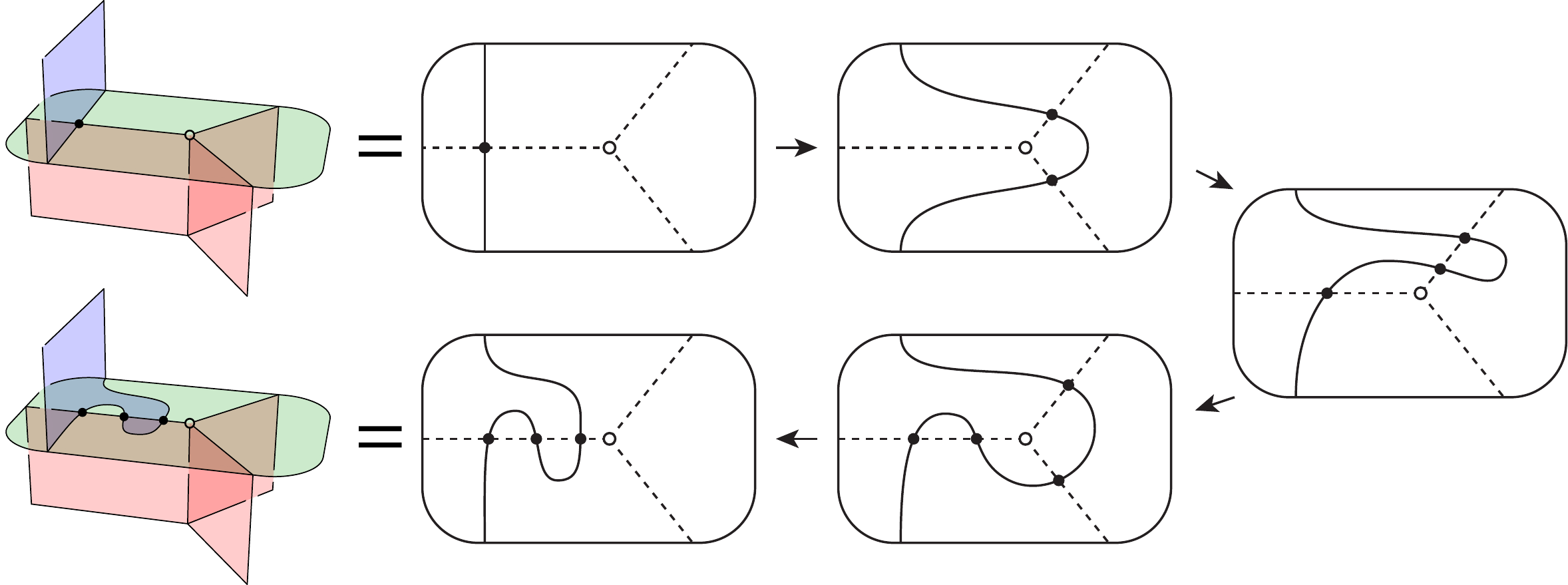}
\caption{The V-move and its construction from 2-3 and 3-2 moves. } 
\label{fig_V-move}
\end{figure}


\section{A warm-up exercise}
\label{sec:warm-up}

As a warm-up exercise for our proof of Theorem \ref{thm:main}, we prove the following lemma. This introduces some of the ideas of the main proof in an easier context, and will be useful later. 
In the proof, we introduce the \emph{triangular 0-2 move} and \emph{pillow marks}. They give a first taste for what is to come.

\begin{lemma} \label{material_vertices_floor}
Let $\cT_A$ and $\cT_B$ be two triangulations of a given 3--manifold $M$ with the same number, $k$, of material vertices. Then there is a sequence of triangulations $\cT_A=\cT_1, \cT_2, \ldots, \cT_n=\cT_B$, with adjacent triangulations related by 2-3, 3-2, 1-4 and 4-1 moves, such that each triangulation $\cT_i$ has at least $k$ material vertices.
\end{lemma}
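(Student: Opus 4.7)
The plan is to adapt the common-refinement proof sketch of Theorem~\ref{pachner} from Section~\ref{sec:refinements}, while keeping $k$ specific material vertices fixed throughout. Among the bistellar moves $\{1\text{-}4,\, 2\text{-}3,\, 3\text{-}2,\, 4\text{-}1\}$, only the 4-1 move removes a vertex; so it suffices to produce $k$ material vertices that persist as vertices of every intermediate triangulation.

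First I would apply an isotopy of $M$ to $\cT_B$ so that its material vertices coincide with those of $\cT_A$; write the common set as $\{p_1, \dots, p_k\}$. This is legitimate because the configuration space of $k$ unordered points in a connected 3--manifold is path-connected, and triangulations are considered up to isotopy. Next I would apply two barycentric subdivisions to each of $\cT_A$ and $\cT_B$, producing simplicial triangulations $\cT_A''$ and $\cT_B''$; by Section~\ref{sec:Barycentric subdivision}, each such subdivision is realized by 1-4, 2-3, and 3-2 moves, none of which remove a vertex, so the $p_i$ persist. Then, by Moise--Brown--Hamilton, apply a further isotopy fixing $p_1, \dots, p_k$ so that the quotient maps of $\cT_A''$ and $\cT_B''$ are simultaneously linear on a common structure, and invoke Alexander's theorem to reach a common refinement $\cT^*$ via stellar moves from $\cT_A''$. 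By Section~\ref{sec:stellar}, forward stellar moves are implemented by 1-4, 2-3, and 3-2 moves, so the entire passage from $\cT_A$ up to $\cT^*$ only adds vertices, and in particular preserves the $p_i$.

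To descend from $\cT^*$ back down to $\cT_B$, reverse the construction on the $\cT_B$ side: inverse stellar moves from $\cT^*$ to $\cT_B''$, followed by inverse barycentric subdivisions from $\cT_B''$ to $\cT_B$. Both stages involve 4-1 moves, but each such 4-1 collapses only a vertex that was introduced at a forward step (a stellar subdivision point or a barycenter of a cell of $\cT_B$), never one of the $p_i$, because these are already vertices of $\cT_B$ and hence of $\cT_B''$. The concatenated sequence from $\cT_A$ to $\cT_B$ therefore keeps $p_1, \dots, p_k$ as vertices, and hence as material vertices, at every intermediate step. The only conceptual obstacle is the initial alignment of the material vertices of $\cT_A$ and $\cT_B$; without arranging a shared set $\{p_1, \dots, p_k\}$ at the outset, no prescribed set of $k$ vertices has any reason to survive the inverse stellar moves near the end.
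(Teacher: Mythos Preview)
Your approach is correct and genuinely different from the paper's, but you have made it harder than necessary. The only observation you actually need is about vertex \emph{counts}: the common-refinement path climbs from $\cT_A$ to a refinement $\cT^*$ using only 1-4, 2-3 and 3-2 moves (barycentric subdivisions and forward stellar moves, per \S\ref{sec:stellar} and \S\ref{sec:Barycentric subdivision}), so the number of material vertices is non-decreasing along that leg; the path from $\cT^*$ down to $\cT_B$ is simply the reverse of the analogous ascent from $\cT_B$, so the count is non-increasing and terminates at $k$. Hence every intermediate triangulation has at least $k$ material vertices, with no need to align the material vertices of $\cT_A$ and $\cT_B$ or to shepherd a distinguished set $\{p_1,\dots,p_k\}$ through the whole sequence. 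In particular, you can drop the demand that the Moise--Brown--Hamilton isotopy fix these points; that is not part of the standard statement and you do not justify it. If you insist on tracking specific vertices, note that on the descent you should be tracking the $k$ material vertices of $\cT_B$, not those of $\cT_A$---these coincide only after your alignment step, which is precisely the step you have not fully justified.

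The paper's argument is quite different. Rather than rebuilding the connecting sequence from scratch, it takes an arbitrary bistellar sequence supplied by Theorem~\ref{thm:Banagl-Friedman}, which may dip below $k$, and repairs it by carrying an auxiliary \emph{triangular pillow} (inserted by a 1-4 followed by a 3-2) through the problematic stretch, shuffling it out of the way of each bistellar move via a ``pillow mark'' bookkeeping device. Your route is cleaner for this particular lemma, since it sidesteps the pillow mechanism entirely by choosing a monotone up-then-down sequence at the outset. What the paper's argument buys is pedagogical: the pillow-mark technique is a deliberate rehearsal for the arch-mark machinery in the proof of Theorem~\ref{thm:main}, where one must transport a local gadget through a \emph{given} sequence of 2-3 and 3-2 moves and cannot simply rerun the common-refinement construction.
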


\begin{proof}
The statement of this lemma is essentially the statement of Theorem \ref{thm:Banagl-Friedman}, except that we also require that the number of material vertices does not go below the number, $k$, of material vertices that $\cT_A$ and $\cT_B$ have. We will work from a sequence $\mathcal{S}$ of triangulations $\cT_A=\cT_1, \cT_2, \ldots, \cT_n=\cT_B$ given to us by Theorem \ref{thm:Banagl-Friedman}, and will modify it if and when the number of material vertices goes below $k$.

In fact, we may assume that $\cT_2, \ldots, \cT_{n-1}$ have precisely $k-1$ vertices, and thus $\cT_2$ is obtained from $\cT_1$ by a 1-4 move, $\cT_n$ is obtained from $\cT_{n-1}$ by a 4-1 move, and all other moves are 2-3 and 3-2 moves. If we are able to replace this by a sequence of triangulations for which the number of vertices is always at least $k$, then by induction we obtain the result.

Our tool to increase the number of vertices in a triangulation is the \emph{triangular 0-2 move}. Its reverse is the \emph{triangular 2-0 move}. See Figure~\ref{0-2_triangular}. The triangular 0-2 move is performed by ungluing one of the triangular gluings of the triangulation, and inserting into the resulting gap a \emph{triangular pillow} -- a three-ball formed from two tetrahedra, glued to each other along three faces, so that the three-ball has an internal vertex. The triangular 0-2 move can be performed on any triangular face of the triangulation, and implemented using a 1-4 move followed by a 3-2 move, as shown in Figure~\ref{0-2_triangular}. The inverse move is called the \emph{triangular 2-0 move}, and can be performed on any triangular pillow as long as the two outer faces of the pillow are not glued to each other. \footnote{If they were glued then the entire triangulation would consist of only the triangular pillow, so performing the 2-0 move would result in a triangulation with no tetrahedra!}

\begin{figure}[htbp]
\centering
\labellist
\small\hair 2pt
\pinlabel {0-2} at 167 429
\pinlabel {1-4} at 242 135
\pinlabel {3-2} at 484 135
\endlabellist
\includegraphics[width=0.8\textwidth]{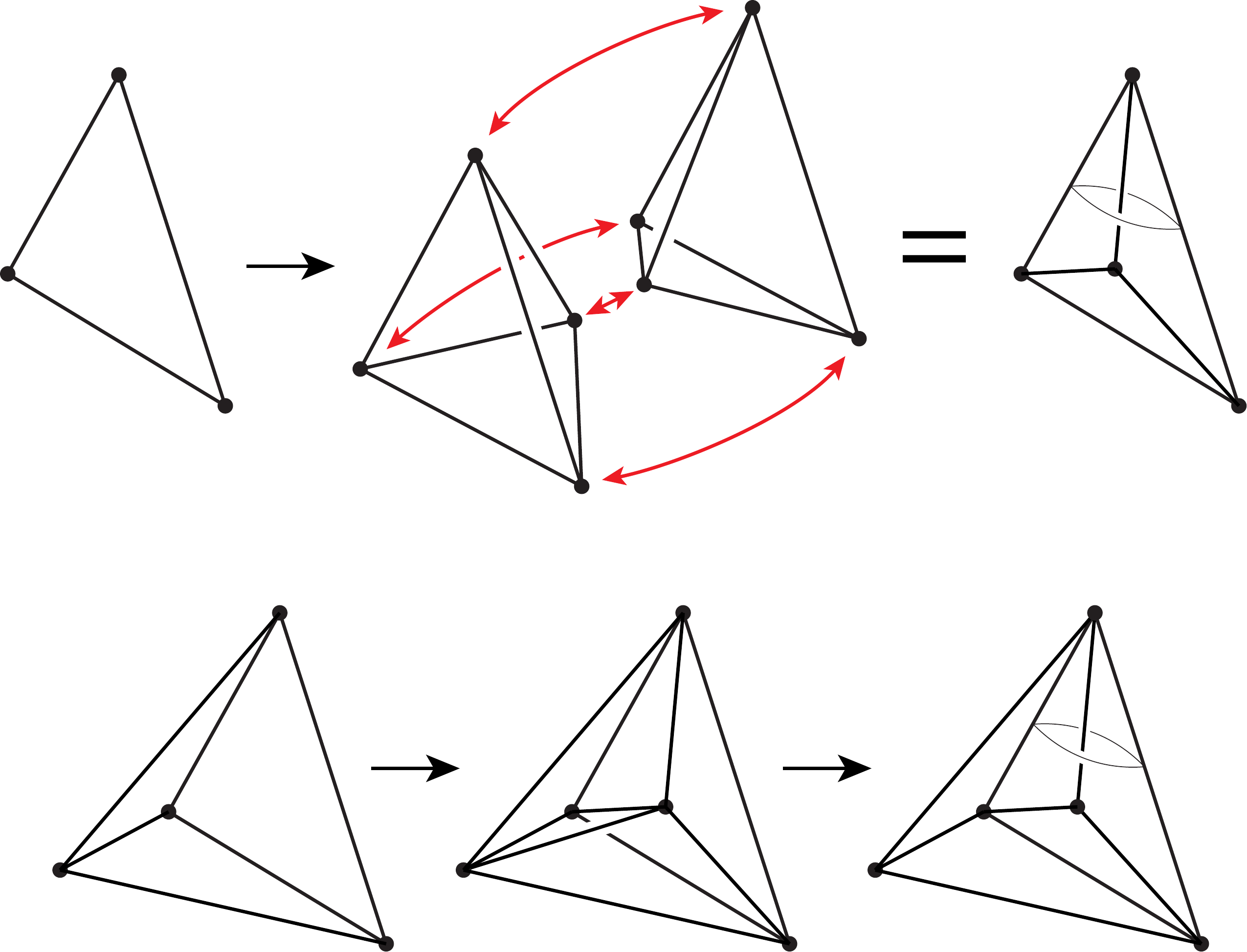}

\caption{Above: the triangular 0-2 move adds a vertex and two tetrahedra to a triangulation. Below: the triangular 0-2 move can be implemented using a 1-4 move applied to a tetrahedron adjacent to the triangle, followed by a 3-2 move. We draw a bigon to show the two external faces of the triangular pillow.} 
\label{0-2_triangular}
\end{figure}

We modify our sequence of triangulations $\cT_A=\cT_1, \cT_2, \ldots, \cT_n=\cT_B$ as follows. Immediately before the initial 4-1 move takes us to $k-1$ material vertices, we perform a triangular 0-2 move on some triangle $\tri$ of the triangulation, increasing the number of material vertices to $k+1$. If this triangle $\tri$ is not deleted by any of the subsequent moves taking us to $\cT_n$, then we perform those moves as before, with the resulting triangulations modified from the original sequence by our triangular 0-2 move. After performing the final 1-4 move (taking us to $k+1$ material vertices), we remove the triangular pillow with the triangular 2-0 move, taking us back to $k$ material vertices, and we have arrived at $\cT_B$.

If there is no triangle of $\cT_A$ that persists throughout the sequence of moves then the above process will not work. Whichever choice we make for $\tri$, at some point a 2-3 or 3-2 move in the sequence (or the final 4-1 move) is supposed to delete $\tri$. However, we will be unable to perform this move because the triangular pillow blocks it. In these cases, we must first move the triangular pillow elsewhere. 

In order to organise the argument, for each triangulation of $\mathcal{S}$, we will mark a triangle with a \emph{pillow mark}, showing where we want the triangular pillow to be. We denote by $\cT'_i$ the \emph{marked triangulation} resulting from adding a pillow mark to the triangulation $\cT_i$.
Denote the result of inserting a triangular pillow into a marked triangulation $\cT'_i$ according to the pillow mark by $\widehat{\cT}'_i$. We refer to these triangulations $\widehat{\cT}'_i$ as \emph{waypoint triangulations}. Our plan then is to build a new sequence of triangulations $\widehat{\mathcal{S}}$, which again connects $\cT_A$ to $\cT_B$, but for which the number of material vertices never goes below $k$. We do this by going from $\cT_A = \cT_1$ to $\widehat{\cT}'_1$ via a triangular 0-2 move (i.e. a 1-4 followed by a 3-2), then connecting each adjacent pair $\widehat{\cT}'_i$ and $\widehat{\cT}'_{i+1}$ of waypoint triangulations using 1-4, 2-3, 3-2 and 4-1 moves, then finally going from $\widehat{\cT}'_n$ to $\cT_n = \cT_B$ via a triangular 2-0 move (i.e. a 2-3 followed by a 4-1).

\begin{tikzpicture}
\end{tikzpicture}

\begin{figure}[htbp]
\[
\begin{tikzcd}
 \cT_A \arrow[r,equal] & \cT_1 \arrow[d] \arrow[r, "4-1"] & \cT_2 \arrow[d] \arrow[r] & \cT_3 \arrow[d] \arrow[r] & \cdots \arrow[r] & \cT_{n-1} \arrow[d] \arrow[r, "1-4"] & \cT_n \arrow[d] \arrow[r, equal] & \cT_B \\
& \widehat{\cT}'_1 \arrow[r, dashed]  & \widehat{\cT}'_2 \arrow[r, dashed] & \widehat{\cT}'_3 \arrow[r, dashed] & \cdots \arrow[r, dashed] & \widehat{\cT}'_{n-1} \arrow[r, dashed] &\widehat{\cT}'_n &
\end{tikzcd}
\]
\caption{The original sequence of triangulations $\mathcal{S}$ is on the upper row. The modified waypoint triangulations are below. Dashed arrows indicate sequences of 1-4, 2-3, 3-2 and 4-1 moves. All vertical arrows indicate triangular 0-2 moves.}
\label{triangular_pillow_sidestep}
\end{figure}
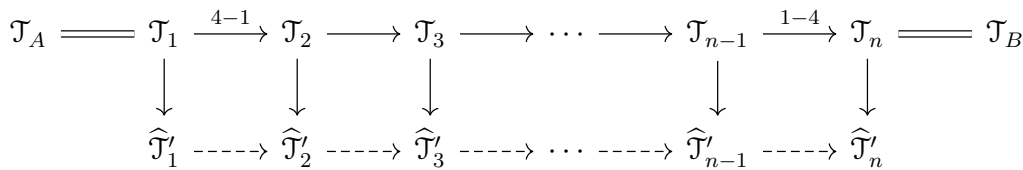

First, we describe where the pillow mark is on each $\cT'_i$. The idea is to have the pillow mark of $\cT'_i$ move when it is involved in the bistellar move relating $\cT'_i$ and $\cT'_{i+1}$.
Having done this, we will describe sequences of 1-4, 2-3, 3-2 and 4-1 moves to take each waypoint triangulation $\widehat{\cT}'_i$ to the subsequent $\widehat{\cT}'_{i+1}$. See Figure~\ref{triangular_pillow_sidestep}.

In $\cT'_1$, we choose the pillow mark arbitrarily. There are two cases under which we determine the new pillow mark on $\cT'_{i+1}$ based on the location of the pillow mark on $\cT_i$:

If a pillow mark is on a triangle of $\cT'_i$ that is not deleted by the bistellar move relating $\cT_i$ and $\cT_{i+1}$, then we leave it where it is, duplicating it on $\cT'_{i+1}$. If a pillow mark is on a triangle of $\cT'_i$ that is deleted by the bistellar move, then we must first move it to any triangle of $\cT_i$ that is not deleted by the move. We denote the resulting marked triangulation by $\cT''_i$. This done, we perform the bistellar move and get $\cT'_{i+1}$, where the pillow mark doesn't move between $\cT''_i$ and $\cT'_{i+1}$. Note that there is always a triangle of $\cT_i$ that is not deleted by the bistellar move, since every triangle on the boundary of the region altered by a bistellar move remains after the move.

Having constructed these markings on the triangulations of $\mathcal{S}$, the resulting waypoint triangulations $\widehat{\cT}'_i$ never have fewer than $k$ material vertices. In fact, $\widehat{\cT}'_1$ and $\widehat{\cT}'_n$ have $k+1$ vertices, while all others have $k$ vertices. All that remains is to show how to connect $\cT_1$ to $\widehat{\cT}'_1$, how to connect $\widehat{\cT}'_i$ to $\widehat{\cT}'_{i+1}$ for each subsequent pair of waypoint triangulations, and how to connect $\widehat{\cT}'_n$ to $\cT_n$ by sequences of 1-4, 2-3, 3-2 and 4-1 moves, without reducing the number of material vertices below $k$. 

To connect $\cT_1$ to $\widehat{\cT}'_1$, we apply a triangular 0-2 move. This is implemented by a 1-4 move followed by a 3-2 move, as shown in Figure~\ref{0-2_triangular}, and takes us from a triangulation with $k$ vertices to one with $k+1$ vertices. The connection between $\widehat{\cT}'_n$ to $\cT_n$ is the same but in reverse.

In the case that we did not move the pillow mark between $\cT'_i$ and $\cT'_{i+1}$, then the triangular pillow is not in the way of the corresponding 2-3 or 3-2 move on $\widehat{\cT}'_i$, and so we simply apply the move to obtain $\widehat{\cT}'_{i+1}$, which doesn't alter the number of vertices.
In the case that we did move the pillow mark, we get from $\widehat{\cT}'_i$ to $\widehat{\cT}''_i$ by first performing a triangular 0-2 move in the location of the new pillow mark, and then perform a triangular 2-0 move to remove the old triangular pillow and obtain $\widehat{\cT}'_{i+1}$. Thus the number of material vertices goes up by one to $k+1$ and then down by one to $k$ in this process, but never goes below $k$. Having moved the pillow mark out of the way of the 2-3 or 3-2 move that converts $\cT_i$ into $\cT_{i+1}$, we apply the move to $\widehat{\cT}''_i$ to obtain $\widehat{\cT}'_{i+1}$, which again doesn't alter the number of vertices.

Making these moves connects the waypoint triangulations together, and so connects $\cT_A$ to $\cT_B$ by a sequence of bistellar moves, in such a way that all triangulations in the sequence have at least $k$ material vertices.
\end{proof}

\begin{rmk}
One might ask why we use the triangular 0-2 move to increase the number of vertices instead of the simpler 1-4 move. The answer is that the triangular pillow is easier to ``hide'' from the other Pachner moves than the result of the 1-4 move.  For example, the sequence $\mathcal{S}$ might have a triangulation with only two tetrahedra on which we are supposed to perform a 2-3 move. Neither of the two tetrahedra would be valid locations to hide the extra vertex if we used the 1-4 move. 
\end{rmk}


\section{Proof strategy}
\label{proof_strategy}

In order to prove Theorem~\ref{thm:main}, suppose that
we are given two triangulations,  $\cT_A, \cT_B$ of a given manifold $M$ with the same number, $k$ say, of material vertices. By Theorems~\ref{pachner} and \ref{thm:Banagl-Friedman}, we have a sequence $\mathcal{S}$ of triangulations $\cT_A=\cT_1, \cT_2, \ldots, \cT_n=\cT_B$, with adjacent triangulations related by 2-3, 3-2, 1-4 and 4-1 moves. We convert this sequence $\mathcal{S}$ of triangulations into another sequence $\widehat{\mathcal{S}}$ that connects $\cT_A$ to $\cT_B$, but that does not use 1-4 or 4-1 moves.

Note that every triangulation of the desired sequence $\widehat{\mathcal{S}}$ must have $k$ material vertices, since the number of vertices only changes under 1-4 and 4-1 moves.
By Lemma \ref{material_vertices_floor}, we may assume that every triangulation in our sequence $\mathcal{S}$ has at least $k$ material vertices, so we must avoid intermediate triangulations with more than $k$ material vertices. Again, as in the proof of the lemma, it suffices to consider an innermost pair of triangulations with the property that the two triangulations have $k$ material vertices, and the triangulations between them $k+1$. In particular, we may assume that the first bistellar move is a 1-4 move applied to $\cT_A$, followed by a sequence of 2-3 and 3-2 moves, and the sequence is then completed with a 4-1 move resulting in $\cT_B$.

We need a number of tools to achieve this, primarily the \emph{arch}. The arch is a structure we can add in to a spine (or dually a triangulation). The move of \emph{inserting an arch} has the effect of connecting together two three-dimensional regions in the complement of the spine, while only altering the spine in a very localised fashion. Dually, this move unglues a triangular face $\tri$ at which two (or possibly one) tetrahedra are glued, and inserts the arch (made from a single tetrahedron). The effect is to identify two of the three vertices incident to $\tri$. This move cannot be achieved by 2-3 and 3-2 moves, because these moves do not change the number of vertices of the triangulation. We discuss the arch in detail in Section \ref{Sec:arch}.

Since the triangulations in our sequence $\mathcal{S}$ have at most $k+1$ material vertices, we will need to insert at most one arch in each triangulation. In order to organise the argument, we will mark the triangle of each triangulation where we wish to insert an arch. Each such \emph{arch mark} shows both the triangle for an arch to be introduced into, and the pair of incident vertices to be connected. See Figure~\ref{arch_mark}. We denote by $\cT'_i$ the \emph{marked triangulation} resulting from adding an arch mark to the triangulation $\cT_i$.
Dually, an arch mark is associated to an edge of the spine, and indicates which pair of incident three-dimensional regions are to be connected by inserting the arch.

\begin{figure}[h]
\centering
\includegraphics[width=0.2\textwidth]{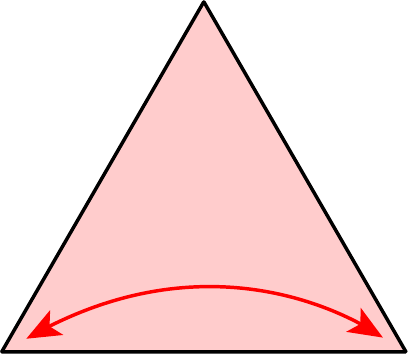}
\caption{An arch mark on a triangle shades the triangle and adds arrows showing the vertices to be identified by the arch.}
\label{arch_mark}
\end{figure}

We will choose where to put each arch mark in such a way that after performing the arch insertions, the resulting triangulations have the same number of material vertices as $\cT_A$ and $\cT_B$. Denote the result of inserting an arch into a marked triangulation $\cT'_i$ by $\widehat{\cT}'_i$. We refer to these triangulations $\widehat{\cT}'_i$ as \emph{waypoint triangulations}. Our plan then is to connect each adjacent pair $\widehat{\cT}'_i$ and $\widehat{\cT}'_{i+1}$ of waypoint triangulations using only 2-3 and 3-2 moves, producing a new sequence of triangulations $\widehat{\mathcal{S}}$, which again connects $\cT_A$ to $\cT_B$, but that only uses 2-3 and 3-2 moves. See Figure~\ref{arch_sidestep}.
We give the details of the proof in Section~\ref{sec:details}.

\begin{figure}[h]
\[
\begin{tikzcd}
  \cT_A \arrow[r,equal] &\cT_1 \arrow[r, "1-4"] \arrow[dr, dashed] & \cT_2 \arrow[d] \arrow[r] & \cT_3 \arrow[d] \arrow[r] & \cdots \arrow[r] & \cT_{n-1} \arrow[d] \arrow[r, "4-1"] & \cT_n \arrow[r,equal] & \cT_B\\
 & & \widehat{\cT}'_2 \arrow[r, dashed] & \widehat{\cT}'_3 \arrow[r, dashed] & \cdots \arrow[r, dashed] & \widehat{\cT}'_{n-1} \arrow[ur, dashed] & &
\end{tikzcd}
\]
\caption{The original sequence of triangulations $\mathcal{S}$ is on the upper row. The waypoint triangulations are below. Dashed arrows indicate sequences of 2-3 and 3-2 moves. All vertical arrows indicate arch insertion moves.}
\label{arch_sidestep}
\end{figure}
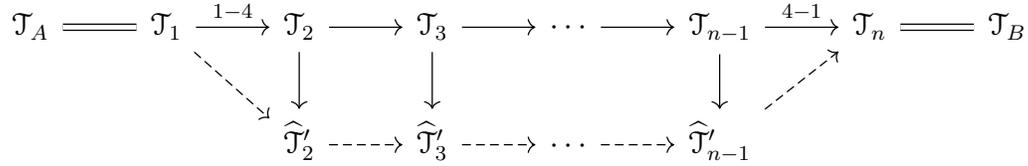


\section{The arch and associated constructions}
\label{sec:arch and friends}

Two of the tools that are key to the visual simplicity and conceptual elegance of working with spines are Matveev's \emph{arch} and \emph{arch-with-membrane}. This section describes these constructions and their equivalent moves on triangulations.

\subsection{The arch~\cite[page 7]{matveev_book}}

\label{Sec:arch}

Figure~\ref{fig_arches} shows the arch, and the related structure of an \emph{arch-with-membrane} in spine form. Figure~\ref{fig_tetrahedron_arch} shows the arch in the dual triangulation picture. Also see Fig. 1.29 of \cite{matveev_book}.

\begin{figure}[htbp]
\centering
\subfloat[An edge of the spine.]{
\centering
\labellist
\small\hair 2pt
\pinlabel $A$ at 360 230
\pinlabel $f$ at 250 230
\pinlabel $B$ at 100 230
\pinlabel $g$ at 120 100
\pinlabel $A'$ at 390 40
\endlabellist
\includegraphics[width=0.3\textwidth]{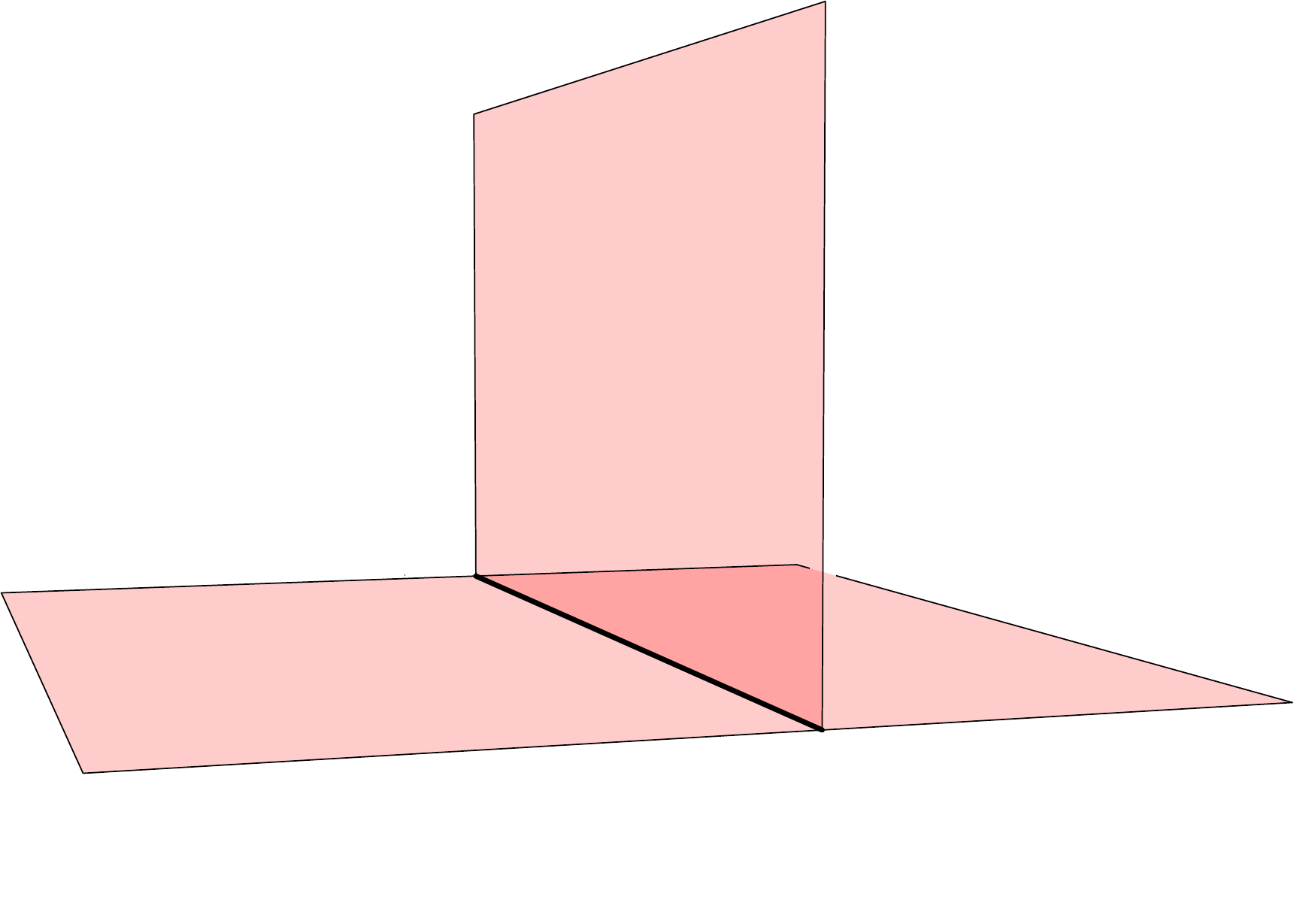}
\label{fig_arch_before}
}
\thinspace
\subfloat[An arch.]{
\centering
\includegraphics[width=0.3\textwidth]{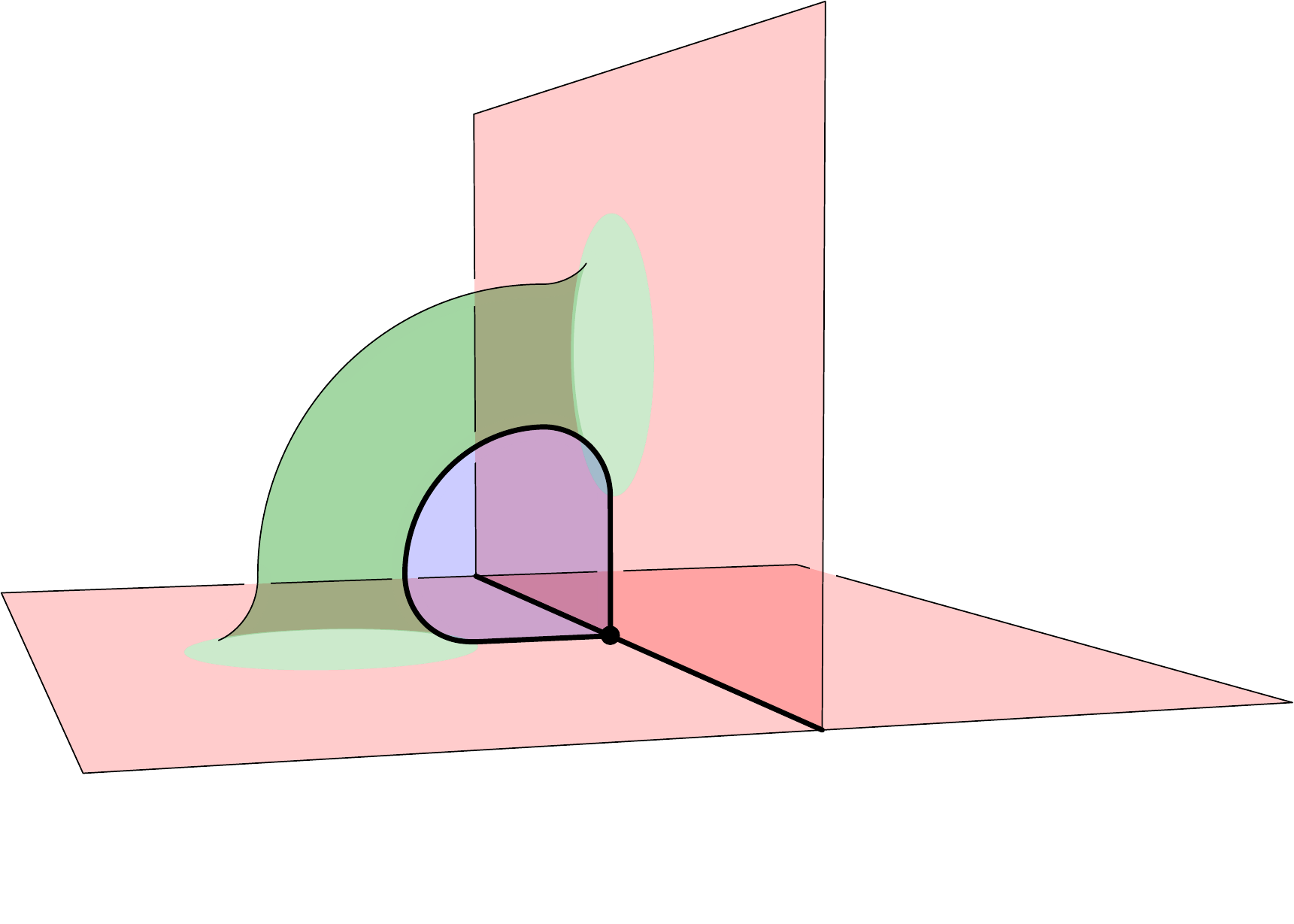}
\label{fig_arch}
}
\thinspace
\subfloat[An arch-with-membrane.]{
\includegraphics[width=0.3\textwidth]{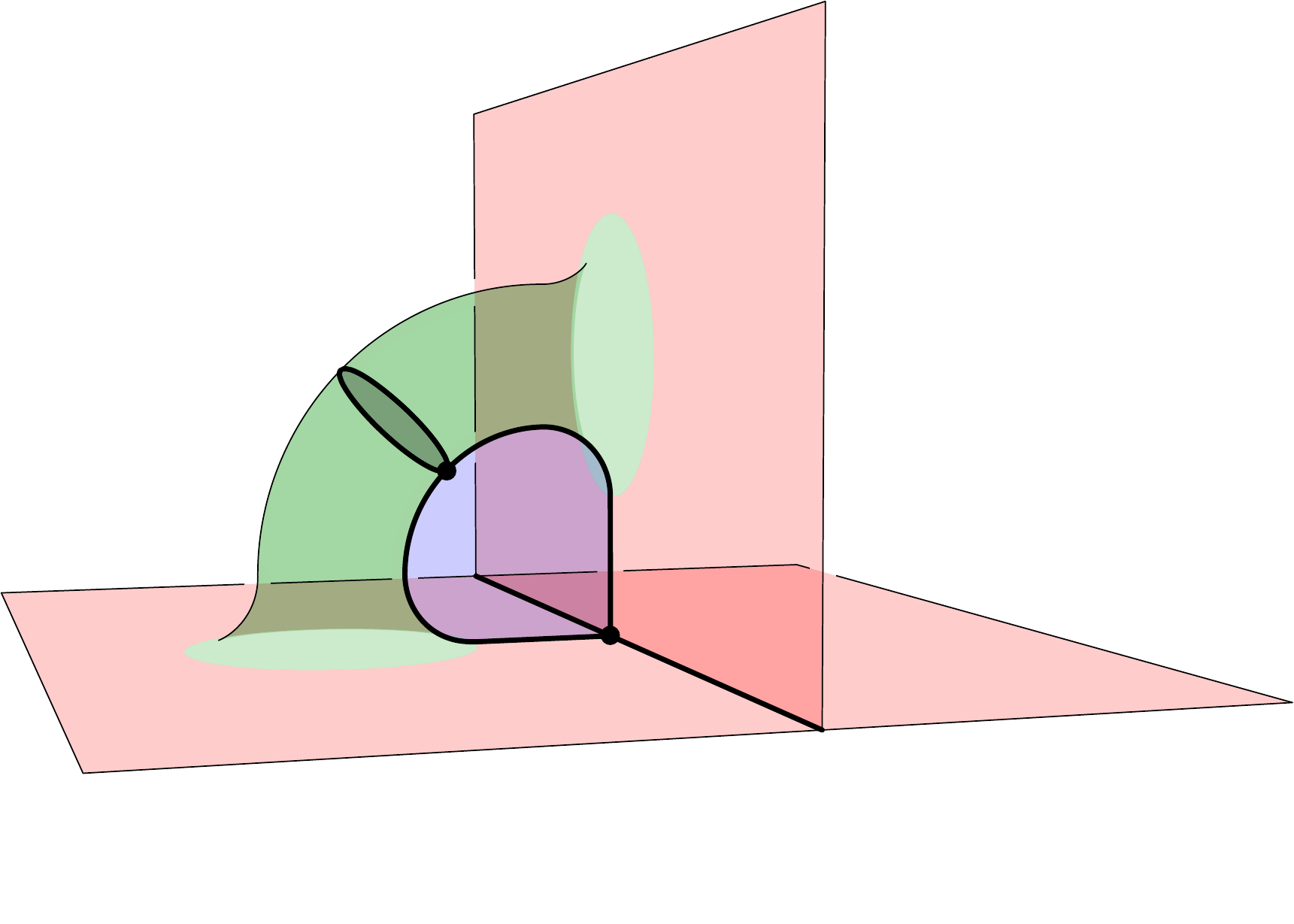}
\label{fig_arch_w_membrane}
}
\caption{Introducing an arch (i.e. going from Figure~\ref{fig_arch_before} to  \ref{fig_arch}) connects together the regions $A$ and $A'$. Figure~\ref{fig_arch_w_membrane} shows the related structure of an arch-with-membrane.} 
\label{fig_arches}
\end{figure}

\begin{figure}[htbp]
\centering
\labellist
\small\hair 2pt
\pinlabel $A$ at -12 423
\pinlabel $A$ at -12 2
\pinlabel $A$ at 443 423
\pinlabel $A$ at 443 2
\pinlabel $A$ at 784 154
\pinlabel $B$ at 63 107
\pinlabel $B$ at 217 207
\pinlabel $B$ at 530 172
\pinlabel $B$ at 940 103
\endlabellist
\includegraphics[width=0.8\textwidth]{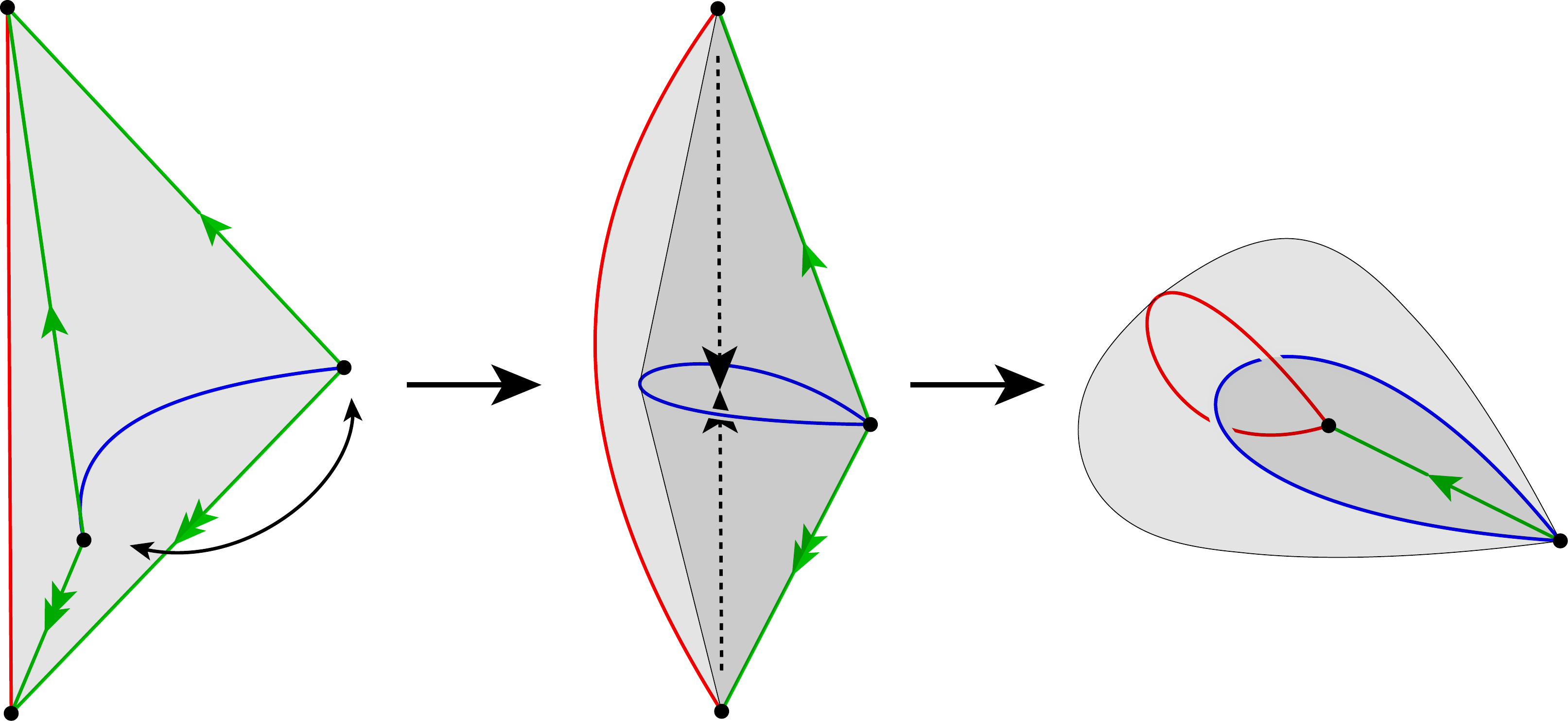}
\caption{The arch is harder to see in the dual picture of the triangulation. Here we show the shape of the single tetrahedron. First we identify the two edges marked with a single arrow, and separately the two edges marked with a double arrow. This produces the middle diagram: a shape with two triangular faces on the exterior, and two cone-shaped faces on the interior. Second, we glue the two interior faces together, which also identifies the two edges with arrows. The result has an exterior consisting of two triangular faces, with two of the vertices of these triangles identified with each other.} 
\label{fig_tetrahedron_arch}
\end{figure}

\begin{prop}
The move of inserting an arch into a spine, connecting two distinct three-dimensional regions, at least one of which is a three-ball, results in a spine of the same pseudo-manifold.
\end{prop}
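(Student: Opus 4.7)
The plan is to show that arch insertion preserves the underlying pseudo-manifold by analyzing its effect on the complementary regions of the spine. Since $M$ can be reconstructed from the spine together with the homeomorphism types of its complementary regions (each being a cone on the corresponding vertex link), it suffices to check that the complementary regions of the new spine, when coned off, reassemble to give $M.$

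First, I would describe the arch insertion locally. The operation modifies the spine only in a small ball neighborhood $N$ of the edge $f,$ where the regions $A,$ $A',$ $B$ meet; outside $N,$ both the spine and the ambient pseudo-manifold are unchanged. Inside $N,$ the arch is a 2-cell attached along part of the existing spine in such a way that it cordons off a thin 3-ball tube from $B$ (leaving $B$ unaffected) and attaches this tube to both $A$ and $A'$ along small 2-disks in their boundaries. Equivalently, in the dual triangulation picture of Figure~\ref{fig_tetrahedron_arch}, one unglues the common triangle $\tri$ and inserts the arch tetrahedron, whose boundary identifications have the net effect of merging two vertices of $\tri$ into one.

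Second, I would identify the merged region explicitly. The tube is topologically a 3-ball attaching to $\partial A$ and $\partial A'$ along two disjoint 2-disks in its boundary sphere, which is precisely the definition of the boundary connected sum. So the merged region is $A \,\natural\, A',$ while the other complementary regions, including $B,$ are unchanged. Third, I would invoke the standard 3-manifold fact that $B^3 \,\natural\, X \cong X$ for any compact 3-manifold $X$ with boundary, via a homeomorphism supported in a collar of the connect-sum disk. Since $A$ is a 3-ball by hypothesis, the merged region is homeomorphic to $A'$ and its boundary to $\partial A'.$ Reconstructing the pseudo-manifold from $S'$ therefore yields a space homeomorphic to $M,$ via a homeomorphism that is the identity outside $N$ and uses the $B^3\,\natural\,A' \cong A'$ homeomorphism inside.

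The main obstacle is verifying rigorously that the arch, as depicted in Figures~\ref{fig_arches} and~\ref{fig_tetrahedron_arch}, genuinely realises a boundary connected sum of $A$ and $A'$ and does not alter $B$ in the process; this requires tracking the boundary identifications of the arch tetrahedron carefully to confirm that its regular neighbourhood in $M$ is a 3-ball attaching to $\partial A$ and $\partial A'$ along disks disjoint from $B.$ Both hypotheses in the proposition are essential for the conclusion: if $A$ and $A'$ coincided, the tube would instead attach to a single region as a 1-handle, adding a handle to the corresponding vertex link and thus changing $M;$ and if neither region were a 3-ball, the boundary connected sum would generally change the homeomorphism type of the merged region, and the reconstructed pseudo-manifold would differ from $M.$
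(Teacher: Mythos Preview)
Your argument about the complementary regions is correct and matches the second paragraph of the paper's proof: the merged region is a boundary connected sum $A\,\natural\,A'$, and since one summand is a ball this is homeomorphic to $A'$, while all other regions are unchanged. So the topology of the ambient pseudo-manifold is preserved.

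However, there is a genuine gap. The proposition asserts that the result is a \emph{spine} of the same pseudo-manifold, and in this paper ``spine'' means \emph{special spine}: in particular every 2-cell must be a disk and the singular set must be a graph with no loop components. You never verify this, and it is the subtle part. The paper spends most of its proof on exactly this point. The new monogon face is obviously a disk, but the face obtained by tubing $f$ and $g$ together and then cutting along the monogon boundary is a disk only if $f$ and $g$ were distinct to begin with. The paper then argues that if $f=g$, tracking the transverse coorientation forces $A=A'$, contradicting the hypothesis; so the distinctness hypothesis on the regions is what guarantees the 2-cells remain disks, not merely what guarantees the boundary connected sum is tame. Your proof uses the distinctness only in the second role.

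Relatedly, your framing via the reconstruction theorem is slightly off. The arch is inserted inside $M$, so there is nothing to reconstruct: $S'$ already sits in $M$ with the correct complementary regions. What must be checked is precisely that $S'$ is special, so that the spine--triangulation duality (and Matveev's reconstruction theorem) applies to it at all. That is the missing step.
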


\begin{proof}
We first consider the set of all triple points. It is easy to see that this is a union of open intervals: no loops have been introduced. It is slightly more subtle to see that all of the two-dimensional surfaces (the non-singular points) are again disks. The monogon region in Figure~\ref{fig_arch} is a disk. The surface containing the tube is formed by starting with the two disks $f$ and $g$ as shown in Figure~\ref{fig_arch_before}, taking their connect sum, and then cutting the result along the boundary of the monogon. This cuts open the surface to form a disk, as long as $f$ and $g$ were distinct. If $f=g$, then we have two cases. Consider the transverse coorientation on $f$ pointing from $A$ into $B$. Following this coorientation around, if we find that it also points from $A'$ into $B$, then $A=A'$, contradicting the fact that the two regions to be connected by the arch are distinct. If instead we find that the transverse coorientation also points from $B$ into $A'$, then we find that $A=B$ and $B=A'$, so we reach the same contradiction. 

Breaking symmetry, suppose that $A$ is homeomorphic to a ball. Since $A$ and $A'$ are distinct, the result of connecting together $A$ and $A'$ is homeomorphic to $A'$. The topology of the other complementary regions is unchanged.
\end{proof}

\subsection{Building an arch-with-membrane}
\label{sec_build_arch-with-membrane}

{Using 2-3 and 3-2 moves, we can make an arch-with-membrane on any edge of a spine which has more than one vertex. The moves are applied in a neighbourhood of a vertex of the spine incident to the edge. This is illustrated in Figure~\ref{fig_make_3d_arch_w_membrane}. }

\begin{figure}[htbp]
\centering
\labellist
\small\hair 2pt
\pinlabel V-move at 236 250
\pinlabel isotopy at 483 250
\pinlabel 2-3 at 275 186
\pinlabel 3-2 at 240 50
\pinlabel isotopy at 488 50
\endlabellist
\includegraphics[width=\textwidth]{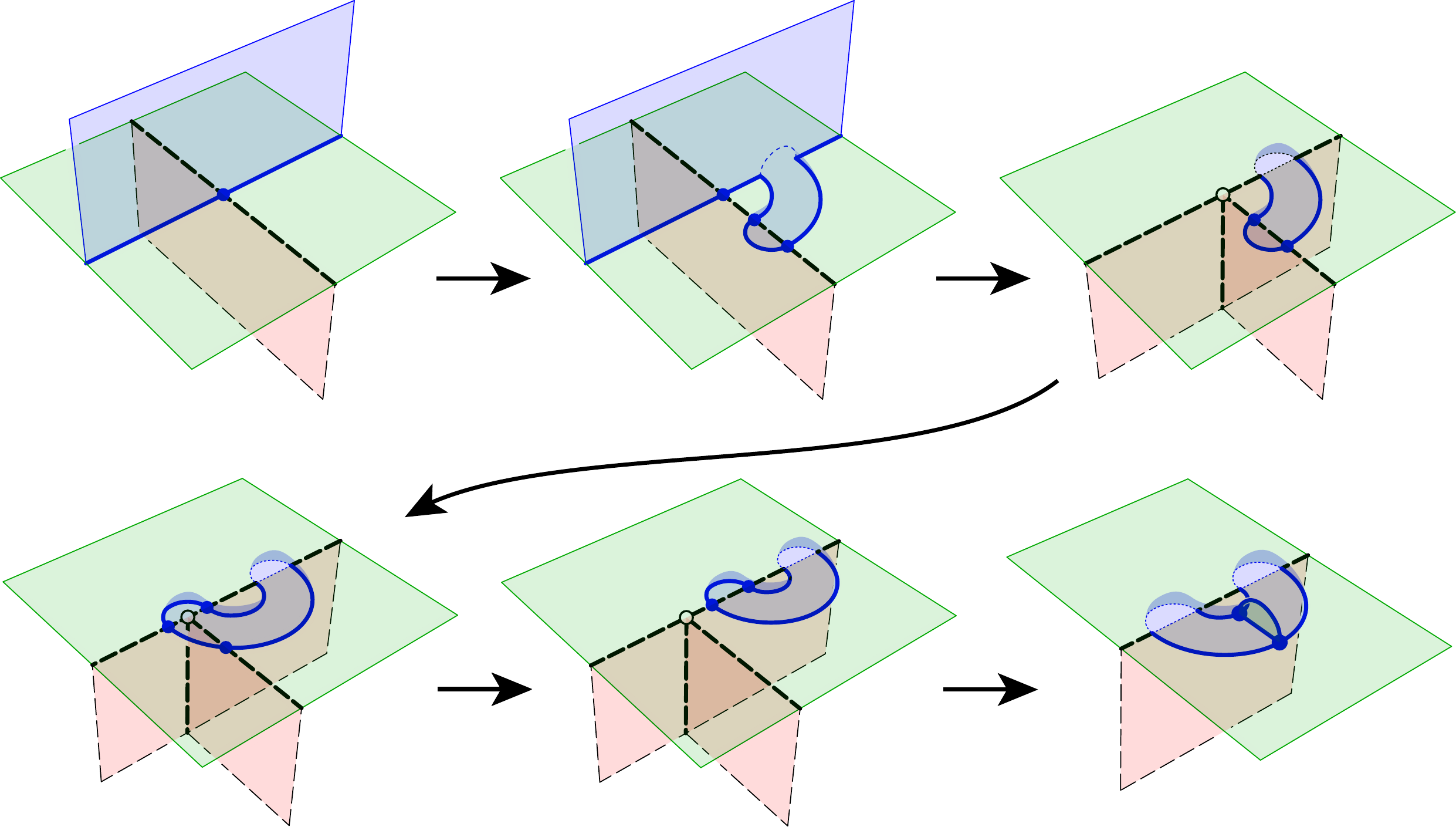}

\caption{Building an arch-with-membrane. We perform a V-move, a 2-3 move and then a 3-2 move. An isotopy moves the bigon along the tube. (In the last diagram here we have zoomed in to the arch-with-membrane and no longer see the original vertex.) A further isotopy (not shown), pushing the bigon in the final diagram forward gives the arch-with-membrane as shown in Figure~\ref{fig_arch_w_membrane}. } 
\label{fig_make_3d_arch_w_membrane}
\end{figure}

\subsection{Implementing a 1-4 move followed by introducing an arch using 2-3 and 3-2 moves}
\label{sec_1-4_arch_implementation}

{Similarly, we cannot perform a 1-4 move using only 2-3 and 3-2 moves. However, we can recreate the result of performing a 1-4 move followed by inserting an arch using 2-3 and 3-2 moves.}
This is achieved by constructing an arch-with-membrane, then applying two 2-3 moves. See Figure~\ref{fig_1-4_arch}.

\begin{figure}[htbp]
\centering
\labellist
\small\hair 2pt
\pinlabel 1-4 at 220 475
\pinlabel \text{insert arch} at 610 465
\pinlabel \text{build arch-with-membrane} at 90 210
\pinlabel 2-3 at 435 40
\pinlabel 2-3 at 640 225
\endlabellist
\includegraphics[width=\textwidth]{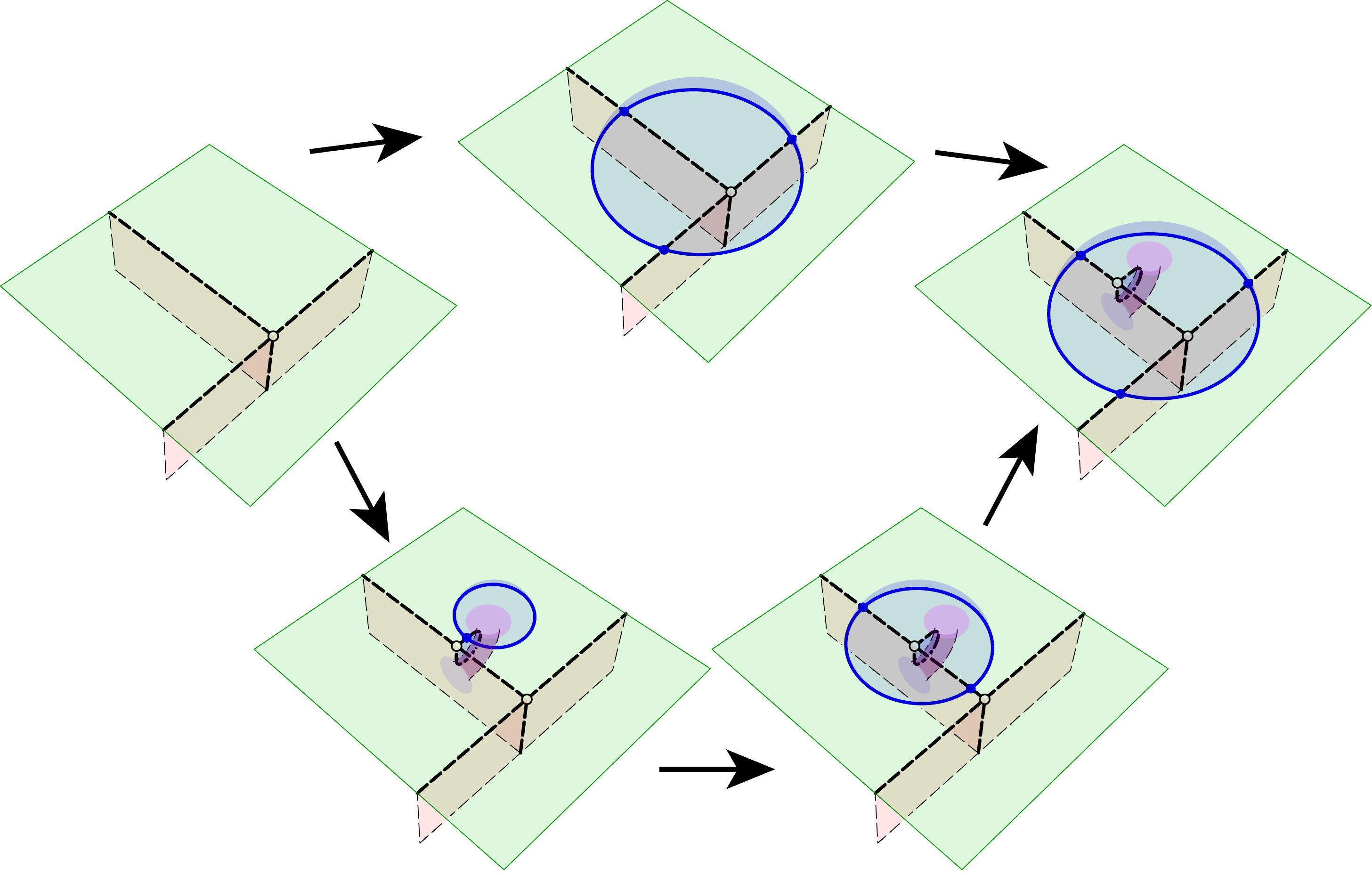}

\caption{By building an arch-with-membrane and performing two 2-3 moves, we implement the composition of a 1-4 move with inserting an arch while keeping the number of material vertices constant.} 
\label{fig_1-4_arch}
\end{figure}


\section{Details in the main proof}
\label{sec:details}

The point of time has come where we cannot further postpone diving into the intricacies of the proof of the main theorem. This section explains how we position and, if required, move arch marks,  and how we connect waypoint triangulations with 2-3 and 3-2 moves. The overall aim is to preprocess the triangulations in such a way, that all that remains is a step that is easy to describe but requires work to justify in detail: sweeping a membrane across a ball, from one arch to another. This very last step is given in \S\ref{move_membrane}.

\subsection{Arch mark positioning}

First, we describe where the arch mark is on each of $\cT'_2$ through $\cT'_{n-1}$. Then we describe sequences of 2-3 and 3-2 moves to take $\cT_1$ to $\widehat{\cT}'_2$, to take each waypoint triangulation $\widehat{\cT}'_i$ to the subsequent $\widehat{\cT}'_{i+1}$, and to take $\widehat{\cT}'_{n-1}$ to $\cT_n$.

In each triangulation $\cT'_i$, for $i\in\{2,\ldots,n-1\}$, the arch mark must connect a material vertex to some other vertex (either material or ideal).\footnote{Note that joining a material vertex to itself would produce an ideal vertex, and connecting two ideal vertices by an arch would act as a connect sum on their links.}

In $\cT'_2$, we choose the arch mark to connect the new vertex formed by the 1-4 move to one of the other vertices incident to it. See Figure~\ref{fig_1-4_arch_mark}.
There are a number of cases under which we determine the new arch mark on $\cT'_{i+1}$ based on the arch mark on $\cT'_i$.
If an arch mark is on a triangle that is not deleted by the bistellar move relating $\cT_i$ and $\cT_{i+1}$, then we leave it where it is, duplicating it on $\cT'_{i+1}$. If an arch mark is on a triangle that is deleted by a 2-3 or 3-2 move, then we must first move it to another triangle of $\cT_i$ that is not deleted by the move. We denote the resulting marked triangulation by $\cT''_i$. This done, we perform the bistellar move and get $\cT'_{i+1}$. We will require that the arch marks on $\cT'_i$ and $\cT''_i$ share a material vertex. This will allow us to move the arch itself from $\widehat{\cT}'_i$ to $\widehat{\cT}''_i$ in Section~\ref{move_arch}.
We deal with the following cases:

\begin{figure}
\centering
\labellist
\small\hair 2pt
\pinlabel $\cT_1$ at 23 90
\pinlabel $\cT'_2$ at 183 90
\endlabellist
\includegraphics[width=0.4\textwidth]{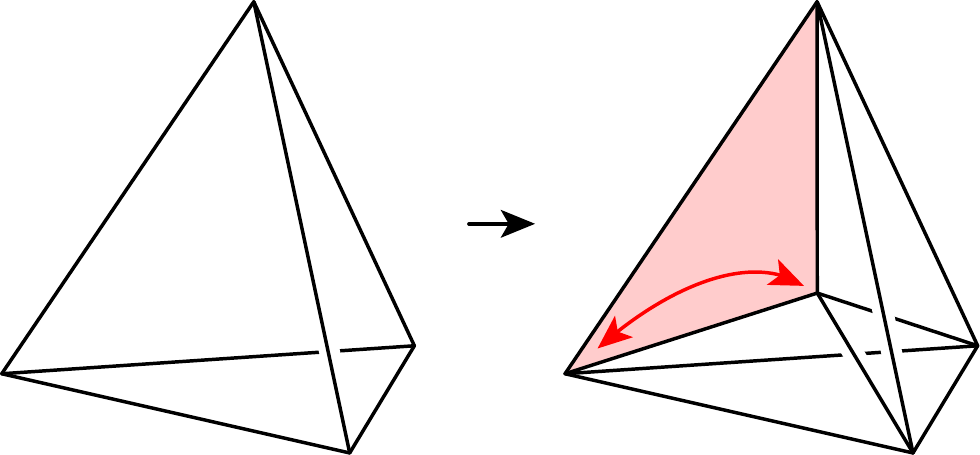}
\caption{We draw an arch mark to connect the vertex created by the 1-4 move to one of the existing vertices.}
\label{fig_1-4_arch_mark}
\end{figure}

\begin{enumerate}
\item In the case of a 2-3 move, one triangle is removed from the interior of the six-sided polyhedron involved in the move. An arch mark on this triangle connects together two of the three vertices on the ``equator'' of the polyhedron. We move such a mark to either of the two triangles on the boundary of the polyhedron that share the same two vertices. This done, the arch mark connects the same two vertices together as before, so the arch marks on $\cT'_i$ and $\cT''_i$ share a material vertex. See Figure~\ref{2-3_arch_mark}.
\item In the case of a 3-2 move, three triangles are removed from the interior of the six-sided polyhedron involved in the move. An arch mark on one of these triangles is in one of two cases: either it connects a vertex on the equator of the polyhedron to one of the two polar vertices of the polyhedron, or it connects the two polar vertices together. In the first case, we again move such a mark to one of the two triangles of the polyhedron that share the same two vertices. Again, the new arch mark connects the same two vertices so the arch marks on $\cT'_i$ and $\cT''_i$ share a material vertex. See Figure~\ref{3-2_arch_mark_case_1}. In the second case, label the two polar vertices $N$ and $S$, and let $A$ be one of the equatorial vertices. Without loss of generality, assume that $N$ is a material vertex. If $A$ is distinct from $N$ then we connect $A$ to $N$, again by adding an arch mark on a boundary triangle of the polyhedron. Otherwise we connect $A$ to $S$,  again by adding an arch mark on a boundary triangle of the polyhedron. By hypothesis, $A = N$ is distinct from $S$. Here, the arch marks on $\cT'_i$ and $\cT''_i$ share the material vertex $N$. See Figure~\ref{3-2_arch_mark_case_2}.
\end{enumerate}

\begin{figure}
\centering
\labellist
\small\hair 2pt
\pinlabel $\cT'_i$ at 20 20
\pinlabel $\cT''_i$ at 185 20
\pinlabel $\cT'_{i+1}$ at 350 20
\endlabellist
\includegraphics[width=0.6\textwidth]{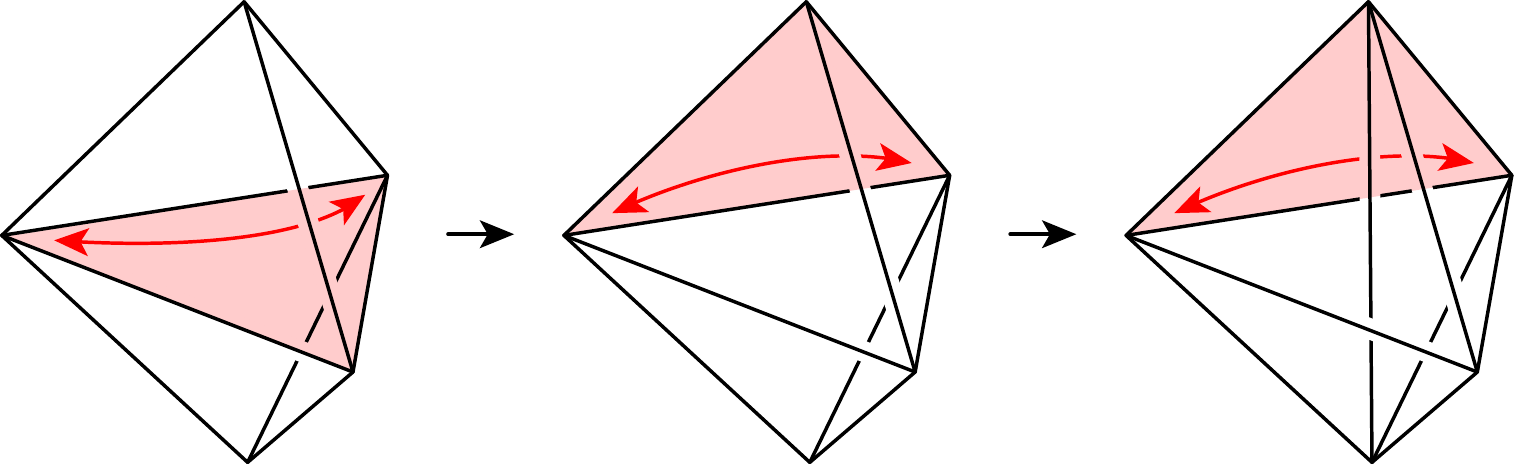}
\caption{If a 2-3 move deletes the face that the arch mark is on, we move the arch mark so as to connect the same pair of vertices before performing the 2-3 move.}
\label{2-3_arch_mark}
\end{figure}

\begin{figure}

\subfloat[Case 1. Here, we move the arch mark so as to connect the same pair of vertices before performing the 3-2 move.]{
\centering
\labellist
\small\hair 2pt
\pinlabel $\cT'_i$ at 20 20
\pinlabel $\cT''_i$ at 185 20
\pinlabel $\cT'_{i+1}$ at 350 20
\endlabellist
\includegraphics[width=0.6\textwidth]{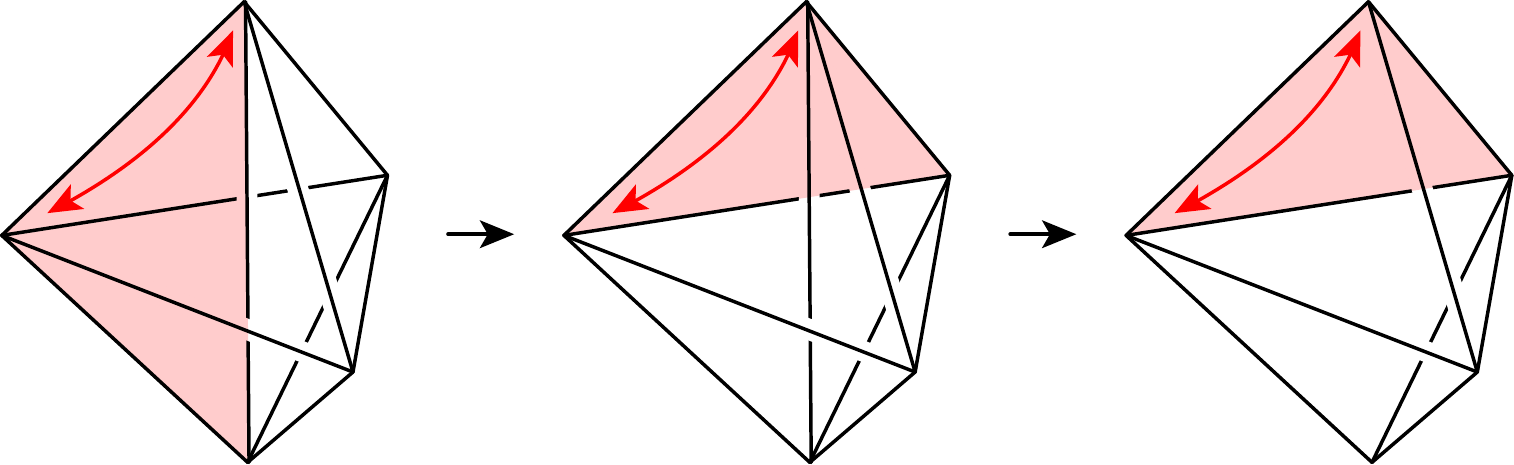}
\label{3-2_arch_mark_case_1}
}

\subfloat[Case 2. We take the upper path if $A$ is distinct from $N$, and the lower path if not. Here, the new arch mark may or may not connect the same pair of vertices together as the old arch mark.]{
\centering
\labellist
\small\hair 2pt
\pinlabel $N$ at 71 215
\pinlabel $S$ at 71 61
\pinlabel $A$ at -10 138
\pinlabel $\cT'_i$ at 20 90
\pinlabel $\cT''_i$ at 185 20
\pinlabel $\cT'_{i+1}$ at 350 20
\pinlabel $\cT''_i$ at 185 163
\pinlabel $\cT'_{i+1}$ at 350 163
\endlabellist
\includegraphics[width=0.6\textwidth]{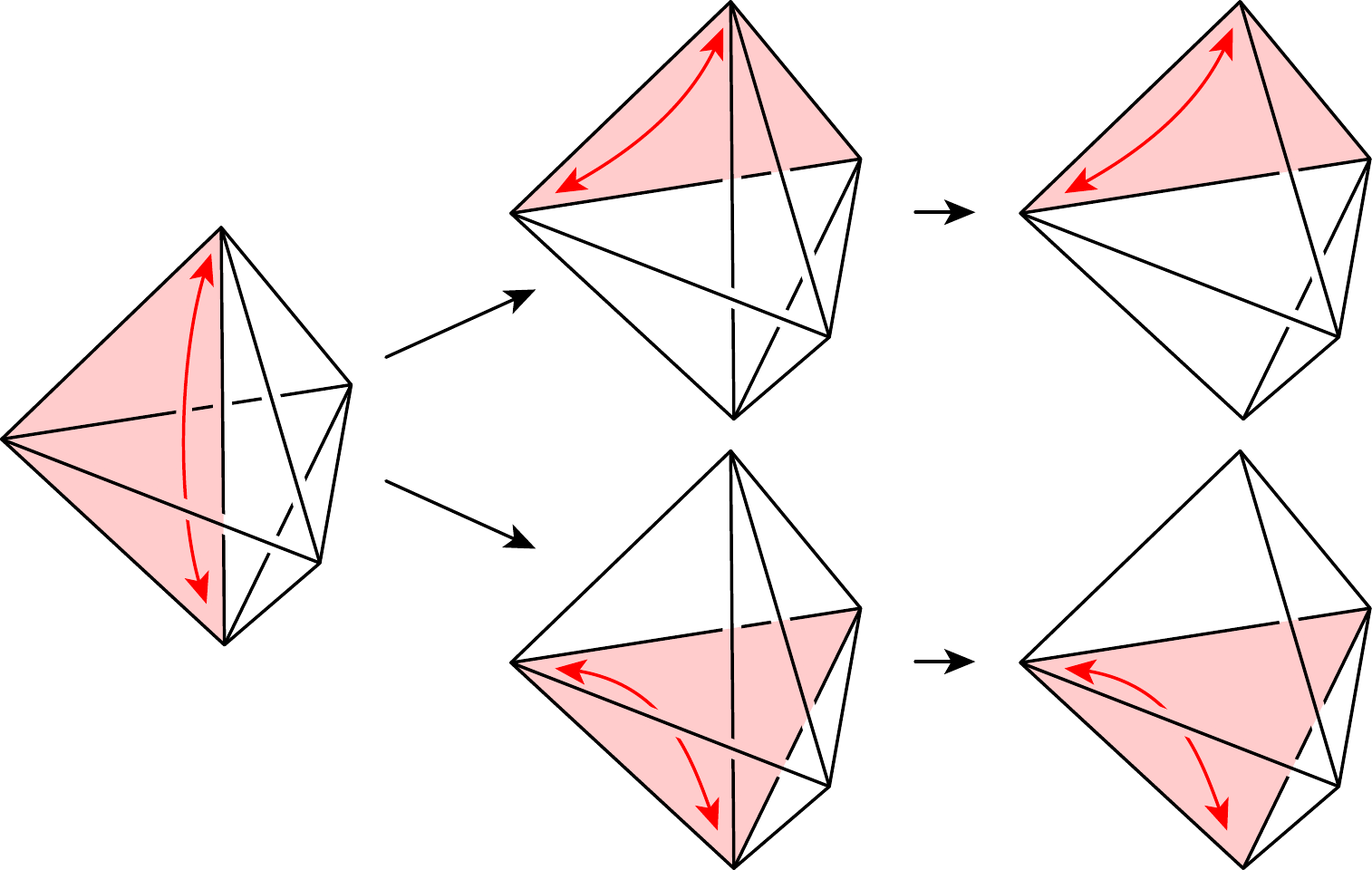}
\label{3-2_arch_mark_case_2}
}
\caption{If a 3-2 move deletes the face that the arch mark is on, we move the arch mark before performing the 3-2 move.}
\label{3-2_arch_mark}
\end{figure}

With the above moves, we can add arch marks to all triangulations $\cT_2, \ldots \cT_{n-1}$ in the sequence, and produce all of the waypoint triangulations $\widehat{\cT}'_i$, all of which have the same number of material vertices as $\cT_A$ and $\cT_B$ by construction. 

\subsection{Moving from $\cT_1$ to $\widehat{\cT}'_2$}
\label{T1_to_T2}

The triangulation $\cT_1$ is related to $\cT_2$ by a 1-4 move. we need a sequence of 2-3 and 3-2 moves that produces the result of a 1-4 move followed by adding an arch, converting $\cT_1$ to $\widehat{\cT}_2$. We gave details of this in Section \ref{sec_1-4_arch_implementation}.

\subsection{Moving between waypoint triangulations using 2-3 and 3-2 moves}
\label{move_arch}

If the arch mark does not move between $\cT'_i$ and $\cT'_{i+1}$ (since the 2-3 or 3-2 move does not destroy its triangle), then $\widehat{\cT}'_i$ and $\widehat{\cT}'_{i+1}$ are related by the same 2-3 or 3-2 move, so we connect them using that move.

If however we had to move the arch mark first, via a marked triangulation $\cT''_i$, then we need a way to move the arch via 2-3 and 3-2 moves, connecting the triangulations $\widehat{\cT}'_i$ and $\widehat{\cT}''_i$. Having moved the arch, we then connect $\widehat{\cT}''_i$ to $\widehat{\cT}'_{i+1}$ by the same 2-3 or 3-2 move that connects $\cT_i$ to $\cT_{i+1}$.

We have arranged matters so that the arch marks on $\cT'_i$ and $\cT''_i$ share a material vertex, $v$ say.
In the dual spine picture for $\cT_i$, there is a three-ball $B$ dual to $v$ since $v$ is a material vertex. In $\widehat{\cT}'_i$, $B$ is connected via a cylinder in the neck of the arch to the dual region $R$ to the vertex on the other end of the arch mark on $\cT'_i$. See Figure~\ref{move_arch_across_ball_step0}. Here the arch is to the right of the figure, and in the middle we have drawn part of the boundary of $B$, as seen from within $B$. 

First we build a \emph{arch-with-membrane} (see Figure~\ref{fig_arch_w_membrane}) in the location of the new arch mark. This can be made using 2-3 and 3-2 moves -- see Section \ref{sec_build_arch-with-membrane}. The result is shown in Figure~\ref{move_arch_across_ball_step1}. This done, we see $B$ with an arch glued to one end and an arch-with-membrane glued to the other. 

Next, we sweep the membrane from the arch-with-membrane, through $B$, and into the other arch. This turns the arch-with-membrane into an arch, and the arch into an arch-with-membrane. See Figure~\ref{move_arch_across_ball_step2}. Then, we deconstruct the new arch-with-membrane, following the reverse of the sequence of moves used to build such an arch-with-membrane. Having performed these moves, we have moved the arch. See Figure~\ref{move_arch_across_ball_step3}. We give details of this sweep move in Section~\ref{move_membrane}.

\begin{figure}[htbp]
\centering
\labellist
\small\hair 2pt
\pinlabel $B$ at 435 75
\pinlabel $R$ at 750 125
\endlabellist
\subfloat[We start with a three-ball $B$ connected via a cylinder in the neck of an arch to some other three-dimensional region $R$.]{
\includegraphics[width=0.45\textwidth]{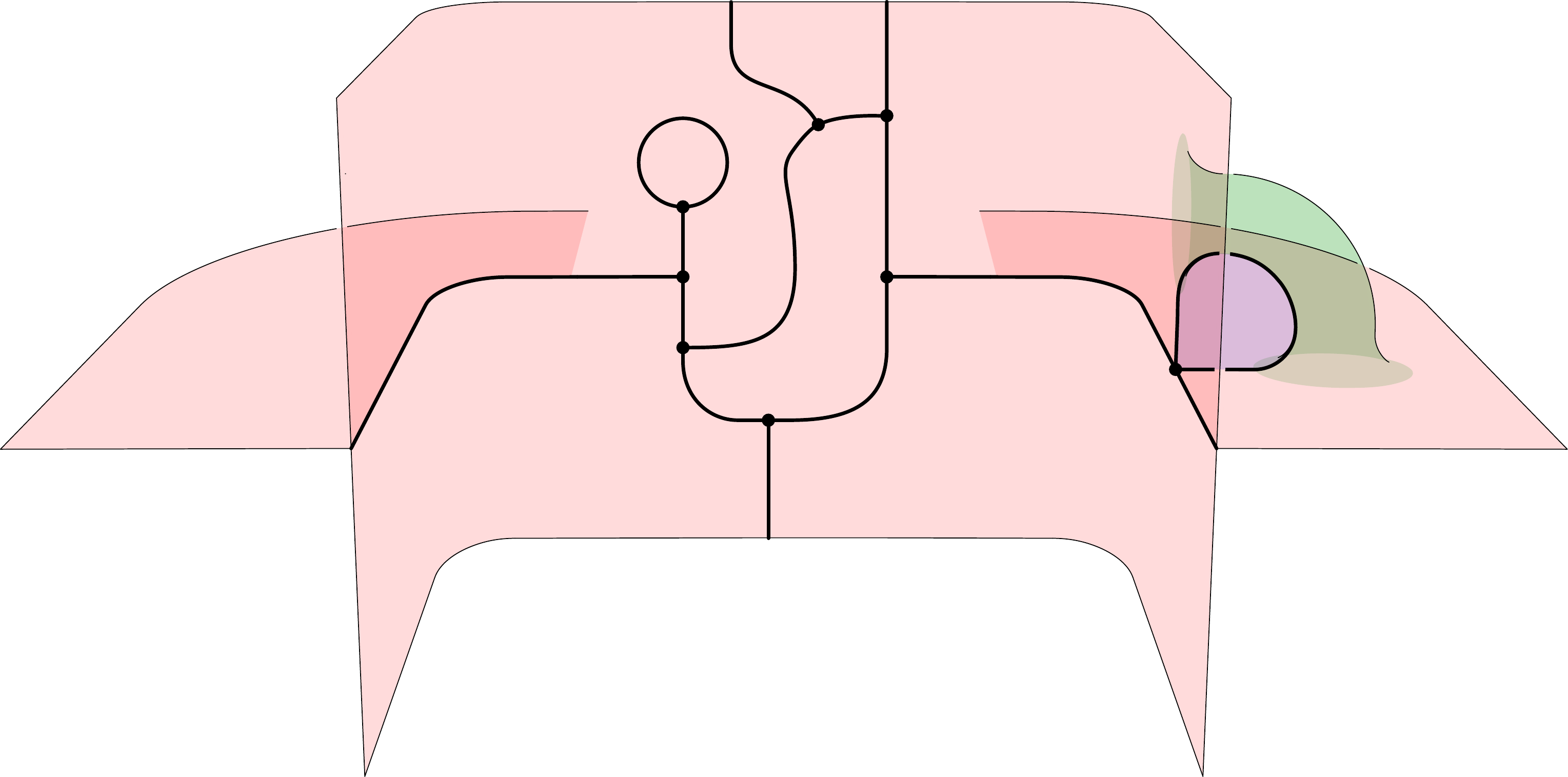}
\label{move_arch_across_ball_step0}
}\quad
\subfloat[First we build an arch-with-membrane in the desired new location for the arch.]{
\includegraphics[width=0.45\textwidth]{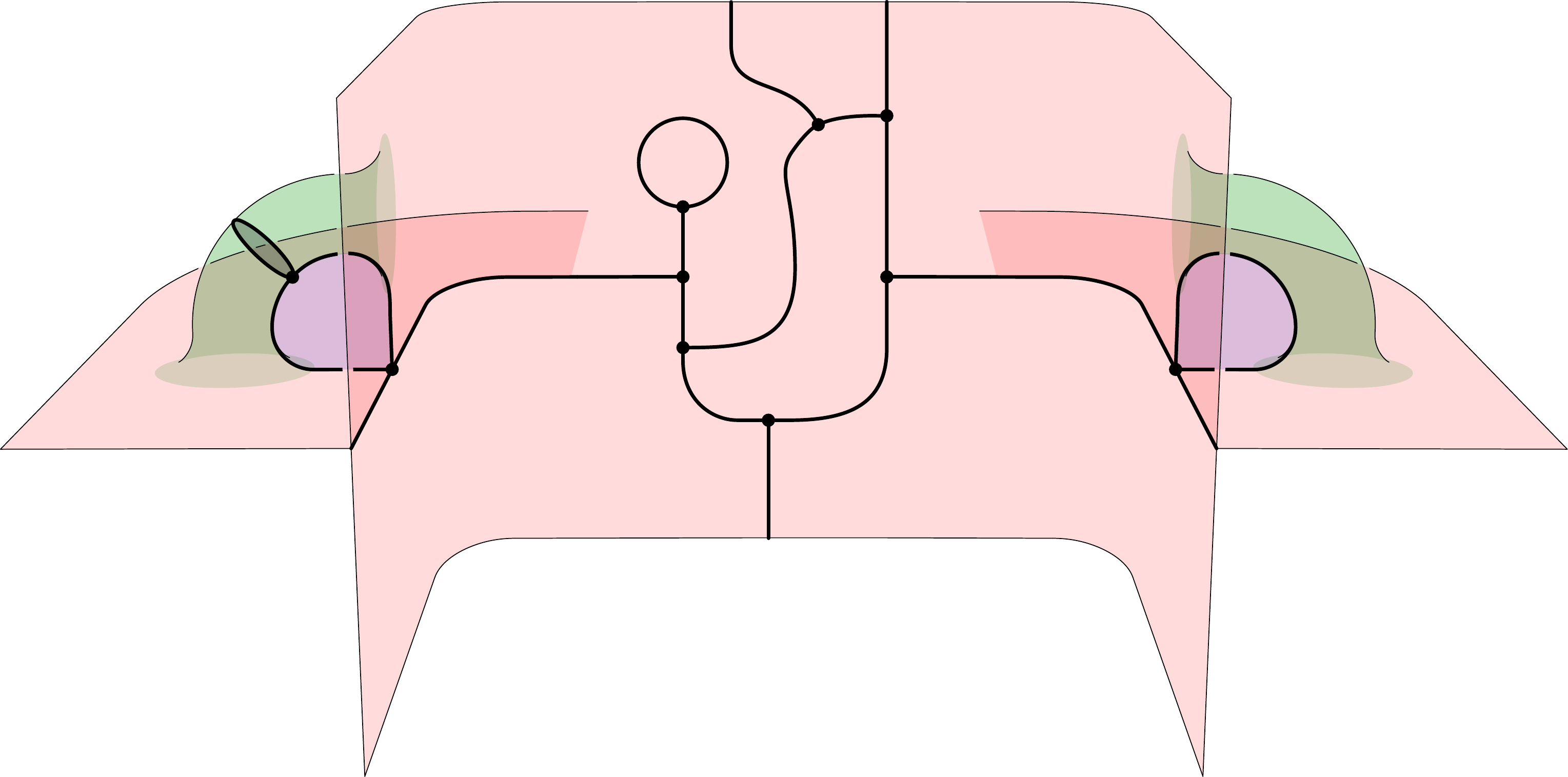}
\label{move_arch_across_ball_step1}
}

\subfloat[Next we sweep the membrane across $B$, from the arch-with-membrane into the arch.]{
\includegraphics[width=0.45\textwidth]{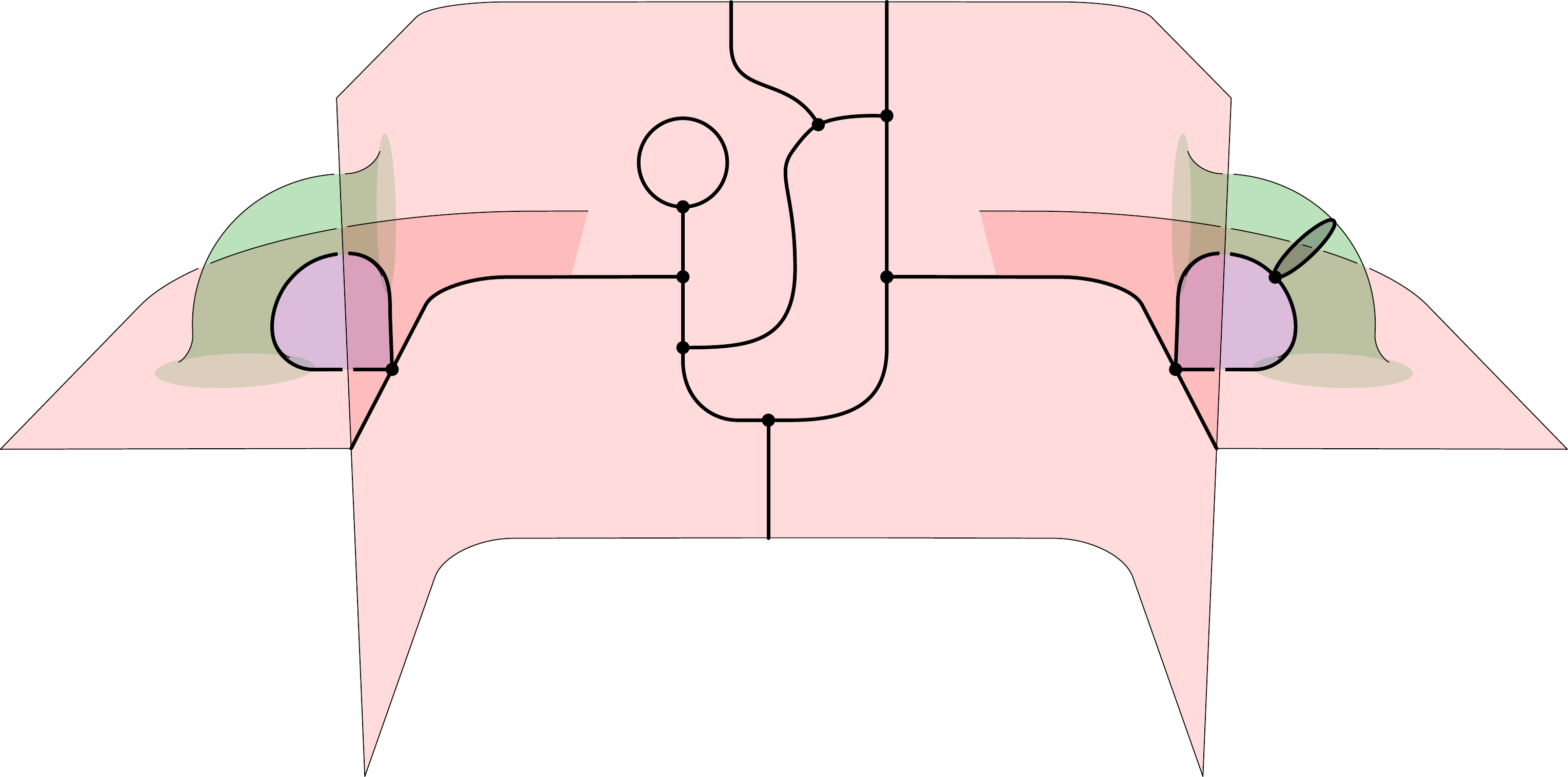}
\label{move_arch_across_ball_step2}
}\quad
\subfloat[Finally we deconstruct the resulting arch-with-membrane.]{
\includegraphics[width=0.45\textwidth]{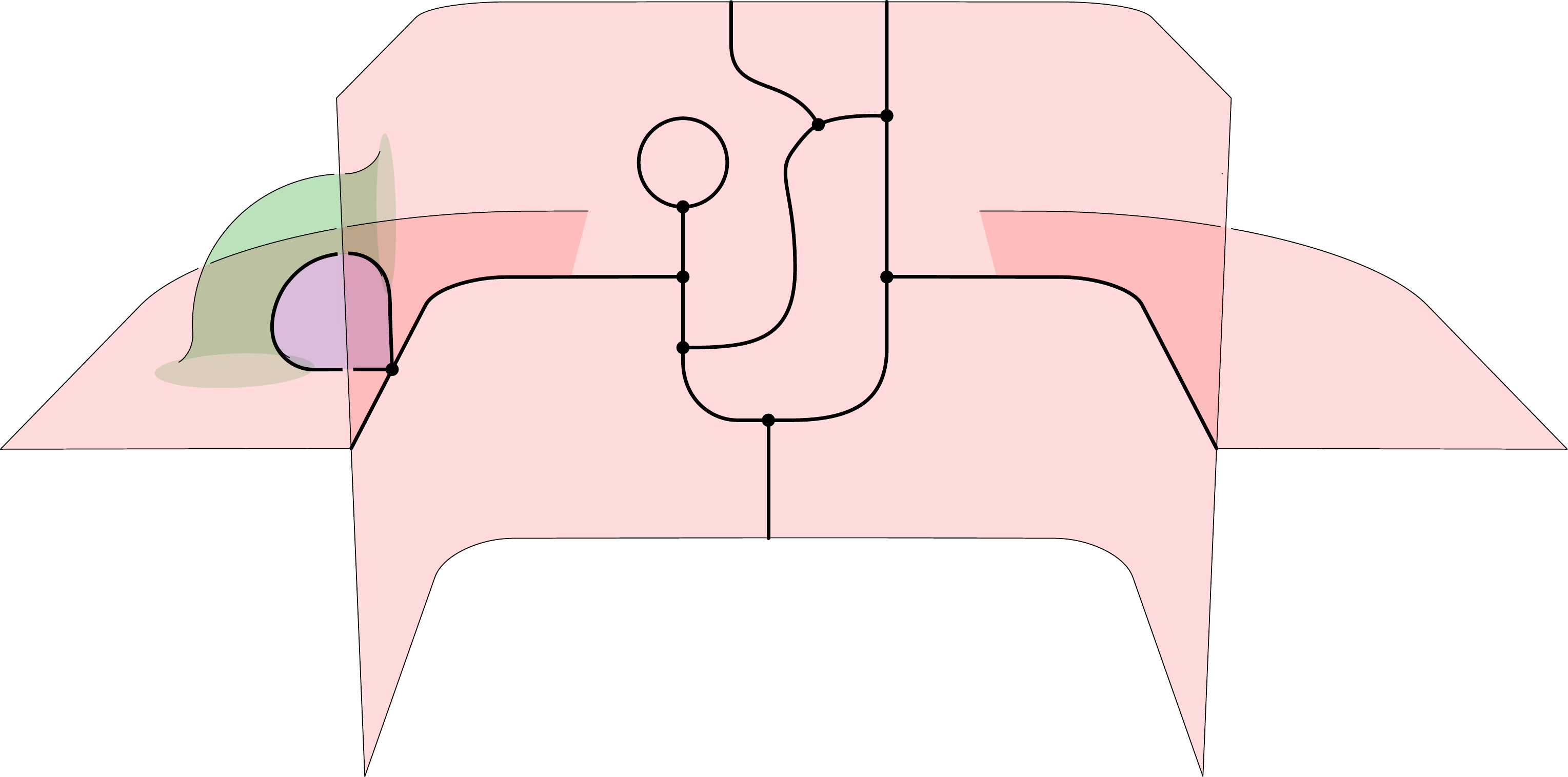}
\label{move_arch_across_ball_step3}
}

\caption{Moving an arch across a material vertex, drawn in the dual spine picture.}
\label{fig_move_arch}
\end{figure}

\subsection{Moving from $\widehat{\cT}'_{n-1}$ to $\cT_n$}

If our arch mark is connected to the material vertex $w$ we are about to remove in the 4-1 move then we perform the reverse procedure to Section~\ref{T1_to_T2} and we are done. If however the arch mark is connected to some other material vertex $v$, then we must do some further work first. If $v$ and $w$ share an edge $e$, then we move the arch mark to a triangle containing $e$ so that it connects $v$ to $w$. As in the previous section, moving the corresponding arch can be performed by sweeping across a three-ball, here it is the three-ball dual to $v$. If $v$ and $w$ do not share an edge, then we move $w$ until they do. After the 4-1 move our triangulation is $\cT_n$, and we may think of $\cT_{n-1}$ as $\cT_n$ with one of its tetrahedra, $\sigma$ say, subdivided into four. See Figure~\ref{move_4-1_vertex_from_tet_to_tet_step0}. By performing a 3-2 move on one of the four edges incident to $w$, we convert $\cT_{n-1}$ into the result of inserting a triangular pillow into a face of $\cT_n$ incident to $\sigma$. See Figure~\ref{move_4-1_vertex_from_tet_to_tet_step1}. Note that we can perform this 3-2 move because the three tetrahedra incident to this edge are distinct, since we were able to perform a 4-1 move on them and one other tetrahedron. The 3-2 move creates a new face that forms one of the two outer faces of the triangular pillow. By performing a 2-3 move on the other outer face, we then get the result of subdividing a tetrahedron $\sigma'$ of $\cT_n$ incident to $\sigma$. See Figure~\ref{move_4-1_vertex_from_tet_to_tet_step2}. Note that we can perform this 2-3 move because the two tetrahedra incident to the face are distinct: one is part of the triangular pillow, while the other is not.

These two bistellar moves transport the vertex $w$ from $\sigma$ to $\sigma'$, and once again we are a 4-1 move away from $\cT_n$. By repeatedly moving $w$ in this way, we can arrange for $v$ and $w$ to share an edge. Also note that in moving $w$ in this way, we do not destroy the triangle containing the arch mark, because in order to do so, $w$ and $v$ would already share an edge.

\begin{figure}[htbp]
\centering

\subfloat[]{
\includegraphics[width=0.3\textwidth]{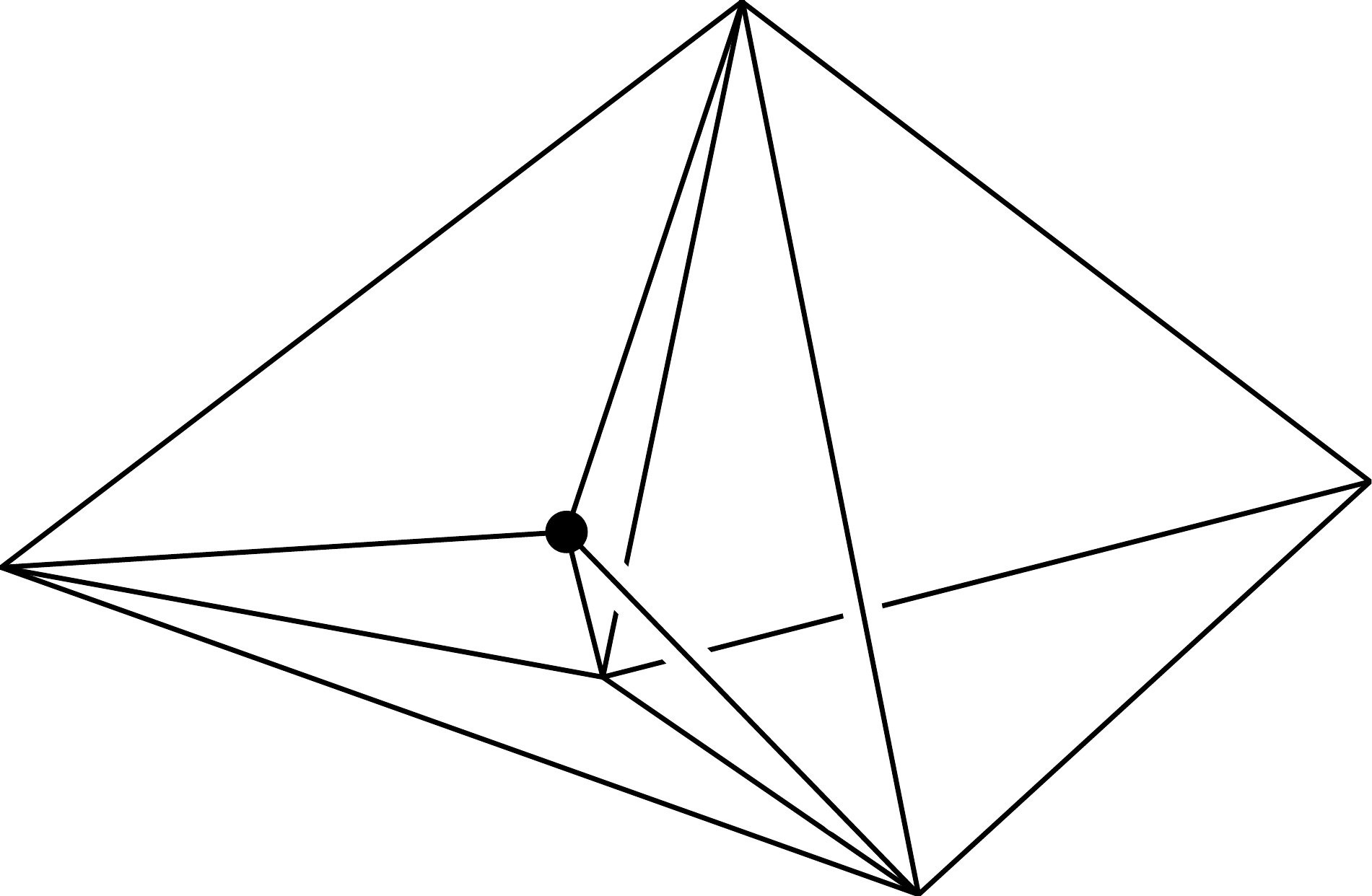}
\label{move_4-1_vertex_from_tet_to_tet_step0}
}\thinspace
\subfloat[]{
\includegraphics[width=0.3\textwidth]{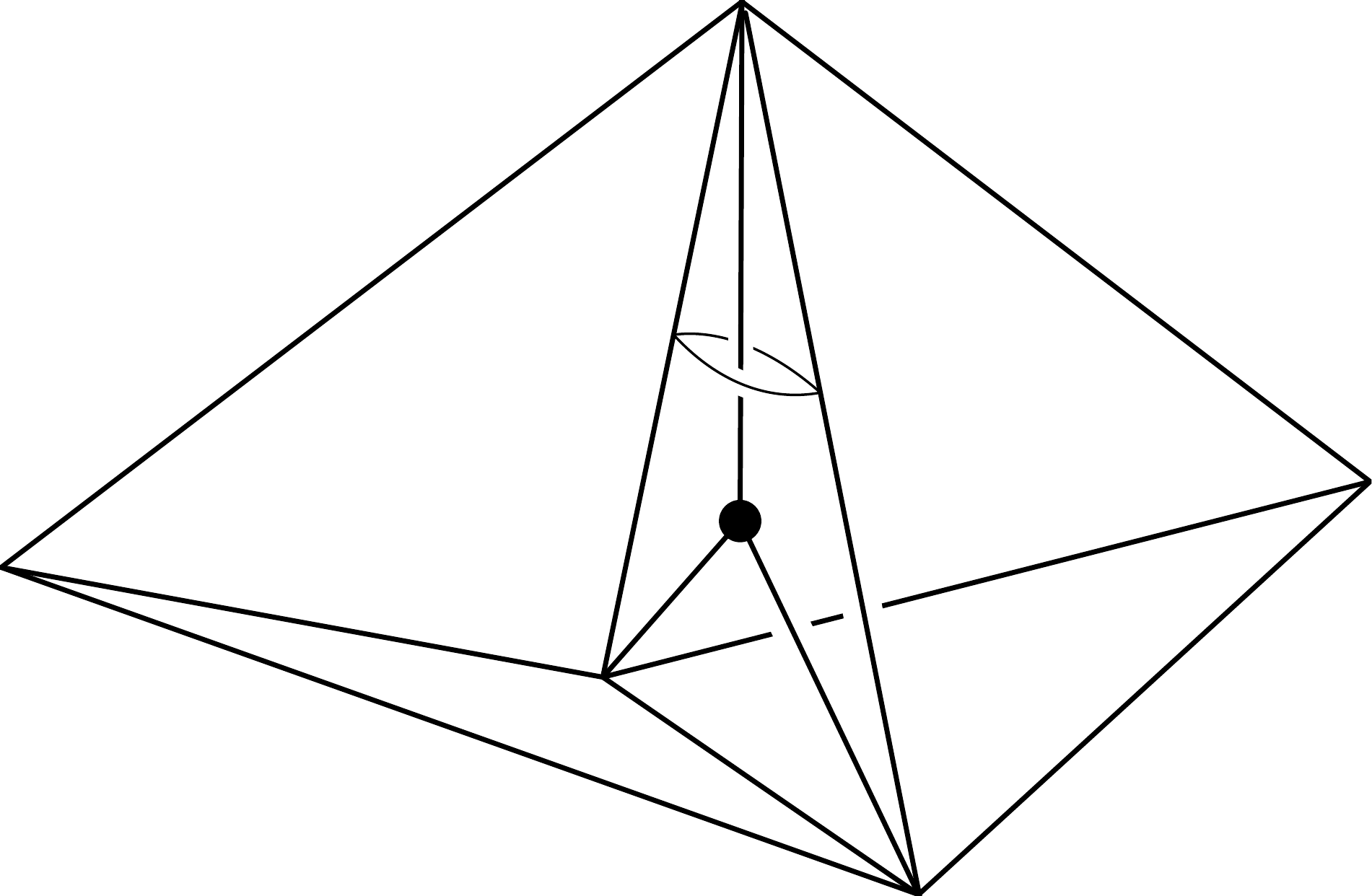}
\label{move_4-1_vertex_from_tet_to_tet_step1}
}\thinspace
\subfloat[]{
\includegraphics[width=0.3\textwidth]{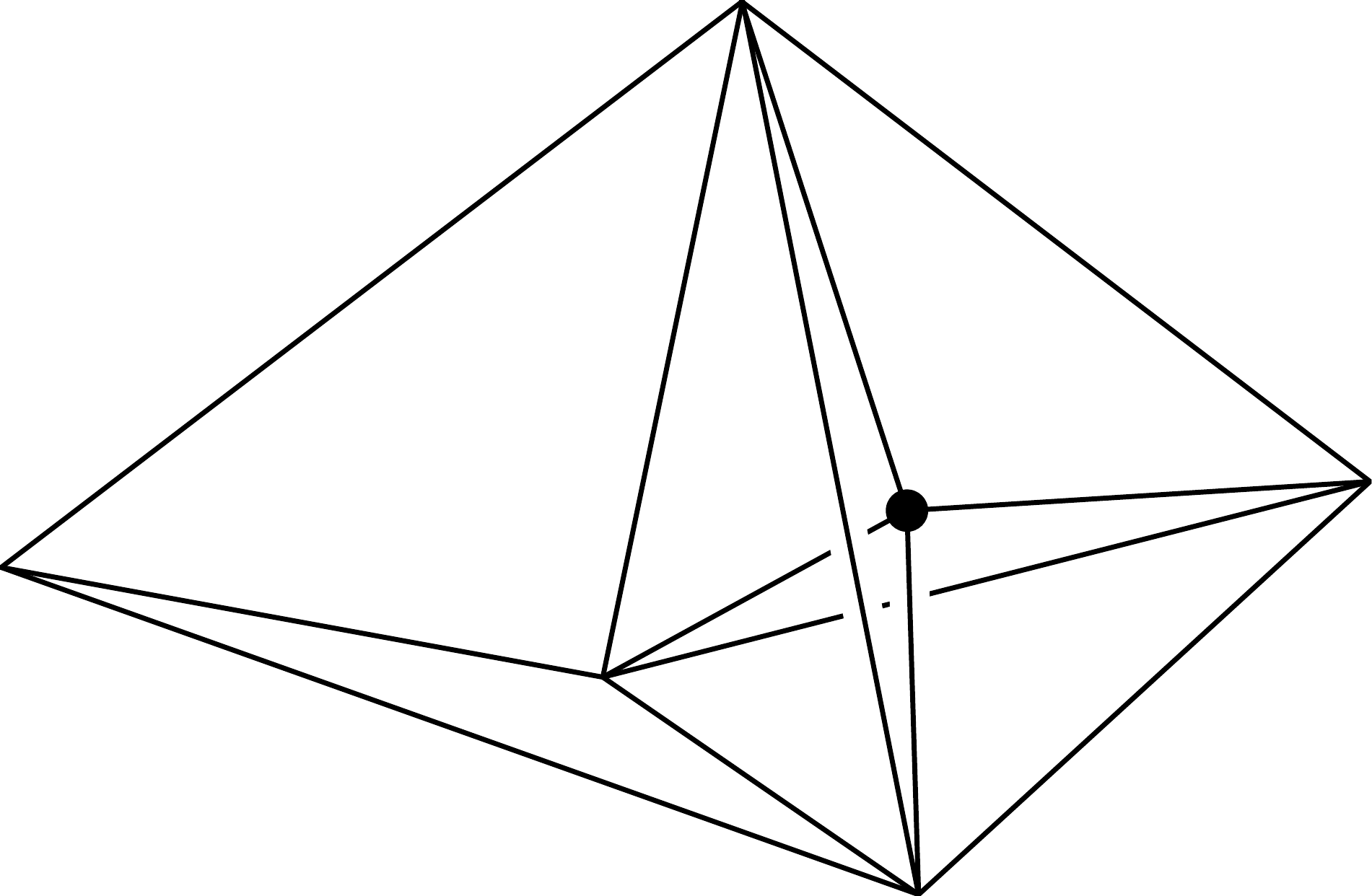}
\label{move_4-1_vertex_from_tet_to_tet_step2}
}

\caption{The vertex $w$ can be moved from one tetrahedron of $\cT_n$ to a neighbouring tetrahedron by a 3-2 move followed by a 2-3 move.}
\label{fig_move_4-1_vertex}
\end{figure}


\section{Moving a membrane across a ball, from one arch to another}
\label{move_membrane}

There are two key steps at which we depart from the proofs of Matveev and Amendola in how we sweep a membrane across a ball. We point these out in the discussion below.

We must connect together triangulations $\cT_\alpha$ and $\cT_\beta$, say, shown in dual spine form in Figures~\ref{move_arch_across_ball_step1} and \ref{move_arch_across_ball_step2} respectively. We note that this is the setting of Lemma 1.2.16 (2-cell replacement) of \cite{matveev_book}. Matveev's method is to slide the membrane across the ball from the arch-with-membrane into the arch. The combinatorial changes as one slides the membrane are 2-3 and 3-2 moves, and what we call \emph{quadrilateral 0-2 moves}. In the literature this move is often called simply a ``0-2 move'', but we add the adjective ``quadrilateral'' to distinguish them from our triangular 0-2 move.\footnote{The name comes from the dual triangulation picture: the quadrilateral 0-2 move inserts a quadrilateral pillow consisting of two tetrahedra, instead of the triangular pillow inserted in the triangular 0-2 move.} Matveev refers to a quadrilateral 0-2 move as a \emph{lune move}. The implementation of the lune move in terms of 2-3 and 3-2 moves is quite subtle in full generality, due to self-gluings in the vicinity of the move. In our setting, we will be able to avoid these subtleties (although at the cost of adding other subtleties!)
The use of 2-3, 3-2 and lune moves to sweep the membrane across the ball is also somewhat tricky. The situation is (again) complicated by the fact that the ball may have self gluings, so when we isotope the membrane across a self-glued face of the ball, it potentially interacts with parts of itself meeting that face from the other side. Thus, Matveev's proof that the moves can be done to achieve the sweep involves a PL topology argument. 

In contrast, our method is entirely combinatorial. Our strategy is to first apply moves to each of the triangulations $\cT_\alpha$ and $\cT_\beta$ to convert them to triangulations $\cT'_\alpha$ and $\cT'_\beta$, which again only differ by the position of the membrane, and for which the ball has no self-gluings. This means that we can then organise the isotopy of the membrane from one arch to the other in a combinatorial way, and convert $\cT'_\alpha$ into $\cT'_\beta$.

First of all, we see what to do in the case that the ball has no self-gluings.

\subsection{The no self-gluings case}
\label{no_self-gluings}

\begin{figure}[htbp]
\centering
\subfloat[The annulus $A$. The boundary of the membrane is shown at its initial position $C_\text{start}$ and final position $C_\text{end}$, which are also the boundary curves of $A$.]{
\labellist
\small\hair 2pt
\pinlabel $C_\text{start}$ at 240 280
\pinlabel $e_\text{start}$ at 205 200
\pinlabel $e_\text{end}$ at 210 35
\pinlabel $C_\text{end}$ at 60 140
\endlabellist
\includegraphics[width=0.45\textwidth]{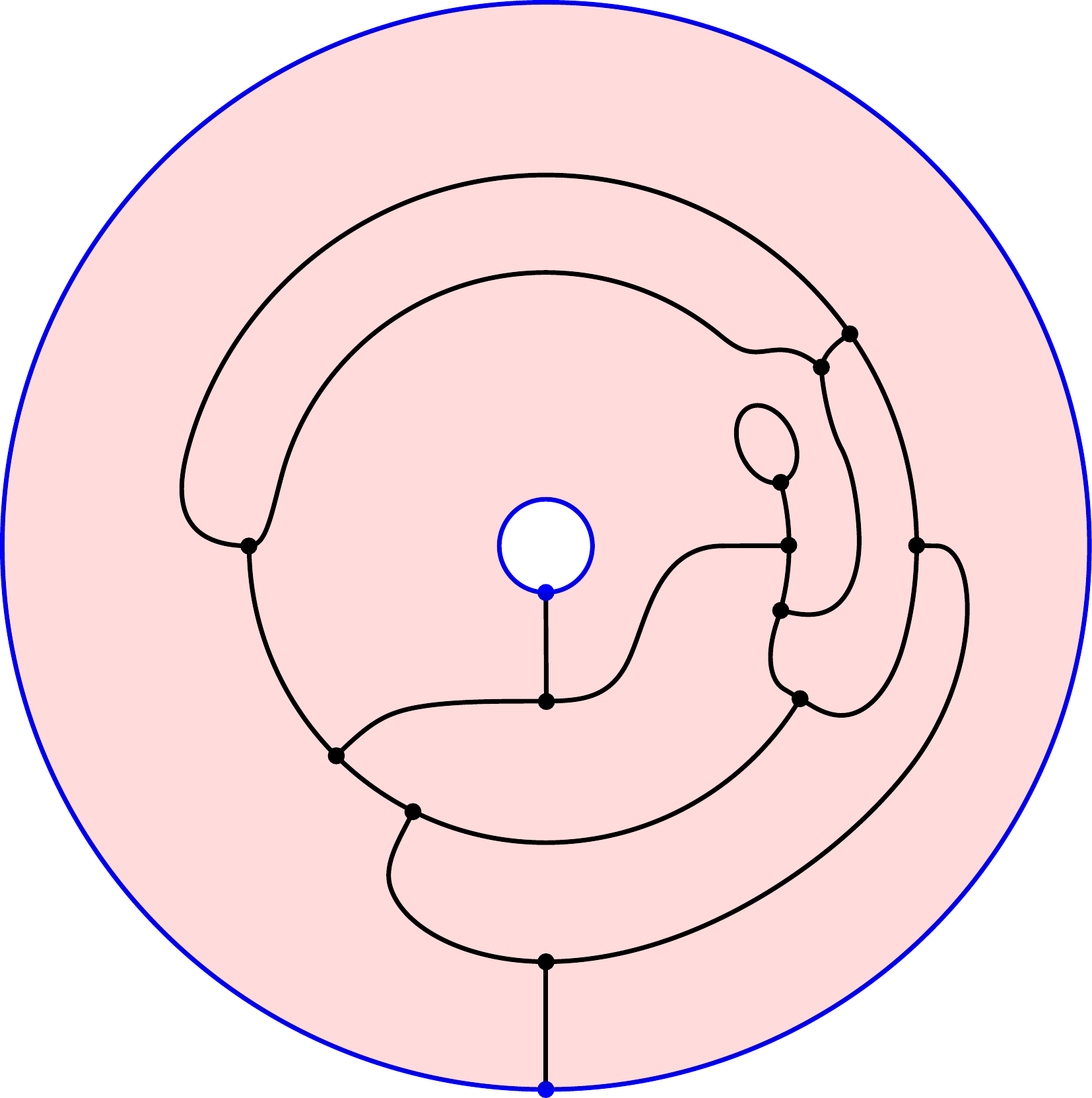}
\label{fig_annulusA_2}
}
\quad
\subfloat[After pushing $C$ over the spanning tree $T$.]{
\labellist
\small\hair 2pt
\pinlabel $C$ at 190 205
\pinlabel $v_\text{end}$ at 80 140
\endlabellist
\includegraphics[width=0.45\textwidth]{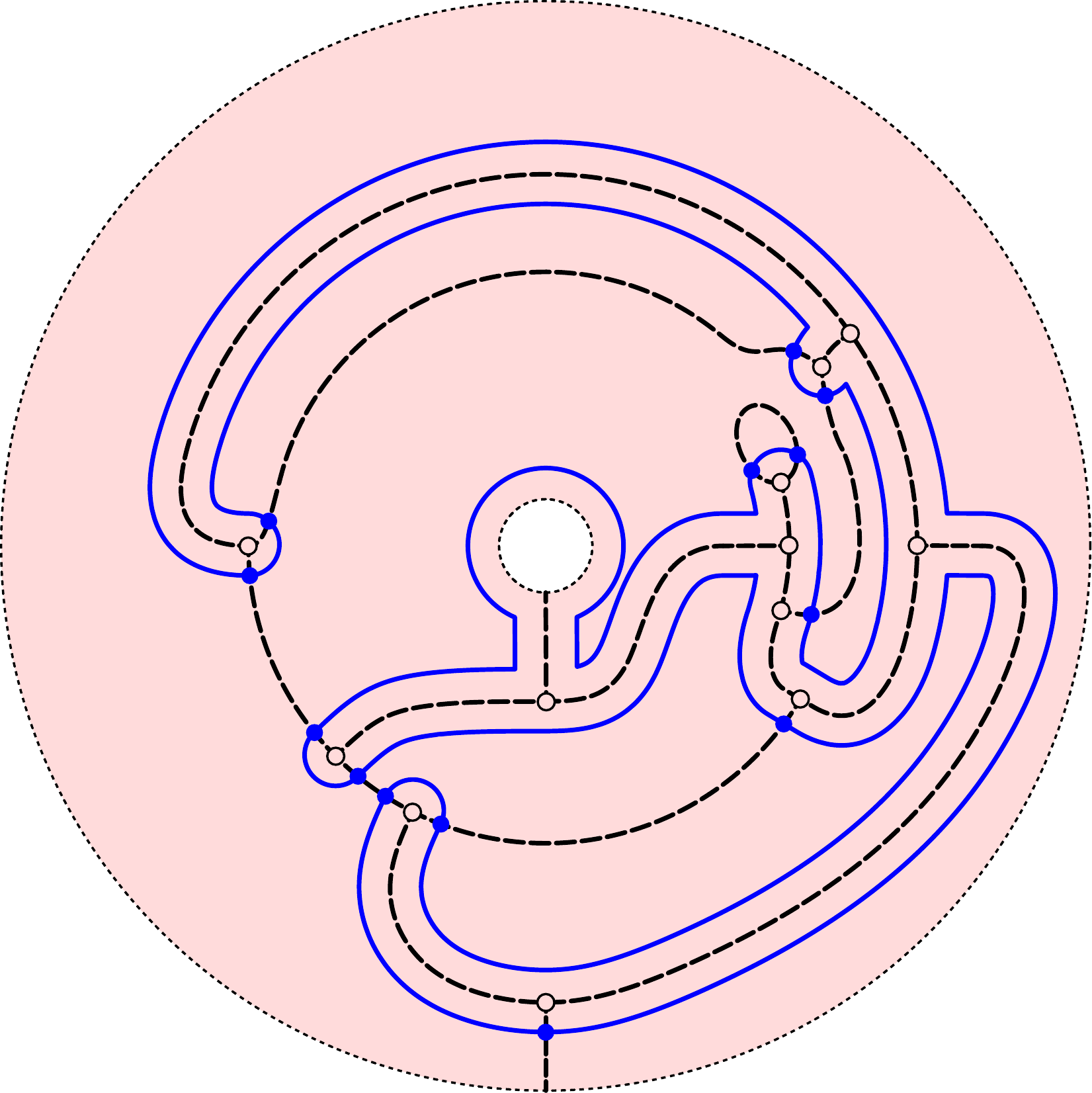}
\label{sweep_membrane_annulus_step_1}
}
\caption{The setting in which we sweep a membrane across a three-ball, from an arch-with-membrane to another arch, assuming that the ball has no self-gluings. } 
\label{fig_sweep_membrane_across_3_ball_both}
\end{figure}

Again, we must get from Figure~\ref{move_arch_across_ball_step1} to Figure~\ref{move_arch_across_ball_step2} by sweeping the membrane disk across the three-ball $B$, which we assume for the moment has no self-gluings.
Equivalently, we have to slide the boundary of the disk, a circle $C$, across the annulus $A$ formed from the boundary of the $B$ minus the initial and final positions of the membrane. Let $C_\text{start}$ and $C_\text{end}$ be the start and end positions of $C$, which are therefore also the two boundary components of $A$. 
The key point is that the pattern on the annulus $A$ formed from the intersection of the boundary of $B$ with parts of the spine outside of $B$ is a trivalent graph $\Gamma$. Let $e_\text{start}$ and $e_\text{end}$ be the edges of $\Gamma$ that intersect $C_\text{start}$ and $C_\text{end}$ respectively. See Figure~\ref{fig_annulusA_2}.

\begin{figure}[htbp]
\centering
\subfloat[The 2-3 move.]{
\labellist
\small\hair 2pt
\pinlabel 2-3 at 320 168
\endlabellist
\includegraphics[width=0.45\textwidth]{spine_slide_2-3}
\label{spine_slide_2-3}
}
\quad
\subfloat[The quadrilateral 0-2 move.]{
\labellist
\small\hair 2pt
\pinlabel 0-2 at 320 148
\endlabellist
\includegraphics[width=0.45\textwidth]{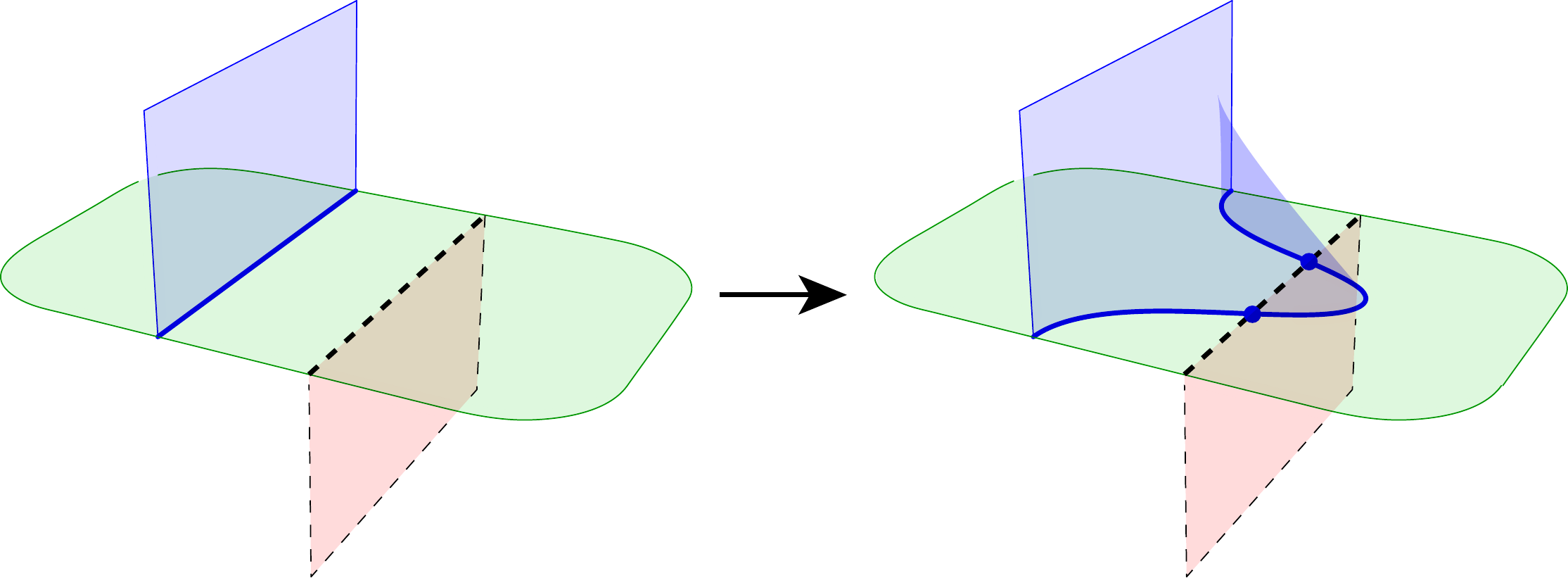}
\label{spine_slide_0-2}
}
\caption{Sliding the upper disk past parts of the spine below the plane is achieved by the 2-3 move and (the inverse of the) quadrilateral 0-2 move.} 
\label{fig_spine_slide_2-3_and_0-2}
\end{figure}

We slide $C$ across $A$ in two stages. First, we push $C$ off $C_\text{start}$ slightly into $A$, then choose a spanning tree $T$ for $\Gamma$ (not including  $e_\text{end}$), and push $C$ over $T$. This is achieved by a sequence of 2-3 moves, as shown in Figure~\ref{spine_slide_2-3}. Figure~\ref{sweep_membrane_annulus_step_1} shows the result in our example. In the second stage, we slide $C$ over the 2-cells of $A$, as follows. These 2-cells themselves form a tree $T'$, dual to $T$. There is one special vertex $v_\text{end}$ of $T'$, corresponding to the 2-cell incident to $C_\text{end}$. We may collapse $T'$ onto $v_\text{end}$ by performing a sequence of quadrilateral 2-0 moves (Figure~\ref{spine_slide_0-2} shows the reverse move). We perform a quadrilateral 2-0 move on a leaf of $T'$ to remove it from the tree as we push $C$ over the corresponding 2-cell. Since $T'$ is a tree, we can always find a leaf vertex other than $v_\text{end}$ to perform a quadrilateral 2-0 move on.

\subsection{Performing the quadrilateral 2-0 moves}

In Lemma 1.2.11 of \cite{matveev_book}, Matveev shows that the quadrilateral 0-2 move can be performed using 2-3 and 3-2 moves in a very general setting. In this paper however, 
we only prove that it can be performed in our more restrictive setting. 

Here we state the moves purely in terms of the circle intersecting the planar graph.
The meaning of ``0-2'', ``2-3'' and ``3-2'' moves is as for the boundary of the upper disk moving against the boundary of the spine below the plane, as shown in Figure~\ref{fig_spine_slide_2-3_and_0-2}.

\begin{figure}[htbp]
\centering
\labellist
\scriptsize\hair 2pt
\pinlabel $q_1$ at 340 446
\pinlabel $C$ at -10 452
\pinlabel $g$ at 64 452
\pinlabel $p_2$ at 42 422
\pinlabel $p_1$ at 88 422
\pinlabel $f$ at 64 405
\endlabellist
\includegraphics[width=0.5\textwidth]{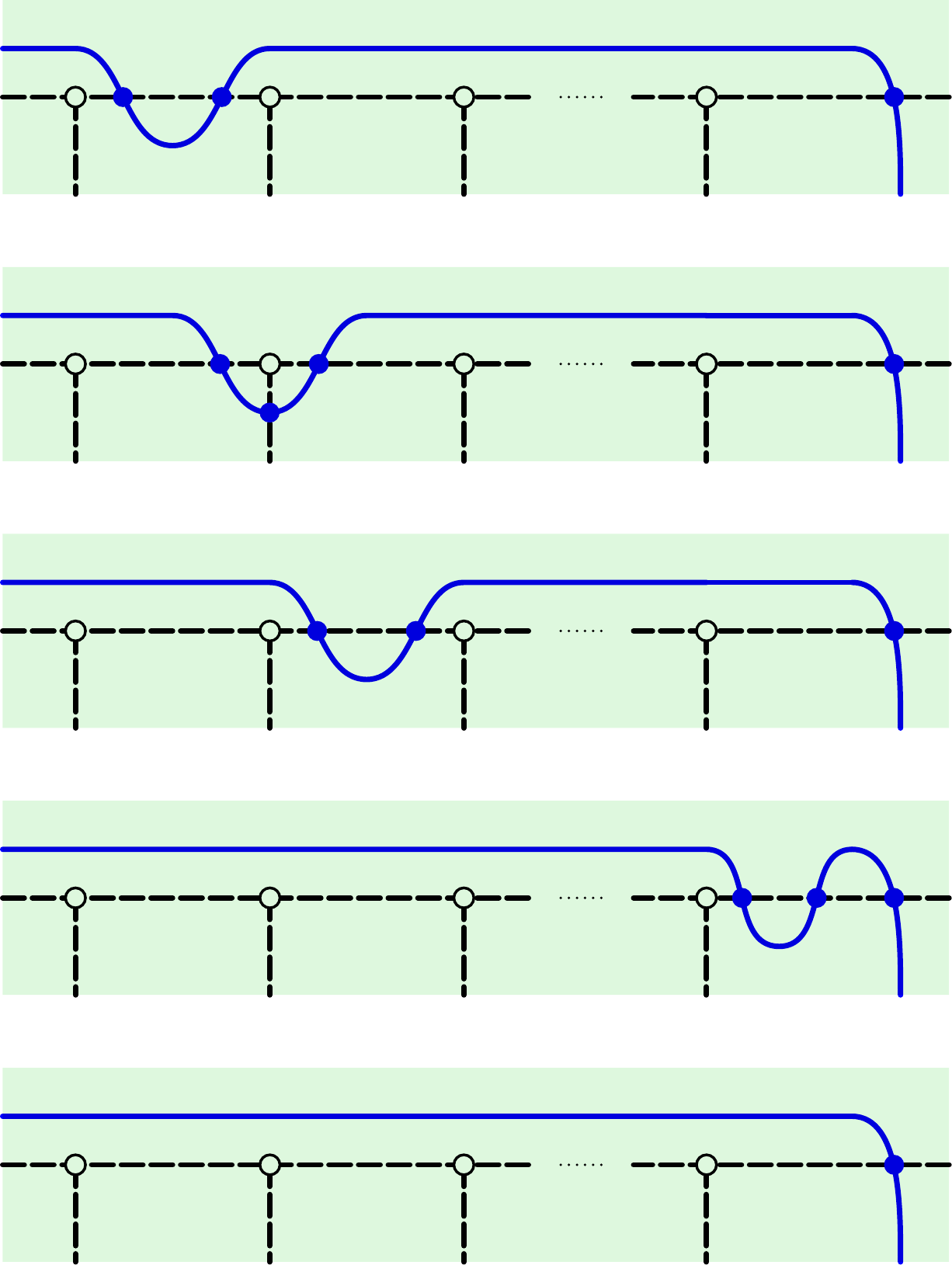}
\caption{Moves to perform a quadrilateral 2-0 move.} 
\label{fig_2-0_quad_move}
\end{figure}

See Figure~\ref{fig_2-0_quad_move}. 
We have a ``bump'' in the curve $C$, which protrudes into the face $f$. We wish to push the bump into $g$. There are always two components $A_\text{start}$ and $A_\text{end}$ of $A-C$, which contain $C_\text{start}$ and $C_\text{end}$ respectively, and we want to push the bump from the $A_\text{start}$ side towards the $A_\text{end}$ side.

The bump intersects $\bdry g$ in two points, say $p_1$ and $p_2$. The circle $C$ also intersects $\bdry g$ at some other point on $\bdry g$. This follows since if not, after the 2-0 move $C$ would lie in the interior of the face $g$ of the spine. But $C$ cannot contract to either side since it is essential in $A$. Therefore $g$ cannot be a disk, which is a contradiction.  Let $q_1$ and $q_2$ be the first such intersection points moving away from the bump in the directions of $p_1$ and $p_2$ respectively. Let $a_1$ and $a_2$ be the subarcs of $\bdry g$ between $p_1$ and $q_1$, and between $p_2$ and $q_2$ respectively. At least one of $a_1$ and $a_2$ does not contain $e_\text{start}$. Relabelling if necessary, we assume that $a_1$ does not contain $e_\text{start}$.

In our setting, $a_1$ is in $A_\text{start}$. Therefore there are no intersections between $C$ and $a_1$, and by our previous choice $a_1$ does not contain $e_\text{start}$.
So, we can move the bump along $a_1$ via a sequence of 2-3 and 3-2 moves, staying inside of $A$, as shown in Figure~\ref{fig_2-0_quad_move}, until it is next to $q_1$. Finally, the inverse of a V-move removes the bump. 

After performing all of these moves, the position of $C$ is identical to its original position, except that the original bump has been removed. This then achieves the quadrilateral 2-0 move.

In this process, we should be concerned that some moves we make might be blocked by self-identifications in our graph $\Gamma$.  It is possible that some of the edges of $a_1$ are identified with each other, corresponding to identifications among the sides of the face $g$. Note that any identification between these edges must be orientation reversing since the graph is planar. Moreover, there can be no identification between adjacent edges since the graph is trivalent. 
Each such step of the bump from sitting in one edge to sitting in the next involves only a small neighbourhood of the vertex between the two edges. By the above comments, there can be no identification between parts of $\Gamma$ involved in the step. 

\subsection{Removing self-gluings}

{Now that we know how to sweep a membrane across a ball when the ball has no self-gluings, we now have to deal with balls with self-gluings.}

Fixing some notation, we have membranes $m_\alpha$ and $m_\beta$ in $\cT_\alpha$ and $\cT_\beta$ respectively, and the difference in the position of the two membranes is the only difference between the two spines. Let $\cT_{\alpha\beta} = (\cT_\alpha \cup m_\beta) = (\cT_\beta \cup m_\alpha)$. See Figure~\ref{T_alphabeta}.

The ball $B$ is one of the connected regions of $M$ minus the spine $\cT_{\alpha\beta}$. Let $R_\alpha$ be the other incident region to $m_\alpha$, and similarly for $R_\beta$ and $m_\beta$. We have $R_\alpha \neq B \neq R_\beta$, although $R_\alpha$ may or may not be the same region as $R_\beta$. The 1-cell $\bdry m_\alpha$ is incident to two 2-cells other than $m_\alpha$, namely $f_\alpha$ and $g_\alpha$. The 2-cell $f_\alpha$ separates $B$ from another region $C_\alpha$, while $g_\alpha$ separates $R_\alpha$ from $C_\alpha$. The region $C_\alpha$ meets itself along a 2-cell $d_\alpha$ associated to the arch-with-membrane. We have similar notation for the $\beta$ side of the picture. 
The regions $C_\alpha$ and $C_\beta$ may each be the same as any of the other regions listed, including  $B$ if  $f_\alpha$ or $f_\beta$ is a self-gluing of $B$. Moreover, we may have $f_\alpha = f_\beta$ if the intersection of the 1-skeleton of the spine with $\bdry B$ does not separate the two arches, and we may have $g_\alpha = g_\beta$, connecting around outside of $B$.

\begin{figure}[htbp]
\centering
\labellist
\small\hair 2pt
\pinlabel $g_\alpha$ at 53 207
\pinlabel $C_\alpha$ at 73 377
\pinlabel $m_\alpha$ at 113 322
\pinlabel $R_\alpha$ at 113 122
\pinlabel $d_\alpha$ at 178 255
\pinlabel $f_\alpha$ at 208 377
\pinlabel $B$ at 435 75
\pinlabel $g_\beta$ at 820 207
\pinlabel $C_\beta$ at 800 377
\pinlabel $m_\beta$ at 766 322
\pinlabel $R_\beta$ at 766 122
\pinlabel $d_\beta$ at 703 255
\pinlabel $f_\beta$ at 673 377
\endlabellist
\includegraphics[width=\textwidth]{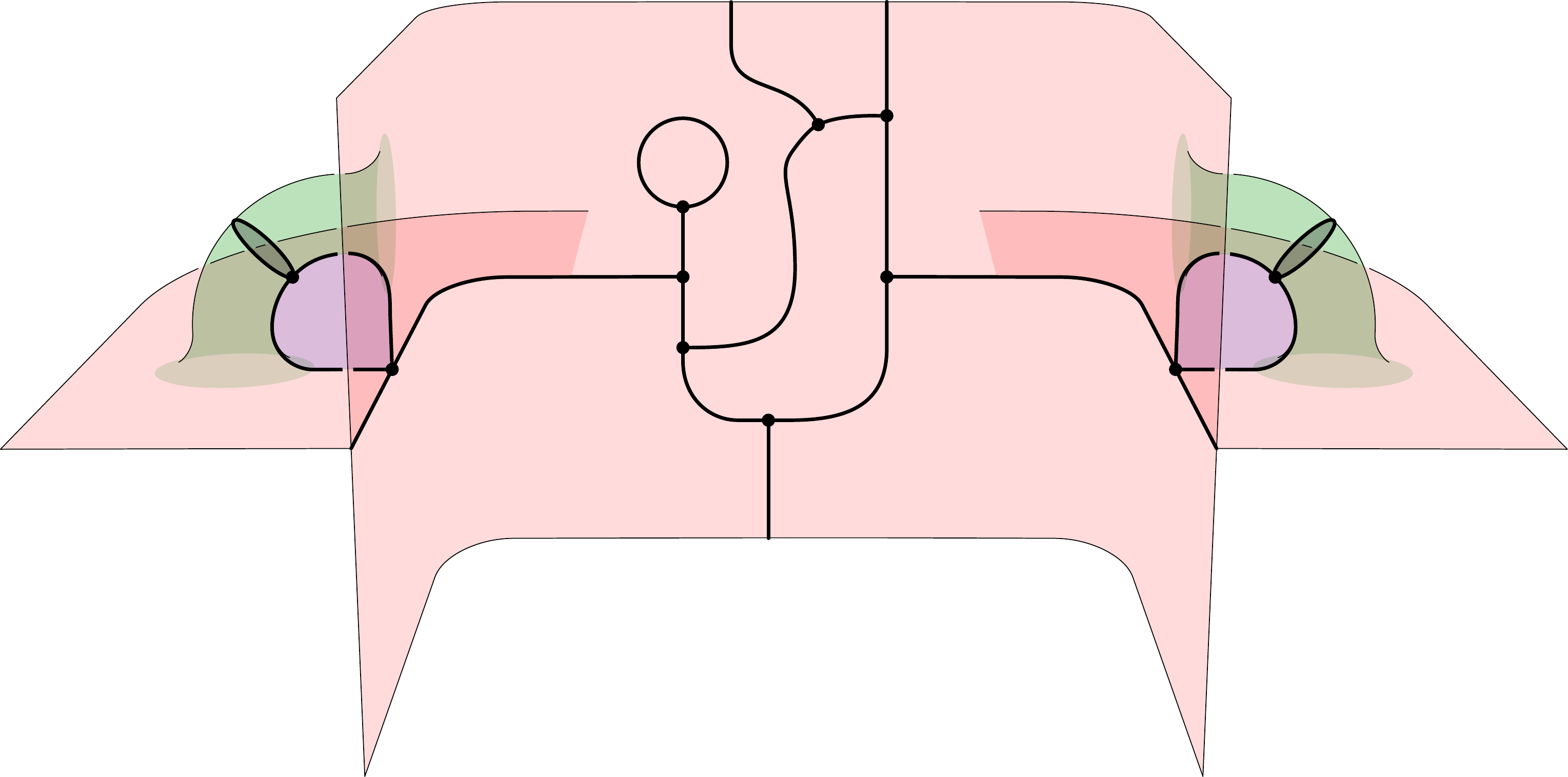}
\caption{The spine $\cT_{\alpha\beta}$. } 
\label{T_alphabeta}
\end{figure}

Note that $\bdry B$ (before we glue up the faces of $B$) consists of $(m_\alpha\cup m_\beta)$, together with an annulus $A$. After gluing, the ball $B$ may be glued to itself along some faces, which means that the annulus has some faces  identified.   

The pattern on $\bdry B \cup g_\alpha \cup g_\beta$ formed from the intersection of the boundary of $B$ with parts of the spine outside of $B$ (and parts of $\bdry d_\alpha$ and $\bdry d_\beta$ intersecting $g_\alpha$ and $g_\beta$) is a trivalent graph $\Gamma$. 

We will remove self-gluings of $B$, using moves analogous to the barycentric subdivision moves from Section~\ref{sec:Barycentric subdivision}. As we modify our triangulations to remove self-gluings, we abuse notation and continue to refer to the results of our various moves as $\cT_\alpha$,  $\cT_\beta$ and $\cT_{\alpha\beta}$. Similarly, we will maintain the same names for the various cells of our spines, even as they are modified or moved.
Having removed all self-gluings, we move the membrane from its position in $\cT_\alpha$ to its position in $\cT_\beta$, similarly to as in Section~\ref{no_self-gluings} (with a few modifications we will give after this section). This connects the two triangulations.

 In this process, we may do moves to $\cT_\alpha$ in the vicinity of $m_\beta$. We will alter the corresponding moves applied to $\cT_\beta$ (and $\cT_{\alpha\beta}$), so as to give the same result. To be more precise, we will apply various \emph{composite moves} to $\cT_\alpha$, $\cT_\beta$ and $\cT_{\alpha\beta}$. The same composite moves on these triangulations may consist of a different sequence of bistellar moves, to deal with the fact that membranes may or may not be present in different positions for the three spines. However, after each such composite move, we require that the identity $\cT_{\alpha\beta} = (\cT_\alpha \cup m_\beta) = (\cT_\beta \cup m_\alpha)$ holds. Our composite moves are given in the following sections. 

\subsubsection{Ensure that there are no $BBB$ spine edges.}
\label{remove_BBB_edges}

First, we will require that there are no edges of $\cT_{\alpha\beta}$ (thought of as a spine) whose only incident region is the ball $B$ (``$BBB$-edges'').
We can remove 
$BBB$-edges as follows. First, if there is a $BBB$-edge then there must be a vertex $v$ of the spine $\cT_{\alpha\beta}$ incident to $B$ three times and some other region, $R$ say, distinct from $B$. If no such vertex can be found, then the $BBB$-edges and their incident regions form a connected component of the manifold, which therefore contains only $B$. But we know that $B$ is incident to a vertex of the spine inside the arch-with-membrane, which is incident to $R_\alpha \neq B$, a contradiction. Having found such a vertex $v$, we perform a 2-3 move along its incident $BBB$-edge $e$, noting that the 2-3 move is possible: $e$ cannot connect $v$ to itself since no other edge incident to $v$ is a $BBB$-edge. This 2-3 move reduces the number of $BBB$-edges by one. By induction we can remove all $BBB$-edges. 

If $\bdry d_\beta$ is a $BBB$-edge in $\cT_\alpha$, then this procedure will perform a 2-3 move along it. We must apply appropriate bistellar moves to $\cT_\beta$ so that $\cT_\beta$ is identical to $\cT_\alpha \cup m_\beta$ in the vicinity of $m_\beta$. This is done by using two 2-3 moves along $\bdry d_\beta$ instead of just one. See Figure~\ref{do_two_2-3_moves}. We apply a similar technique if $\bdry d_\alpha$ is a $BBB$-edge in $\cT_\beta$.

\begin{figure}[htbp]
\centering
\subfloat[]{
\includegraphics[width=0.28\textwidth]{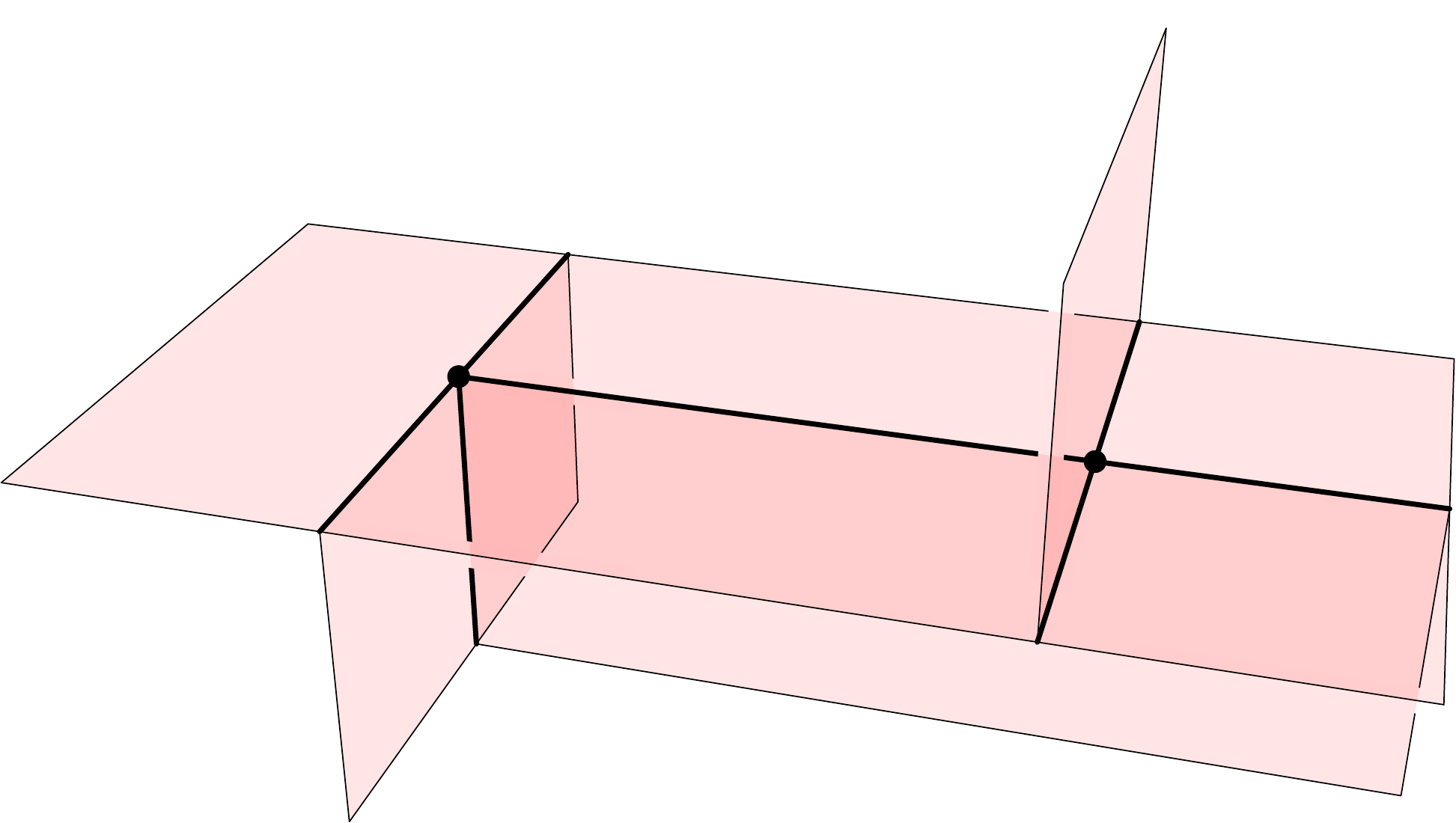}
}
\quad\qquad\qquad\qquad\qquad\qquad\qquad
\subfloat[]{
\includegraphics[width=0.28\textwidth]{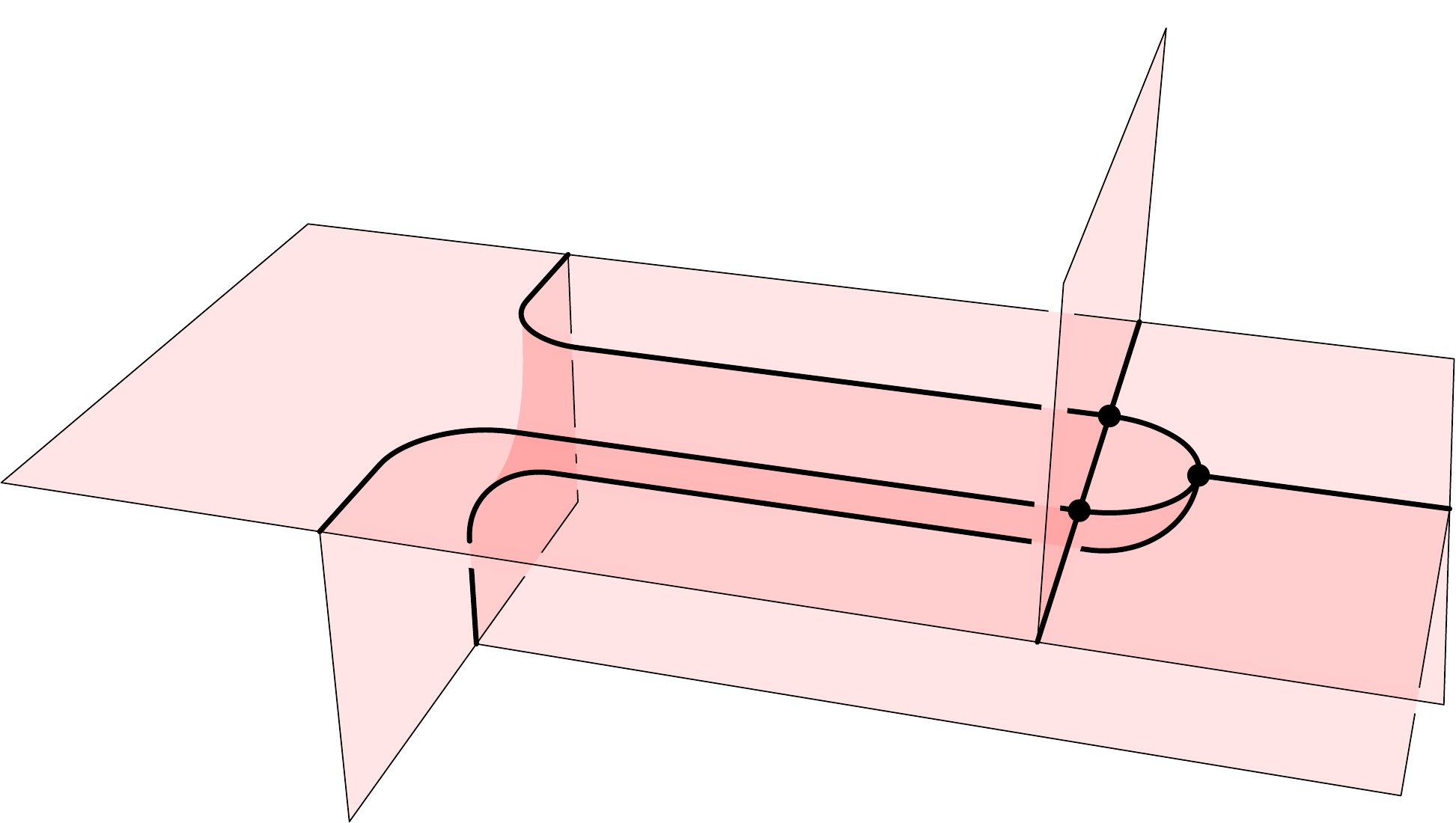}
}

\subfloat[]{
\includegraphics[width=0.28\textwidth]{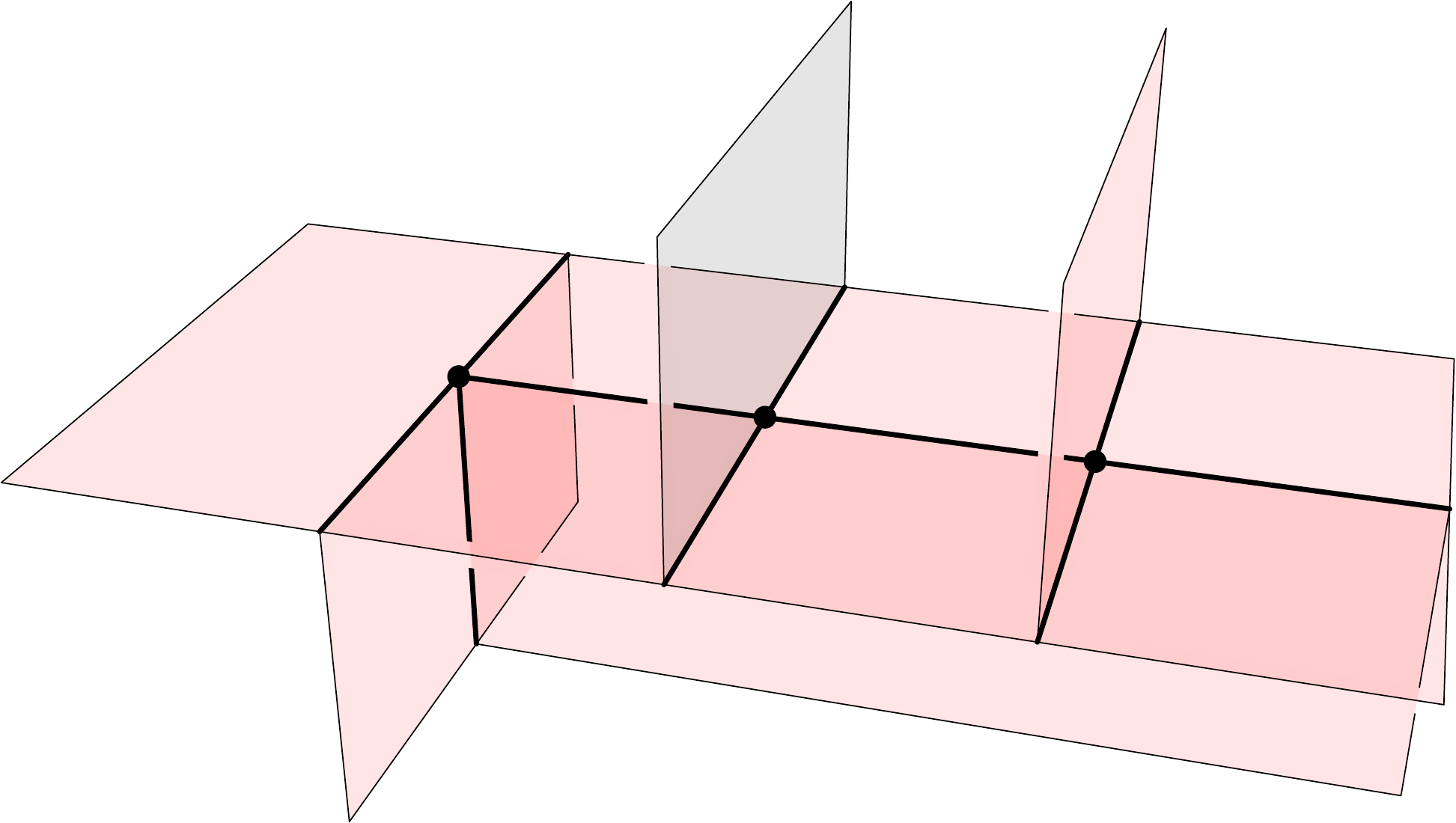}
}
\quad
\subfloat[]{
\includegraphics[width=0.28\textwidth]{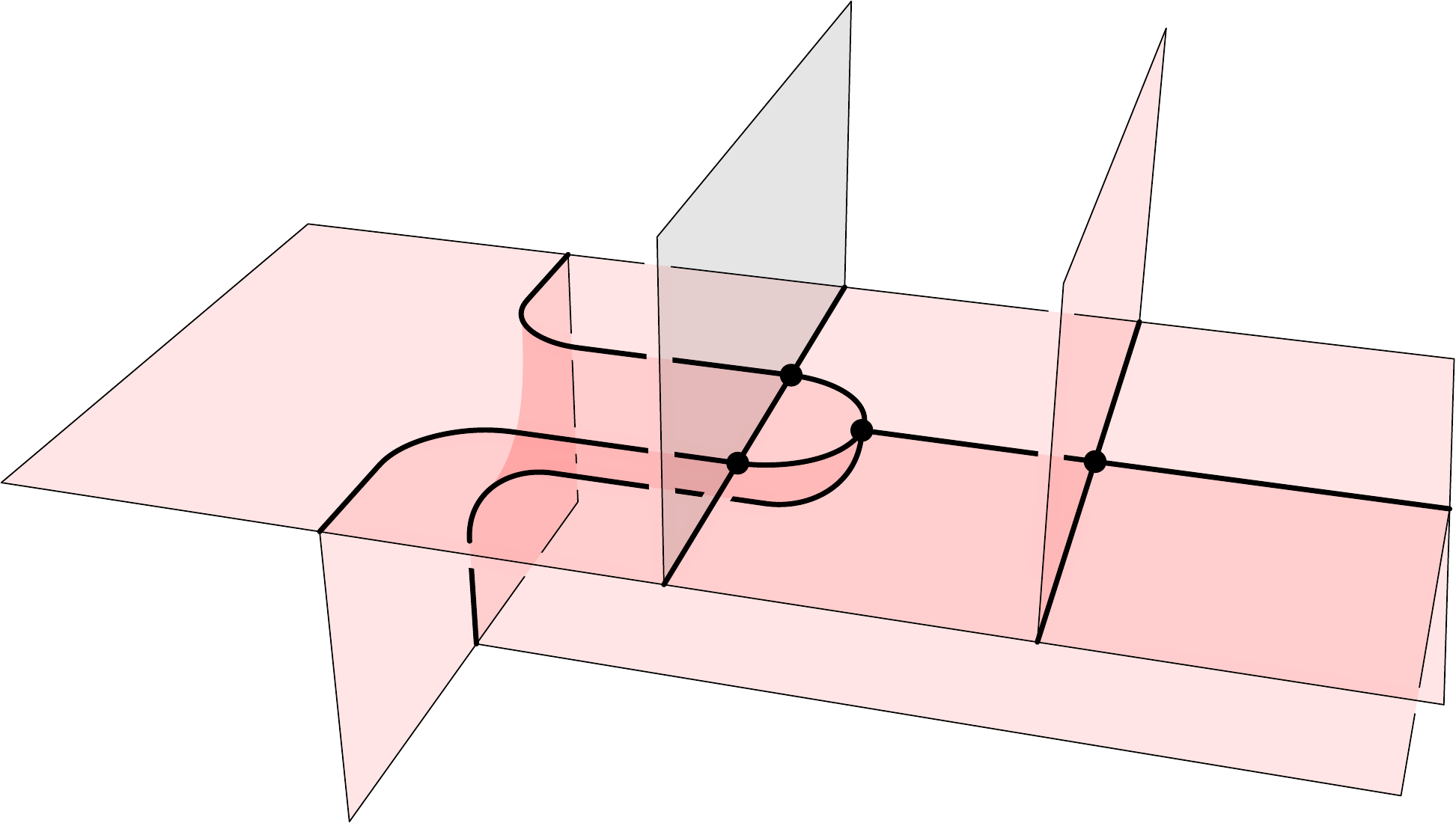}
}
\quad
\subfloat[]{
\includegraphics[width=0.28\textwidth]{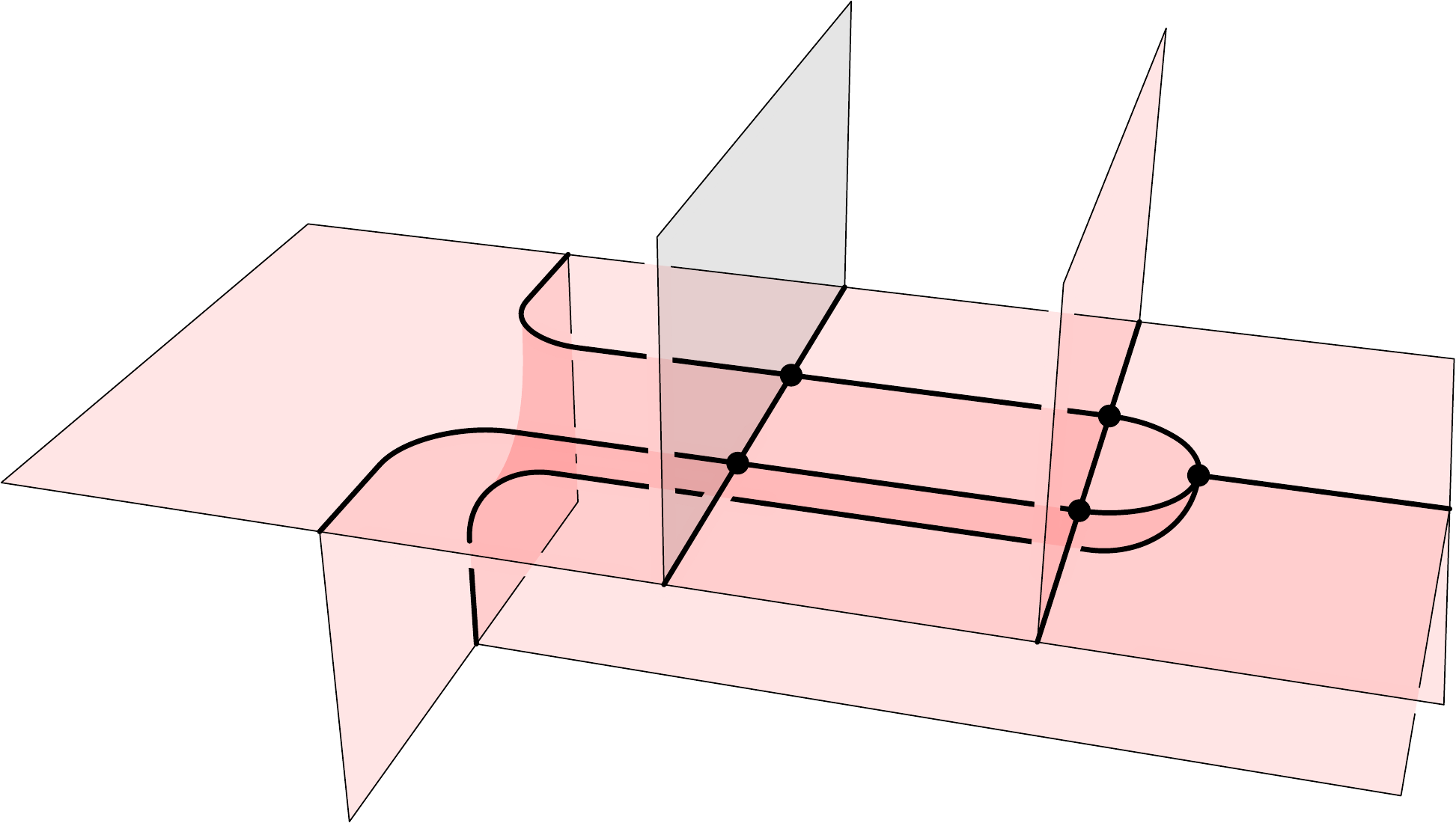}
}
\caption{Top row: a 2-3 move applied to an edge. Bottom row: the same result is achieved despite a membrane disk (shaded gray) being in the way by applying two 2-3 moves.} 
\label{do_two_2-3_moves}
\end{figure}

Having completed this step, no triangle of $\cT_{\alpha\beta}$ (thought of as a triangulation) has three $B$ vertices.

\subsubsection{Thickening the 1-skeleton of $B$}
\label{thicken_K}

Next, let $K$ be the 1-skeleton of $B$ (in $\cT_{\alpha\beta}$, thought of as a spine), union the edges in $\bdry g_\alpha,$ and $\bdry g_\beta$, and those edges that intersect $\bdry m_\alpha$ and $\bdry m_\beta$, but minus $\bdry m_\alpha$ and $\bdry m_\beta$ themselves. We wish to ``thicken up'' $K$, inspired by the implementation of barycentric subdivision given in Section~\ref{sec:Barycentric subdivision}, which uses 1-4 moves. Of course we cannot perform any 1-4 moves here, but we can perform the composition of a 1-4 move with adding an arch, as shown in Figure~\ref{fig_1-4_arch}. Moreover, since every vertex of $K$ is incident to the region $B$ at most twice, we can arrange matters so that the arch is on an edge of the resulting spine which is not incident to $B$. In the dual triangulation picture, we apply a 1-4 move and can indicate the arch with an arch mark. The picture is the same as shown in Figure~\ref{fig_1-4_arch_mark}. Here, we put the arch mark on the internal triangle incident to two non-$B$ vertices. Following this convention, after these moves our triangulation $\cT_\alpha$ is the result of adding arches at arch marks drawn on a triangulation $\cT'_\alpha$ say, where $\cT'_\alpha$ has an extra material vertex for every 1-4 move. On $\cT'_\alpha$, we label the new vertices of the triangulation ``3'', as in the first stage of barycentric subdivision, given in Section~\ref{sec:Barycentric subdivision}. This thickens up the vertices of $K$. To thicken up the edges of $K$, we continue with the barycentric subdivision procedure, adding vertices labelled ``2'' to $\cT'_\alpha$, again with arches connecting the new vertices back to non-$B$ vertices, with arch marks placed on triangles with vertices labelled 2, 3 and a non-$B$ 0 vertex. Again this is always possible to do, since no face of the triangulation has three $B$ vertices. In the second part of this stage of barycentric subdivision, we perform a 2-3 move on a face of the triangulation with three 0 vertices. We are able to do this without destroying one of our arch marks because all of our arch marks are on triangles with at least one non-0 vertex. This done, we have thickened $K$.

When we thicken an edge that intersects $\bdry m_\alpha$, on $\cT_\alpha$ we apply two 2-3 moves instead of one, just as in Section~\ref{remove_BBB_edges}, in order to keep up with $\cT_\beta$. The same applies with $\alpha$ and $\beta$ swapped.

Having completed this step, each face of the spine $\cT_{\alpha\beta}$ (and also $\cT'_{\alpha\beta}$) incident to $B$ has all of its incident vertices and edges distinct. Moreover, in $\cT'_{\alpha\beta}$, no arch mark is on a triangle with a $B$-vertex.

\subsubsection{Thickening self-gluings of $B$}

We now wish to continue the barycentric subdivision theme, thickening up the faces of $B$ that are self-gluings. For such a face, we are precisely in the setting of adding a vertex labelled 1, as in Figure~\ref{barycentric_subdiv_3d_edge}. 
Again here we implement the 1-4 and arch move on $\cT_\alpha$ with 2-3 and 3-2 moves. The corresponding arch mark is placed on a triangle of $\cT'_\alpha$ not incident to a $B$ vertex. In fact we may place it on a triangle with vertices labelled 1, 2 and 3. Next we follow the subsequent steps, as shown in Figure~\ref{barycentric_subdiv_3d_edge_top_view}. These are all 2-3 moves, deleting triangles with two $B$ vertices, which therefore do not have arch marks on them, and a final 3-2 move. Each of the three triangles deleted in the final 3-2 move has a $B$ vertex, and so again there are no arches in the way of this move.

In this step, special handling for $m_\alpha$ (and $m_\beta$ respectively) is needed in the case that $B = C_\alpha$, so $f_\alpha$ is a self-gluing of $B$. In this case, for $\cT_\beta$, the membrane $m_\alpha$ is not present, so $f_\alpha = g_\alpha$, and the entire face is thickened. So, we must do the same for $\cT_\alpha$, despite $m_\alpha$ being in the way. To achieve this, first of all let's break the operation of thickening up a face, as shown in Figure~\ref{barycentric_subdiv_3d_edge_spine}, into two stages.

\begin{enumerate} 
\item First, from Figure~\ref{barycentric_subdiv_3d_spine_edge_step_0} to \ref{barycentric_subdiv_3d_spine_edge_step_1} we create a new three-cell region (here, connected via an arch to one of the neighbouring regions). 
\item Second, from Figure~\ref{barycentric_subdiv_3d_spine_edge_step_1} onwards, we expand the new region outwards, \emph{collapsing} the face away. 
\end{enumerate}

The data determining a face collapse move consists of 
\begin{enumerate}
\item the face $f$ to be collapsed, together with 
\item the \emph{collapsing edge} of $f$ which expands into $f$ as $f$ collapses.
\end{enumerate}
In Figure~\ref{barycentric_subdiv_3d_spine_edge_step_1}  the collapsing edge is drawn dashed.
In order to perform a face collapse, we require that all vertices of the face be distinct, and that there are at least three vertices.

\begin{figure}[htbp]
\centering
\subfloat[]{
\labellist
\scriptsize\hair 2pt
\pinlabel $f_\alpha$ at 282 150
\pinlabel $m_\alpha$ at 300 256
\pinlabel $g_\alpha$ at 355 200
\endlabellist
\includegraphics[width=0.3\textwidth]{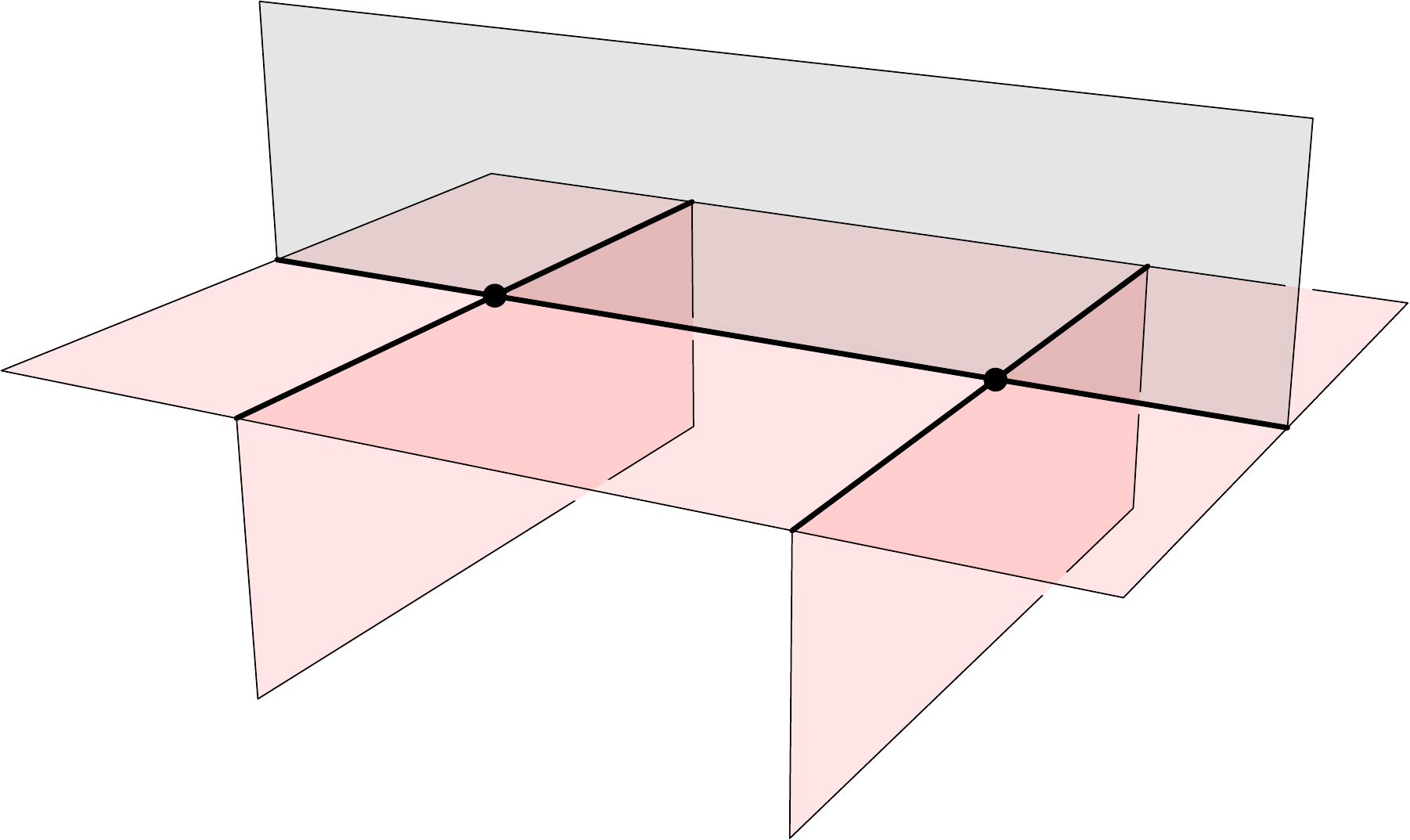}
}
\subfloat[]{
\includegraphics[width=0.3\textwidth]{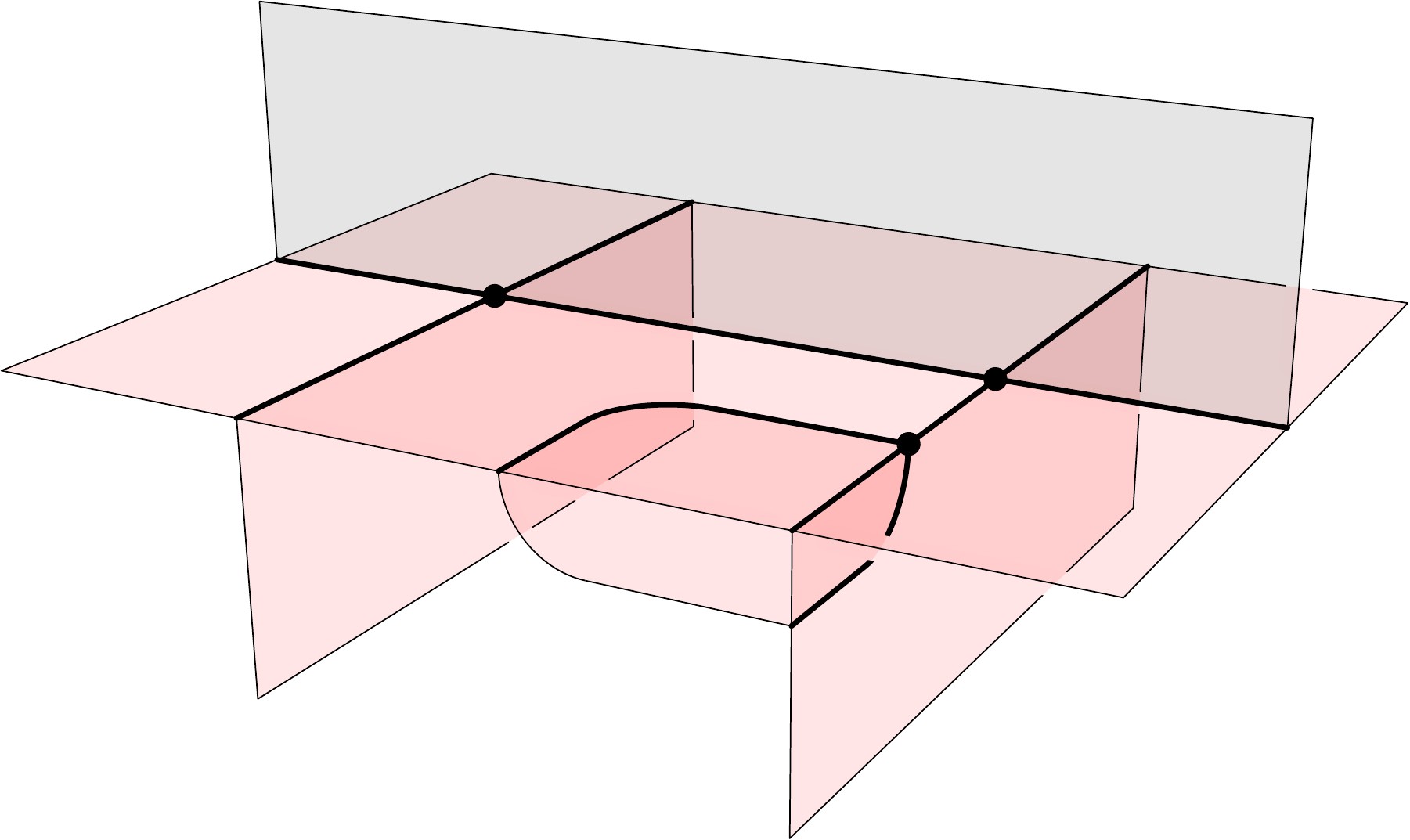}
}
\subfloat[]{
\includegraphics[width=0.3\textwidth]{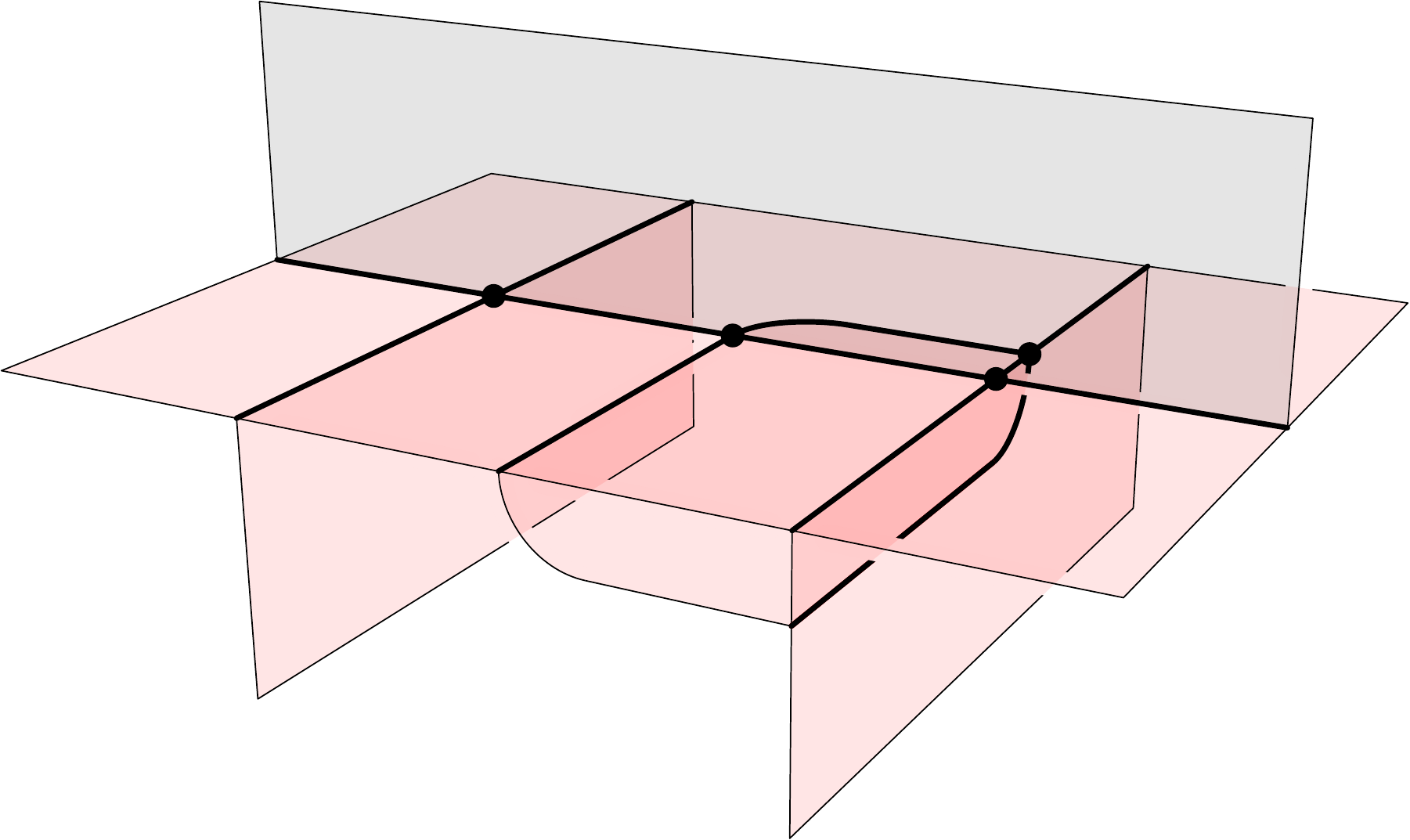}
}

\subfloat[]{
\includegraphics[width=0.3\textwidth]{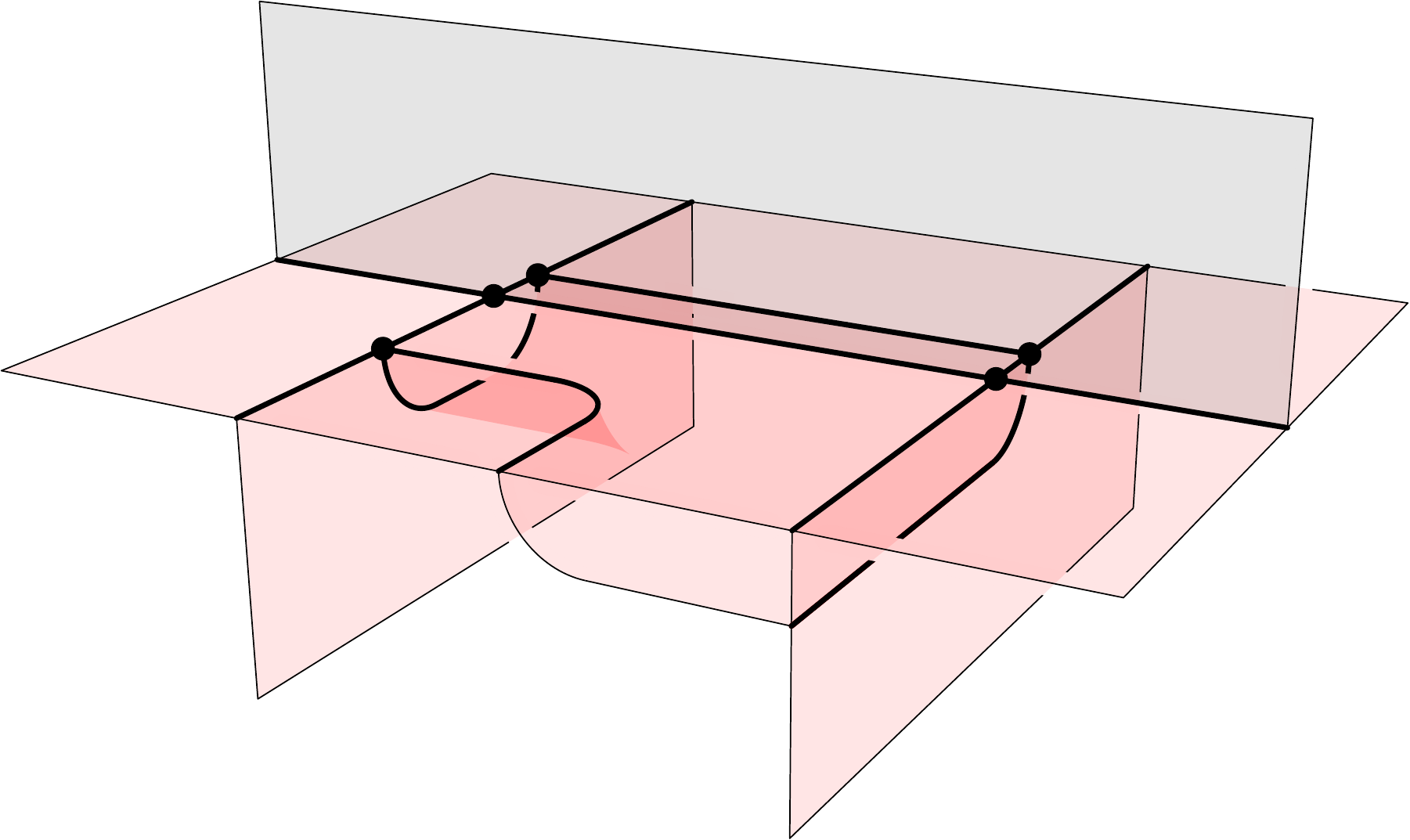}
}
\subfloat[]{
\includegraphics[width=0.3\textwidth]{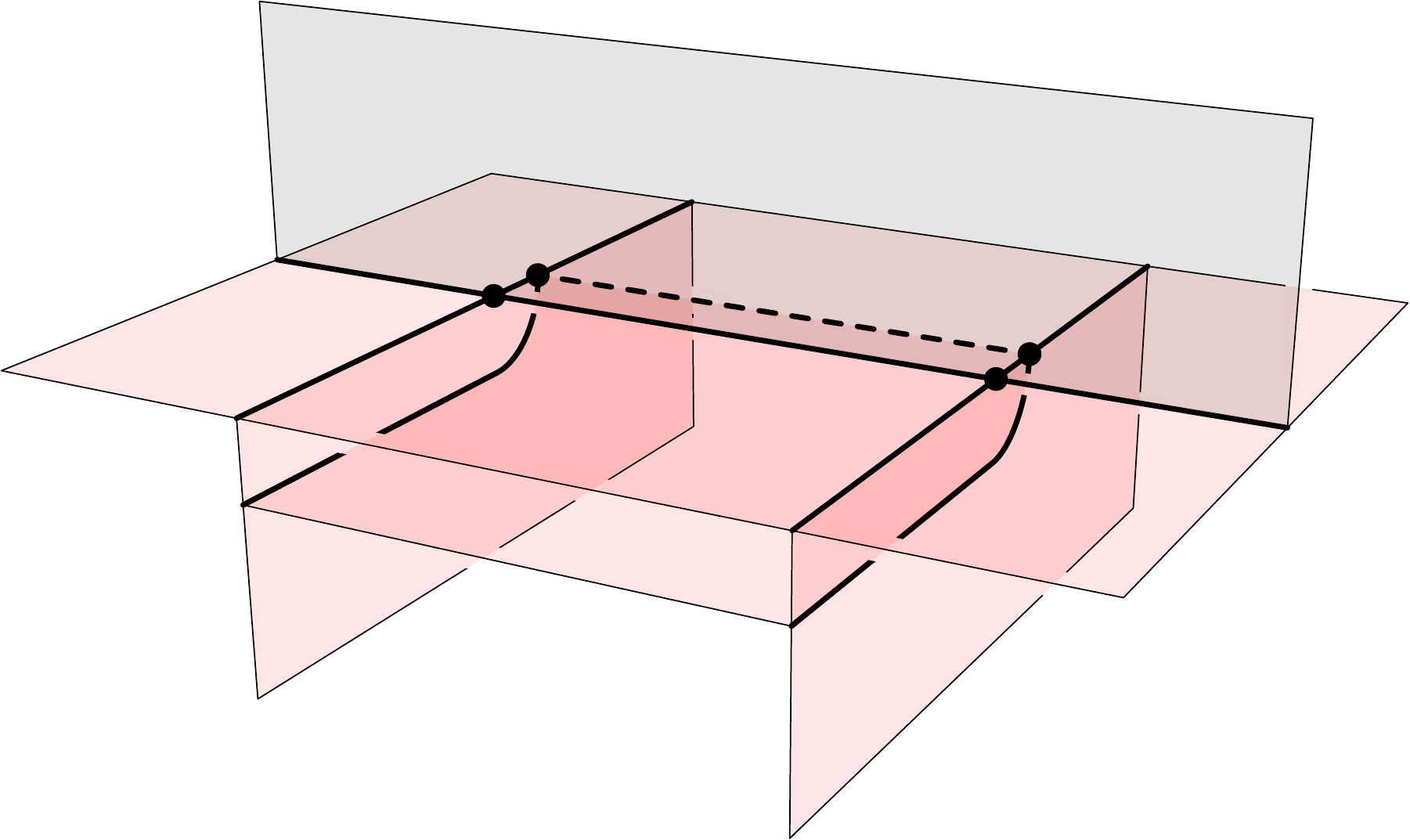}
\label{split_past_membrane_step_4}
}
\caption{Collapsing a face past a membrane (shaded gray).} 
\label{split_past_membrane}
\end{figure}

Figure~\ref{split_past_membrane} shows the effect of thickening a face $f_\alpha$ incident to a membrane $m_\alpha$. (Note that this is not a completely general picture: the degree of the remains of $f_\alpha$ could reach three earlier than shown, in which case we perform a 3-2 move rather than further 2-3 moves.) Now, to continue the thickening of $f_\alpha$ into $g_\alpha$, we perform a second collapsing step, this time collapsing $g_\alpha$, without first creating a new region. Instead we use the region already created to thicken $f_\alpha$, collapsing $g_\alpha$ with the collapsing edge shown in Figure~\ref{split_past_membrane_step_4}. 
Note that by construction, at the start of the second collapsing step the degree of $g_\alpha$ is at least three, so we are able to perform the collapse: If the degree of $g_\alpha$ were two, then before the thickening move we would have had an arc of $\Gamma$ cross $\bdry m_\alpha$ and return, forming a bigon. But our previous moves cannot produce such a configuration.

Also note that if $g_\alpha = g_\beta$, then it is possible that we collapse $g_\alpha$ first, which then requires us to collapse into $f_\alpha$. Finally, it is also possible that we have that $f_\alpha = f_\beta$, since the graph $\Gamma$ does not separate them, and moreover that $g_\alpha = g_\beta$, connecting around outside of $B$. In this case, the membrane (in either position) cannot be ignored - without it there is one face $f_\alpha = f_\beta = g_\beta = g_\alpha$, which is therefore not a disk. When we collapse this face, we continue on from Figure~\ref{split_past_membrane_step_4} to Figure~\ref{split_past_membrane_2_step_1}. Completing the collapse in the rest of Figure~\ref{split_past_membrane_2}, we gain an extra rectangular face, parallel to the membrane, which separates the region that expanded into $f_\alpha = f_\beta = g_\beta = g_\alpha$ from itself. Here, we need to do a little more work to maintain $(\cT_\alpha \cup m_\beta) = (\cT_\beta \cup m_\alpha)$. In particular, after thickening $f_\alpha = f_\beta = g_\beta = g_\alpha$, we have a rectangle near $m_\alpha$ in $\cT_\alpha$, but one near $m_\beta$ in $\cT_\beta$.

\begin{figure}[htbp]
\centering
\subfloat[]{
\includegraphics[width=0.3\textwidth]{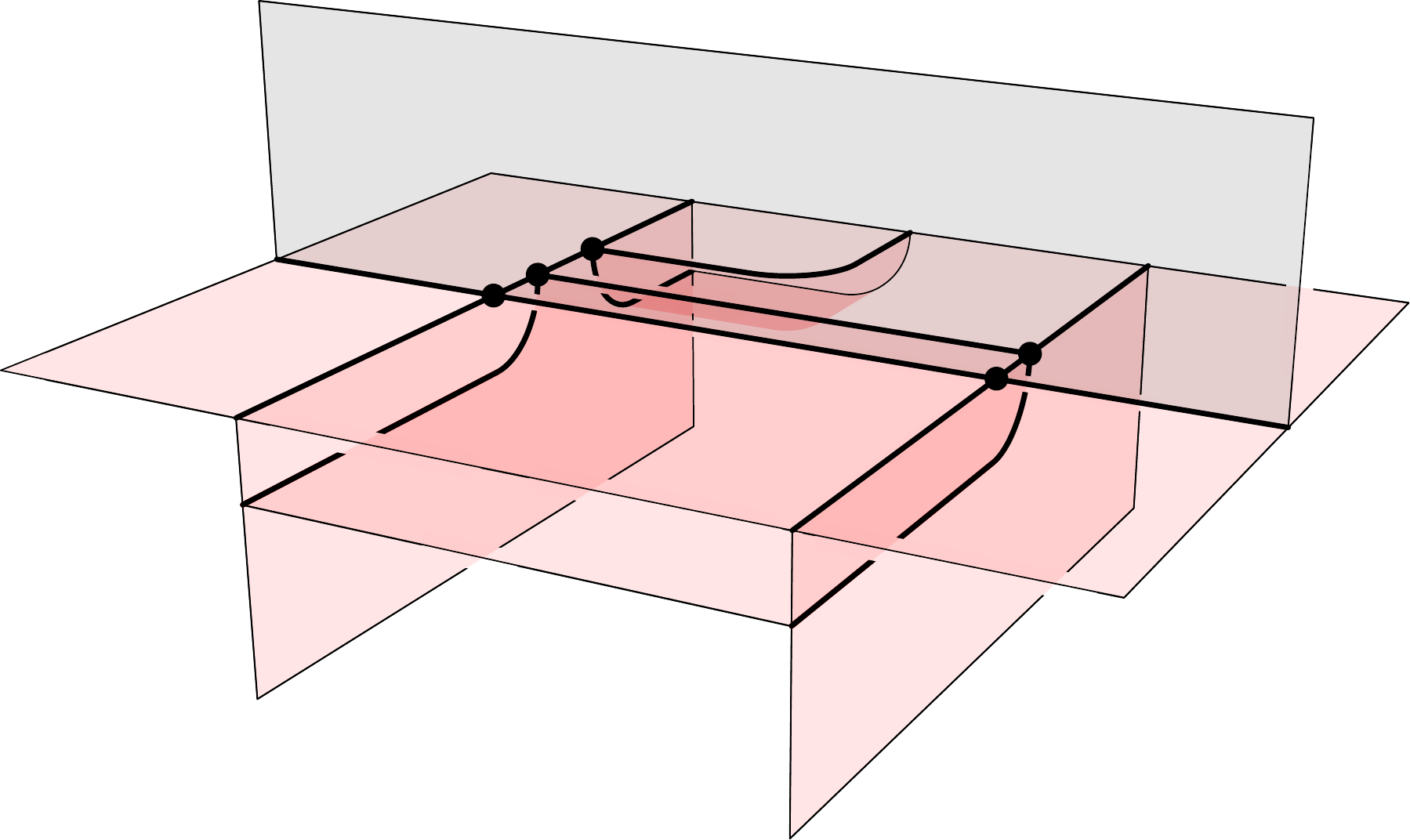}
\label{split_past_membrane_2_step_1}
}
\subfloat[]{
\includegraphics[width=0.3\textwidth]{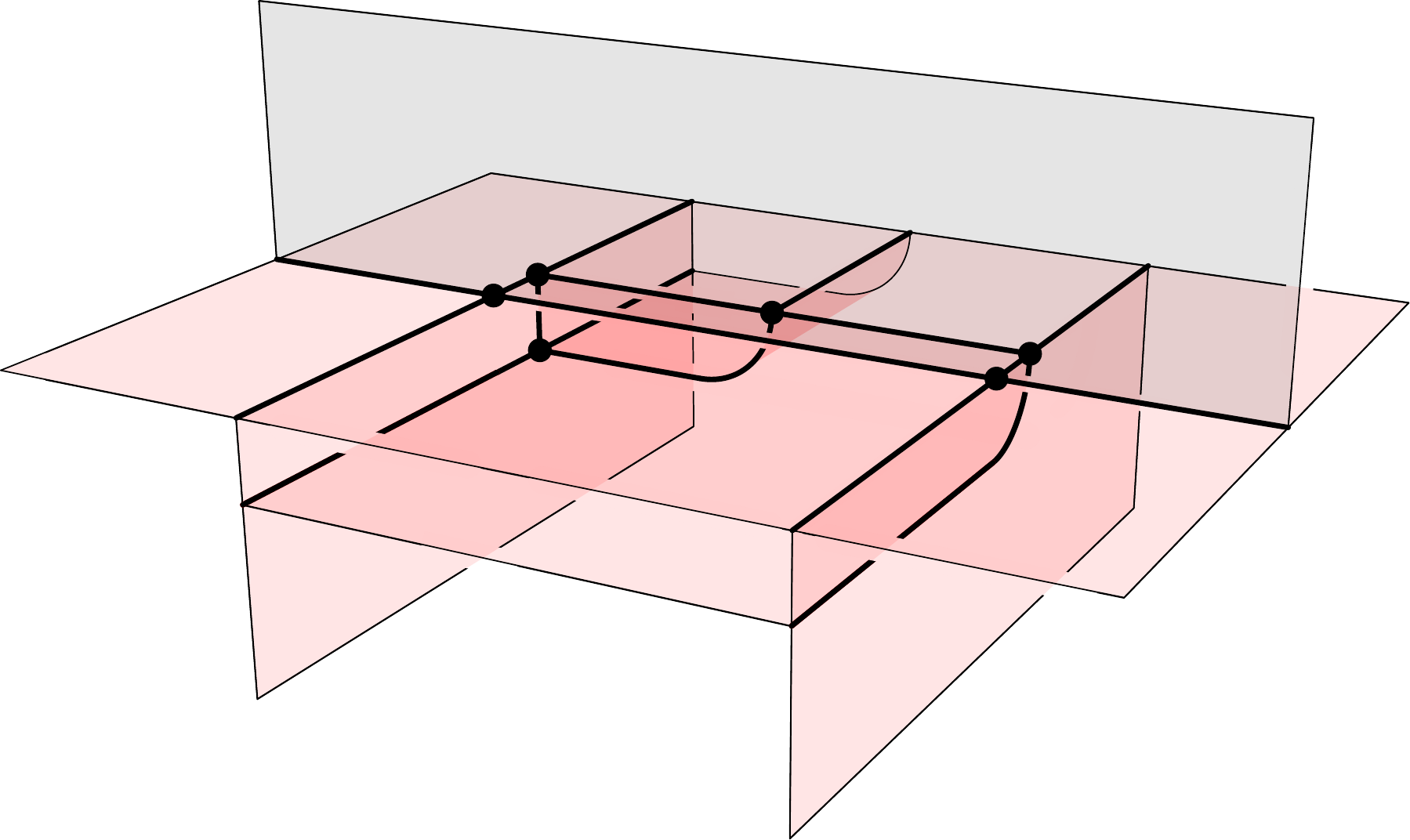}
}

\subfloat[]{
\includegraphics[width=0.3\textwidth]{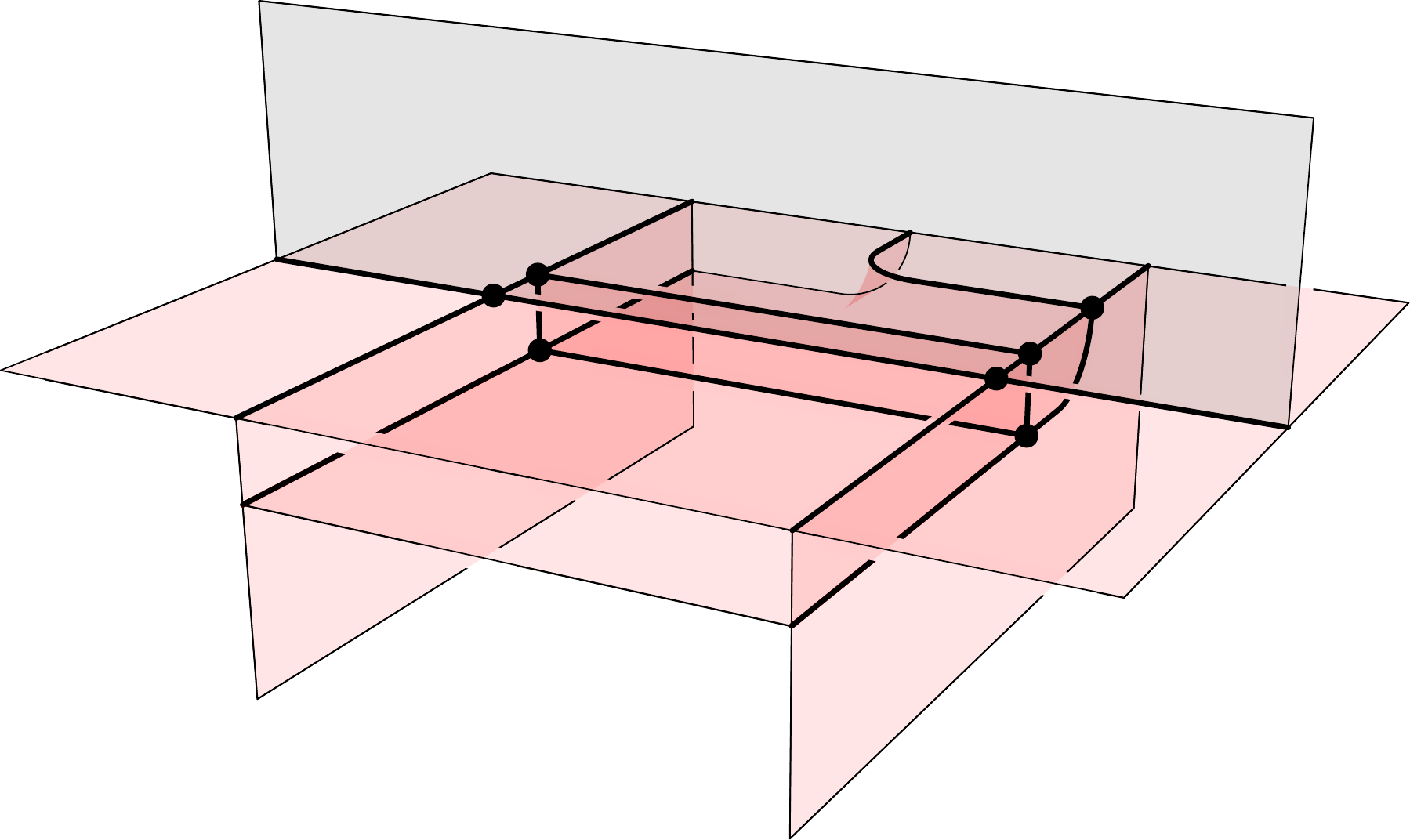}
}
\subfloat[]{
\includegraphics[width=0.3\textwidth]{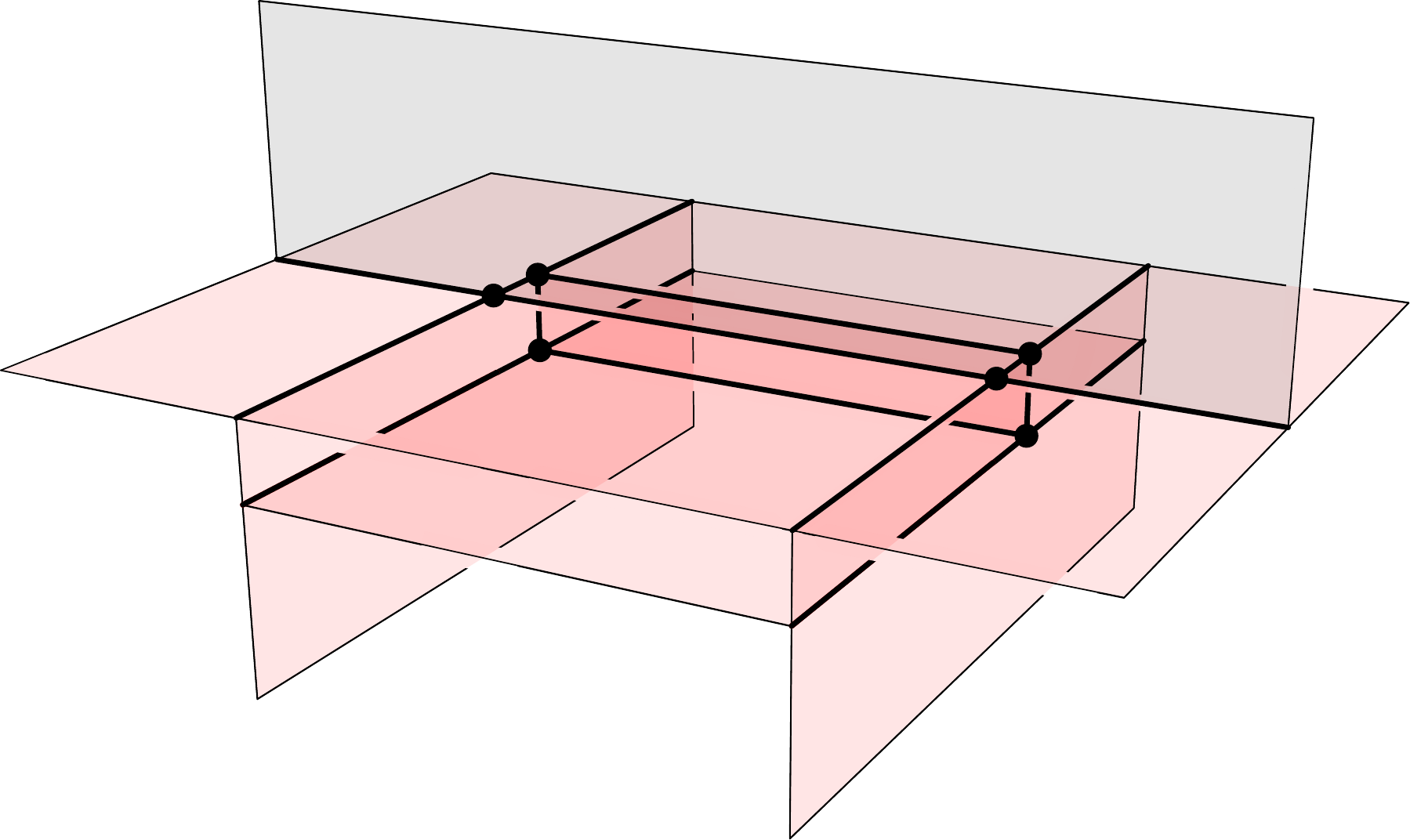}
}
\caption{Collapsing a face past a membrane (shaded gray) when the membrane is a self-gluing.} 
\label{split_past_membrane_2}
\end{figure}

We can move the rectangle past a membrane using a 2-3 followed by a 3-2 move, and we can move it around the boundary of the thickened face using similar moves. See Figure~\ref{slide_rectangle}. Note that although we are sliding a disk (the rectangle) through a ball (the thickened face), this is a much simpler setting than our original problem: there are no self-identifications and the thickened face has a much simpler graph on its boundary. These two rectangle moves suffice to transport it from its location near $m_\alpha$ to its location near $m_\beta$, as long as we don't attempt to move it past the arch connecting the thickened face to the outside. Thankfully, we can either move the rectangle through $B$ or around the outside of $B$, and one of these two paths does pass by the arch.

\begin{figure}[htbp]
\centering
\subfloat[]{
\includegraphics[width=0.3\textwidth]{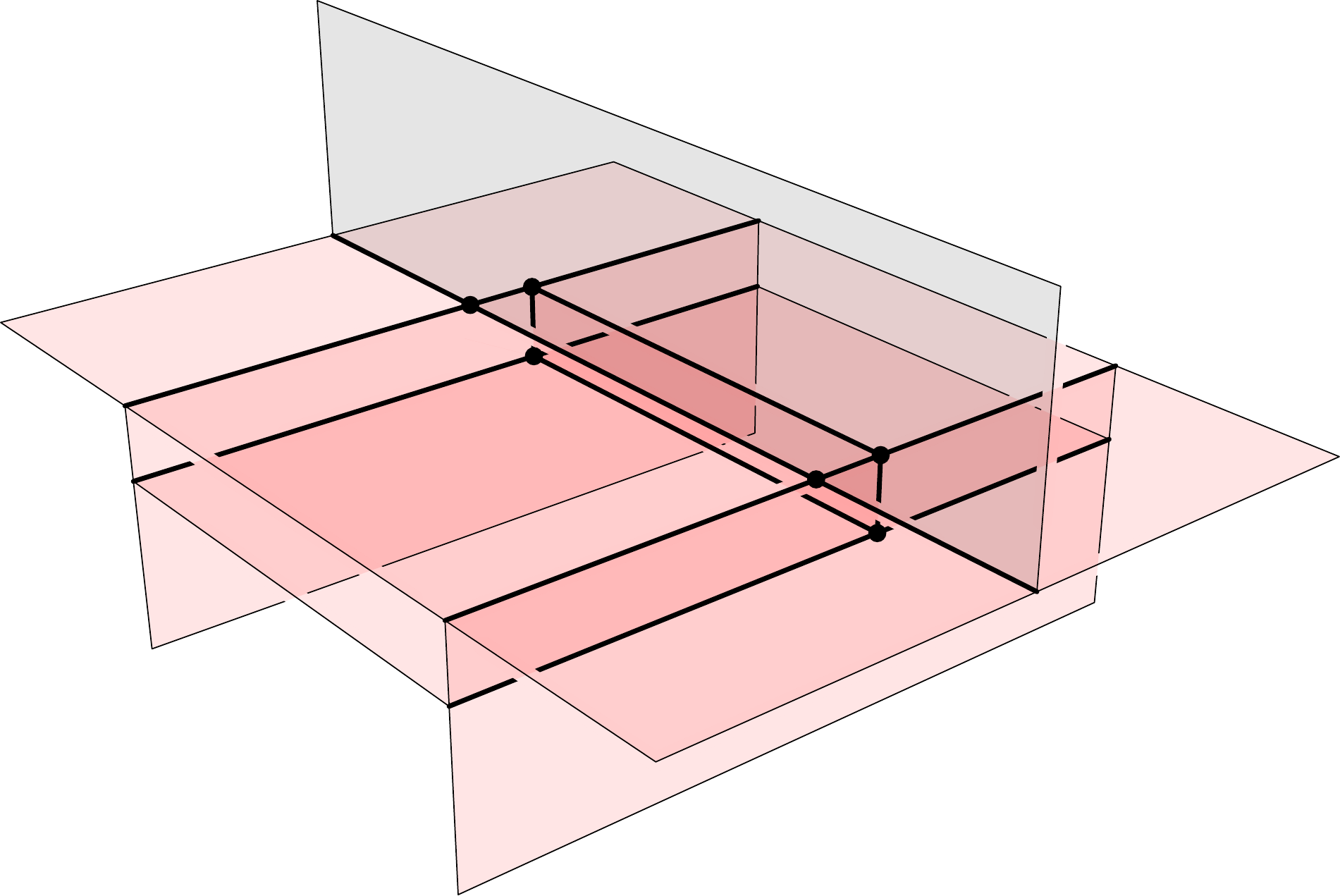}
}
\subfloat[]{
\includegraphics[width=0.3\textwidth]{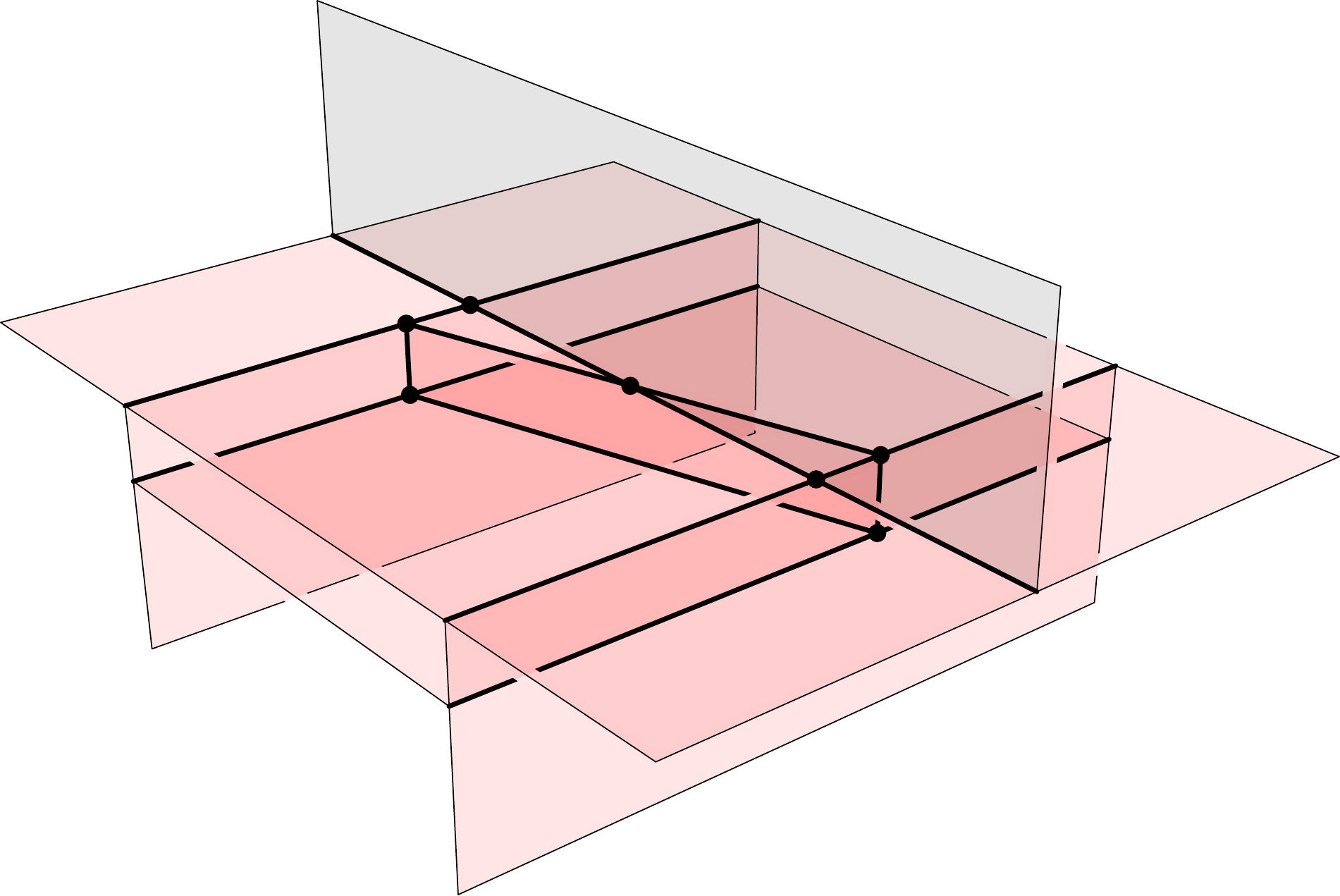}
}
\subfloat[]{
\includegraphics[width=0.3\textwidth]{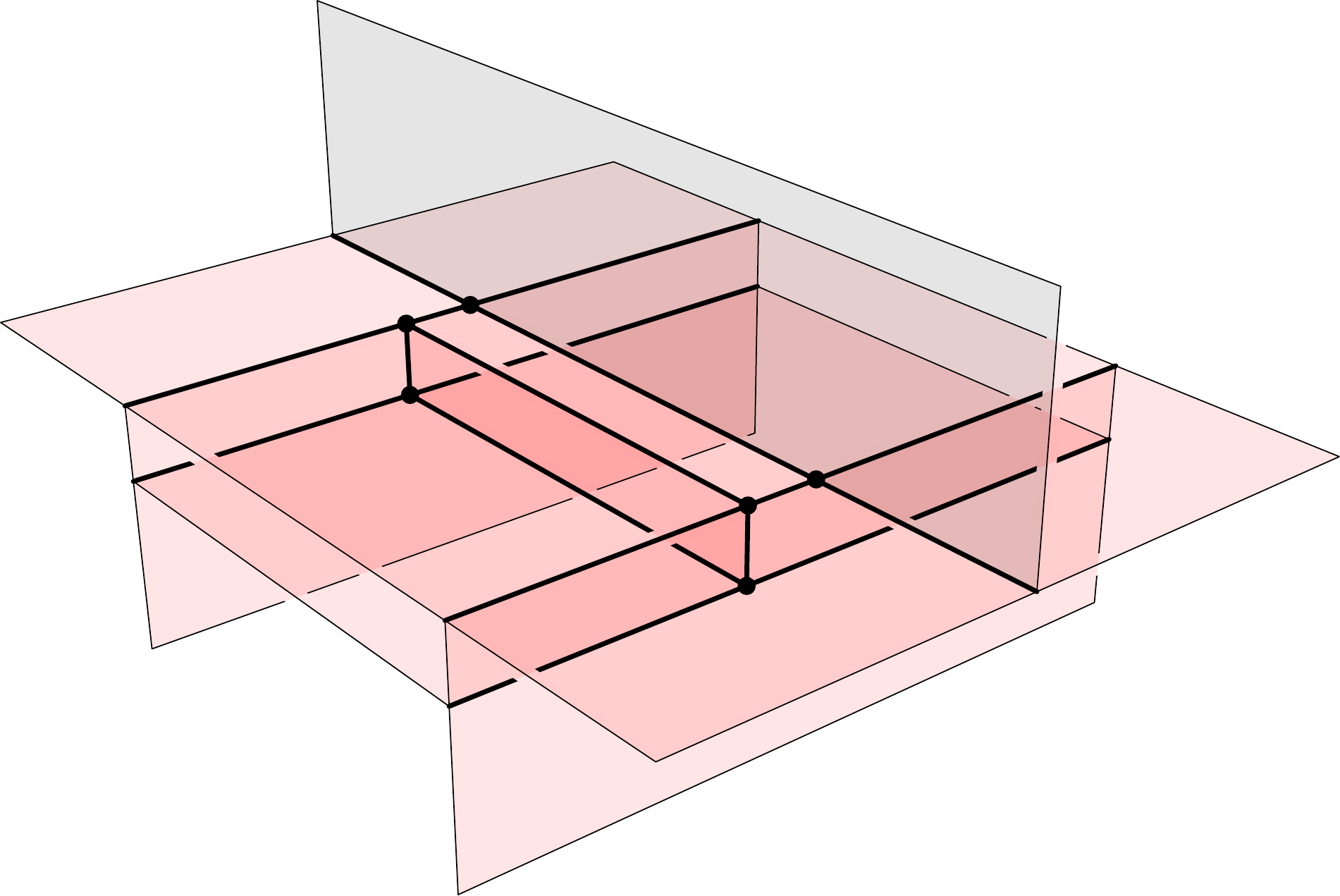}
}

\subfloat[]{
\includegraphics[width=0.3\textwidth]{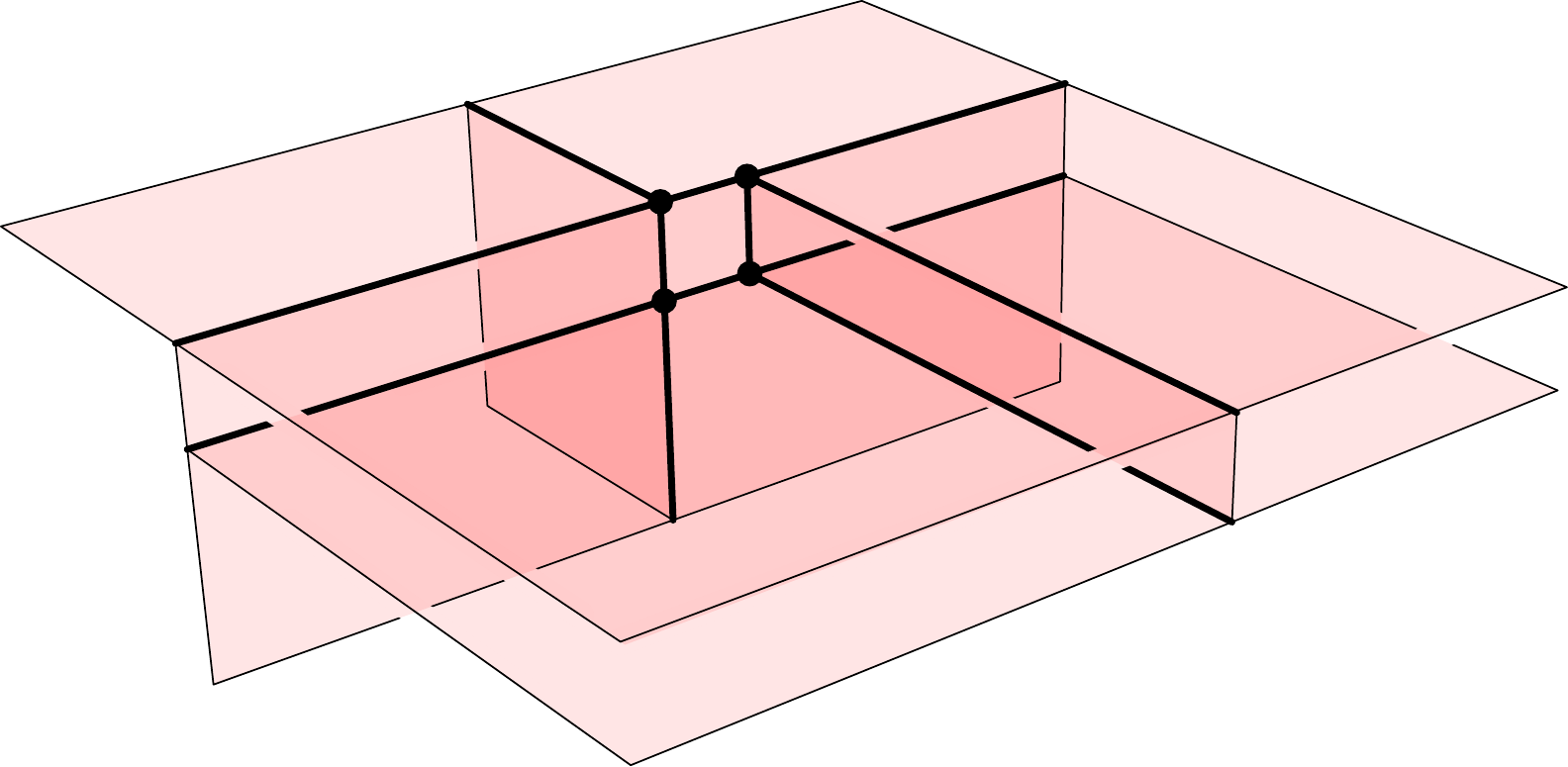}
}
\subfloat[]{
\includegraphics[width=0.3\textwidth]{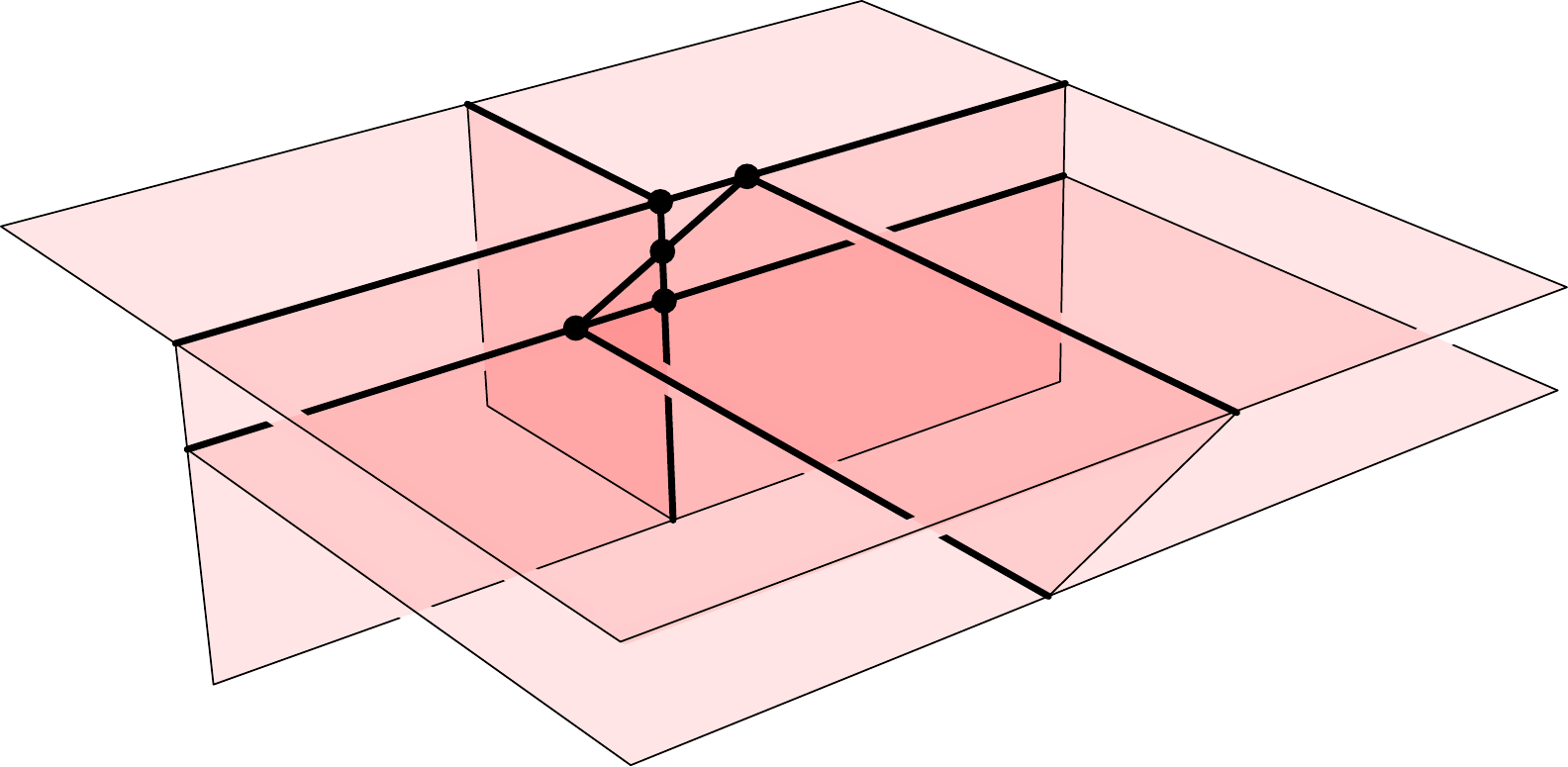}
}
\subfloat[]{
\includegraphics[width=0.3\textwidth]{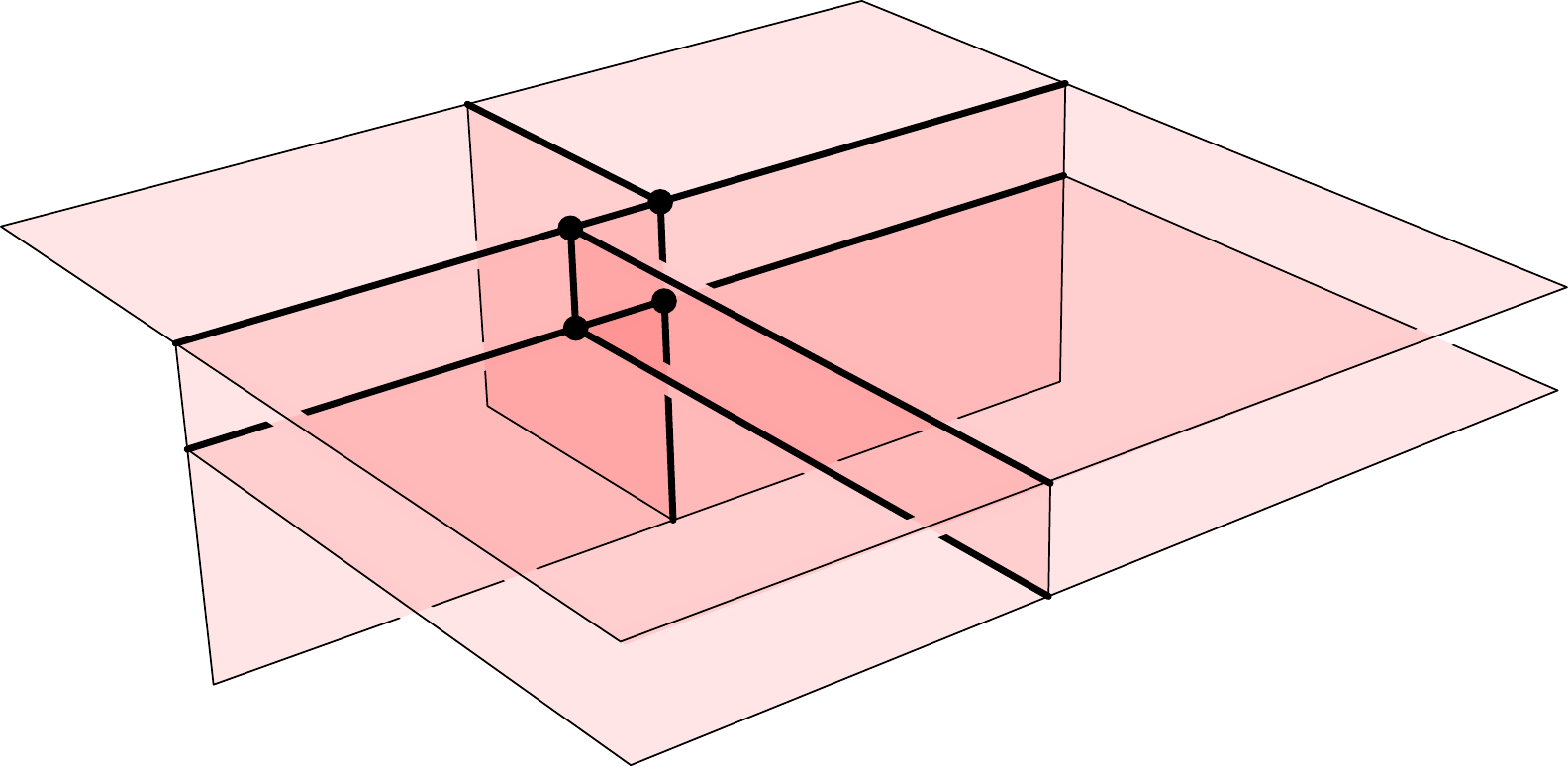}
}

\caption{Sliding the rectangle past a membrane (first row) or a corner of the thickened face (second row). In both cases, this is achieved by applying a 2-3 move followed by a 3-2 move.} 
\label{slide_rectangle}
\end{figure}

\subsection{Modifications to moving a membrane across a ball}

The only difference in the setup here in comparison to that in Section~\ref{no_self-gluings} is that there may be more than one edge of $\Gamma$ exiting the annulus $A$ through each of its boundary curves $C_\text{start}$ and $C_\text{end}$: If $\bdry d_\alpha$ is a $BBB$-edge then in Section~\ref{remove_BBB_edges} we remove it. The resulting spine has two edges intersecting $\bdry m_\alpha$. When we thicken up edges in Section~\ref{thicken_K}, this again doubles the number of edges intersecting $\bdry m_\alpha$. The same is true for edges intersecting $\bdry m_\beta$. Our algorithm is however almost identical to the one given in Section~\ref{no_self-gluings}. All we need to do is choose one of the edges intersecting $C_\text{start}$ to be $e_\text{start}$, and have the spanning tree include that $e_\text{start}$ but no other edges intersecting $C_\text{start}$ or $C_\text{end}$. The rest of the algorithm then goes through identically.


\subsection*{Acknowledgements}

{Research of the first and third authors is supported in part by the Australian Research Council under the Discovery Projects funding scheme (DP160104502). The second author was supported in part by National Science Foundation grant DMS-1708239.} The authors thank Robert L\"owe for helpful comments on a previous draft. We are most grateful to the referee for their careful reading and helpful remarks.

\bibliographystyle{hamsplain}
\bibliography{connectivity} 


\end{document}